\tikzset{>=latex} % sets default arrow type in tikz
\tikzset{font=\small}
\tikzset{mark size=1.5pt, mark options=thin}
\tikzset{pin distance=4pt,
  every pin edge/.style={<-, thin, shorten <= -2pt}}
\newcommand{\TM}{T^*\!M}
\newcommand{\T}{\mathcal{T}}
\newtheorem{theorem}{Theorem}
\newtheorem{corollary}[theorem]{Corollary}
\newtheorem{lemma}{Lemma}
\newtheorem{proposition}[lemma]{Proposition}
\theoremstyle{definition}
\newtheorem{definition}{Definition}
\newtheorem{remark}{Remark}
\newcommand{\FR}{\mathfrak{I}_{_{\!0}}}
\newcommand{\KR}{\mathcal{K}}
\newcommand{\R}{{\mathbb R}}
\newcommand{\G}{{\mathcal G}}
\newcommand{\ep}{\varepsilon}
\newcommand{\Hs}[1]{\text{\raisebox{.2cm}{${_{\!H_{\scl}^{#1}\!\!(M)}}$}}}
\newcommand{\sob}[1]{H_{\scl}^{#1}(M)}
\newcommand{\Hm}{\Hs{\!\!\frac{k-2m+1}{2}}}
\newcommand{\Hl}{\Hs{\frac{k-3}{2}}}
\newcommand{\Hln}{\Hs{\!\!{\frac{n-3}{2}}}}
\newcommand{\x}{\ensuremath{\times}}
\newcommand{\mc}[1]{\mathcal{#1}}
\newcommand{\re}{\mathbb{R}}
\newcommand{\WFh}{\operatorname{WF_h}}
\newcommand{\MSh}{\operatorname{MS_h}}
\newcommand{\sigFlow}{ \mc{L}_{h} }
\newcommand{\smalleq}[1]{\scalebox{.9}{$#1$}}%
\newcommand{\red}[1]{{\color{purple}{#1}}}
\definecolor{light-gray}{RGB}{168,168,168}
\definecolor{dark-gray}{RGB}{110,110,110}
\definecolor{dark-green}{RGB}{34,149,34}
\definecolor{jeffColor}{RGB}{102, 0, 204}
\newcommand{\SNH}{S\!N^*\!H}
\newcommand{\SM}{S^*\!M}
\newcommand{\SigH}{\Sigma_{_{\!H\!,p}}}
\newcommand{\comp}{\operatorname{comp}}
\newcommand{\Tj}{\Lambda_{\rho_j}^\tau(R(h))}
\newcommand{\LambdaH}{\Lambda^\tau_{_{\!\Sigma_{H\!,p}}}}
\newcommand{\LM}{{_{\!L^2(M)}}}
\newcommand{\LMo}{{_{\!L^1({\tilde{H}})}}}
\newcommand{\class}{\delta}
\newcommand{\U}[2]{ U(#1,#2)  }%e^{-\frac{i}{h}A(#2,#1,\tilde{x},hD_{\tilde{x}})}
\renewcommand{\P}{{\scriptscriptstyle P}}
\def\XXint#1#2#3{{\setbox0=\hbox{$#1{#2#3}{\int}$} \vcenter{\hbox{$#2#3$}}\kern-.5\wd0}}
\DeclareMathOperator{\vol}{vol}
\DeclareMathOperator{\supp}{supp}
\DeclareMathOperator{\inj}{inj}
\DeclareMathOperator{\scl}{scl}
\newcommand{\e}{\varepsilon}
\newcommand{\Tinj}{\tau_{_{\!\text{inj}}}}
\DeclareMathOperator{\WF}{WF}
\numberwithin{equation}{section}
\numberwithin{lemma}{section}
\title[]{Eigenfunction concentration via geodesic beams}
\author{Yaiza Canzani}
\address{Department of Mathematics, University of North Carolina at Chapel Hill, Chapel Hill, NC, USA}
\email{canzani@email.unc.edu}
\author{Jeffrey Galkowski}
\address{Department of Mathematics, University College London, London, UK}
\email{j.galkowski@ucl.ac.uk}
\date{}
\begin{document}

\begin{abstract}
In this article we develop new techniques for studying  concentration of Laplace eigenfunctions $\phi_\lambda$ as their frequency, $\lambda$, grows.
The method consists of controlling $\phi_\lambda(x)$ by decomposing $\phi_\lambda$ into a superposition of geodesic beams that run through the point $x$.   Each beam is localized in phase-space on a tube centered around a geodesic whose radius shrinks slightly slower  than $\lambda^{-\frac{1}{2}}$. We control $\phi_\lambda(x)$ by the $L^2$-mass of $\phi_\lambda$ on each geodesic tube and derive a purely dynamical statement through which  $\phi_\lambda(x)$ can be studied. {In particular, we obtain estimates on  $\phi_\lambda(x)$} by decomposing the set of geodesic tubes into those that are non self-looping for time $T$ and those that are. This approach allows for quantitative improvements, in terms of $T$, on the available bounds for $L^\infty$ norms, $L^p$ norms, pointwise Weyl laws, and averages over submanifolds. 
\end{abstract}

\maketitle

%\tableofcontents
%\newpage
%%%%%%%%%%%%%%%%%%%%%%%%%%%%%%%%%%%%%%%%%%%%%%%%%%%%%%%%%%%%%%%%%%%%%%%%%%%%%%%%
%%%%%%%%%%%%%%%%%%%%%%%%%%%%%%%%%%%%%%%%%%%%%%%%%%%%%%%%%%%%%%%%%%%%%%%%%%%%%%%%
%%%%%%%%%%%%%%%%%%%%%%%%%%%%%%%%%%%%%%%%%%%%%%%%%%%%%%%%%%%%%%%%%%%%%%%%%%%%%%%%
%%%%%%%%%%%%%%%%%%%%%%%%%%%%%%%%%%%%%%%%%%%%%%%%%%%%%%%%%%%%%%%%%%%%%%%%%%%%%%%%
\section{Introduction}
%%%%%%%%%%%%%%%%%%%%%%%%%%%%%%%%%%%%%%%%%%%%%%%%%%%%%%%%%%%%%%%%%%%%%%%%%%%%%%%%
%%%%%%%%%%%%%%%%%%%%%%%%%%%%%%%%%%%%%%%%%%%%%%%%%%%%%%%%%%%%%%%%%%%%%%%%%%%%%%%%
%%%%%%%%%%%%%%%%%%%%%%%%%%%%%%%%%%%%%%%%%%%%%%%%%%%%%%%%%%%%%%%%%%%%%%%%%%%%%%%%
%%%%%%%%%%%%%%%%%%%%%%%%%%%%%%%%%%%%%%%%%%%%%%%%%%%%%%%%%%%%%%%%%%%%%%%%%%%%%%%%

On a smooth, compact, Riemannian manifold  $(M^n,g)$ with no boundary, we consider sequences of Laplace eigenfunctions $\{\phi_\lambda\}$ solving 
\begin{equation}
\label{e:eigsYesa}
(-\Delta_g-\lambda^2)\phi_\lambda=0,\qquad{\|\phi_\lambda\|_{_{\!L^2(M)}}=1.}
\end{equation}
 From a quantum mechanics point of view, {\small $|\phi_\lambda(x)|^2$} represents the probability density for finding a quantum particle of energy $\lambda^{2}$ at the point $x\in M$. As a result, understanding how $\phi_\lambda$ concentrates across $M$ is an important problem in the mathematical physics community.

In this article, we construct tools to examine the behavior of $\phi_\lambda$ by decomposing it into geodesic beams.  To study how $\phi_\lambda$ concentrates near $x\in M$, we rewrite $\phi_\lambda$ as a sum of functions, each of which is microlocalized to a shrinking neighborhood of a geodesic that runs through $x$. The {analysis} of this decomposition, including a precise description of the $L^\infty$ behavior of each geodesic beam, yields a bound on $\phi_\lambda(x)$ in terms of the local structure of the $L^2$-mass of $\phi_\lambda$ along each of the geodesic tubes  starting at $x$. In addition, through an application of Egorov's theorem, we obtain estimates on the growth of $\phi_\lambda(x)$ that rely only on the dynamical behavior of geodesics emanating from $x$, and not on  {any other geometric} structure of $(M,g)$. Throughout the article, we refer to the tools developed here as \emph{geodesic beam techniques}.

The term geodesic beam is inspired by Gaussian beams. Recall that, on the round sphere, these are eigenfunctions {that concentrate} in a {$\lambda^{-1/2}$} neighborhood of a closed geodesic that have a Gaussian profile transverse to the geodesic. Gaussian beams have been extensively studied in the math and physics literature (see e.g.~\cite{Laz67,Arn73,Keller71,babich1991short,Dim06,ZeGaussian15,We75,Ra77,Ra82}).  Notably, Ralston~\cite{Ra76} constructed quasimodes associated to stable periodic orbits modelled on Gaussian beams. These references concern modes associated to a single closed geodesic. In contrast, the methods developed here decompose functions into linear combinations of what we call geodesic beams. Each building block is similar to a Gaussian beam in that it is associated to a geodesic and concentrates in a small neighborhood thereof. However, {three facts crucial} to our construction are: that geodesic beams are only locally defined, %and need not resemble Gaussian beams along the entire geodesic,
that the geodesic need not close, {and that they do {not} need to have a Gaussian profile transverse to the geodesic}.

In this article we build the geodesic beam tools and illustrate their application by obtaining quantitative improvements to $L^\infty$ norms for eigenfunctions on certain integrable geometries (see \S\ref{s:spheres}).

In addition, the techniques developed in this paper have remarkable implications in the study of $L^\infty$ norms and averages of eigenfunctions, $L^p$ norms, and pointwise Weyl Laws.  (See \S \ref{s:no conjugate},  \S\ref{s:Lp}, \S \ref{s:Weyl} respectively.) However, all of these applications require some additional non-trivial input e.g. {controlling looping behavior of geodesics}  in~\cite{CG19dyn}, understanding the local geometry {of overlapping tubes} in~\cite{CG19Lp}, and reduction of Weyl remainders to quasimode estimates in~\cite{CG19Weyl}. \emph{We stress that the crucial technique in each application is that of geodesic beams, which are developed in this article}.  We briefly describe the applications to $L^\infty$ norms, averages, $L^p$ norms, and Weyl Laws now:  \smallskip

{\bf{$L^\infty$ norms:}} Beginning in the 1950's, the works of Levitan, Avakumovi\'c, and H\"ormander~\cite{Lev,Ava,Ho68} prove the estimate {\small $\|{\phi_\lambda}\|_{_{\!L^\infty(M)}}=O(\lambda^{\frac{n-1}{2}})$} as $\lambda \to \infty$; known to be saturated on the round sphere. This bound was improved to {\small${\small o(\lambda^{\frac{n-1}{2}})}$} by Sogge, Toth, Zelditch and the second author~\cite{SZ02,SoggeTothZelditch, SZ16I,SZ16II, GT, Gdefect} under various dynamical assumptions at $x$. Notably,~\cite{SZ02} was the first to study $L^\infty$ bounds under purely local dynamical assumptions. When $(M,g)$ has no conjugate points, a quantitative improvement of the form {\small $\|{\phi_\lambda}\|_{_{\! L^\infty}}=O(\lambda^{\frac{n-1}{2}}/\sqrt{\log \lambda})$} has been known since the classical work of B\'erard~\cite{Berard77,Bo16,Randol}. However, until the present time, no quantitative improvements were available without \emph{global geometric} assumptions on $(M,g)$.  In \S\ref{s:no conjugate} we present applications of our geodesic beam techniques giving such improvements.  \medskip

{\bf{Averages:}} Another measure of eigenfunction concentration is the average over a submanifold $H\subset M$ of codimesion $k$. 
In this case, the general bound ~{\small $\int_H{\phi_\lambda}d\sigma_H=O(\lambda^{\frac{k-1}{2}})$} was proved by Zelditch \cite{Zel} and is saturated on the round sphere. This generalized the work of Good and Hejhal~\cite{Good,Hej}. Chen--Sogge~\cite{CS} were the first to obtain {a refinement} on the standard bounds. This work has since been improved under various assumptions by Sogge, Xi, Zhang, Wyman, Toth, and the authors~\cite{SXZ,Wym,Wym2,Wym3,Wym18,CGT,CG17}. As before, none of these results obtain quantitative improvements without {global geometric} assumptions on $(M,g)$.  In \S\ref{s:no conjugate} we present applications of our geodesic beam techniques giving such improvements. \medskip

{\bf{$L^p$ norms:}} 
 Since the seminal work of Sogge~\cite{So88}, it has been known that {\small $\|\phi_\lambda\|_{_{L^p(M)}}={\small O(\lambda^{\delta(p,n)})}$} where  {\small $\delta(p,n)$} depends on how $p$ compares  to the critical exponent {\small ${\small p_c=\tfrac{2(n+1)}{n-1}}$}. Namely,   {\small ${\small \delta(p,n)=\tfrac{n-1}{2} -\tfrac{n}{p}}$} if   {\small $p \geq p_c$} and  {\small ${\small \delta(p,n)=\tfrac{n-1}{4} -\tfrac{n-1}{2p}}$} if  {\small $2\leq p \leq p_c$}. 
When $(M,g)$ has non-positive sectional curvature,  Hassell and Tacy~\cite{HT15} gave quantitative gains over this estimate of the form {\small ${\small O(\lambda^{\delta(p,n)}/ (\log \lambda)^{\sigma(p,n)})}$} when $p> p_c$ and with {\small$\sigma(p,n)=\tfrac{1}{2}$}.   Blair and Sogge~\cite{BS17,BS18} also obtained an improvement when {\small$2<p\leq p_c$} for some {\small $\sigma(p,n)>0$}  smaller than $\tfrac{1}{2}$. In \S\ref{s:Lp} we present applications of our geodesic beam techniques which yield {\small $\sqrt{\log \lambda}$} improvements for $L^p$ norms with $p>p_c$, generalizing those of~\cite{HT15}.  \medskip

{\bf Weyl Laws:}
 Let $\{\lambda_j^{2}\}_j$ be the Laplace eigenvalues of $(M,g)$.  It is well known that 
{\small ${\small \#\{j:\; \lambda_j \leq \lambda\}=\tfrac{\vol(B^{n})\vol(M)}{(2\pi)^n}\lambda^n + E(\lambda)}$} with {\small $E(\lambda)=O(\lambda^{n-1})$} as $\lambda \to \infty$, {where $B^n \subset \mathbb R^n$} is the unit ball. Indeed, this is the integrated version of the more refined statement proved by H\"ormander  in \cite{Ho68} which says that {\small  ${\small \sum_{\lambda_j \leq \lambda}|\phi_{\lambda_j}(x)|^2=\tfrac{\vol(B^{n})}{(2\pi)^n}\lambda^n + E(\lambda,x)}$}  for all $x \in M$, with {\small $E(\lambda,x)=O(\lambda^{n-1})$} uniform for $x \in M$. This estimate has been improved by Sogge--Zelditch~\cite{SZ02} and B\'erard~\cite{Berard77} under various dynamical assumptions. In \S\ref{s:Weyl} we present improvements of these results based on geodesic beam techniques. \ 

\medskip

%%%%%%%%%%%%%%%%%%%%%%%%%%%%%%%%%%%%%%%%%%%%%%%%%%%%%%%%%%%%%%%%%%%%%%%%%%%%%%%%
%%%%%%%%%%%%%%%%%%%%%%%%%%%%%%%%%%%%%%%%%%%%%%%%%%%%%%%%%%%%%%%%%%%%%%%%%%%%%%%%
\subsection{Main results: Localizing eigenfunctions near geodesic tubes}
%%%%%%%%%%%%%%%%%%%%%%%%%%%%%%%%%%%%%%%%%%%%%%%%%%%%%%%%%%%%%%%%%%%%%%%%%%%%%%%%
%%%%%%%%%%%%%%%%%%%%%%%%%%%%%%%%%%%%%%%%%%%%%%%%%%%%%%%%%%%%%%%%%%%%%%%%%%%%%%%%
In this section we present Theorems \ref{t:porcupine efxs} and  \ref{t:coverToEstimate efxs}, which are our main estimates for Laplace eigenfunctions. In \S\ref{s:general} we present much more general versions of these two results, Theorems \ref{t:porcupine} and  \ref{t:coverToEstimate}, that hold for quasimodes of more general operators.

In fact, we  work in the semiclassical framework, writing $\lambda =h^{-1}$ and {letting} $h\to 0^+$. Then, relabeling $\phi_{\lambda}=\phi_h$, we {study} 
\begin{equation}
\label{e:eigsYes}
(-h^2\Delta_g-1)\phi_h=0,\qquad{\|\phi_h\|_{_{\!L^2(M)}}=1.}
\end{equation} 
This rescaling is useful because it allows us to work in compact subsets of phase space, and in particular, near the cosphere bundle $\SM$ where geodesic dynamics naturally take place.

Our main results give an estimate for $\phi_h$ near a point $x\in M$. We now introduce the necessary objects to state these estimates.   We will work with a cover of $S_x^*M$ by {short} geodesic tubes $\Lambda_{\rho}^\tau(R(h))\subset \TM$. This notation {roughly} means that the geodesic tube, $\Lambda_{\rho}^\tau(R(h))$, is the {flowout of a ball of radius $R(h)$ around $\rho$ for times $t\in[-\tau-R(h),\tau+R(h)]$. {We will, in fact, take $\tau>0$ small}. This is similar to} an $R(h)$ thickening (with respect to the Sasaki metric on $T^*M$) of the geodesic of length $2\tau$ centered at {$\rho \in S_x^*M$} (see \eqref{e:tube2} for a precise definition). 
We say that $\{\Lambda_{\rho_j}^\tau(R(h))\}_{j=1}^{N_h}$ is a  \emph{$(\tau, R(h))$-cover} of $S_x^*M$ if it covers { $\Lambda_{\!S_x^*M}^{\tau}( \tfrac{1}{2}R(h))$} {(see Definition~\ref{d: cover} for the definition of a cover and~\eqref{e:tube} for the definition of $\Lambda_{S^*_xM}^\tau(\tfrac{1}{2}R(h))$)}. 

In addition,  a \emph{$\delta$-partition} of $S_x^*M$ associated to the $(\tau, R(h))$-cover  is a collection of functions $\{\chi_j\}_{j=1}^{N_h}\subset S_\delta(\TM;[0,1])$ so that each $\chi_j$ is supported in the tube $\Lambda_{\rho_j}^\tau(R(h))$ and with the property that $\sum_{j=1}^{N_h}\chi_j\geq 1$ on $\Lambda_{S^*_xM}^{\tau}(\tfrac{1}{2}R(h)).$ {(See Appendix~\ref{s:SemiNote} for a description the symbol class $S_\delta$, {and Definition~\ref{d: cover} for the definition of a $\delta$-partition}.)}

The functions $\chi_j$ are used to microlocalize $\phi_h$ to the tubes $\Lambda_{\rho_j}^\tau(R(h))$. We refer to $Op_h(\chi_j)\phi_h$ as a \emph{geodesic beam through $x$}. They  are constructed in Proposition~\ref{l:nicePartition} and have the additional property that $Op_h(\chi_j)$ nearly commutes with $(-h^2\Delta_g-1)$ near $x$ {(so that these localizers do not destroy the property of being a quasimode locally near $x$)}. (See also Step 2 in the proof of Theorem~\ref{t:porcupine}.)
{The fact that $Op_h(\chi_j)$ nearly commutes with $(-h^2\Delta_g-1)$ requires that we work with geodesic tubes of positive length, $\tau$, independent of $h$ rather than localizing to balls of radius $R(h)$ centered in $S^*_xM$.}

In the following result, we control $\phi_h(x)$ by the $L^2$-mass of the geodesic beams through $x$. 

%%%%%%%%%%%%%%%%%%%%%%%%%%%%%%%%%%%%%%%%%%%%%%%%%%%%%%%%%%%%%%%%%%%%%%%%%%%%%%%
\begin{theorem}
\label{t:porcupine efxs} 
 Let $x \in M$. There exist 
$\tau_0=\tau_0(M,g)>0$, $R_0=R_0(M,g)>0,$
 $C_{n}>0$ depending only on $n$, so that the following holds. 

Let $0<\tau\leq \tau_0$,  $0\leq \class<\frac{1}{2}$,  and ${8}h^\delta\leq R(h) \leq  R_0$. Let $\{\chi_j\}_{j=1}^{N_h}$ be a $\delta$-partition for $S_x^*M$ associated to a $(\tau, R(h))$-cover. Let $N>0$.

Then, there are $h_0=h_0(M,g,\{\chi_j\},\delta)>0$ and $C_{_{\! N}}>0$ with the property that for any $0<h<h_0$ and $\phi_h$ satisfying~\eqref{e:eigsYes},
\begin{align*}
\|\phi_h\|_{L^\infty(B(x,h^\delta))}
&\leq C_{n} \tau^{-\frac{1}{2}}{h^{\frac{1-n}{2}}R(h)^{\frac{n-1}{2}}}\sum_{j=1}^{N_h}\|Op_h(\chi_j)\phi_h\|_{\LM} +C_{_{\!N}}h^N\|\phi_h\|_{\LM}.
\end{align*}
{Moreover, the constants ${h_0}$ and $C_{_{\!N}}$ are uniform for $\chi_j$ in bounded subsets of $S_\delta$.}
\end{theorem}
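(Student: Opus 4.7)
The plan is to localize $\phi_h$ microlocally via the partition $\{\chi_j\}$ and bound each geodesic beam $u_j := Op_h(\chi_j)\phi_h$ pointwise on $B(x,h^\delta)$. The theorem reduces to showing (i) $\phi_h(y) = \sum_j u_j(y) + O(h^N\|\phi_h\|_\LM)$ on $B(x,h^\delta)$, and (ii) the per-beam estimate
\begin{equation*}
|u_j(y)| \leq C_n \tau^{-1/2} h^{\frac{1-n}{2}} R(h)^{\frac{n-1}{2}} \|u_j\|_\LM + O(h^N\|\phi_h\|_\LM).
\end{equation*}
Summing in $j$ then yields the theorem.

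For (i), take $\eta\in C_c^\infty(M)$ with $\eta\equiv 1$ on $B(x,h^\delta)$ and $\supp\eta\subset B(x,R(h)/4)$; then $\phi_h = \eta\phi_h$ on $B(x,h^\delta)$, and by~\eqref{e:eigsYes} combined with $R(h)\geq 8h^\delta$, $\WFh(\eta\phi_h)\subset T^*\supp(\eta)\cap\{|\xi|_g=1\}\subset \Lambda_{S^*_xM}^0(R(h)/2)$, where $\sum_j \chi_j\geq 1$ by the cover hypothesis. A standard $S_\delta$-calculus argument (composing $\eta\phi_h$ with $Op_h(\sum_j\chi_j)$ via an intermediate symbol $\psi$ dominated by $\sum_j\chi_j$ and equal to $1$ on $\WFh(\eta\phi_h)$, then using pseudolocality to swap $Op_h(\chi_j)\eta\phi_h$ for $Op_h(\chi_j)\phi_h$ on $B(x,h^\delta)$) yields the decomposition modulo $O_{L^\infty}(h^N\|\phi_h\|_\LM)$.

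The heart is (ii). Let $P_1:=\sqrt{-h^2\Delta_g}$, so $P_1\phi_h=\phi_h$. Fix an even $\rho\in C_c^\infty((-\tau,\tau))$ with $\int\rho=1$ and set $W:=\int\rho(t)e^{it/h}e^{-itP_1/h}\,dt$; then $W\phi_h=\phi_h$ exactly. By Proposition~\ref{l:nicePartition} the commutator $[Op_h(\chi_j),P_1]$ is $O(h^\infty)$ microlocally near $x$, so $u_j$ is an $O(h^\infty)$-quasimode near $B(x,h^\delta)$ and $u_j=Wu_j+O(h^\infty\|\phi_h\|_\LM)$. Pick a slightly enlarged cutoff $\tilde\chi_j\in S_\delta$ equal to $1$ on $\supp\chi_j$ and supported in $\Lambda_{\rho_j}^\tau(2R(h))$; pseudolocality $Op_h(\tilde\chi_j)Op_h(\chi_j)=Op_h(\chi_j)+O(h^\infty)$ yields $u_j=Op_h(\tilde\chi_j)u_j+O(h^\infty\|\phi_h\|_\LM)$. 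Combining, for $y\in B(x,h^\delta)$,
\begin{equation*}
u_j(y) = \bigl(Op_h(\tilde\chi_j)W\bigr)u_j(y) + O(h^\infty\|\phi_h\|_\LM).
\end{equation*}
Writing $K(y,\cdot)$ for the Schwartz kernel of $Op_h(\tilde\chi_j)W$, Cauchy--Schwarz reduces (ii) to the kernel bound
\begin{equation*}
\|K(y,\cdot)\|_\LM^2 = \bigl(Op_h(\tilde\chi_j)WW^*Op_h(\tilde\chi_j)^*\bigr)(y,y)\leq C_n^2\tau^{-1}h^{1-n}R(h)^{n-1}.
\end{equation*}

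For this kernel bound, the semigroup identity gives $WW^*=\int(\rho*\rho^-)(s)e^{is/h}e^{-isP_1/h}\,ds$ with $\rho*\rho^-\in C_c^\infty((-2\tau,2\tau))$. Conjugating through $Op_h(\tilde\chi_j)$ by Egorov's theorem (exploiting the essential flow-invariance of $\tilde\chi_j$ on $\Lambda_{\rho_j}^\tau(2R(h))$) yields a semiclassical pseudodifferential operator with principal symbol $|\tilde\chi_j(y,\xi)|^2|\hat\rho((|\xi|_g-1)/h)|^2$, whose diagonal at $(y,y)$ equals $(2\pi h)^{-n}\int|\tilde\chi_j(y,\xi)|^2|\hat\rho((|\xi|_g-1)/h)|^2\,d\xi$ plus lower-order terms. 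The integrand concentrates on the intersection of the tube's $\xi$-fiber over $y$ (of volume $\sim R(h)^n$) with the shell $\{\big||\xi|_g-1\big|\leq Ch/\tau\}$, yielding intersection volume $\sim R(h)^{n-1}\cdot h/\tau$ and the desired bound. \emph{The main obstacle} is the uniform propagation of constants through each $S_\delta$-calculus step: each symbolic composition and Egorov application introduces relative errors of size $O(h^{1-2\delta})$ (since $\tilde\chi_j\in S_\delta$ and $R(h)$ may be as small as $h^\delta$); these must be iterated and absorbed into the final $C_N h^N$ remainder by taking $N$ sufficiently large in terms of $\delta$, while keeping the constants uniform for $\chi_j$ in bounded subsets of $S_\delta$.
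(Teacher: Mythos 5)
Your overall strategy is genuinely different from the paper's. You reduce the theorem to an on-diagonal bound for the microlocalized smoothed spectral projector $Op_h(\tilde\chi_j)WW^*Op_h(\tilde\chi_j)^*$, i.e.\ a $TT^*$/wave-kernel argument. The paper never touches the wave group: Theorem~\ref{t:porcupine efxs} is deduced from Theorem~\ref{t:porcupine} with $H=\{x\}$, whose core (Lemmas~\ref{L:optimized} and~\ref{l:squirrel}) factors $p=e(x,\xi)(\xi_1-a(x,\xi'))$ near each tube, solves the resulting one-dimensional evolution equation with the unitary propagator $U(x_1,t)$ and averages in $t$ to trade pointwise-in-$x_1$ control for $L^2$ control over the tube (this is where $\tau^{-1/2}$ comes from), and then applies a semiclassical Sobolev embedding in the directions transverse to the geodesic, using that $(hD_{x_i}-a_i)^k Op_h(\chi_j)u=O(R(h)^k)$ to extract the factor $h^{\frac{1-n}{2}}R(h)^{\frac{n-1}{2}}$. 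Your heuristic count of the phase-space volume $\sim R(h)^{n-1}\cdot h/\tau$ does land on the correct constant, so the target kernel bound is a true statement.

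However, as written there is a genuine gap at that central step. The operator $WW^*=\widehat{\rho*\rho^-}\big((P_1-1)/h\big)$ is \emph{not} a semiclassical pseudodifferential operator in any class $\Psi_\delta$ with $\delta<\tfrac12$: its would-be symbol $|\hat\rho((|\xi|_g-1)/h)|^2$ varies on scale $h$ transverse to $S^*M$, so neither the symbolic calculus nor Egorov's theorem (which concerns conjugation $e^{isP_1/h}Ae^{-isP_1/h}$, not the sandwiched composition you need) justifies the claim that $Op_h(\tilde\chi_j)WW^*Op_h(\tilde\chi_j)^*$ has principal symbol $|\tilde\chi_j|^2|\hat\rho((|\xi|_g-1)/h)|^2$ with controllable "lower-order terms." To make this rigorous you must insert the small-time H\"ormander/WKB parametrix for $e^{-isP_1/h}$, $|s|\le 2\tau$, and carry out a stationary phase in $(s,\xi)$ with amplitudes in $S_\delta$ supported in anisotropic tubes of width $R(h)\ge 8h^\delta$; checking that each correction is $O(h^{1-2\delta})$ relative to the main term $h^{1-n}R(h)^{n-1}/\tau$ is the entire technical content, and it is not supplied. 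A second, fixable, issue: your identity $u_j=Wu_j+O(h^\infty)$ requires $[W,Op_h(\chi_j)]\phi_h$ to be negligible on $B(x,h^\delta)$, but $\chi_j$ fails to be flow-invariant precisely at the ends of the tube, at flow-distance $\approx\tau$ from $S^*_xM$; since $\rho$ is supported in $(-\tau,\tau)$, the propagator can carry that non-commutation region back over $x$. You need $\supp\rho$ strictly shorter than the flow-invariance length of $\chi_j$ (as the paper arranges with its $\tilde\tau=\tau/4$, $\e<\tau/4$ bookkeeping); this only changes $C_n$ by a fixed factor.
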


%%%%%%%%%%%%%%%%%%%%%%%%%%%%%%%%%%%%%%%%%%%%%%%%%%%%%%%%%%%%%%%%%%%%%%%%%%%%%%%% 
 {Crucially, this estimate makes no assumptions on the geometry of $M$ or the dynamics of the geodesic flow. Information on the dynamics of the geodesic flow will later allow us to control the $L^2$ mass of the geodesic beams (see Theorem~\ref{t:coverToEstimate efxs}).}
 
This result is a consequence of the more general and stronger result given in Theorem~\ref{t:porcupine} below. {(See Remark \ref{r:how to prove results for efxs} for the proof.)} Indeed, the latter is stated as a bound for $\int_H u_h d\sigma_H$, where $H\subset M$ is a general submanifold and  $u_h$ is a quasimode for a pseudodifferential operator with a real, classically elliptic symbol with respect to which $H$ is conormally transverse. Note that when $H=\{x\}$ we have $\int_H u_h d\sigma_H=u_h(x)$. See \S\ref{s:general} for a detailed description.\smallskip

One can conclude from  Theorem~\ref{t:porcupine efxs} that, in order to have maximal sup-norm growth at a point, an eigenfunction must have a component with $L^2$ norm bounded from below that is distributed in the same way as the canonical example on the sphere (up to scale $h^\delta$ for all $\delta<\frac{1}{2}$). Indeed, {if one restricts attention to $(\tau,r)$ covers of $S_x^*M$ without too many overlaps (see Definition~\ref{d:good cover})}
it follows from Theorem~\ref{t:porcupine efxs} that there exists $C_n>0$, so that for all $\e>0$, if 
\[\#\Big\{j:\;  { { \e^2}} \, R(h)^{n-1} \leq \|Op_h(\chi_j)\phi_h\|^2_{_{\!L^2(M)}} \leq \small \frac{R(h)^{n-1}}{{ \e^2}} \Big\} \leq {\e^2} N_h,\] then
$\|\phi_h\|_{L^\infty(B(x,h^\delta))}
\leq \e C_{n} \tau^{-\frac{1}{2}}h^{\frac{1-n}{2}}$.

%\red{Indeed, let 
%\begin{gather*}
%\mc{Z}_<:=\{j\mid  \|Op_h(\chi_j)\phi_h\|^2_{_{\!L^2(M)}}<{ \e^2} \, R(h)^{n-1}\},\qquad\mc{Z}_{>}:=\{j\mid \small \frac{R(h)^{n-1}}{ \e^2}<  \|Op_h(\chi_j)\phi_h\|^2_{_{\!L^2(M)}} \},\\
%\mc{Z}:=\{j\mid {  \e^2} \, R(h)^{n-1}\leq \|Op_h(\chi_j)\phi_h\|^2_{_{\!L^2(M)}}\leq  \small \frac{R(h)^{n-1}}{ \e^2} \}
%\end{gather*}
%Then, since there is $C_n>0$ such that $|\mc{Z}_>|\leq C_n\e^2 R(h)^{1-n}$, and Definition~\ref{d:good cover} implies there is $C_n>0$ such that $N_h\leq C_nR(h)^{1-n}$, applying Cauchy--Schwarz in the last term, we have
%\begin{align*}
%R(h)^{\frac{n-1}{2}}\sum_j\|Op_h(\chi_j)\phi_h\|_{_{\!L^2(M)}}&\leq C_n\e +R(h)^{\frac{n-1}{2}}\sum_{\mc{Z}}\|Op_h(\chi_j)\phi_h\|_{_{\!L^2(M)}}+ C_n\e\\
%&\leq 2C_n\e +C_n |\mc{Z}|\e^{-1}\leq C_n\e.
%\end{align*}
%Together with Theorem~\ref{t:porcupine efxs} this implies the estimate.}
%Therefore, for the sup-norm bound to be saturated, there must be a positive proportion of the $N_h$ tubes for which 
%the mass $\|Op_h(\chi_j)\phi_h\|_{_{\!L^2(M)}}$ is comparable to $R(h)^{\frac{n-1}{2}}$. This is the behavior displayed by
%the the zonal harmonic at the poles.
%}

To understand Theorem~\ref{t:porcupine efxs} heuristically, one should think of {\small $\|Op_h(\chi_j)\phi_h\|_{L^2(M)}$} as measuring the $L^2$ mass of $\phi_h$ on the tube of radius $R(h)$ around a geodesic that runs through the point $x$. Since {\small $\vol (\supp \chi_j)\asymp R(h)^{n-1} $}, an individual term in the sum in Theorem~\ref{t:porcupine efxs} is then
\begin{align*}
R(h)^{\frac{n-1}{2}}{\|Op_h(\chi_j)\phi_h\|_{L^2(M)} } %&=\Bigg(\frac{\|Op_h(\chi_j)u\|_{L^2(M)}^2}{R(h)^{n-1}}\Bigg)^{\frac{1}{2}}R(h)^{n-1}\\
&\asymp {\small \Bigg(\frac{\|Op_h(\chi_j)\phi_h\|_{L^2(M)}^2}{\vol(\supp \chi_j)}\Bigg)^{\frac{1}{2}} }\vol(\supp \chi_j),
\end{align*}
where $\vol$ is the volume measure on $S^*_xM$ induced by the Sasaki metric on $T^*M$. In particular,  the  sum on the right of the estimate in Theorem~\ref{t:porcupine efxs} can be interpreted  as
$
\int_{S^*_xM} \big|\frac{d\mu}{d\vol}\big|^{\frac{1}{2}}d\vol,
$
where $\mu$ is the measure giving the distribution of the mass squared of $\phi_h$ on ${{\!S^*_xM}}$. 
This statement can be made precise by using defect measures (see~\cite[Theorem 6]{CG17}), but the results using defect measures can only be used to obtain $o(1)$ improvements on eigenfunction bounds.

We emphasize now that Theorem~\ref{t:porcupine efxs} is the key estimate for the proofs of all the applications to $L^\infty$-norms, $L^p$-norms, and Weyl Laws stated in  \S \ref{s:no conjugate},  \ref{s:Lp}, \ref{s:Weyl}, respectively.

At first sight it may seem that it is not easy to extract information from the upper bound provided in Theorem~\ref{t:porcupine efxs}. However, the strength of this bound is showcased in our next result, Theorem \ref{t:coverToEstimate efxs}. The latter combines  the  analytical bound of Theorem~\ref{t:porcupine efxs} together with Egorov's Theorem to obtain a purely dynamical statement.  Indeed, $\phi_h(x)$ is controlled by covers of $\Lambda^\tau_{S^*_xM}({\tfrac{1}{2}R(h)})$ by  ``good" tubes that are non self-looping under the geodesic flow, {$\varphi_t:=\exp(tH_{|\xi|_g})$ (where $H_{|\xi|_g}$ is the Hamiltonian vector field of $|\xi|_g$)}, and ``bad" tubes whose {number} is small. %In fact, Theorems~\ref{t:noConj2},~\ref{t:noConj1}, and~\ref{T:applications}  are reduced to a purely dynamical argument together with an application of Theorem~\ref{t:coverToEstimate}.}

\begin{definition}
(non-self looping sets) For $0<t_0<T_0$, we say that $A\subset T^*\!M$ is \emph{$[t_0,T_0]$ non-self looping} if 
\begin{equation}\label{e:nonsl}
\bigcup_{t=t_0}^{T_0}\varphi_t(A)\cap A=\emptyset\qquad \text{ or }\qquad  \bigcup_{t=-T_0}^{-t_0}\varphi_t(A)\cap A=\emptyset.
\end{equation}
\end{definition}

The goal of our next result is to obtain quantitative control of $\phi_h(x)$ by splitting the geodesic tubes into ``good" tubes $\{\Lambda_{\rho_j}^\tau(R(h))\}_{j \in \mc{G}_\ell}$ that are {$[t_\ell,T_\ell]$} non self-looping and ``bad" tubes {$\{\Lambda_{\rho_j}^\tau(R(h))\}_{j \in \mc{B}}$} {that may be {self-}looping}. The quantitative control is then given in terms of {$t_\ell$}, $T_\ell$, $|\mc{G_\ell}|$, and $|\mc{B}|$. {Recall that $\tau>0$ is a small parameter so the tubes $\Lambda_{\rho}^\tau(R(h))$ do not see the global dynamical structure of the geodesic flow. It is only when $T_\ell\gg \tau$ that one encounters this information.} 

{It is convenient to work with covers by tubes for which the number of overlaps is controlled. Indeed, we say that a $(\tau, {R(h)})$- covering by tubes is a  $(\mathfrak{D}, \tau, {R(h)})$-good covering, if it can be split into $\mathfrak{D}>0$ families of disjoint tubes. See Definition \ref{d:good cover} for a precise definition. In Proposition \ref{l:cover} we prove that one can always work with $(\mathfrak{D}_n, \tau, {R(h)})$-good coverings, where $\mathfrak{D}_n$ only depends on $n$.}

In what follows we write $\Lambda_{\max}$ for the maximal expansion rate {of the flow}
and $T_e(h)$ for the  Ehrenfest time $T_e(h):=\frac{\log h^{-1}}{2\Lambda_{\max}}$ (see \eqref{e:Lmax}).
%Note that $\Lambda_{\max}\in[0,\infty)$ and if $\Lambda_{\max}=0$, we may replace it by an arbitrarily small positive constant. 

%%%%%%%%%%%%%%%%%%%%%%%%%%%%%%%%%%%%%%%%%%%%%%%%%%%%%%%%%%%%%%%%%%%%%%%%%%%%%%%%

\begin{theorem}
\label{t:coverToEstimate efxs}
 Let $x\in M$, $0<\delta<\frac{1}{2}$. There exist positive constants $h_0=h_0(M,g,\delta)$, $\tau_0=\tau_0(M,g)$, $R_0=R_0(M,g)$, and $C_{n}$ depending only on $n$, so that for all $0<\tau {\leq}\tau_0$ and $0<h<h_0$ the following holds.

Let ${8}h^\delta\leq R(h){\leq}R_0$, and  $\{\Lambda_{\rho_j}^\tau(R(h))\}_{j=1}^{N_h}$ be a {$(\mathfrak{D},\tau, R(h))$-good cover for $S^*_xM$ for some $\mathfrak{D}>0$}. Let $0\leq\alpha< 1-2{\limsup_{h\to 0}\frac{\log R(h)}{\log h}}$ and suppose there exists a partition of $\{1,\dots, N_h\}$ into $\mc{B}$ and  $\{\mc{G}_\ell\}_{\ell \in \mathcal L}$ such that for every $\ell \in \mathcal L$ there exist $T_\ell=T_\ell(h)>0$ {and $t_\ell=t_\ell(h)>0$} with $t_\ell(h)\leq T_\ell(h)\leq {2} \alpha T_e(h)$   {such that}  
$$
\bigcup_{j\in \mc{G}_\ell}\Lambda_{\rho_j}^\tau(R(h))\;\;\text{ is }\;\;[t_\ell,T_{\ell}]\text{ non-self looping}.
$$
Then, {for all $N>0$ there exists $C_{_{\!N}}=C_{_{\!N}}(M,{g}, N,\tau,\delta)>0$} so that for $\phi_h$ solving~\eqref{e:eigsYes}
\begin{align*}
\|\phi_h\|_{L^\infty(B(x,h^\delta))}\!
\leq C_{n}{\mathfrak{D}}\tau^{-\frac{1}{2}}h^{\frac{1-n}{2}}R(h)^{\frac{n-1}{2}}\!\!
\Bigg(|\mc{B}|^{\frac{1}{2}}\!+\!\sum_{\ell \in \mathcal L }\frac{|\mc{G}_\ell|^{\frac{1}{2}} {t_\ell^{\frac{1}{2}}}}{T_\ell^{\frac{1}{2}}}\Bigg)\|\phi_h\|_{\LM} \!\!\!+\! C_{_{\!N}}h^N \|\phi_h\|_{\LM}.
\end{align*}
\end{theorem}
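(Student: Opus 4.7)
The plan is to apply Theorem~\ref{t:porcupine efxs} with a $\delta$-partition $\{\chi_j\}_{j=1}^{N_h}$ subordinate to the given $(\mathfrak D,\tau,R(h))$-good cover, thereby reducing the pointwise estimate on $\phi_h$ to a bound on $\sum_{j=1}^{N_h}\|Op_h(\chi_j)\phi_h\|_{L^2(M)}$. I would then split this sum along the dichotomy $\{1,\dots,N_h\}=\mc{B}\sqcup\bigsqcup_{\ell\in\mathcal{L}}\mc{G}_\ell$ and estimate each piece by Cauchy--Schwarz, with the good groups requiring, in addition, a dynamical argument based on Egorov's theorem.

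For the bad contribution, the good cover property allows us to split $\mc{B}$ into $\mathfrak D$ subfamilies of pairwise disjoint tubes, and standard almost-orthogonality within each subfamily yields $\sum_{j\in\mc{B}}\|Op_h(\chi_j)\phi_h\|^2_{L^2(M)}\lesssim \mathfrak D\|\phi_h\|_{L^2(M)}^2$ modulo acceptable errors, whence Cauchy--Schwarz gives $\sum_{j\in\mc{B}}\|Op_h(\chi_j)\phi_h\|_{L^2(M)}\lesssim \mathfrak D^{1/2}|\mc{B}|^{1/2}\|\phi_h\|_{L^2(M)}$, accounting for the first term in the asserted bound.

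For each good family $\mc{G}_\ell$, set $A_\ell:=\bigcup_{j\in\mc{G}_\ell}\Lambda_{\rho_j}^\tau(R(h))$ and fix a cutoff $\psi_\ell\in S_\delta$ which equals $1$ on $A_\ell$ and is supported in a slight enlargement. The $[t_\ell,T_\ell]$ non-self-looping hypothesis implies that the iterates $\varphi_{kt_\ell}(A_\ell)$, for $0\leq k<K_\ell:=\lfloor T_\ell/t_\ell\rfloor$, are pairwise disjoint. Since $\phi_h$ is an eigenfunction of $\sqrt{-h^2\Delta_g}$ with eigenvalue $1$, propagation by the associated half-wave flow leaves $\phi_h$ invariant up to a phase, and Egorov's theorem, applicable up to $|t|\leq \alpha T_e(h)$ precisely because $\alpha<1-2\limsup_{h\to 0}\log R(h)/\log h$, gives that $\|Op_h(\psi_\ell\circ\varphi_{-kt_\ell})\phi_h\|_{L^2(M)}$ equals $\|Op_h(\psi_\ell)\phi_h\|_{L^2(M)}$ modulo an acceptable error for each $k<K_\ell$. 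Since the symbols $\psi_\ell\circ\varphi_{-kt_\ell}$ have pairwise disjoint supports, almost-orthogonality yields
\[
K_\ell\,\|Op_h(\psi_\ell)\phi_h\|_{L^2(M)}^2
\;\lesssim\; \sum_{k=0}^{K_\ell-1}\|Op_h(\psi_\ell\circ\varphi_{-kt_\ell})\phi_h\|_{L^2(M)}^2
\;\lesssim\; \|\phi_h\|_{L^2(M)}^2,
\]
so that $\|Op_h(\psi_\ell)\phi_h\|_{L^2(M)}\lesssim (t_\ell/T_\ell)^{1/2}\|\phi_h\|_{L^2(M)}$. A final Cauchy--Schwarz in $j\in\mc{G}_\ell$ combined with the good cover bound $\sum_{j\in\mc{G}_\ell}\|Op_h(\chi_j)\phi_h\|^2\lesssim \mathfrak D\|Op_h(\psi_\ell)\phi_h\|^2$ then yields the $\ell$-th term of the target estimate, and summing over $\ell$ together with the bad contribution completes the proof.

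The main obstacle will be the Egorov step: the pushed-forward symbols $\psi_\ell\circ\varphi_{-kt_\ell}$ must lie in a bounded subset of $S_\delta$, so that the constant $h_0$ from Theorem~\ref{t:porcupine efxs} is uniform in $\ell$, and one must track how the $K_\ell\lesssim T_\ell/t_\ell$ Egorov remainders aggregate after multiplication by the large prefactor $h^{(1-n)/2}R(h)^{(n-1)/2}$ and summation over $\ell\in\mathcal{L}$. The quantitative assumption $\alpha<1-2\limsup\log R(h)/\log h$ is precisely the threshold at which all of these errors remain harmless and can be absorbed into the final $C_{_{\!N}}h^N\|\phi_h\|_{L^2(M)}$ term.
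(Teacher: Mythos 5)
Your proposal follows the same architecture as the paper's proof: reduce to $\sum_j\|Op_h(\chi_j)\phi_h\|_{L^2(M)}$ via Theorem~\ref{t:porcupine efxs}, treat $\mc{B}$ by splitting into the $\mathfrak{D}$ disjoint subfamilies plus almost-orthogonality and Cauchy--Schwarz, and treat each $\mc{G}_\ell$ by propagating cutoffs to the Ehrenfest time, using disjointness of the propagated supports and sharp G\aa rding to extract the factor $(t_\ell/T_\ell)^{1/2}$. This is exactly the content of the paper's Theorem~\ref{t:coverToEstimate} combined with Lemma~\ref{l:recur-a} (and the change of Hamiltonian in \S\ref{S:change of hamiltonian}, which you implicitly perform when you invoke the half-wave propagator). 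Your one genuine simplification --- using that an exact eigenfunction is invariant under the propagator up to a phase, so that no $\|P\phi_h\|$ terms need to be tracked --- is legitimate here, though it is precisely what Lemma~\ref{l:recur-a} generalizes to quasimodes.

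There is, however, one concrete gap in the good-family step. You take $\psi_\ell\equiv 1$ on $A_\ell=\bigcup_{j\in\mc{G}_\ell}\Lambda_{\rho_j}^\tau(R(h))$ and \emph{supported in a slight enlargement} of $A_\ell$, and then claim the supports of $\psi_\ell\circ\varphi_{-kt_\ell}$ are pairwise disjoint. But the $[t_\ell,T_\ell]$ non-self-looping hypothesis applies only to $A_\ell$ itself, not to any enlargement; the flowouts of the enlargement can perfectly well intersect one another, and then both the disjoint-support almost-orthogonality $\sum_k\|Op_h(\psi_\ell\circ\varphi_{-kt_\ell})\phi_h\|^2\lesssim\|\phi_h\|^2$ and your subsequent comparison fail. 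You cannot shrink the support into $A_\ell$ while keeping $\psi_\ell\equiv 1$ there, so the aggregated cutoff cannot be salvaged as stated. The paper avoids this by never aggregating: Lemma~\ref{l:recur-a} propagates the individual $\chi_j$ for $j$ in one disjoint subfamily $\mc{G}_{\ell,i}$, whose supports are contained in $A_\ell$ by the definition of a $\delta$-partition, so the hypothesis directly gives disjointness of all the sets $\varphi_{kt_\ell}(\supp\chi_j)$, and the conclusion $\sum_{j}\|Op_h(\chi_j)\phi_h\|^2\,T_\ell/t_\ell\lesssim\|\phi_h\|^2$ is obtained in one stroke, making your intermediate comparison $\sum_j\|Op_h(\chi_j)\phi_h\|^2\lesssim\mathfrak{D}\|Op_h(\psi_\ell)\phi_h\|^2$ unnecessary. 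Replacing your $\psi_\ell$ by the family $\{\chi_j\}_{j\in\mc{G}_{\ell,i}}$ repairs the argument and lands you exactly on the paper's proof; the remaining bookkeeping of Egorov remainders that you flag is handled there by noting each remainder is $O(h^{2\e_0})$ with disjoint supports, so the aggregate stays $O(h^{2\e_0})$ and is absorbed by sharp G\aa rding.
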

%%%%%%%%%%%%%%%%%%%%%%%%%%%%%%%%%%%%%%%%%%%%%%%%%%%%%%%%%%%%%%%%%%%%%%%%%%%%%%%%

\begin{remark}
{Note that, since the tubes $\Lambda_{\rho_j}^\tau(R(h))$ are essentially time $\tau$ flowouts of balls around $\rho_j$ with radius $R(h)$,  if the ball of radius $R(h)$ around $\rho_j$ is $[t-\tau,T+\tau]$ non-self looping then $\Lambda_{\rho_j}^\tau(R(h))$ is $[t,T]$ non-self looping. Therefore, we could replace the non-self looping assumption on $\Lambda_{\rho_j}^\tau(R(h))$ in Theorem~\ref{t:coverToEstimate efxs} by an analogous non-self looping assumption on $B(\rho_j,R(h))$. Note, however, that these balls \emph{cannot} be replaced by balls inside $S^*_xM$. We need them to have full dimension so that smooth cutoffs can be supported inside $\Lambda^\tau_\rho(R(h))$. Moreover, it is necessary that they encode quantitative information on how geodesics near the center of $\Lambda_{\rho}^\tau(R(h))$ return close to $x$.}
\end{remark}

 This result is a consequence of the more general and stronger result given in Theorem~\ref{t:coverToEstimate}. {See Remark \ref{r:how to prove results for cover efxs} for the proof}. {As with the previous theorem, the generalization is stated for averages over submanifolds of quasimodes of general operators.}  % Indeed, the latter is stated as a bound for $\int_H u_h d\sigma_H$, where $H\subset M$ is a general submanifold and  $u_h$ is a quasimode for a self-adjoint pseudodifferential operator with a  classically elliptic symbol with respect to which $H$ is conormally transverse. 
{See \S\ref{s:general} for a detailed explanation. For examples where Theorem~\ref{t:coverToEstimate efxs} is applicable see \S\ref{s:app with integrable} and \S\ref{s:examples}.}
\smallskip

{We note that Theorem~\ref{t:coverToEstimate efxs}  distinguishes much finer features than that of self-conjugacy with maximal multiplicity. Indeed, the theorem can be used to obtain estimates at points \emph{all} of whose geodesics return; provided the geodesics through the point have some additional non-recurrent structure (e.g. the umbillic points on the triaxial ellipsoid; see \S\ref{s:examples}). In particular, this estimate distinguishes recurrent structure and non-recurrent structure {as in Definition~\ref{d:non-rec}}. At this point, we do not know to what extent it distinguishes periodic structure from recurrent structure.}

Theorem~\ref{t:coverToEstimate efxs} reduces estimates on $\phi_h(x)$ to the construction of covers of $\Lambda_{_{\!S^*_xM}}^\tau({\tfrac{1}{2}R(h)})$ by sets with appropriate structure. Here $\Lambda_{_{\!S^*_xM}}^\tau({\tfrac{1}{2}R(h)})$ denotes a $ \tfrac{1}{2}R(h)$ thickening of the set of {geodesics} through $x$, {see \eqref{e:tube}}.  If there is a cover of $\Lambda^\tau_{_{\!S^*_xM}}({\tfrac{1}{2}R(h)})$ by ``good" sets $\{G_\ell\}_{\ell\in L}$ and a ``bad" set $B$, with  every $G_\ell$ being $[t_\ell(h),T_\ell(h)]$ non-self looping, the estimate reads
\[
\|\phi_h\|_{L^\infty(B(x,h^\delta))}
\leq {C_n \mathfrak{D}}\tau^{-\frac{1}{2}}h^{\frac{1-n}{2}}
\left(
\![\vol(B)]^{\frac{1}{2}}
+\sum_{\ell \in \mathcal L }\frac{[\vol(G_\ell)]^{\frac{1}{2}} {t_\ell^{\frac{1}{2}}}}{T^{\frac{1}{2}}_\ell}\right)\!\!\|\phi_h\|_{\LM},
%\left.\qquad\qquad\qquad\qquad\qquad\qquad\!+\!\sum_{\ell \in L}\frac{[\Sig(G_\ell)]^{\frac{1}{2}}t_\ell^{\frac{1}{2}}(h)T_\ell^{\frac{1}{2}}(h)}{h}\;\|Pu\|_{\LM}\!\right).
\]
where $\vol$ denotes the volume induced on $S_x^*M$ by the Sasaki metric on $T^*\!M$, and where  we write $\vol(A)=\vol(A\cap S^*_xM)$ for $A\subset T^*\!M$. The additional structure required on the sets $G_\ell$ and $B$ is that they consist of a union {of} tubes $\Lambda_{{\rho_i}}^\tau({R(h)})$  and that $T_\ell(h)<2(1-2\delta)T_e(h)$. 

\begin{figure}%{l}{0.55\textwidth}
 \begin{tikzpicture}
    \def \T{2.7};
    \def \t{1.6};
    \def \rone{1};
    \def \steptheta{2.5};
    \def \x{3};
    \def \y{\x};
    \def \dotrad{.75pt};
    \def \tau{.3};
    \def \R{.01};
    \begin{scope}[scale={2.7/\T}]
    \draw[blue,ultra thick] (0,0) circle (\T);
    \draw[jeffColor,thick] (0,0) circle (\t);

    \draw[gray] (0,0) circle (\rone);
    \begin{scope}
    \clip (-\x,-\y) rectangle (\x,\y);
    \foreach \i in {-\x,...,\x}
    {
    \foreach \j in {-\y,...,\y}
    {
    \draw[light-gray,dashed] (0,0)--(\x*\i,\x*\j);
    \node at (\i,\j)[circle,fill,inner sep=\dotrad]{};
    };
    };
    \end{scope}
    \begin{scope}
    \foreach \i in {-\x,...,\x}
    {
    \foreach \j in {-\y,...,\y}
    {
    \node at (\i,\j)[circle,fill,inner sep=\dotrad]{};
    };
    };
    \end{scope}
    \foreach \t in {0,\steptheta,...,359.9}
    {
    \filldraw[rotate=\t,dark-green] (\rone-\tau-\R,-\R) rectangle (\rone+\tau+\R,\R);
    };
    \foreach \i in {-\x,...,\x}
    {
    \foreach \j in {-\y,...,\y}
    {
     \pgfmathparse{(abs(\j)^2 +abs(\i)^2)<= (\T)^2 ? int(1) : int(0)};
    \ifnum\pgfmathresult=1 
        \pgfmathparse{abs(\i)^2+abs(\j)^2>(\t)^2 ?int(1):int(0)};
        \ifnum\pgfmathresult=1
          \filldraw[rotate={atan2(\j,\i)},orange] (\rone-\tau-\R,-\R) rectangle (\rone+\tau+\R,\R);
        \fi
    \fi

    };
    };
    
        \draw[->] (-10:\T+1) node[right]{$S^*_{{x}}M$}--(-10:\rone);
        \draw[blue,->](80:\T+1)node[blue,above]{$\varphi_T(S^*_{{x}}M)$}--(80:\T);
     \draw[->,jeffColor] (39:\T+1.8)node[jeffColor,right]  {$\smalleq{\varphi_{t}(S^*_{{x}}M)}$}--(39:\t);
     \node at (0,0)[circle,fill,red,inner sep=2*\dotrad]{};
     \draw (0,0) node[right,red]{${\bf{x}}$};
     \end{scope}
    \end{tikzpicture}
\label{f:cover}
\end{figure}
With this in mind, Theorem~\ref{t:coverToEstimate efxs} should be thought of as giving a non-recurrent condition on $S_x^*M$ which guarantees quantitative improvements over the standard bounds {(see Definition~\ref{d:non-rec} for a precise explanation of what we mean by non-recurrent structure)}. In particular, taking $T_\ell$, {$t_\ell$}, $G_\ell$ and $B$ to be $h$-independent can be used to recover the dynamical consequences in~\cite{CG17,Gdefect} (see~\cite{GJEDP} and Section~\ref{s:relateOld}).

In \S \ref{s:spheres} we illustrate how to build covers by good and bad tubes in some integrable geometries, and how to use them to obtain quantitative improvements over the known $L^\infty$-bounds.  {In the figure we illustrate how to cover $S_x^*M$ with  ``good" tubes  (green) and  ``bad" tubes (orange) for a point $x$ on the square flat torus. The grid represents the integer lattice on the universal cover of the torus. In the figure,  there is \emph{only one index} i.e. $\ell=1$, and we chose {$t_\ell=t=1.6$}, $T_\ell=T{=2.7}$, {$\tau=0.2$, and $R=0.01$}. {In the figure, the length of the green/orange tubes is $2(\tau+R)$. Note that some of the green tubes \emph{are not} $[3\tau,T]$ non-self looping but are $[t,T]$ non-self looping e.g. the tube at angle $\pi/4$.} {In practice, to obtain quantitative gains, one needs to work with $T\to \infty$. The figure is drawn for \emph{one} relatively small $T$ because choosing a larger $T$ makes the figure illegible.} A tube is ``bad" if the geodesic generated by it returns to $x$ in time between $t$ and $T$.} Note, in addition, that $t_\ell$ must be positive since our tubes have finite, positive width in the flow direction. Also, a set \emph{may} be  $[t_0,T]$ self-looping, {but not $[\tilde{t}_0,T]$ self-looping for some $\tilde{t}_0>t_0$ e.g. a {neighborhood, $U\setminus V\subset T^*M$, where $U$ is a neighborhood} around an unstable hyperbolic closed geodesic }{in phase space and $V$ is a slightly smaller neighborhood}. While, at the moment we do not have examples where it is necessary to send $t_\ell\to \infty$ with $h$, we anticipate this will be useful in the future. 

{
To understand why it is in general useful to have families of tubes $\mc{G}_\ell$ with different looping times, $[t_\ell,T_\ell]$, we consider the following setup. We assume that the geodesic flow is exponentially contracting in the sense that 
$$
\|d\varphi_t|_{S^*_xM}\|\leq Ce^{-Ct}.
$$
For simplicity, let $\dim M=2$. The way in which we work with the assumption  on the geodesic flow is that the flow out of an arc of length $R$ in $S_x^*M$ will have length $e^{-CT}R$ upon return to $S_x^*M$ at time $T$. We, in general, do not have information about the place to which the arc returns. Suppose we want to cover $S^*_xM$ with tubes of radius $R$ and divide them into $[t_0,T(h)]$ non-self looping collections $\mc{G}_\ell$ such that Theorem~\ref{t:coverToEstimate efxs} gives a $\log h^{-1}$ gain.
Note that, for simplicity, we identify each tube with  the arc of length $R$ that is formed by its intersection with $S_x^*M$. Since $R\geq h^\delta$, and, in order to get a $\log h^{-1}$ improvement, we must take $T(h)\sim \log h^{-1}$, we have $R\geq e^{-CT(h)}$. 

To simplify the situation further, we discretize the time and imagine that the return map, $\Phi$, has the properties above. 
To produce a non-self looping collection, we start with an arc $A_0$ of length $\sim 1$.
To construct a $[t_0,T(h)]$ non-self looping set, $G_0$, we let
$$
A_1:=\bigcup_{t_0\leq k\leq T(h) }\Phi^k(A_0)\cap A_0,\qquad G_0:=A_0\setminus A_1.
$$
Since we do not know the directions in which $A_0$ returns, $A_1$ apriori consists of intervals of size $e^{-C},e^{-2C},\dots,e^{-CT(h)}$. {Hence, $A_1$} has volume $\sim e^{-C}$ and is $[t_0,T(h)]$ self-looping. In order to get a $T(h)^{-1}$ improvement with only one {$T_\ell(h)=T(h)$}, any set which is $[t_0,T(h)]$ self looping must have volume $\leq CT(h)^{-1}$. 
Since $A_1$'s volume is $\gg T(h)^{-1}$, we must iterate this process by putting 
$$
A_\ell:=\bigcup_{t_0\leq k\leq T(h)}\Phi^k(A_{\ell-1})\cap A_{\ell-1},\qquad G_{\ell-1}:=A_{\ell-1}\setminus A_\ell.
$$
Apriori, $A_\ell$ has volume $\sim e^{-C\ell}$, is $[t_0,T(h)]$ self-looping, and consists of intervals of size $e^{-C\ell},e^{-C(\ell+1)},\dots e^{-C(T(h)+\ell)}$.
Therefore, in order to gain $T(h)^{-1}$ in our estimates, we must iterate until $e^{-C\ell}\sim T(h)^{-1}$. That is, $\ell(h)\sim \log {T(h)}$. Note that in this case the smallest arc in $A_{\ell(h)}$ has length $$e^{-C(T(h)+\ell(h))}\sim {h^CT(h)^{-C}}.$$
Now, depending on $C$, this may be $\ll h^\delta$, which is the scale of our cover. There are a two ways around this. We could shrink $T(h)$ so that this scale is above $R$. However, this would be somewhat unnatural since then our dynamical gain would necessarily depend on the contraction rate. So that we may use our original $T(h)$, while still having a scale above $h^\delta$, 
we shrink the non-self looping times at each step so that $G_\ell$ is $e^{-{C\ell}/2}T(h)$ non-self looping. In doing this, we have that $G_\ell$ is $[t_0,e^{-C\ell/2}T(h)]$ non-self looping and has volume $\sim e^{-C\ell}$. In addition, the minimum size of an interval in $A_\ell$ is $e^{-\sum_{j=0}^\ell e^{-Cj/2}T(h)}$. Iterating until $\ell\sim \log T(h)$, then enables us to obtain our estimates.}

%We refer the reader to the discussion of the triaxial ellipsoid \S\ref{s:triaxial ellipsoid}.

In the following sections, \S \ref{s:no conjugate}, \S \ref{s:Lp}, \S \ref{s:Weyl}, we showcase a {few of the many} applications of Theorem \ref{t:coverToEstimate efxs} in obtaining quantitative improvements for $L^\infty$ norms, $L^p$ norms, pointwise Weyl laws, and averages over submanifolds.

%%%%%%%%%%%%%%%%%%%%%%%%%%%%%%%%%%%%%%%%%%%%%%%%%%%%%%%%%%%%%%%%%%%%%%%%%%%%%%%%
%%%%%%%%%%%%%%%%%%%%%%%%%%%%%%%%%%%%%%%%%%%%%%%%%%%%%%%%%%%%%%%%%%%%%%%%%%%%%%%%
%%%%%%%%%%%%%%%%%%%%%%%%%%%%%%%%%%%%%%%%%%%%%%%%%%%%%%%%%%%%%%%%%%%%%%%%%%%%%%%%
%\section{Applications}
%%%%%%%%%%%%%%%%%%%%%%%%%%%%%%%%%%%%%%%%%%%%%%%%%%%%%%%%%%%%%%%%%%%%%%%%%%%%%%%%
%%%%%%%%%%%%%%%%%%%%%%%%%%%%%%%%%%%%%%%%%%%%%%%%%%%%%%%%%%%%%%%%%%%%%%%%%%%%%%%%
%%%%%%%%%%%%%%%%%%%%%%%%%%%%%%%%%%%%%%%%%%%%%%%%%%%%%%%%%%%%%%%%%%%%%%%%%%%%%%%%
\subsection{Improvements to $L^\infty$-norms and averages}\label{s:no conjugate}
In this section we introduce some of the applications of geodesic beam techniques to the study of the $L^\infty$ norms of $\phi_h$, and of the averages $\int_H \phi_h d\sigma_H$ over a submanifold $H \subset M$. The goal is  to obtain quantitative improvements on the known bounds \cite{Ho68,Zel}
\begin{equation}\label{e:Hor bound}
\phi_h(x)=O(h^{\frac{1-n}{2}}) \qquad \text{and} \qquad \int_H \phi_h(x) d\sigma_H = O(h^{\frac{1-k}{2}}),
\end{equation}
where $k$ is the codimension of $H$. These bounds are sharp since they are, for example, saturated on the round sphere. Note that the right hand estimate includes the left if we take $H=\{x\}$.
In \S\ref{s:app with no conjugate} we present applications of our geodesic beam techniques to studying eigenfunction growth on manifolds with no conjugate points, or whose geometries satisfy a weaker condition. These results, and many more, can be found in \cite{CG19dyn}. In \S\ref{s:app with integrable}  we present applications to obtaining quantitative improvements of $L^\infty$ norms in integrable geometries. The proofs of these and more general results are presented in \S\ref{s:spheres}.

\subsubsection{{Results under conjugate point assumptions}} \label{s:app with no conjugate} %{\cs reference paper, edit flashiness\cs} 
 It is well known that the $L^\infty$ bound in \eqref{e:Hor bound} is saturated on the round sphere if one chooses $\phi_h$ to be a zonal harmonic that peaks at the given point $x \in S^n$. {This phenomenon is possible since} all geodesics through $x$ are closed. {In addition, on the sphere every point is maximally self-conjugate. } In general, a point $x \in M$ is said to be conjugate to $y \in M$ if there exists a unit speed geodesic $\gamma$ joining $x$ and $y$, together with   a non-trivial   Jacobi field along $\gamma$ that vanishes at $x$ and $y$. The number of such Jacobi fields that are linearly independent is called the multiplicity of $x$ with respect to $y$ and is always bounded by $n-1$. When the multiplicity equals $n-1$ the point $x$ is said to be maximally conjugate to $y$. {As a consequence of our geodesic beam techniques, we obtain quantitative improvements on the $L^\infty$ norm of an eigenfunction near a point $x$ that, loosely speaking, is not maximally self-conjugate. }
% \sout{For sup-norm estimates the relevant condition is that of maximal self conjugacy, i.e. $x$ being maximally conjugate to $x$.}} {\sout{For example, on $S^n$ every point is maximally self-conjugate. We now present an application of {our} {geodesic beams techniques} that gives a quantitative improvement on the $L^\infty$-norm of an eigenfunction near a point $x$ that, loosely speaking, is not maximally self-conjugate.} }

Consider the set $\Xi$ of unit speed geodesics {on $(M,g)$} and define
\begin{equation}\label{e:conjugate set}
\mc{C}_x^{r,t}:=\big\{ \gamma(t) \big| \, \gamma\in \Xi, \, \gamma(0)=x,\,\exists\, n-1\text{ conjugate points to } x \text{ in }\gamma(t-r,t+r)\big\},
\end{equation}
where we count conjugate points with multiplicity.
 %Next, for  a set $V \subset M$ define
 %$
%\mc{C}_{_{\!V}}^{m,r,t}:=\bigcup_{x\in V}\{\gamma(t): \gamma\in \Xi_x^{m,r,t}\}.%\qquad% \mc{C}_{_{\!V}}^{m}:=\bigcap_{r>0}\,\bigcap_{T>0}\,\overline{\bigcup_{t>T}\mc{C}_{_{\!V}}^{m,r,t}}.
%$
Note that if $r_t \to 0^+$ as $|t|\to \infty$, then saying that $x \in \mc{C}_x^{r_t,t}$ for $t$ large indicates that $x$ behaves like  a point that is maximally self-conjugate. This is the case for every point on the sphere.  The following result applies under the assumption that this does not happen and obtains quantitative improvements in that setting.  {The obvious case where our next theorem applies is that of manifolds without conjugate points, where $\mc{C}_x^{r,t}=\emptyset$ for $0<r<|t|$. In addition, the theorem applies to \emph{all} non-trivial product manifolds $M=M_1\times M_2$ (see \S~\ref{s:examples})}. 

%%%%%%%%%%%%%%%%%%%%%%%%%%%%%%%%%%%%%%%%%%%%%%%%%%%%%%%%%%%%%%%%%%%%%%%%%%%%%%%%
\begin{theorem}[{\cite[Theorem 1]{CG19dyn}}]
\label{t:noConj2}
Let $V \subset M$ and assume that there exist $t_0>0$ and $a>0$  so that 
$$
\inf_{x\in V}d\big(x, \mc{C}_{x}^{r_t,t}\big)\geq r_t,\qquad\text{ for } t\geq t_0,
$$
with $r_t=\frac{1}{a}e^{-at}.$ 
Then, there exist $C>0$ and $h_0>0$ so that for $0<h<h_0$ and $u \in {\mc{D}'}(M)$
$$
\|u\|_{L^\infty(V)}\leq Ch^{\frac{1-n}{2}}\left(\frac{\|u\|_{\LM}}{\sqrt{\log h^{-1}}}\;+\; \frac{\sqrt{\log h^{-1}}}{h}\big\|(-h^2\Delta_g-1)u\big\|_{\Hln}\right).
$$
\end{theorem}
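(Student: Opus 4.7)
The plan is to derive this estimate from the quasimode version Theorem~\ref{t:coverToEstimate} of Theorem~\ref{t:coverToEstimate efxs}, applied uniformly for $x \in V$, by constructing at each such $x$ a good cover of $S^*_xM$ whose ``bad'' subset has small total size. The bound on $\|u\|_{L^\infty(V)}$ then follows by covering $V$ with balls of radius $h^\delta$ and taking a supremum.

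\textbf{Setup.} Fix $\delta \in (0, \tfrac12)$, set $R(h) := h^\delta$, fix a small $\tau > 0$ depending only on $(M, g)$, and choose $T(h) := \beta \log h^{-1}$ with a small $\beta > 0$ to be fixed below (in particular, with $T(h) \leq 2\alpha T_e(h)$ for some $\alpha < 1 - 2\delta$). By Proposition~\ref{l:cover}, pick a $(\mathfrak{D}_n, \tau, R(h))$-good cover $\{\Lambda_{\rho_j}^\tau(R(h))\}_{j=1}^{N_h}$ of $S^*_xM$; note $N_h \asymp R(h)^{-(n-1)}$. Declare $j \in \mc{B}$ iff the tube $\Lambda_{\rho_j}^\tau(R(h))$ is $[t_0, T(h)]$ self-looping, let $\mc{G}_1 := \{1, \dots, N_h\} \setminus \mc{B}$, and take $t_1 := t_0$, $T_1 := T(h)$. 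Theorem~\ref{t:coverToEstimate} (the quasimode generalization) then yields
\begin{equation*}
\|u\|_{L^\infty(B(x, h^\delta))} \leq C h^{\tfrac{1-n}{2}} R(h)^{\tfrac{n-1}{2}} \Bigl( |\mc{B}|^{1/2} + |\mc{G}_1|^{1/2} \sqrt{\tfrac{t_0}{T(h)}} \Bigr) \|u\|_{\LM} + C' \tfrac{\sqrt{\log h^{-1}}}{h} \|(-h^2\Delta_g - 1) u\|_{\Hln}.
\end{equation*}
Since $|\mc{G}_1| \leq N_h \lesssim R(h)^{-(n-1)}$, the $\mc{G}_1$-contribution is already $\lesssim h^{(1-n)/2}/\sqrt{\log h^{-1}}$; it remains to show $|\mc{B}|\, R(h)^{n-1} \lesssim 1/\log h^{-1}$ uniformly in $x \in V$.

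\textbf{Bounding $|\mc{B}|$, the main obstacle.} A tube is bad only if its central geodesic returns to $B(x, CR(h))$ at some time $t \in [t_0 - 2\tau, T(h) + 2\tau]$, so $|\mc{B}|\, R(h)^{n-1}$ is controlled, up to the overlap constant, by the $(n-1)$-volume of the loop set $\mc{L}(T(h)) := \bigcup_{t_0 \leq t \leq T(h)}\{ \xi \in S^*_xM : \exp_x(t\xi) \in B(x, CR(h)) \}$. The hypothesis enters here: any returning point lies in $B(x, R(h)) \subset B(x, r_t)$ for $h$ small, so the condition $d(x, \mc{C}_x^{r_t, t}) \geq r_t$ with $r_t = a^{-1} e^{-at}$ forbids $x$ from being the endpoint of a maximally-conjugate geodesic loop at scale $r_t$ and time $\sim t$. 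A Jacobi-field and index-form analysis then translates this into a quantitative lower bound for the $(n-1)$-Jacobian of $\xi \mapsto \exp_x(t\xi)$ near each returning $\xi$, yielding that the loop-set contribution at times close to $t$ has $(n-1)$-volume $\lesssim R(h)^{n-1} e^{\kappa a t}$ for some $\kappa = \kappa(n) > 0$.

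\textbf{Choice of $\beta$ and conclusion.} Summing over unit windows $t \in [t_0, T(h)]$,
\begin{equation*}
|\mc{B}|\, R(h)^{n-1} \;\lesssim\; R(h)^{n-1}\, e^{\kappa a T(h)} \;=\; h^{(n-1)\delta - \kappa a \beta},
\end{equation*}
which is a positive power of $h$ --- hence much stronger than $1/\log h^{-1}$ --- as soon as $\beta < (n-1)\delta/(\kappa a)$. Fixing $\beta$ below both this threshold and $(1 - 2\delta)/\Lambda_{\max}$ keeps $T(h) \asymp \log h^{-1}$ and satisfies the Ehrenfest constraint, preserving the $1/\sqrt{\log h^{-1}}$ gain from the $\mc{G}_1$-term. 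Uniformity for $x \in V$ follows from the compactness of $V$ and the uniformity of the Jacobi-field estimate under the hypothesis. The principal technical difficulty is precisely the extraction of a uniform quantitative Jacobian lower bound from the non-maximal-conjugacy hypothesis, which is the main geometric input from~\cite{CG19dyn}; once that is in hand, the remainder of the argument is parameter balancing and a direct application of Theorem~\ref{t:coverToEstimate}.
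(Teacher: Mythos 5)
Your overall architecture is the intended one: this theorem is not actually proved in the present paper (its proof is deferred to \cite{CG19dyn}), and the route there is exactly the one you describe --- build a $(\mathfrak{D},\tau,R(h))$-good cover of $S^*_xM$, split it into a small bad set and non-self-looping good sets, apply Theorem~\ref{t:coverToEstimate} with $T(h)\sim\log h^{-1}$, and sup over a cover of $V$ by $h^\delta$-balls. However, your treatment of the key step, the bound on $|\mc{B}|$, has three genuine problems.

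First, declaring $j\in\mc{G}_1$ whenever the single tube $\Lambda_{\rho_j}^\tau(R(h))$ is not self-looping does not make $\bigcup_{j\in\mc{G}_1}\Lambda_{\rho_j}^\tau(R(h))$ non-self-looping, which is what Theorem~\ref{t:coverToEstimate} requires: one good tube may flow into a \emph{different} good tube. You must define $\mc{B}$ via returns of each tube's flowout to a neighborhood of all of $S^*_xM$, as is done in Lemma~\ref{l:sphere}. Second, the criterion ``a tube is bad only if its central geodesic returns to $B(x,CR(h))$'' with $C$ independent of $t$ ignores that $\varphi_t$ spreads a tube of radius $R(h)$ to diameter $\sim R(h)e^{\Lambda_{\max}t}$; over times $t\le T(h)=\beta\log h^{-1}$ the relevant return radius is $R(h)h^{-\Lambda_{\max}\beta}$. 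This is survivable (it only shrinks the admissible range of $\beta$), but it must be tracked. Third, and most seriously, the hypothesis $d(x,\mc{C}_x^{r_t,t})\ge r_t$ does \emph{not} yield a lower bound on the full $(n-1)$-Jacobian of $\xi\mapsto\exp_x(t\xi)$, hence not the volume bound $R(h)^{n-1}e^{\kappa a t}$ you assert. It only excludes \emph{maximal} conjugacy, so the Jacobi-field analysis gives that at least one singular value of $d(\exp_x(t\cdot))$ is $\gtrsim r_t$, while the remaining $n-2$ may vanish outright; this happens, e.g., on products such as $S^{n-1}\times S^1$, where the paper shows $\mc{C}_x^{r,t}=\emptyset$ yet $\exp_x$ has corank-$(n-2)$ singularities. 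The correct consequence is that near each returning direction the loop set lies in a slab of width $\sim R(h)e^{\Lambda_{\max}t}/r_t$, giving total volume $\lesssim R(h)e^{(a+\Lambda_{\max})t}$ and hence $|\mc{B}|R(h)^{n-1}\lesssim h^{\delta-(a+\Lambda_{\max})\beta}$. This is still a positive power of $h$ for $\beta$ small, so your parameter balancing goes through in the end, but the intermediate step as written is false and the exponent is not the one you state.
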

%%%%%%%%%%%%%%%%%%%%%%%%%%%%%%%%%%%%%%%%%%%%%%%%%%%%%%%%%%%%%%%%%%%%%%%%%%%%%%%%
\noindent{For a definition of the semiclassical Sobolev spaces $H^s_{\text{scl}}$ see \eqref{e:sobolev}.} {Here and below, when we write $\|v\|_{H_{\scl}^s}$ for some $v\in \mc{D}'$ {with} $v\notin H_{\scl}^s$, we define $\|v\|_{H_{\scl}^s}=\infty$.}

Before stating our next theorem, we recall that if $(M,g)$ has strictly negative sectional curvature, then it also has  Anosov geodesic flow~\cite{Anosov}.  Also, both Anosov geodesic flow~\cite{Kling} and non-positive sectional curvature imply that $(M,g)$ has no conjugate points.
%%%%%%%%%%%%%%%%%%%%%%%%%%%%%%%%%%%%%%%%%%%%%%%%%%%%%%%%%%%%%%%%%%%%%%%%%%%%%%%%
\begin{theorem}[{\cite[Theorems 3 and 4]{CG19dyn}}]\label{T:applications}
 Let $(M,g)$ be a smooth, compact Riemannian manifold of dimension $n$. Let $H\subset M$ be a closed embedded submanifold of codimension $k$. Suppose one of the following assumptions holds{:}
\begin{enumerate}[label=\textbf{\Alph*.},ref=\textbf{\Alph*}]
\item  \label{a1} $(M,g)$ has no conjugate points and $H$ has codimension $k>\frac{n +1}{2}$. \smallskip %C
\item  \label{a2}$(M,g)$ has no conjugate points and $H$ is a geodesic sphere. \smallskip %F
\item  \label{a3}$(M,g)$ is a surface with Anosov geodesic flow.  \smallskip %A
\item  \label{a6}$(M,g)$ is non-positively curved and has Anosov geodesic flow, and $H$ has codimension $k>1$.  \smallskip %E
\item  \label{a4}$(M,g)$ is non-positively curved and has Anosov geodesic flow, and $H$ is totally geodesic.  \smallskip %E
\item  \label{a5} $(M,g)$ has {Anosov geodesic flow} and $H$ is a subset of $M$ that lifts to a horosphere in the universal cover. %G
\end{enumerate}
Then, there exists $C>0$ so that for all $w\in C_c^\infty(H)$ the following holds. There is $h_0>0$ so that for $0<h<h_0$ and $u\in \mc{D}'(M)$
\begin{equation}
\label{e:subEst}
\Big|\int_Hwud\sigma_H\Big|\leq Ch^{\frac{1-k}{2}}\|w\|_{\infty}\Big(\frac{\|u\|_{\LM}}{\sqrt{\log h^{-1}}}+\frac{\sqrt{\log h^{-1}}}{h}\|(-h^2\Delta_g-1)u\|_{\Hl}\Big).
\end{equation}
\end{theorem}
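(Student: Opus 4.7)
The plan is to reduce Theorem \ref{T:applications} to the submanifold generalization of Theorem \ref{t:coverToEstimate efxs} (namely Theorem \ref{t:coverToEstimate}) and then, in each case \ref{a1}--\ref{a5}, construct a good $(\mathfrak{D},\tau,R(h))$-cover of $\Lambda^\tau_{\SNH}(\tfrac{1}{2}R(h))$ whose non-self-looping time is of order $\log h^{-1}$. First I would invoke the quasimode/submanifold version of the estimate: for $u\in \mc{D}'(M)$ and a cover of $\Lambda^\tau_{\SNH}(\tfrac{1}{2}R(h))$ by tubes $\{\Lambda^\tau_{\rho_j}(R(h))\}$ partitioned into bad indices $\mc{B}$ and good families $\mc{G}_\ell$ that are $[t_\ell,T_\ell]$ non-self-looping, a bound of the schematic form
\begin{align*}
\Bigl|\int_H wu\, d\sigma_H\Bigr|
\leq{}& C\|w\|_\infty h^{\frac{1-k}{2}} R(h)^{\frac{n-1}{2}}\Bigl(|\mc{B}|^{\frac{1}{2}}+\sum_\ell |\mc{G}_\ell|^{\frac{1}{2}} t_\ell^{\frac{1}{2}} T_\ell^{-\frac{1}{2}}\Bigr)\|u\|_{\LM}\\
&+C h^{\frac{1-k}{2}-1}\sqrt{\log h^{-1}}\,\|(-h^2\Delta_g-1)u\|_{\Hl}
\end{align*}
should hold modulo $O(h^\infty)$ terms. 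Since the total number of tubes in a good cover is $\asymp R(h)^{1-n}\vol(\SNH)$, the factor $R(h)^{(n-1)/2}|\mc{G}_\ell|^{1/2}$ is $O(1)$, and the desired $\sqrt{\log h^{-1}}$ improvement will come from taking $T_\ell\asymp \log h^{-1}$ while keeping $t_\ell$ of order $\tau$ and $|\mc{B}|$ of relative density $O((\log h^{-1})^{-1})$.

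The second step is to supply the dynamical input case by case. In cases \ref{a1} and \ref{a2}, absence of conjugate points implies that the differential of the geodesic flow, viewed as a map from one sheet of $\SNH$ to another, has no degeneracies over the Ehrenfest window; the loop set of $\rho\in\SNH$ that returns to $\SNH$ within $[t_0,T]$ is contained in a union of codimension $k-1$ strata, and the codimension hypothesis $k>(n+1)/2$ in \ref{a1} together with the geodesic-sphere geometry in \ref{a2} forces the $(n-1)$-dimensional measure of this set to be $O(T^{-1})$, giving the required bad-tube bound at $T=\log h^{-1}$. For the Anosov cases \ref{a3}--\ref{a5} I would exploit the hyperbolic splitting: the conormal bundles considered are either transverse to (surface case) or aligned with the weak stable foliation (totally geodesic and horosphere cases), so loop sets at time $T$ live in an $O(e^{-cT})$ neighborhood of stable/unstable manifolds, and a direct volume count yields density $O(e^{-cT})$. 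In case \ref{a6}, where $H$ is an arbitrary codimension-$k$ submanifold in a non-positively curved Anosov manifold, the shrinking-time iteration sketched after Theorem \ref{t:coverToEstimate efxs} must be executed: produce nested good families $\mc{G}_\ell$ whose tubes fill a volume $\sim e^{-c\ell}$ with looping time $T_\ell\asymp e^{-c\ell/2}\log h^{-1}$, and verify that the resulting series $\sum_\ell R(h)^{(n-1)/2}|\mc{G}_\ell|^{1/2} T_\ell^{-1/2}$ is still of size $(\log h^{-1})^{-1/2}$.

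The main obstacle will be the recurrent Anosov case \ref{a6}: the iterative construction must be carried out for $\ell \lesssim \log\log h^{-1}$ steps while preserving the lower bound $R(h)\geq 8h^\delta$ on the tube radius, so the shrinking schedule $T_\ell\sim e^{-c\ell/2}\log h^{-1}$ has to be tuned carefully against the contraction rate of the flow to avoid the minimum interval length in the iterated loop set dropping below $h^\delta$. Cases \ref{a1} and \ref{a2} are easier in principle because Jacobi non-degeneracy gives a clean codimension count, but organizing the loop strata into a $(\mathfrak{D},\tau,R(h))$-good cover with $\mathfrak{D}$ independent of $h$ (via Proposition \ref{l:cover}) still requires geometric bookkeeping. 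Extending from eigenfunctions to arbitrary $u\in \mc{D}'(M)$ with the $\Hl$ quasimode error is automatic since the geodesic beam machinery is formulated for quasimodes from the start, which is exactly why the quasimode term on the right-hand side of \eqref{e:subEst} carries the $\sqrt{\log h^{-1}}/h$ factor inherited from propagating the quasimode defect over the time interval $[0,T_\ell]$.
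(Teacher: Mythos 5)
Your first step is exactly the intended route: the theorem is not proved in this paper at all but imported from \cite{CG19dyn}, and the strategy there is precisely to feed a $(\mathfrak{D},\tau,R(h))$-good cover of $\SigH$ into Theorem~\ref{t:coverToEstimate} with $T_\ell\asymp\log h^{-1}$, using that $R(h)^{\frac{n-1}{2}}|\mc{G}_\ell|^{\frac12}=O(1)$ and demanding $|\mc{B}|\lesssim R(h)^{1-n}/\log h^{-1}$; your bookkeeping of how the $\|u\|_{\LM}$ and $\|Pu\|$ terms produce $(\log h^{-1})^{-1/2}$ and $\sqrt{\log h^{-1}}/h$ respectively is correct.

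The gap is that everything after the reduction --- the actual construction of the covers in each of the six cases --- is asserted rather than proved, and this is the entire non-trivial content (it occupies the companion paper). Concretely: in cases A and B your claim that the loop set is ``contained in a union of codimension $k-1$ strata'' of measure $O(T^{-1})$ is neither justified nor the right mechanism; what the absence of conjugate points supplies is a quantitative non-degeneracy of $d\varphi_t$ restricted to the fibers of $\SNH$ (compare the hypothesis $d(x,\mc{C}_x^{r_t,t})\geq r_t$ with $r_t=a^{-1}e^{-at}$ in Theorem~\ref{t:noConj2}), and one must convert that into a count of \emph{tubes} of radius $R(h)\geq 8h^\delta$ that return, not a measure estimate on an $h$-independent loop set --- a measure-zero loop set only yields $o(1)$, not $(\log h^{-1})^{-1/2}$. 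In the Anosov cases the loop set is \emph{not} contained in an $O(e^{-cT})$ neighborhood of stable/unstable leaves: $\SNH$ can meet the recurrent set substantially (a closed geodesic conormal to $H$, or the umbillic-point phenomenon of \S\ref{s:triaxial ellipsoid}), and this is exactly why the theorem is formulated with many families $\mc{G}_\ell$ with distinct $(t_\ell,T_\ell)$ rather than one good/bad split. Finally, you correctly identify that case F requires the iteration with shrinking times $T_\ell\sim e^{-c\ell/2}\log h^{-1}$ sketched after Theorem~\ref{t:coverToEstimate efxs}, but you do not carry it out; verifying that the iteration closes after $\ell\sim\log\log h^{-1}$ steps, that the minimal scale in the iterated loop sets stays above $h^\delta$, and that $\sum_\ell(|\mc{G}_\ell|/N_h)^{1/2}T_\ell^{-1/2}\lesssim(\log h^{-1})^{-1/2}$ is the delicate step, and it is precisely what \cite[Lemmas 3.1--3.2]{CG19dyn} accomplish. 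As written, the proposal is a correct reduction plus a statement of the lemmas that would be needed, not a proof of them.
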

%%%%%%%%%%%%%%%%%%%%%%%%%%%%%%%%%%%%%%%%%%%%%%%%%%%%%%%%%%%%%%%%%%%%%%%%%%%%%%%%

\begin{remark}
{Note that while $C>0$ in~\eqref{e:subEst} is independent of $w$, the choice of $h_0>0$ depends on high order derivatives of $w$. }
\end{remark}
To the authors' knowledge, the results in \cite{CG19dyn} improve and extend \emph{all} existing bounds on averages over submanifolds for eigenfunctions of the Laplacian, including those on $L^\infty$ norms (without additional assumptions on the eigenfunctions; see Remark~\ref{r:extra} for more detail on other types of assumptions).  Our estimates imply those of~\cite{CG17} and therefore give all previously known improvements of the form {\small $\int_H ud\sigma_H=o(h^{\frac{1-k}{2}})$}. Moreover, we are able to improve upon the results of~\cite{Wym18,Wym2,SXZ,Berard77,Bo16,Randol}.

%%%%%%%%%%%%%%%%%%%%%%%%%%%%%%%%%%%%%%%%%%%%%%%%%%%%%%%%%%%%%%%%%%%%%%%%%%%%%%%%
\subsubsection{Integrable geometries} \label{s:app with integrable}
%Not only do we prove Theorem~\ref{t:noConj2, but we study certain integrable situations on the sphere and prove the bound $o(\frac{h^{-\frac{1}{2}}}{\sqrt{\log h^{-1}}})$ for eigenfunctions away from the poles. For example, we find a rotationally symmetric metric on the sphere for which this bound holds (see Theorems~\ref{t:spherePend},~\ref{t:sphere} and Remark~\ref{r:sphere}). As far as the authors are aware, these are the first bounds with better than $\sqrt{\log h^{-1}}$ improvements in any geometry without extra assumptions on the eigenfunctions (see Remark~\ref{r:extra}). Moreover, they are the first quantitative bounds available in geometries without assumptions on the set of conjugate points.
%%%%%%%%%%%%%%%%%%%%%%%%%%%%%%%%%%%%%%%%%%%%%%%%%%%%%%%%%%%%%%%%%%%%%%%%%%%%%%%%
Next, we present a class of integrable geometries for which $\log h^{-1}$ improvements over the standard bounds {are} a consequence of Theorem \ref{t:coverToEstimate efxs} and its generalization, Theorem \ref{t:coverToEstimate}. We apply Theorem~\ref{t:coverToEstimate} to the case of Schr\"o\-dinger operators, {$-h^2\Delta_g+V$}, {acting on} spheres of revolution where the bicharacteristic flow is integrable. When $V=0$, these examples {give manifolds with} many conjugate points {where} we are able to obtain {quantitatively} improved $L^\infty$ bounds away from the poles of $S^2$.

{To state our results, we identify the surface of revolution $M$ with $[0,\pi]\times S^1$ endowed with the metric
$
g(r,\theta)=dr^2+\alpha(r)^2d\theta^2.
$
We then consider operators of the form 
$
P(h)=-h^2\Delta_g-V
$
with $V>0$. The Hamiltonian for this problem is then
$$
p(\theta,r,\xi_{\theta},\xi_r)=\xi_r^2+\tfrac{1}{\alpha(r)^2}\, {\xi_\theta^2}-V(r)
$$ 
and we assume that the map $r\mapsto \alpha(r)\sqrt{V(r)}$ has a single {critical point at $r=r_s$} which is a non-degenerate maximum. In order that $M$ be equivalent to a sphere, $\alpha(r)$  must satisfy $\alpha^{(2k)}(0)=0$ and $\alpha^{(2k)}(\pi)=0$ {for all non-negative integers $k$}.}

{Since $\{p, \xi_\theta\}=0$,  the pair $(M,p)$ yields  an integrable system on $T^*M$.   
Let $(\Theta,I)\in \mathbb T^2\times \re^2$ be action-angle coordinates so that
$
\TM = \bigsqcup_{I\in \re^2}\mathbb{T}_I
$
is the foliation by Liouville tori (possibly with some singular elements). That is, in the $(\Theta,I)$ coordinates    $p=p(I)$ and hence the {Hamiltonian} flow is given by 
$$ 
\varphi_{t}(\Theta,I)=(\Theta+t\partial_I p(I),I).
$$
There is a single singular torus corresponding to the closed Hamiltonian bicharacteristic $\gamma_s:=\{r=r_s\}$. In addition, we make the following assumption
\begin{enumerate}
\item The map $\{p=0\}\ni I \mapsto \partial_Ip(I)\in \mathbb{RP}^2$ is a diffeomorphism. When this is the case at $I_0$, we say $p$ is iso-energetically non-degenerate at $I_0$ on $\{p=0\}$.
\end{enumerate}
%%%%%%%%%%%%%%%%%%%%%%%%%%%%%%%%%%%%%%%%%%%%%%%%%%%%%%%%%%%%%%%%%%%%%%%%%%%%%%%
\begin{theorem}
\label{t:sphere}
Let $\alpha$ and $V$ satisfy the assumptions above. %$M=[0,2\pi]_\theta\times[0,\pi]_r$ and 
Then, for
\begin{equation}
    \label{e:sphereHamilton}
P=-h^2\Delta_g-V(r)+hQ
\end{equation}
with $Q\in \Psi^2(M)$ self-adjoint, and $K\subset [0,2\pi]\times (0,\pi)$ compact, there exists $C>0$ with the following properties. For all $L>0$ there exists $h_0>0$ so that for $0<h<h_0$, and $u \in \mathcal D'(M)$
$$
\|u\|_{L^\infty(K)}\leq Ch^{-\frac{1}{2}}\Bigg(\frac{\|u\|_{\LM}}{L\sqrt{\log h^{-1}}}+\frac{L\sqrt{\log h^{-1}}\|Pu\|_{\Hs{-\frac{1}{2}}}}{h}\Bigg).
$$
In particular, if $\|Pu\|_{\Hs{-\frac{1}{2}}}=o\Big(\frac{h\|u\|_{\LM}}{{{\log h^{-1}}}}\Big)$,
 then
$$
\|u\|_{L^\infty(K)}=o\Bigg(\frac{h^{-\frac{1}{2}}}{\sqrt{\log h^{-1}}}\|u\|_{\LM}\Bigg).
$$
\end{theorem}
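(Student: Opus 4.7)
The plan is to apply Theorem~\ref{t:coverToEstimate}, the general version of Theorem~\ref{t:coverToEstimate efxs} for averages over submanifolds of quasimodes of general pseudodifferential operators, with $H = \{x\}$ for each $x \in K$. The principal symbol $p(r,\theta,\xi_r,\xi_\theta) = \xi_r^2 + \alpha(r)^{-2}\xi_\theta^2 - V(r)$ of $P$ is real and classically elliptic off its characteristic set $\{p=0\}$, and a single point is trivially conormally transverse, so the hypotheses of Theorem~\ref{t:coverToEstimate} are satisfied. Using Proposition~\ref{l:cover} to produce a $(\mathfrak{D}_2,\tau,R(h))$-good cover of $S^*_xM$, the entire conclusion reduces to constructing a suitable partition of this cover into bad and non-self-looping good subfamilies, uniformly in $x \in K$.

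First I would pass to action-angle coordinates $(\Theta, I) \in \mathbb{T}^2 \times \mathbb{R}^2$ on neighborhoods of each regular Liouville torus, so that $p = p(I)$ and $\varphi_t(\Theta, I) = (\Theta + t\omega(I), I)$ with $\omega(I) = \partial_I p(I)$. Iso-energetic non-degeneracy of $p$ on $\{p = 0\}$ translates into the frequency ratio $\omega_1/\omega_2$ varying diffeomorphically along the energy shell, the only singular torus being that of the closed bicharacteristic $\gamma_s = \{r = r_s\}$. Fixing $R(h) = h^\delta$ for some $\delta < \tfrac{1}{2}$ and $T(h) \sim L^2 \log h^{-1}$, I would define the bad family $\mc{B}$ to consist of tubes whose centers lie within angular distance $\lesssim (\log h^{-1})^{-1}$ of the (at most two) directions in $S^*_xM$ along $\gamma_s$; this is only relevant when $x \in \gamma_s$. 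Since $S^*_xM$ is one-dimensional and each tube has angular extent $\asymp R(h)$, we have $|\mc{B}| \lesssim 1/(R(h)\log h^{-1})$, so $R(h)^{1/2}|\mc{B}|^{1/2} \lesssim 1/\sqrt{\log h^{-1}}$.

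The remaining tubes are grouped into families $\mc{G}_\ell$ whose unions are $[t_\ell, T(h)]$ non-self-looping. In the action-angle picture, $\varphi_s B(\rho_0, R(h)) \cap B(\rho_0, R(h)) \neq \emptyset$ forces the existence of $|\delta I| \lesssim R(h)$ and $k \in \mathbb{Z}^2$ with $|s\omega(I_0 + \delta I) - 2\pi k| \lesssim R(h)$, and for $s \leq T(h)$ this means $s\omega(I_0)$ must lie within $CR(h)(1 + s|D\omega|)$ of $2\pi\mathbb{Z}^2$. Combining the iso-energetic twist with a counting argument on lattice approximations to the ray $s \mapsto s\omega(I_0)$, I would show that the good tubes can be organized into a bounded number of families (depending only on $\delta$ and the geometry), each of whose union is $[t_0, T(h)]$ non-self-looping. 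Inserting this decomposition into Theorem~\ref{t:coverToEstimate} with $T(h) \asymp L^2 \log h^{-1}$ yields the principal term $Ch^{-1/2}\|u\|_{L^2}/(L\sqrt{\log h^{-1}})$, and the quasimode remainder in that theorem, which scales like $T(h)^{1/2}/h$ times $\|Pu\|_{H^{-1/2}_{\scl}}$, produces the $L\sqrt{\log h^{-1}}/h$ error factor in the statement.

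The hard part will be the dynamical step: in any integrable system, closed geodesics arising from rational tori are dense in $S^*_xM$, so individual tubes cannot in general be made long-time non-self-looping. The decisive input is the iso-energetic non-degeneracy, which localizes resonances at each order on small arcs of $S^*_xM$ and ensures that the twist between neighboring tori separates their flowouts at a linear rate in time; together with careful counting, this yields the bounded-family decomposition and prevents accumulation of uncontrolled self-loopings. Uniformity of all constants in $x \in K$ then follows from compactness of $K$ together with smoothness of the action-angle coordinates and of $D\omega$ away from the singular torus.
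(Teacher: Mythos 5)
Your setup (reduction to Theorem~\ref{t:coverToEstimate} with $H=\{x\}$, action--angle coordinates, $R(h)=h^{\epsilon}$, $T(h)\sim L^2\log h^{-1}$) matches the paper, but the dynamical decomposition has a structural gap. You propose that, after discarding only the tubes within an $O((\log h^{-1})^{-1})$ arc of the singular directions, \emph{all} remaining tubes can be organized into boundedly many families each of which is $[t_0,T(h)]$ non-self-looping. This cannot work: if a single tube $\Lambda_{\rho_j}^\tau(R(h))$ is centered at a direction lying on a rational Liouville torus of period $t^*\in[t_0,T(h)]$, then $\varphi_{t^*}\bigl(\Lambda_{\rho_j}^\tau(R(h))\bigr)\cap\Lambda_{\rho_j}^\tau(R(h))\neq\emptyset$, so \emph{no} family containing that tube can be $[t_0,T(h)]$ non-self-looping, whatever the grouping. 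And such directions are unavoidable: precisely because of the iso-energetic twist, the rotation number sweeps an interval along $S^*_xM$ and therefore hits rationals of bounded denominator, producing periodic directions of bounded period through essentially every $x\in K$. The resolution in the paper (Lemma~\ref{l:sphere}) is the opposite of what you propose: the tubes that come within $O(TR^{\alpha_1})$ of a rational torus of period $\leq T$ are put into the \emph{bad} set $\mc{B}$, and the decisive use of iso-energetic non-degeneracy is a \emph{counting} bound — period-$\leq T$ rational tori are $cT^{-2}$-separated, hence there are $O(T^2)$ of them, each contributing $O(TR^{1-\alpha_1})$ tubes, so $|\mc{B}|\lesssim T^3R^{1-\alpha_1}+R^{-\alpha_1}$, which with $\alpha_1=\tfrac12$, $T\sim\log h^{-1}$, $R=h^{\epsilon}$ gives $R^{1/2}|\mc{B}|^{1/2}\lesssim (\log h^{-1})^{3/2}h^{\epsilon/4}$. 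The complement then forms a \emph{single} $[1,T]$ non-self-looping family. Your proposal never carries out this count, and your bad set is too small to absorb the resonant tubes.

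Two secondary points. First, the excision near $\xi_r=0$ is needed for \emph{every} $x\in K$, not only for $x\in\gamma_s$: it is where the action--angle coordinates degenerate (the Lipschitz constant relating Sasaki distance to $(\Theta,I)$ blows up like the inverse distance to the singular torus, which is exactly what the $R^{1-\alpha_1}$-neighborhood removal controls) and, just as importantly, it is where the two intersection points of $\mathbb{T}_{\xi_\theta}$ with $S^*_xM$ merge (see~\eqref{e:canyon}); away from it one can argue that a return of a tube to $S^*_xM$ on the same branch $\{\pm\xi_r\geq R^{\alpha_1}\}$ forces a return near the starting point and hence proximity to a rational torus. Second, the paper removes an $R^{1-\alpha_1}=R^{1/2}$ neighborhood there (costing $R^{-1/2}$ tubes), not a $(\log h^{-1})^{-1}$ neighborhood; your larger excision still closes the $|\mc{B}|^{1/2}$ budget but does not provide the quantitative control on the action--angle coordinates that the looping analysis requires.
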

%%%%%%%%%%%%%%%%%%%%%%%%%%%%%%%%%%%%%%%%%%%%%%%%%%%%%%%%%%%%%%%%%%%%%%%%%%%%%%%

}

{\begin{remark}
Note that we make no assumptions on $u$. In particular, $u$ need not be a joint eigenfunction of the quantum completely integrable system. Furthermore, the addition of the perturbation $hQ$ (for $Q$ general) destroys the quantum complete integrability of the operator.
\end{remark}}

\subsection{Logarithmic improvements for $L^p$-norms}\label{s:Lp}
 Since the work of Sogge \cite{So88} it has been known that 
 \[
 \|\phi_h\|_{_{L^p(M)}}={\small O(h^{-\delta(p,n)})}, \qquad \qquad 
 \delta(p,n)=
\begin{cases}
\tfrac{n-1}{2} -\tfrac{n}{p}&  \quad p \geq p_c,\\ 
\tfrac{n-1}{4} -\tfrac{n-1}{2p}& \quad 2\leq p \leq p_c,
\end{cases}
\]
where $ p_c=\tfrac{2(n+1)}{n-1}$. 
This bound is saturated on the sphere by zonal harmonics  when $p\geq p_c$ and by highest weight spherical harmonics (a.k.a Gaussian beams)  when $p\leq p_c$. (See e.g~\cite{Ta18a} for a description of extremizing quasimodes.)

It is then natural to look for quantitative improvements on this bound under different geometric assumptions. 
When $(M,g)$ has non-positive sectional curvature,  a bound  of the form
\[
 \|\phi_h\|_{_{L^p(M)}}= O\Big(\frac{h^{-\delta(p,n)}} {(\log h^{-1})^{\sigma(p,n)}}\Big)
 \]
 was proved by Hassell-Tacy \cite{HT15},  with {\small$\sigma(p,n)=\tfrac{1}{2}$}, for the case $p> p_c$. In the same setting, Blair-Sogge \cite{BS17,BS18} studied the {\small$2<p\leq p_c$} case and obtained a logarithmic improvement for some $\sigma(p,n)$ that is smaller than $\tfrac{1}{2}$. 
 
 An application of Theorem \ref{t:coverToEstimate efxs} gives $(\log{h^{-1}})^{\frac{1}{2}}$ improvement when $p > p_c$ under very weak assumptions on the set of conjugate points of $(M,g)$. Indeed, given $x\in M$, $r>0$, and $t>0$, we continue to write 
$\mc{C}_x^{r,t}$ for the set of points defined in \eqref{e:conjugate set}. Note that if $r_t \to 0^+$ as $|t|\to \infty$, then saying that $y \in \mc{C}_x^{r_t,t}$ for $t$ large indicates that $y$ behaves like  point that is maximally conjugate to $x$.

%%%%%%%%%%%%%%%%%%%%%%%%%%%%%%%%%%%%%%%%%%%%%%%%%%%%%%%%%%%%%%%%%%%%%%%%%%%%%%%%
\begin{theorem}[\cite{CG19Lp}]
\label{t:Lp}
{Let $p>p_c$.} Let $V \subset M$ and assume that there exist $t_0>0$ and $a>0$  so that 
$$
\inf_{x,y\in V}d\big(y, \mc{C}_{x}^{r_t,t}\big)\geq r_t,\qquad\text{ for } t\geq t_0,
$$
with $r_t=\frac{1}{a}e^{-at}.$ 
Then, there exist $C>0$ and $h_0>0$ so that for $0<h<h_0$, and $\phi_h$ satisfying \eqref{e:eigsYes},
$$
\|\phi_h\|_{L^p(V)}\leq C  \frac{h^{-\delta(p,n)}}{\sqrt{\log h^{-1}}}.
$$
\end{theorem}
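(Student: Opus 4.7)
The strategy is to prove an $L^2\to L^p(V)$ operator-norm bound for a semiclassical spectral cluster $\Pi_h$ (a smooth cutoff to $\{|-h^2\Delta_g-1|\le Ch\}$) whose range contains $\phi_h$. Once we have $\|\Pi_h\|_{L^2\to L^p(V)}\le Ch^{-\delta(p,n)}/\sqrt{\log h^{-1}}$, applying it to $\phi_h=\Pi_h\phi_h$ will yield the theorem. The reason to work with the cluster rather than deducing the $L^p$ bound from a pointwise bound is that integrating the improved pointwise bound $|\phi_h|\le Ch^{(1-n)/2}/\sqrt{\log h^{-1}}$ produces only the $L^\infty$ exponent, while interpolating this bound against the sharp Sogge $L^{p_c}$ estimate produces only a $(\log h^{-1})^{-(1-p_c/p)/2}$ saving, which degenerates as $p\to p_c^+$ and therefore falls short of the uniform $\sqrt{\log h^{-1}}$ improvement claimed.

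The first step is to set up the geodesic-beam cover and recover the pointwise improvement as in Theorem~\ref{t:noConj2}. For each $x\in V$, pick $R(h)\ge r_{T(h)}$ with $T(h)=\tfrac{1}{2a+\varepsilon}\log h^{-1}$. The hypothesis $\inf_{x,y\in V} d(y,\mc{C}_x^{r_t,t})\ge r_t$ with $r_t=e^{-at}/a$ then guarantees that no geodesic of length $t\in[t_0,T(h)]$ starting at $x$ can return close to $V$ through a maximally conjugate point at scale $R(h)$. This allows one to build a $(\mathfrak{D}_n,\tau,R(h))$-good cover of $S^*_xM$ in which every tube lies in a single good family $\mc{G}$, with $\mc{B}=\emptyset$, and each tube is $[t_0,T(h)]$ non-self-looping. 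Theorem~\ref{t:coverToEstimate efxs} then gives the uniform pointwise improvement on $V$.

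To upgrade to the sharp $L^p$ bound, I would apply $TT^*$: $\|\Pi_h\|_{L^2\to L^p(V)}^2=\|\Pi_h^*\Pi_h\|_{L^{p'}(V)\to L^p(V)}$. The kernel of $\Pi_h^*\Pi_h$ is represented as an oscillatory integral of the half-wave propagator $e^{it\sqrt{-\Delta_g}}$ against a smooth cutoff supported in $|t|\le T(h)$. Theorem~\ref{t:porcupine efxs} then produces a tube decomposition of the truncated kernel; on each $[t_0,T(h)]$ non-self-looping tube, a time-averaging/Cauchy-Schwarz argument yields an extra factor of $t_0/T(h)\sim 1/\log h^{-1}$ beyond the standard Sogge dispersive bound of size $h^{-2\delta(p,n)}$. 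Reassembling the tubes and taking a square root gives the claimed operator bound on $\Pi_h$, and hence the theorem.

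\textbf{Main obstacle.} The heart of the argument is the $TT^*$ kernel estimate. The previous work of Hassell--Tacy~\cite{HT15} required global non-positive sectional curvature; the challenge here is to extract the same $\sqrt{\log h^{-1}}$ improvement from the extremely weak pairwise non-conjugacy hypothesis, using only the non-self-looping property supplied by Theorem~\ref{t:coverToEstimate efxs}. The delicate points are (i) controlling the tube overlaps so that summing per-tube estimates does not overwhelm the $1/\log h^{-1}$ saving, and (ii) handling the near-diagonal contribution at times $|t|\lesssim 1$, where geodesic beams do not help and one must fall back on the standard Sogge parametrix without spoiling the logarithmic gain.
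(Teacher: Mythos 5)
Your write-up is a plan rather than a proof: the step you label the ``main obstacle'' --- the $TT^*$ kernel estimate for the truncated half-wave operator --- is precisely the content of the theorem, and nothing in your sketch supplies it. The assertion that each non-self-looping tube contributes ``an extra factor of $t_0/T(h)$ beyond the standard Sogge dispersive bound'' is the whole difficulty: the Hassell--Tacy route obtains the long-time dispersive bound for the propagator kernel by lifting to the universal cover and summing a Hadamard parametrix over deck transformations, which requires the absence of conjugate points globally. Under the present hypothesis (only that points of $V$ are never maximally conjugate to one another, quantitatively) there is no such parametrix for $e^{it\sqrt{-\Delta_g}}$ at times $t\sim\log h^{-1}$, and the non-self-looping property of a tube is a statement about the bicharacteristic flow, not about pointwise kernel decay; converting one into the other is exactly what would have to be proved. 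This is why the argument of~\cite{CG19Lp} does not go through $TT^*$ at all: Theorem~\ref{t:porcupine efxs} is used \emph{pointwise}, bounding $|\phi_h(x)|$ by $h^{\frac{1-n}{2}}R(h)^{\frac{n-1}{2}}\sum_j\|Op_h(\chi_j^x)\phi_h\|_{L^2}$ with tubes depending on $x$; the right-hand side is raised to the $p$-th power and integrated over $V$, and the additional nontrivial input (flagged in the introduction) is the local geometry of overlapping tubes, i.e.\ a count of how many basepoints a fixed tube can serve. The non-looping estimate of Lemma~\ref{l:recur-a} then supplies the $1/\log h^{-1}$ gain in the resulting sum, and the restriction $p>p_c$ enters through summability of the tube counts rather than through a dispersive estimate.

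A second, more local error: your claim that the hypothesis lets you take $\mc{B}=\emptyset$ with every tube $[t_0,T(h)]$ non-self-looping is false. The hypothesis excludes maximal self-conjugacy, not returns; on a product manifold such as the flat torus (which satisfies the hypothesis, see \S\ref{s:examples}) every point lies on many closed geodesics, so some tubes are unavoidably self-looping. What the non-conjugacy buys is that a returning tube is spread out in at least one direction upon return, so the looping directions can be organized either into a bad set $\mc{B}$ of suitably small cardinality or into families $\mc{G}_\ell$ with geometrically decaying volumes and adapted times $(t_\ell,T_\ell)$, as in the discussion following Theorem~\ref{t:coverToEstimate efxs}. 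Your diagnosis of why naive interpolation between the improved $L^\infty$ bound and the sharp $L^{p_c}$ bound fails, and your choices $T(h)\sim\log h^{-1}$ and $R(h)\geq r_{T(h)}$, are correct.
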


One should think of the assumption in Theorem~\ref{t:Lp} as ruling out maximal conjugacy of the points $x$ and $y$ uniformly up to time $\infty$.

\begin{remark}
There are estimates in terms of the dynamical properties of covers by tubes similar to Theorem~\ref{t:coverToEstimate efxs} for each of the bounds in
Theorems~\ref{t:noConj2},~\ref{T:applications}, and~\ref{t:Lp}. In particular, these estimates do \emph{not} require global {geometric} assumptions on $(M,g)$, instead only using dynamical properties near $S^*_xM$ or $\SNH$.
\end{remark}
%%%%%%%%%%%%%%%%%%%%%%%%%%%%%%%%%%%%%%%%%%%%%%%%%%%%%%%%%%%%%%%%%%%%%%%%%%%%%%%%

\subsection{Logarithmic improvements for pointwise Weyl Laws}\label{s:Weyl}
 Let $\{h_j^{-2}\}_j$ be the eigenvalues of $(M,g)$.  It is well known that 
{\small ${\small \#\{j:\; h_j^{-1} \leq h^{-1}\}=\tfrac{\vol(B^{n})\vol(M)}{(2\pi)^n}h^{-n} + E(h)}$} with {\small $E(h)=O(h^{1-n})$}. Indeed, this result is the integrated version of the more refined statement proved by H\"ormander  in \cite{Ho68} which says that for all $x \in M$
\begin{equation}\label{e:weyl error defn}
    \sum_{h_j^{-1} \leq h^{-1}}|\phi_{h_j}(x)|^2=\frac{\vol(B^{n})}{(2\pi)^n}h^{-n} + E(h,x),
\end{equation}  
 with {\small $E(h,x)=O(h^{1-n})$} uniformly for $x \in M$. When the set of looping directions {over} $x$ has measure zero \cite{SZ02} proved that $E(h, x)=o(h^{1-n})$. Also, Duistermaat-Guillemin \cite{DG75} proved an integrated version of this result by showing that $E(h)=o(h^{1-n})$ if the set of closed geodesics in $M$ has measure zero. In terms of quantitative improvements, \cite{Berard77,Bo16} prove that {\small $E(h,x)={\small O(h^{1-n}/{\small \log h^{-1}})}$} if $(M,g)$ has no conjugate points. As before, another application of geodesic beam techniques is that $\log h^{-1}$ improvements can be obtained under weaker assumptions than having no conjugate points.

\begin{theorem}[{\cite{CG19Weyl}}]
\label{t:weyl} 
%\red{\cs Option 1 \cs
% Let $x\in M$, $0<\delta<\frac{1}{2}$. There exist positive constants $h_0=h_0(M,g,\delta)$, $\tau_0=\tau_0(M,g)$, $R_0=R_0(M,g)$, and $C_{n}$ depending only on $n$, so that for all $0<\tau <\tau_0$ and $0<h<h_0$ the following holds.
%
%Let ${8}h^\delta\leq R(h){<R_0}$, and  $\{\Lambda_{\rho_j}^\tau(R(h))\}_{j=1}^{N_h}$ be a {$(\mathfrak D,\tau, R(h))$-good cover of $S^*_xM$ for some $\mathfrak{D}>0$}. Let $0\leq\alpha< 1-2{\limsup_{h\to 0}\frac{\log R(h)}{\log h}}$ and suppose there exists a partition of $\{1,\dots, N_h\}$ into $\mc{B}$ and  $\mc{G}$ such that there exist ${1\leq T(h)}\leq {2} \alpha T_e(h)$ and   $t(h)<\inj(M)$ so that
%$$
%\bigcup_{j\in \mc{G}}\Lambda_{\rho_j}^\tau(R(h))\;\;\text{ is }\;\;[t(h),T(h)]\text{ non-self looping}.
%$$
%% 
%Then, with $E(h,x)$ as in \eqref{e:weyl error defn},
%\begin{align*}
%E(h,x)
%\leq  C_{n}{\mathfrak{D}}\tau^{-1}h^{{1-n}}R(h)^{{n-1}}
%\Bigg(|\mc{B}|\frac{T}{t}+ |\mc{G}|\frac{ t  }{T}\Bigg)
%\end{align*}}
Let $V \subset M$ and assume that there exist $t_0>0$ and $a>0$  so that 
$$
\inf_{x\in V}d\big(x, \mc{C}_{x}^{r_t,t}\big)\geq r_t,\qquad\text{ for } t\geq t_0,
$$
with $r_t=\frac{1}{a}e^{-at}.$ 
Then, there exist $C>0$ and $h_0>0$ so that for $0<h<h_0$ and $E(h,x)$ as in \eqref{e:weyl error defn},
$$
\sup_{x\in V}E(h,x)\leq \frac{Ch^{1-n}}{\log h^{-1}}.
$$
\end{theorem}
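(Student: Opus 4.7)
The plan is to reduce the pointwise Weyl bound to a quasimode estimate and then apply Theorem \ref{t:coverToEstimate efxs}, or more precisely its general distributional version Theorem \ref{t:coverToEstimate}. The strategy mirrors the classical Tauberian passage from the wave kernel to the spectral function, but propagates to time $T(h) \sim \log h^{-1}$ and replaces stationary phase by the geodesic beam estimates of this paper.

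First, choose an even $\psi \in C_c^\infty(\mathbb{R})$ with $\psi \geq 0$, $\psi(0) = 1$, and $\hat{\psi}$ supported in $(-1,1)$. Set
\[ N_T(h,x) := \sum_j \psi\bigl(T(h_j^{-1} - h^{-1})\bigr)\,|\phi_{h_j}(x)|^2. \]
A standard Safarov--Vassiliev-type Tauberian argument gives
\[ |E(h,x)| \,\lesssim\, T^{-1}\sup_{|s-h^{-1}|\leq 1} N_T(s^{-1},x) + O\bigl(T^{-1}h^{1-n}\bigr), \]
so it suffices to show $N_T(h,x) \lesssim h^{1-n}$ uniformly in $x \in V$ for $T = T(h) \sim \log h^{-1}$. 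Since $\hat{\psi}$ has compact support, the wave-group representation of $\psi(T(\sqrt{-\Delta_g} - h^{-1}))$ integrates only over $|t| \leq T$, and hence the distribution $u_h^x := \psi(T(\sqrt{-\Delta_g} - h^{-1}))\delta_x$ satisfies $u_h^x(x) = \|u_h^x\|_{L^2(M)}^2 = N_T(h,x)$ and $(-h^2\Delta_g - 1)u_h^x = O_N(h^N)$ in every semiclassical Sobolev norm, with constants polynomial in $T$.

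Second, apply Theorem \ref{t:coverToEstimate} to $u_h^x$ at the point $x$, producing
\[ N_T(h,x) = u_h^x(x) \,\lesssim\, h^{(1-n)/2} R(h)^{(n-1)/2}\Bigl(|\mc{B}|^{1/2} + \sum_\ell |\mc{G}_\ell|^{1/2} T_\ell^{-1/2}\Bigr)\,\|u_h^x\|_{L^2(M)}. \]
Substituting $\|u_h^x\|_{L^2(M)}^2 = N_T(h,x)$ and solving for $N_T(h,x)$ yields a bound quadratic in the cover data. To obtain $N_T(h,x) \lesssim h^{1-n}/\log h^{-1}$ one needs $|\mc{B}|\, R(h)^{n-1} \to 0$ and $T_\ell \sim \log h^{-1}$. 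The required cover is built from the non-conjugacy hypothesis: the condition $d(x,\mc{C}_x^{r_t,t}) \geq r_t$ with $r_t = a^{-1}e^{-at}$ rules out that geodesics of length $\leq t$ return $x$ to a maximally conjugate direction within distance $r_t$. Standard Jacobi-field estimates then show that the set of $\rho \in S^*_x M$ whose time-$s$ flowout returns to within $R(h)$ of $S^*_x M$ through a nearly degenerate return Jacobian has measure $\lesssim R(h)^{n-1}e^{Cas}$, uniformly in $t_0 \leq s \leq T$. Choosing $T(h) = c\log h^{-1}$ with $c$ small relative to $1/a$ and $R(h) = h^\delta$ with $\delta$ close to $\tfrac{1}{2}$, the iterative partitioning scheme sketched after Theorem \ref{t:coverToEstimate efxs} produces a bad set with $|\mc{B}|\, R(h)^{n-1} \to 0$ and good families $\mc{G}_\ell$ that are $[t_0,T_\ell]$ non-self-looping with $T_\ell \sim \log h^{-1}$.

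The main obstacle is the bilinear nature of $N_T(h,x)$: Theorem \ref{t:coverToEstimate} controls a single linear quasimode, whereas $N_T$ is a sum of squares. The resolution is to linearise $N_T(h,x)$ as the pointwise value $u_h^x(x)$ of the single distribution $u_h^x$ above, and then check carefully that $u_h^x$ is a genuine quasimode of $-h^2\Delta_g - 1$ in the sense required by Theorem \ref{t:coverToEstimate}, with remainder controlled in a Sobolev norm tight enough that the polynomial-in-$T$ losses from the wave group representation remain harmless when $T \sim \log h^{-1}$. Once this analytic reduction is in place, the dynamical cover construction parallels those used for Theorems \ref{t:noConj2}, \ref{T:applications}, and \ref{t:Lp}.
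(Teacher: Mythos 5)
First, a point of reference: this paper does not prove Theorem~\ref{t:weyl}. It is stated as an application and cited from [CG19Weyl], and the introduction explicitly names ``reduction of Weyl remainders to quasimode estimates'' as the additional non-trivial input carried out there. Your architecture --- Tauberian reduction, linearization of the smoothed spectral function as the pointwise value of a single function $u_h^x$, self-improvement via Theorem~\ref{t:coverToEstimate}, and non-self-looping covers built from the conjugate-point hypothesis --- is indeed the intended route. But two of your analytic claims are false as written. First, $\|u_h^x\|_{L^2}^2=\sum_j\psi^2\bigl(T(h_j^{-1}-h^{-1})\bigr)|\phi_{h_j}(x)|^2$, which is not $N_T(h,x)$; since $\psi\ge 0$ one only has $\|u_h^x\|_{L^2}^2\le\|\psi\|_\infty N_T(h,x)$ --- fortunately the direction the self-improvement needs, but the asserted identity is wrong. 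Second, and more seriously, $(-h^2\Delta_g-1)u_h^x$ is not $O_N(h^N)$: on the $j$-th mode the operator contributes $h^2h_j^{-2}-1=h(h_j^{-1}-h^{-1})(hh_j^{-1}+1)$, which on the effective support of $\psi(T(\cdot-h^{-1}))$ has size $h/T$, not $h^N$ (and since $\hat\psi\in C_c^\infty$, $\psi$ cannot be compactly supported, so there are also rapidly decaying spectral tails to handle). Thus $u_h^x$ is only an $O(h/T)$ quasimode. This does not kill the approach --- Theorem~\ref{t:coverToEstimate} is built for exactly this quality --- but it changes the proof: the terms $h^{-1}(|\mc{G}_\ell|t_\ell T_\ell)^{1/2}\|Pu\|_{L^2}$ and $Ch^{-1}\|Pu\|$ in the relevant Sobolev norm are then of the \emph{same order} as the main term $(|\mc{G}_\ell|t_\ell/T_\ell)^{1/2}\|u\|_{L^2}$ rather than negligible, and one must check (using $T_\ell\le T$) that they preserve the $T^{-1/2}$ gain. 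Under your $O(h^N)$ claim this borderline verification silently disappears. A related soft spot is the Tauberian step itself: to conclude one also needs the smoothed counting function to equal the Weyl main term up to $O(h^{1-n}/T)$, which requires controlling the wave trace on $1\le|t|\le T$ and is not a consequence of the short-time parametrix alone.

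The second gap is dynamical. The passage from $\inf_{x\in V}d(x,\mc{C}_x^{r_t,t})\ge r_t$ with $r_t=a^{-1}e^{-at}$ to a good cover with $|\mc{B}|R(h)^{n-1}\to 0$ and families $\mc{G}_\ell$ that are $[t_\ell,T_\ell]$ non-self looping with $\sum_\ell(|\mc{G}_\ell|t_\ell/T_\ell)^{1/2}\lesssim R(h)^{(1-n)/2}(\log h^{-1})^{-1/2}$ is precisely the content the paper defers to the companion papers (see the iterative scheme sketched after Theorem~\ref{t:coverToEstimate efxs} and the references to [CG19dyn, Lemmas 3.1--3.2]). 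Your appeal to ``standard Jacobi-field estimates'' asserts the conclusion rather than proving it: one must show that the absence of near-maximal self-conjugacy at scale $r_t$ forces quantitative spreading of the returning directions, then run the iterated splitting with shrinking non-looping times while keeping every scale above $h^\delta$ and every time below $2\alpha T_e(h)$; this is exactly where the exponential rate $a$ in $r_t$ constrains the admissible constant in $T(h)=c\log h^{-1}$. None of that work is supplied, so as it stands the proposal reproduces the outline of the companion paper's argument but not a proof.
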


We remark that there are generalizations of this result to Kuznecov sums estimates, where evaluation at $x$ is replaced by an integral average over a submanifold $H$ (see \cite{Zel} for the first results in this direction). In addition, in the same way that Theorem~\ref{t:coverToEstimate efxs} can be used to obtain quantitative improvements in $L^\infty$ bounds in concrete geometric settings, the {dynamical version of the} estimate in Theorem~\ref{t:weyl} can be used to obtain improved remainder estimates for pointwise Weyl laws. {We show, for example, that all non-trivial product manifolds satisfy the assumptions of Theorem~\ref{t:weyl} at \emph{every} point in \S~\ref{s:examples}.}

{\subsection{Examples}\label{s:examples} We now record some examples to which our theorems apply. We refer the reader to~\cite{CG19dyn} for many more examples. First, note that Theorem~\ref{t:noConj2} applies when $M$ is a manifold without conjugate points. The following examples may (and typically do) have conjugate points.}

{
\subsubsection{Product manifolds}
\begin{lemma}
Let $(M_i,g_i)$, $i=1,2$, be two compact Riemannian manifolds. Let $M=M_1\times M_2$ endowed with the product metric  $g=g_1\oplus g_2$. Then, $\mc{C}_x^{r,t}=\emptyset$ for all $x\in M$, $|t|> 0$, and $0<r<t$.
\end{lemma}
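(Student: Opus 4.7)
The plan is to establish that in a product manifold, no point on any geodesic can be conjugate to the starting point with the maximal multiplicity $n-1$, whence $\mc{C}_x^{r,t}=\emptyset$ follows immediately.

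First, I would decompose $\gamma$ and its Jacobi fields using the product structure. Let $\gamma$ be a unit-speed geodesic with $\gamma(0)=x=(x_1,x_2)$. Since the Levi-Civita connection splits on a product, $\gamma(s)=(c_1(s),c_2(s))$ where each $c_i$ is a (possibly constant) geodesic in $(M_i,g_i)$ with $c_i(0)=x_i$ and $|c_1'(0)|_{g_1}^2+|c_2'(0)|_{g_2}^2=1$. The Riemann tensor likewise splits as $R=R_1\oplus R_2$, so a vector field $J=(J_1,J_2)$ along $\gamma$ is a Jacobi field if and only if each $J_i$ is Jacobi along $c_i$ in $M_i$; in particular $J(s)=0$ precisely when $J_1(s)=0$ and $J_2(s)=0$.

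Next, I would bound the multiplicity at each conjugate parameter. For $s\neq 0$, let $V_i(s)$ denote the space of Jacobi fields along $c_i$ vanishing at $0$ and at $s$; by the splitting, the multiplicity of $\gamma(s)$ as a conjugate point of $x$ along $\gamma$ equals $\dim V_1(s)+\dim V_2(s)$. If $c_i$ is non-constant, a standard decomposition into tangential and perpendicular components (the tangential part has the form $bs\,c_i'$, which is forced to vanish by double-vanishing at $0$ and $s\neq 0$) yields $\dim V_i(s)\leq n_i-1$. If $c_i$ is constant, the Jacobi equation along $c_i$ collapses to $J_i''=0$, whose solutions $A+sB$ vanish at both $0$ and $s\neq 0$ only when $A=B=0$, so $\dim V_i(s)=0$. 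In every case $\dim V_1(s)+\dim V_2(s)\leq (n_1-1)+(n_2-1)=n-2<n-1$.

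Finally, the hypothesis $0<r<|t|$ ensures $0\notin(t-r,t+r)$, so every parameter in this arc is nonzero and the multiplicity bound applies: no point of $\gamma$ in $\gamma(t-r,t+r)$ can carry $n-1$ conjugate directions (counted with multiplicity). Hence $\gamma(t)\notin \mc{C}_x^{r,t}$, and arbitrariness of $\gamma$ gives $\mc{C}_x^{r,t}=\emptyset$. The argument is essentially bookkeeping once the splitting of Jacobi fields is in place; the only mildly subtle step is the degenerate case of a constant factor, which is handled by the collapse of the Jacobi equation along a constant curve.
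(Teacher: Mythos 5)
Your proof is correct, and it takes a genuinely different (though closely related) route from the paper's. The paper does not decompose the space of Jacobi fields at all: it writes the geodesic as $\gamma_{\theta_0}(t)=(\gamma_1(t\cos\theta_0),\gamma_2(t\sin\theta_0))$, observes that varying $\theta$ produces a family of unit-speed geodesics, and differentiates to exhibit the single explicit normal Jacobi field $J(t)=t(-\sin\theta_0\,\dot\gamma_1(t\cos\theta_0),\cos\theta_0\,\dot\gamma_2(t\sin\theta_0))$ with $\|J(t)\|=t$; since this $J$ vanishes only at $t=0$, no $\gamma(s)$ with $s\neq0$ can be conjugate to $x$ with the maximal multiplicity $n-1$, and $\mc{C}_x^{r,t}=\emptyset$ follows. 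You instead split the full Jacobi equation along the two factors and bound $\dim V_1(s)+\dim V_2(s)\le(n_1-1)+(n_2-1)=n-2$ directly, treating the degenerate constant-factor case by the collapse $J_i''=0$. The two arguments are complementary: the paper's is shorter and completely explicit, producing one "Clairaut-type" direction that every $V(s)$ misses, whereas yours is more systematic and in fact slightly stronger — it shows that every Jacobi field vanishing at $0$ and again at some $s\neq0$ must be normal within each factor, so all the spaces $V(s)$, $s\neq 0$, lie in one fixed $(n-2)$-dimensional subspace. That uniform containment is the cleanest way to handle the "counted with multiplicity over the whole arc $\gamma(t-r,t+r)$" clause in the definition of $\mc{C}_x^{r,t}$, a point both write-ups otherwise pass over quickly. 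Your dimension counts (tangential parts $bs\,c_i'$ killed by double vanishing; $A+sB$ killed in the constant case) are all correct, so there is no gap.
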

\begin{proof}
Let $x=(x_1,x_2)\in M$ and $\gamma(t)$ be a unit speed geodesic on $M$ with $\gamma(0)=0$. Then, there are unit speed geodesics $\gamma_1$ and $\gamma_2$ in $M_1$ and $M_2$ respectively such that  $\gamma_1(0)=x_1$, $\gamma_2(0)=x_2$, and there exists $\theta_0 \in \mathbb{R}$ such that $$\gamma(t)=(\gamma_1(t\cos \theta_0 ),\gamma_2(t\sin \theta_0 ))\in M_1\times M_2.$$ Moreover, for every $\theta\in \re$, the curve $\gamma_\theta:=(\gamma_1(t\cos \theta ),\gamma_2(t\sin\theta ))$ is a unit speed geodesic. In particular, one perpendicular Jacobi field along $\gamma=\gamma_{\theta_0}$ is given by 
$$
J(t)=\partial_\theta \gamma_\theta\big|_{\theta=\theta_0}= t(-\sin \theta_0 \dot \gamma_1(t\cos \theta_0 ),  \cos \theta_0 \dot \gamma_2(t\sin \theta_0)).
$$
Thus, $\|J(t)\|=t,$
and hence $J$ vanishes only at $t=0$. In particular, since there exists a Jacobi field vanishing only at $t=0$, $\mc{C}^{r,t}_x=\emptyset$ for all  $0<r<|t|$. \end{proof}
}

{We point out that although $\mc{C}^{r,t}_x$ is empty for $0<r<|t|$, $M$ may, and often does, have self conjugate points. For example, this is the case if $M_1=S^{n_1}$ for $n_1\geq 2$. 
}

{
\begin{corollary}
Let $(M_i,g_i)$, $i=1,2$, be two compact Riemannian manifolds of dimension $n_i>0$. Let $M=M_1\times M_2$ endowed with the metric $g= g_1\oplus g_2$. Then, there is $C>0$ such that for all $x\in M$ and $u\in \mc{D}'(M)$,
$$
|u(x)|\leq Ch^{\frac{1-{(n_1+n_2)}}{2}}\Big(\frac{\|u\|_{L^2(M)}}{\sqrt{\log h^{-1}}}+\frac{\sqrt{\log h^{-1}}}{h}\big\|(-h^2\Delta_g-1)u\big\|_{\Hs{\frac{n_1+n_2-3}{2}}}\Big).
$$
\end{corollary}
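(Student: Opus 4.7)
The plan is to deduce the corollary as a direct application of Theorem~\ref{t:noConj2} to the manifold $M=M_1\times M_2$ with $V=M$, using the preceding lemma to verify the hypothesis. Since the dimension of $M$ is $n=n_1+n_2$, the exponent $h^{(1-n)/2}$ and the Sobolev index $\frac{n-3}{2}=\frac{n_1+n_2-3}{2}$ in the conclusion of Theorem~\ref{t:noConj2} match those in the desired estimate, so once we verify the hypothesis no further bookkeeping is needed.

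To verify the hypothesis, I would fix $a=1$, so that $r_t=e^{-t}$, and choose $t_0>0$ large enough that $r_t<t$ for all $t\geq t_0$ (any $t_0\geq 1$ suffices). By the preceding lemma, for every $x\in M$ and every such $t$ we have $\mc{C}_x^{r_t,t}=\emptyset$. Adopting the standard convention $d(x,\emptyset)=+\infty$, the inequality
\[
\inf_{x\in V}d\bigl(x,\mc{C}_x^{r_t,t}\bigr)\geq r_t
\]
holds trivially for all $t\geq t_0$. Since $M$ is compact (being a product of two compact manifolds), Theorem~\ref{t:noConj2} applies with $V=M$, yielding the stated bound uniformly in $x\in M$.

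There is essentially no obstacle: the work is entirely in the preceding lemma, which exhibits, via the explicit perpendicular Jacobi field $J(t)=t(-\sin\theta_0\,\dot\gamma_1(t\cos\theta_0),\,\cos\theta_0\,\dot\gamma_2(t\sin\theta_0))$, that along \emph{every} geodesic on $M$ there is a non-vanishing perpendicular Jacobi field away from $t=0$. This rules out conjugacy of full multiplicity $n-1$ at any non-trivial time, regardless of the conjugacy behavior within each factor individually. The only minor point to be careful about is the convention $d(x,\emptyset)=+\infty$; if the reader prefers, one can instead observe that the hypothesis of Theorem~\ref{t:noConj2} is used in~\cite{CG19dyn} only to control returns to conjugate loci, and that an empty conjugate locus trivially imposes no constraint.
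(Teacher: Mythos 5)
Your proposal is correct and matches the paper's intended argument: the corollary is exactly Theorem~\ref{t:noConj2} applied with $V=M$ and $n=n_1+n_2$, with the hypothesis verified vacuously because the preceding lemma shows $\mc{C}_x^{r,t}=\emptyset$ for all $x$ and $0<r<|t|$. The remark about the convention $d(x,\emptyset)=+\infty$ is the right way to close the only formal gap.
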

\subsubsection{The triaxial ellipsoid}\label{s:triaxial ellipsoid}
We consider the \emph{triaxial ellipsoid}
$$
M:=\{ x\in \re^3:\; a^2x^2+b^2y^2+c^2z^2=1\}
$$
with $0<a<b<c$. It is well known that the four umbillic points (i.e. points at which the normal curvatures are equal in all directions)
 on $M$ are maximally self-conjugate. In fact, for an umbillic point $x_0$, there is $T>0$ such that every geodesic through $x_0$ returns to $x_0$ at time $T$. Nevertheless, Theorem~\ref{t:coverToEstimate efxs} and its generalization, Theorem~\ref{t:coverToEstimate}, are useful at these points. The reason for this is the presence of {a hyperbolic closed geodesic through $x_0$} to which every other geodesic through $x_0$ exponentially converges forward and backward in time (up to reversal of the parametrization).  In particular, letting $(x_0,\xi_+)$ and $(x_0,\xi_-)$ be the initial points of the {hyperbolic} geodesic, we have that the stable direction for $\xi_+$ is given by $T_{\xi_+}S^*_xM$ and the unstable direction for $\xi_-$ is given by $T_{\xi_-}S^*_xM$~\cite[Theorem 3.5.16]{Kling95}. Thus, for each $\delta>0$ there is $C>0$ such that {if $d(\xi,\xi^{\pm})>\delta$}, then in {for all} $\mp t>0$ one has that  
 $$
\big \|d\varphi_t\big|_{_{\!T_{\xi}S^*_{x_0}M}}\big\|\leq Ce^{\pm Ct}.
 $$
 This type of exponential convergence can be used (see~\cite{GT18a},~\cite[Lemmas 3.1-3.2]{CG19dyn}) to generate covers and obtain
 $$
 |u(x_0)|\leq Ch^{-\frac{1}{2}}\Bigg(\frac{\|u\|_{L^2(M)}}{\sqrt{\log h^{-1}}}+\frac{\sqrt{\log h^{-1}}}{h}\big\|(-h^2\Delta_g-1)u\big\|_{ \text{\raisebox{.2cm}{${_{\!H_{scl}^{-\frac{1}{2}}}}$}}(M)}
\Bigg). $$
}

\subsubsection{The spherical pendulum}
{One example to which Theorem~\ref{t:sphere} applies is that of $S^2=\{x\in \re^3: |x|=1\}$ the standard sphere equipped with the round metric, {$g$}, and $V\in C^\infty(S^2)$ given by $V(x_1,x_2,x_3)=2x_3$. The quantum spherical pendulum is then the operator
$$
P=-h^2\Delta_g+V.
$$
}
{
Identifying the sphere with $M= [0,\pi]_r\times [0,2\pi]_\theta$. The Hamiltonian is given by
$$
p(\theta, r, \xi_{\theta}, \xi_r)=\xi_r^2+\tfrac{1}{\sin^2r}\,{\xi_\theta^2}+2\cos r -E,
$$
 {with $E \in \re$. This Hamiltonian describes the movement of a pendulum of mass $1$ moving without friction on the surface of a sphere of radius $1$.}}
 
{
By~\cite{Horozov} for $E\geq \frac{14}{\sqrt{17}}$, $p$ is iso-energetically non-degenerate for all $I_0$ on $\{p=0\}$. It is easy to check by explicit computations that  $E-2\cos r>0$ for $E>2$ and $r \mapsto \sin r\sqrt{E-2\cos r}$ has a single non-degenerate maximum on $[0,\pi]$.  Therefore, taking $E=E_0\geq\frac{14}{\sqrt{17}}$ and $Q=h^{-1}(E_0-E_h)$ in Theorem~\ref{t:sphere} yields the following Corollary~\ref{t:spherePend}.
\begin{corollary}
\label{t:spherePend}
Let $B>0$, $E_0\geq\frac{14}{\sqrt{17}}$ {and $\delta>0$}. There exists $C>0$ such that for all $L>0$ there exists $h_0>0$ {so that the following holds.  For all $u\in \mc{D}'(S^2)$, $0<h<h_0$ and {$E_h \in (E_0-Bh,E_0+Bh)$},} 
$$
\|u\|_{L^\infty(|x_3|<1-\delta)}\leq Ch^{-\frac{1}{2}}\Bigg(\frac{\|u\|_{_{L^2(S^2)}}}{L\sqrt{\log h^{-1}}}+\frac{L\sqrt{\log h^{-1}}\|(P-E_h)u\|_{\text{\raisebox{.2cm}{${_{\!H_{scl}^{-\frac{1}{2}}\!\!(S^2)}}$}}}}{h}\Bigg).
$$
{In particular, if 
$\|u\|_{L^2(S^2)}=1$ and $ Pu=o\big(h/\log h^{-1}\big)_{L^2}$
then
\begin{equation}\label{e:pendulum bound}
\|u\|_{L^\infty(|x_3|<1-\delta)}=o\Big(\frac{h^{-\frac{1}{2}}}{\sqrt{\log h^{-1}}}
\Big).
\end{equation}}
\end{corollary}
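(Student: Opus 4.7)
The goal is to reduce Corollary~\ref{t:spherePend} to a direct application of Theorem~\ref{t:sphere} by rewriting the spherical pendulum in the required form and verifying its structural hypotheses. First, in standard polar coordinates $(r,\theta) \in (0,\pi) \times [0,2\pi]$ on $S^2$ minus the poles, the round metric reads $g = dr^2 + \sin^2 r\, d\theta^2$, so $\alpha(r) = \sin r$, and the potential becomes $V(x_1,x_2,x_3) = 2x_3 = 2\cos r$. To bring $P - E_h$ into the shape required by Theorem~\ref{t:sphere}, I write
$$
P - E_h = -h^2 \Delta_g - (E_0 - 2\cos r) + (E_0 - E_h),
$$
and set $\tilde V(r) := E_0 - 2\cos r$ together with $Q := h^{-1}(E_0 - E_h)\,\mathrm{Id}$. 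Since $E_0 \geq 14/\sqrt{17} > 2$, we have $\tilde V > 0$ on $[0,\pi]$; since $|E_0 - E_h| < Bh$, the operator $Q$ is a bounded multiple of the identity, hence a self-adjoint element of $\Psi^2(S^2)$ with norm $\leq B$ independent of $h$.

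Next, I verify the conditions on $\alpha$ and $\tilde V$ imposed by Theorem~\ref{t:sphere}. Differentiating, $\alpha^{(2k)}(r) = (-1)^k \sin r$, which vanishes at $r=0$ and $r = \pi$ for every $k \geq 0$. To check that $f(r) := \sin r\, \sqrt{E_0 - 2\cos r}$ has a unique non-degenerate maximum on $[0,\pi]$, I substitute $u = \cos r$ and analyze $f^2 = (1-u^2)(E_0 - 2u)$ on $[-1,1]$. The critical points satisfy $3u^2 - E_0 u - 1 = 0$, giving $u_\pm = \frac{E_0 \pm \sqrt{E_0^2 + 12}}{6}$. A quick algebraic check using $E_0 > 2$ shows $u_+ > 1$ while $u_- \in (-1,0)$, so the unique critical point in $(-1,1)$ is $u_-$, corresponding to $r_s = \arccos(u_-) \in (0,\pi)$. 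Because $f$ vanishes at the endpoints and is positive inside, $r_s$ is a maximum, and $(f^2)''(u_-) = 12u_- - 2E_0 < 0$ together with $f(r_s) > 0$ yields non-degeneracy for $f$ itself. Iso-energetic non-degeneracy of $p$ on $\{p=0\}$ for $E \geq 14/\sqrt{17}$ is exactly the result of Horozov cited in the paper.

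With these hypotheses verified, I apply Theorem~\ref{t:sphere} to the operator $-h^2\Delta_g - \tilde V(r) + hQ$ on the compact set $K := [0,2\pi] \times [\arccos(1-\delta),\, \pi - \arccos(1-\delta)] \subset [0,2\pi]\times(0,\pi)$, which contains $\{|x_3| < 1-\delta\}$ under the identification above. This gives the displayed main estimate of the Corollary for $\|(P-E_h)u\|_{H^{-1/2}_{scl}}$, and since the $H^{-1/2}_{scl}$ norm is controlled by the $L^2$ norm (as $(1 - h^2\Delta_g)^{-1/4}$ is a contraction on $L^2$), the hypothesis $\|(P-E_h) u\|_{L^2} = o(h/\log h^{-1})$ yields the same bound in $H^{-1/2}_{scl}$. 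Substituting into the main estimate produces
$$
\|u\|_{L^\infty(|x_3|<1-\delta)} \;\leq\; \frac{C\, h^{-1/2}}{\sqrt{\log h^{-1}}}\Bigl(\tfrac{1}{L} + L\,\epsilon(h)\Bigr),
$$
with $\epsilon(h) = o(1)$. Choosing $L = L(h) := \epsilon(h)^{-1/2} \to \infty$ slowly enough that $h_0(L(h)) > h$ for small $h$, both terms in parentheses are $\sqrt{\epsilon(h)} = o(1)$, giving~\eqref{e:pendulum bound}.

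\textbf{Main obstacle.} Most of the work is a clean verification of hypotheses, and the only nontrivial step is choosing $L = L(h)$ for the $o$-statement in a way that is compatible with the $h_0 = h_0(L)$ dependence supplied by Theorem~\ref{t:sphere}. This is standard: any sufficiently slowly varying $L(h) \to \infty$ works, since the rate at which the hidden $o(1)$ tends to zero is fixed by $u$ alone and independent of $L$.
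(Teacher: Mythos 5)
Your proposal is correct and follows exactly the route the paper takes: the paper's own proof is the short paragraph preceding the corollary, which checks that $\alpha(r)=\sin r$ and $\tilde V(r)=E_0-2\cos r$ satisfy the hypotheses of Theorem~\ref{t:sphere} (positivity, vanishing of even derivatives of $\alpha$ at the poles, the single non-degenerate maximum of $r\mapsto\sin r\sqrt{E_0-2\cos r}$, and Horozov's iso-energetic non-degeneracy) and then applies that theorem with $Q=h^{-1}(E_0-E_h)$. You have merely filled in the computations the paper labels "easy to check," and your handling of the $o$-statement via a slowly growing $L(h)$ (or equivalently, taking $\limsup$ for each fixed $L$ and letting $L\to\infty$) is the standard completion.
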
}

{\noindent Note that if we define $\tilde{g}=g/{\small \sqrt{E_0-2x_3}}$ with $E_0\geq \frac{14}{\sqrt{17}}$, then Theorem~\ref{t:sphere} shows that {the eigenfunctions $\phi_h$ for $(-h^2\Delta_{\tilde{g}}-1)\phi_h=0$ satisfy the bound
\[
\|\phi_h\|_{L^\infty(|x_3|<1-\delta)}=o\Big(\frac{h^{-\frac{1}{2}}}{\sqrt{\log h^{-1}}}\Big).
\] 
for any $\delta>0$}.   }
%\red{\begin{remark}
%The example is given by a Schr\"odinger operator on $(M,g)=(S^2,g_{\text{round}})$. In particular, the potential is given from the embedding in $\re^3$ as
%$$S^2:=\{(x_1,x_2,x_3)\in \re^3:\; x_1^2+x_2^2+x_3^2=1\}\qquad V(x)=2x_3$$
%and the relevant operator is
%$$P=-h^2\Delta_g+V-E+hQ.$$
%
%\end{remark}}

\subsection{Relations with previous dynamical conditions on pointwise estimates}
\label{s:relateOld}

{In this section, we recall the previous dynamical conditions guaranteeing improved pointwise estimates~\cite{Saf88,VaSa:92,SZ02,SoggeTothZelditch,SZ16I,SZ16I, Gdefect}.
We first define the \emph{loop set at $x$} by
$$
\mc{L}_x:=\{\rho\in S^*_xM \mid \,\exists t\in \mathbb{R} \text{ s.t.}\; \varphi_t(\rho)\in S^*_xM\},
$$ 
and recall that a point $x$ is said to be \emph{non-self focal} if $\vol_{S^*_xM}(\mc{L}_x)=0$. It is proved in~\cite{Saf88,SZ02} that if $x$ is non-self focal, then 
\begin{equation}
\label{e:smallo}
|\phi_h(x)|=o(h^{\frac{1-n}{2}}).
\end{equation}

Next, define
$T_{\pm}:S^*_xM\to [0,\infty]$ by
$
T_\pm(\rho):=\pm \inf\{\pm t>0\mid \varphi_t(\rho)\in S^*_xM\}
$
and $\Phi_\pm:T_\pm^{-1}(0,\infty)\to S^*_xM$ by 
$$
\Phi_\pm(\rho)=\varphi_{_{T_\pm(\rho)}}(\rho).
$$
We then define $\mc{R}_x$ as the recurrent set for $\Phi$. In~\cite{VaSa:92,SoggeTothZelditch,Gdefect}, it is shown that if $\vol_{S^*_xM}(\mc{R}_x)=0$, then~\eqref{e:smallo} continues to hold. In that case $x$ is called \emph{non-recurrent}. Finally, in~\cite{SZ16I,VaSa:92,Gdefect} it is shown that there need only be no invariant $L^2(\vol_{S^*_xM})$ function for~\eqref{e:smallo} to hold.

\begin{definition}
\label{d:non-rec}
For the purposes of this section, we will say that a point $x$ is \emph{$(t_0,T(h))$ non-looping via covers} if there is a $(\tau, R(h))$ cover for $S^*_xM$, $\{\Lambda_{\rho_j}^\tau(R(h))\}_{j=1}^{N_h}$, and $\mc{B}\sqcup \mc{G}=\{1,\dots N_h\}$, such that 
$$
\bigcup_{j\in \mc{G}}\Lambda_{\rho_j}^\tau(R(h)) \text{ is }[t_0,T(h)]\text{ non-self looping} 
\qquad
\text{and} 
\qquad
|\mc{B}|\leq \frac{R(h)^{1-n}}{T(h)}.
$$
(See also~\cite[Definition 2.1]{CG19Weyl}.) We will say that  \emph{$x$ is $T(h)$ non-recurrent via covers} if there are sets of indices $\mc{G}_\ell\subset\{1,\dots N_h\}$ and pairs of times $(t_\ell, T_\ell)$ such that $\{1,\dots N_h\}=\cup_\ell \mc{G}_\ell$ and 

 $${\bigcup_{j\in \mc {G}_\ell}\Lambda_{\rho_j}^\tau(R(h))} \text{ isn} \;[t_\ell,T_\ell]\;\text{ non-self looping} 
    \qquad
    \text{and}
    \qquad \sum_{\ell}\frac{|\mc{G}_\ell|^{1/2}t_\ell^{1/2}}{T_\ell^{1/2}}\leq \frac{R(h)^{\frac{1-n}{2}}}{T(h)^{1/2}}.
    $$
    (See also~\cite[Definition 2.2]{CG19Weyl}.) 
    \end{definition}
    
    First of all, we point out that $x$ being $T(h)$ non-looping via covers implies that it is $T(h)$ non-recurrent via covers and that Theorem~\ref{t:coverToEstimate efxs} states that if $x$ is $T(h)$ non-recurrent via covers for some $T(h)\ll T_{e}(h)$,  then there is $C>0$ such that 
    \begin{equation}
    \label{e:Tnonrec}
    |\phi_h(x)|\leq \frac{Ch^{\frac{1-n}{2}}}{T(h)^{1/2}}.
    \end{equation}

%Note that the estimate~\eqref{e:smallo} under the condition that $x$ is non-recurrent follows from~\eqref{e:Tnonrec} by allowing $T(h)$ to grow arbitrarily slowly with $h$ (see~\cite{GJEDP}).

In order to relate these two concepts to the concept of a non-self focal point and a non-recurrent point respectively, we prove the following two Lemmas in Appendix~\ref{a:oldrelations}

\begin{lemma}
\label{l:loopsAreFun}
Suppose that $x$ is non-self focal. Then there are $t_0>0$ and $T:(0,1)\to (0,\infty)$ such that $\lim_{h\downarrow 0}T(h)=\infty$ and $x$ is $(t_0,T(h))$ non-looping via covers.
\end{lemma}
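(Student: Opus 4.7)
The plan is to exploit the non-self-focal hypothesis $\vol_{S^*_xM}(\mc L_x)=0$ together with regularity of the Liouville measure on $S^*_xM$ to build, for each time horizon $T$, a scale below which a suitably thickened loop set occupies a negligible fraction of $S^*_xM$; the function $T(h)$ is then read off as the largest $T$ whose scale exceeds the chosen cover scale $R(h)$. First, fix $t_0>0$ large enough in units of $\tau$ (e.g.\ $t_0=4\tau$) so that effective time windows of the form $[t_0-c\tau-cR(h),T+c\tau+cR(h)]$ with $c=O(1)$ remain bounded away from $0$, pick $\delta\in(0,\tfrac12)$, and set $R(h):=h^\delta$. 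For each $h$, apply Proposition~\ref{l:cover} to produce a $(\mathfrak D_n,\tau,R(h))$-good cover $\{\Lambda_{\rho_j}^\tau(R(h))\}_{j=1}^{N_h}$ of $S^*_xM$; disjointness within each of its $\mathfrak D_n$ subfamilies of tubes places the centers $\geq 2R(h)$ apart (in the Sasaki distance) and yields the packing bound $\#\{j:\rho_j\in U\}\leq C_n\mathfrak D_n\vol(U)/R(h)^{n-1}$ for every measurable $U\subset S^*_xM$, with $C_n$ depending only on $n$.

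Next, for $T>0$ and $r>0$ introduce the thickened forward loop set
\[
E_r^T:=\Bigl\{\rho\in S^*_xM:\exists\,s\in[t_0-3\tau,T+3\tau],\ \rho'\in B(\rho,r),\ \varphi_s(\rho')\in N_r(S^*_xM)\Bigr\},
\]
where $N_r(S^*_xM)\subset T^*M$ denotes the $r$-neighborhood. The sets $E_r^T$ are closed and increasing in $r$, and a routine compactness/continuity argument on sequences $r_n\to 0$—extracting convergent subsequences $s_n\to s^*\in[t_0-3\tau,T+3\tau]$ and $\rho'_n\to\rho$, and using continuity of the flow—shows $\bigcap_{r>0}E_r^T\subset \mc L_x$. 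Since $\vol(\mc L_x)=0$, continuity of measure from above on the compact space $S^*_xM$ gives $\vol(E_r^T)\to 0$ as $r\to 0^+$ for each fixed $T$. For each positive integer $k$ I would then pick $r_k>0$ with $\vol(E_{r_k}^k)\leq 1/(C_n\mathfrak D_n\,k)$, arrange $(r_k)$ to be non-increasing with $r_k\to 0$, and define $T(h):=\max\{k\in\mathbb N:r_k\geq R(h)\}$; since every $r_k>0$ and $R(h)\to 0$, we obtain $T(h)\to\infty$ as $h\to 0$.

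Setting $\mc B:=\{j:\rho_j\in E_{R(h)}^{T(h)}\}$ and $\mc G:=\{1,\dots,N_h\}\setminus\mc B$, the packing estimate combined with the choice of $T(h)$ gives $|\mc B|\leq R(h)^{1-n}/T(h)$. It remains to verify that $\bigcup_{j\in\mc G}\Lambda_{\rho_j}^\tau(R(h))$ is forward $[t_0,T(h)]$ non-self looping: a putative return $\varphi_t(\zeta_1)=\zeta_2$ with $\zeta_i\in\Lambda_{\rho_{j_i}}^\tau(R(h))$, $j_i\in\mc G$, and $t\in[t_0,T(h)]$ unwinds through the flowout definition of the tubes into $\varphi_s(\rho'_1)=\rho'_2$ with $\rho'_i\in B(\rho_{j_i},R(h))$ and $s\in[t_0-3\tau,T(h)+3\tau]$; since $\rho_{j_2}\in S^*_xM$ forces $\rho'_2\in N_{R(h)}(S^*_xM)$, this in turn places $\rho_{j_1}\in E_{R(h)}^{T(h)}$, contradicting $j_1\in\mc G$. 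The main subtlety I anticipate is exactly this last step: the non-self looping condition must hold for the \emph{whole union}, not each tube individually, so $\mc B$ must be declared large enough to preclude any flow connection between two good tubes of the cover. Defining $E_r^T$ via returns into the neighborhood $N_r(S^*_xM)$—rather than returns close to the center $\rho$ itself—is precisely what enforces this; once this definition is in place, the rest is standard measure theory plus the diagonal choice of $T(h)$.
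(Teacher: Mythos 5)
Your proposal is correct and follows essentially the same route as the paper: for each fixed $T$ a compactness/measure-continuity argument (yours packaged into the thickened return sets $E_r^T$ with $\bigcap_{r>0}E_r^T\subset \mc{L}_x$; the paper's split into Lemmas~\ref{l:padMe} and~\ref{l:coverMe} applied to the closed sets $T_+^{-1}([0,T])$) shows that once $R(h)$ lies below a $T$-dependent threshold the bad tubes number at most $R(h)^{1-n}/T$, and $T(h)$ is then obtained by the same diagonal choice. The only slip is the packing bound as stated for arbitrary measurable $U$ (false when $U$ is smaller than scale $R(h)$); it must be applied to an $O(R(h))$-thickening of $U$, which is harmless here because $B(E_r^T,r)\subset E_{2r}^T$, so one simply calibrates the $r_k$ against $E_{2r_k}^k$.
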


\begin{lemma}
\label{l:dejaVu}
Suppose that $x$ is non-recurrent. Then there is $T:(0,1)\to (0,\infty)$ such that $\lim_{h\downarrow 0} T(h)=\infty$ and $x$ is $T(h)$ non-recurrent via covers.
\end{lemma}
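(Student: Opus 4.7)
The plan is to leverage $\vol_{S^*_xM}(\mathcal{R}_x)=0$ to construct a measure-theoretic filtration of $S^*_xM$ whose leaves carry progressively longer non-self-looping times for the flow $\varphi_t$. The first step is a pointwise-to-local upgrade: for each $\rho\in S^*_xM\setminus \mathcal{R}_x$, non-recurrence of $\rho$ under the first return map $\Phi$ yields a neighborhood $V_\rho$ in $S^*_xM$ and an integer $N_\rho$ with $\Phi^n(V_\rho)\cap V_\rho=\emptyset$ for $n\ge N_\rho$. Combining this with continuity of $T_+$ and a uniform lower bound $t_0$ on return times valid away from a negligible set of directions (obtained by discarding tangential returns via transversality of $\varphi_t$ to $S^*_xM$), one concludes that $V_\rho$ is $[t_0,T_\rho]$ non-self-looping for the flow itself.

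Define $K_T:=\{\rho:\rho\text{ has a neighborhood that is }[t_0,T]\text{ non-self-looping}\}$, which is open. The previous step gives $\bigcup_T K_T\supset S^*_xM\setminus \mathcal{R}_x$, hence $\vol(S^*_xM\setminus K_T)\downarrow 0$ as $T\uparrow\infty$. Fix $T_\ell\uparrow\infty$, set $\epsilon_\ell:=\vol(S^*_xM\setminus K_{T_\ell})$, and by outer regularity enlarge $S^*_xM\setminus K_{T_\ell}$ to an open $U_\ell$ with $\vol(U_\ell)\le 2\epsilon_\ell$. Putting $A_\ell:=U_{\ell-1}\setminus U_\ell$ (with $U_{-1}:=S^*_xM$), one obtains $\vol(A_\ell)\le 2\epsilon_{\ell-1}$, and $A_\ell$ is contained in a $[t_0,T_\ell]$ non-self-looping set; this property persists under $R(h)$-thickening once $h$ is small.

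Given the $(\tau,R(h))$-cover $\{\Lambda_{\rho_j}^\tau(R(h))\}_{j=1}^{N_h}$, pick a truncation $L(h)\to\infty$ and define $\mathcal{G}_\ell:=\{j:\rho_j\in A_\ell\}$ for $0\le\ell\le L(h)$, together with a residual class $\mathcal{G}_{L(h)+1}:=\{j:\rho_j\in U_{L(h)}\}$ to which one assigns the trivial pair $t_{L(h)+1}=T_{L(h)+1}$. Bounded overlaps of a good cover give $|\mathcal{G}_\ell|\lesssim \vol(A_\ell)\,R(h)^{1-n}$, so
\[
\sum_\ell \frac{|\mathcal{G}_\ell|^{1/2} t_\ell^{1/2}}{T_\ell^{1/2}} \lesssim R(h)^{(1-n)/2}\Bigl(\sqrt{t_0}\sum_{\ell=0}^{L(h)}\frac{\sqrt{\epsilon_{\ell-1}}}{\sqrt{T_\ell}}+\sqrt{\epsilon_{L(h)}}\Bigr).
\]
With $(\epsilon_\ell,T_\ell)$ fixed a priori and $L(h)$ chosen to grow slowly enough with $h$, both terms in parentheses tend to $0$, producing the required $T(h)\to\infty$.

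The main obstacle is the first step: passing from non-recurrence of the discrete return map to uniform $[t_0,T]$ non-self-looping for the continuous flow. The difficulty is that short returns of $\varphi_t$ to $S^*_xM$ can occur along tangential directions, so the uniform lower bound $t_0$ is not automatic from $\Phi$-non-recurrence alone; one must discard an exceptional null set where the flow fails to be transverse to $S^*_xM$ and verify that this removal does not affect the recurrence-theoretic conclusions needed to populate the $K_T$.
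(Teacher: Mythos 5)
There is a genuine gap, and it sits at the heart of your construction: the passage from ``each point of $A_\ell$ has a $[t_0,T_\ell]$ non-self-looping neighborhood'' to ``$A_\ell$ is contained in a $[t_0,T_\ell]$ non-self-looping set.'' Non-self-looping is not preserved under unions: if $V_1,V_2$ are each non-self-looping, $\varphi_t(V_1)$ may still meet $V_2$, so $V_1\cup V_2$ can loop. Definition~\ref{d:non-rec} requires the \emph{union} $\bigcup_{j\in\mc{G}_\ell}\Lambda_{\rho_j}^\tau(R(h))$ to be non-self-looping, and your argument supplies no mechanism for that. Worse, your filtration collapses: every $\rho\in S^*_xM\setminus\mc{R}_x$ lies in $K_T$ for \emph{every} $T$ (with a $T$-dependent neighborhood), so each $K_T$ is open of full measure, all your $\e_\ell$ vanish, $U_\ell=\emptyset$, $A_0=S^*_xM$, and you would conclude that the entire cover forms a single $[t_0,T_0]$ non-self-looping family --- false whenever any direction through $x$ loops back (e.g.\ when the closed-geodesic directions are dense but of measure zero). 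Two further points: the local statement ``$\Phi^n(V_\rho)\cap V_\rho=\emptyset$ for all $n\ge N_\rho$'' with a fixed $V_\rho$ is false in general, since $V_\rho$ may contain periodic points of $\Phi$; only finite windows $[t_0,T]$ with $T$-dependent shrinking neighborhoods are available. And the obstacle you single out --- tangential short returns --- is not an obstacle here: for the geodesic flow a return of $S^*_xM$ to itself requires a geodesic loop at $x$, so returns are automatically separated by at least $\inj M$ in time and $t_0=\inj M/2$ comes for free.

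The paper's proof supplies exactly the device you are missing. It replaces $\mc{R}_x$ by the closed $\delta$-approximate recurrent sets $\mc{R}_x^{S,\delta}$ (points returning within $\delta$ of themselves in time at most $S$), whose volume is at most $\e$ for $\delta=\delta(\e)$ uniformly in $S$, and covers $S^*_xM$ by $N(\delta)\lesssim \delta^{1-n}$ balls of radius $\delta/4$. Each piece $G_i=\overline{B(\rho_i,\delta/4)}\setminus B(\mc{R}_x^{S,\delta},R_0)$ is then \emph{collectively} $[\inj M/2,S]$ non-self-looping for the elementary reason that a self-loop of a set of diameter less than $\delta$ is a $\delta$-approximate recurrence. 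The cost is the number of pieces, which dictates the coupling $S\sim N(\delta)^2\delta^{n-1}$ and explains why one obtains only an unspecified rate $T(h)\to\infty$. To repair your argument you would need an analogous mechanism converting pointwise non-recurrence into collective non-self-looping of whole families of tubes; the measure-theoretic filtration alone cannot do this.
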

In particular, lemmas~\ref{l:loopsAreFun} and~\ref{l:dejaVu} recover the fact that $x$ being non-recurrent implies~\eqref{e:smallo}.

}

%%%%%%%%%%%%%%%%%%%%%%%%%%%%%%%%%%%%%%%%%%%%%%%%%%%%%%%%%%%%%%%%%%%%%%%%%%%%%%%%
\subsection{Outline of the paper}
In \S\ref{s:general} we present Theorems \ref{t:porcupine} and \ref{t:coverToEstimate} which are the generalization of Theorems \ref{t:porcupine efxs} and \ref{t:coverToEstimate efxs} to quasimodes of general pseudo-differential operators $P$.
In \S\ref{s:loc}, we perform the analysis of quasimodes for $P$ and in particular prove Theorem~\ref{t:porcupine}. In \S\ref{s:Egorov} we give the proof of Theorem~\ref{t:coverToEstimate}. 
In \S\ref{s:spheres} we construct non-self looping covers on spheres of revolution and {prove Corollary~\ref{t:spherePend}.} {Finally, in \S\ref{S:change of hamiltonian}, we prove that the Hamiltonian flow for $|\xi|_g^2-1$ can be replaced by that for $|\xi|_g-1$.} 
{In Appendix~\ref{s:prelim} we present an index of notation and background on semiclassical analysis. }
%%%%%%%%%%%%%%%%%%%%%%%%%%%%%%%%%%%%%%%%%%%%%%%%%%%%%%%%%%%%%%%%%%%%%%%%%%%%%%%%

\ \\
\noindent {\sc Acknowledgements.} Thanks to Pat Eberlein, John Toth, Andras Vasy, and Maciej Zworski for many helpful conversations and comments on the manuscript. {Thanks also to the anonymous referees for many suggestions which improved the exposition.}
J.G. is grateful to the National Science Foundation for support under the Mathematical Sciences Postdoctoral Research Fellowship  DMS-1502661. {Y.C. is grateful to the Alfred P. Sloan Foundation. }

%%%%%%%%%%%%%%%%%%%%%%%%%%%%%%%%%%%%%%%%%%%%%%%%%%%%%%%%%%%%%%%%%%%%%%%%%%%%%%%%
%%%%%%%%%%%%%%%%%%%%%%%%%%%%%%%%%%%%%%%%%%%%%%%%%%%%%%%%%%%%%%%%%%%%%%%%%%%%%%%%
\section{General results: Bicharacteristic beams}\label{s:general}

%%%%%%%%%%%%%%%%%%%%%%%%%%%%%%%%%%%%%%%%%%%%%%%%%%%%%%%%%%%%%%%%%%%%%%%%%%%%%%%%
%%%%%%%%%%%%%%%%%%%%%%%%%%%%%%%%%%%%%%%%%%%%%%%%%%%%%%%%%%%%%%%%%%%%%%%%%%%%%%%%

Our main estimate gives control on eigenfunction averages in terms of microlocal data. {The ideas leading to the estimate build on the tools first constructed in~\cite{Gdefect} for sup-norms and generalized for use on submanifolds in~\cite{CG17}.}  

Since it entails little extra difficulty, we work in the general setup of semiclassical pseudodifferential operators (see e.g.~\cite{EZB} or ~\cite[Appendix E]{ZwScat} for a treatment of semiclassical analysis, see \S\ref{s:SemiNote} for a brief description of notation). Indeed, instead of only working with Laplace eigenfunctions, all our results can be proved for quasimodes of  a pseudodifferential operator of any order that has real, classically elliptic symbol. We now introduce the necessary objects to state this estimate.

Let $H \subset M$ be a submanifold. For $p\in S^m(\TM )$ define
\begin{equation}\label{e:SigH}
 {\SigH}=\{p=0\} \cap N^*\!H, 
\end{equation} 
where $N^*\!H$ is the conormal bundle to $H$
and {consider the Hamiltonian flow}
\begin{equation}\label{e:varphi}
\varphi_t:=\exp(tH_p).
\end{equation}
{Here, and in what follows, $H_p$ is the Hamiltonian {vector field} {generated by $p$}.}
In practice, we will prove our main result with $H$ replaced by a family of submanifolds $\{H_h\}_h$ such that for all $\alpha$ multiindex  there exists {$\KR_{_{\alpha}}>0$} such that for all $h>0$
\begin{equation}
\label{e:curvature}
|\partial_x^\alpha R_{_{H_h}}|+|\partial_x^\alpha \Pi_{_{H_h}}|\leq \KR_{_{\alpha}}
\end{equation}
where $R_{_{H_h}}$ and $\Pi_{_{H_h}}$ denote the sectional curvature and the second fundamental form of  $H_h$. Next, we assume that there is $\e>0$ so that for all $h>0$, the map $(-\e,\e)\times {\SigH}\to M$,
\begin{equation}
    \label{e:nonSelf}
    (t,\rho)\mapsto\pi({\varphi_t(\rho)})\;\;\;\text{ is a diffeomorphism}.
\end{equation} 
 We will say that a family of submanifolds $\{H_h\}_{h}$ is \emph{regular} if it satisfies~\eqref{e:curvature} and~\eqref{e:nonSelf}.  In addition, we will prove uniform statements in a shrinking neighborhood of $H_h$. In particular, {we prove stimates on $\tilde{H}_h$ where} $\tilde{H}_h$ is another family of submanifolds such that
\begin{equation}
\label{e:conormalClose}
{\sup_{\rho \in \Sigma_{_{H_h,p}}} d(\rho ,\Sigma_{_{\tilde{H}_h,p}})}\leq  h^\delta, \qquad \qquad |\partial_x^\alpha R_{_{\tilde H_h}}|+|\partial_x^\alpha \Pi_{_{\tilde H_h}}|\leq 2\KR_{_{\alpha}}
\end{equation}
for all $h>0$.
 Note that when $H_h$ is a family of points, the curvature bounds  become trivial, {and so in place of ~\eqref{e:conormalClose} we work with}  $d(x_h,\tilde{x}_h)<h^\delta$ 
and we may take ${\KR_{_{0}}}$ to be arbitrarily close to $0$. {It will often happen that the constants involved in our estimates depend on $\{H_h\}$ only through finitely many of the $\KR_{_{\alpha}}$ constants.} %When this happens we will write 
%\begin{equation}\label{e:KR}
%C=C(\KR_{_{\!*}}).
%\end{equation}}

For $p\in S^m(\TM )$, we say that $p$ is \emph{classically elliptic} if there exists {$K_p>0$} so that 
\begin{equation}\label{e:classellipt}
|p(x,\xi)|\geq |\xi|^m/{K_p},\qquad |\xi|\geq {K_p},\quad x\in M.
\end{equation}
In addition, for $p\in S^\infty(\TM ;\R)$, we say that a submanifold $H\subset M$ of codimension $k$ is \emph{conormally transverse for $p$} if given $f_1,\dots, f_{k}\in C_c^\infty(M;\R)$ {locally defining $H$ i.e.} with
\[
H= \bigcap_{i=1}^k\{f_i=0\}\quad\text{and}\quad \{df_i\}\text{ linearly independent on }H,
\] 
we have
\begin{equation}
\label{e:transverse} N^*\!H\; \subset \; \{p\neq0\} \cup \bigcup_{i=1}^k\{ H_pf_i \neq 0\},
\end{equation}
where $H_p$ is the Hamiltonian vector field associated to $p$, {and $N^*\!H$ is the set of conormal directions to $H$.} {Here, we interpret $f_i$ as a function on the cotangent bundle by pulling it back through the canonical projection map.}
In addition, let $r_H:M\to \re$ be the geodesic distance to $H$;  $r_{_{\!H}}(x)=d(x,H)$. Then, define $|H_pr_{_{\!H}}|:\SigH\to \re$ by 
\begin{equation}\label{e:HprH}
|H_pr_{_{\!H}}|(\rho):=\lim_{t\to 0}|H_pr_{_{\!H}}(\varphi_t(\rho))|.
\end{equation}
A family of submanifolds $\{H_h\}_{h}$ is said to be \emph{uniformly conormally transverse for $p$} if $H_h$ is conormally transverse {for $p$} for all $h$ and there exists {$\FR>0$} so that for all $h>0$
\begin{equation}\label{e:FR}
\inf_{\rho \in \SigH} |H_pr_{_{\!H_{\!h}}}|(\rho) \geq \FR.
\end{equation}
{Note that when $p(x, \xi)=|\xi|^2_{g(x)}-1$ then $\SigH=\SNH$ and $|H_pr_{_{\!H}}|(\rho)=2$ for all $\rho \in \SNH$.}

{Let $\{H_h\}_h{\subset M}$ be a regular and uniformly conormally transverse family of submanifolds}. Then, we may fix a family of regular hypersurfaces {depending on $h$,} $\sigFlow\subset \TM$ such that
\begin{equation}
\label{e:Hsig}
\sigFlow \text{ is uniformly transverse to }H_p\;\text{ with }\Sigma_{_{\!H_h, p}}\subset \sigFlow 
\end{equation}
and so that with $\Psi:\re\times {\TM}\to \TM $ defined by $\Psi(t,q)=\varphi_t(q)$, there is $0<\Tinj \leq 1$ (independent of $h$) so that 
\begin{equation}\label{e:Tinj}
%\Tinj:=\sup\{{\tau \leq 1}:\; \Psi|_{(-\tau,\tau)\times\sigFlow }\text{ is injective}\}.
 \Psi|_{(-\Tinj,\Tinj)\times\sigFlow }\text{ \;\;is injective}
\end{equation}
for all $h>0$.

\begin{remark}
{Working with a family $\{\tilde{H}_h\}_h$, and obtaining  uniform estimates for it, is needed in Theorem~\ref{t:porcupine efxs}. In this case, $H_h=\{x\}$ for every $h$ and $\tilde{H}_h$ is a point $\tilde{x}_h\in B(x,h^\delta)$.} Moreover, it is often useful to allow $H_h$ itself to vary with $h$ (see e.g.~\cite{CG19Lp}). Note that any $h$-independent submanifold $H\subset M$ that is conormally transverse is automatically regular and uniformly conormally transverse. {While in some applications it is useful to have $h$-dependent submanifolds $H_h$, as well as uniform estimates in a neighborhood of $H_h$, the reader may wish to ignore the dependence of $H_h$ on $h$ as well as letting $\tilde{H}=H$ for simplicity of reading.}
\end{remark}

Given $A \subset \TM $ define
$$\Lambda_{_{\!A}}^\tau:=\bigcup_{|t|\leq \tau}\varphi_t(A).$$
For $R>0$ and $A\subset \SigH$ we define
\begin{equation}\label{e:tube}
{\Lambda_{_{\!A}}^\tau(r):=\Lambda_{A_{_R}}^{\tau+r},\qquad A_{_r}:=\{\rho\in \sigFlow :d(\rho,A)<r\}.}
\end{equation}
where $d$ denotes the distance induced by the Sasaki metric on $T^*M$ (see e.g. \cite[Chapter 9]{BlairSasaki} for an explanation of the Sasaki metric). {In particular, the tube 
\begin{equation}
\label{e:tube2}
\Lambda_{\rho}^\tau(r):=\bigcup_{|t|\leq \tau +r}\varphi_t\big(\mc{L}_h\cap B(\rho,r)\big).
\end{equation}}
%%%%%%%%%%%%%%%%%%%%%%%%%%%%%%%%%%%%%%%%%%%%%%%%%%%%%%%%%%%%%%%%%%%%%%%%%%%%%%%%
\begin{figure}[htbp]
 \begin{center}
\includegraphics[height=5cm]{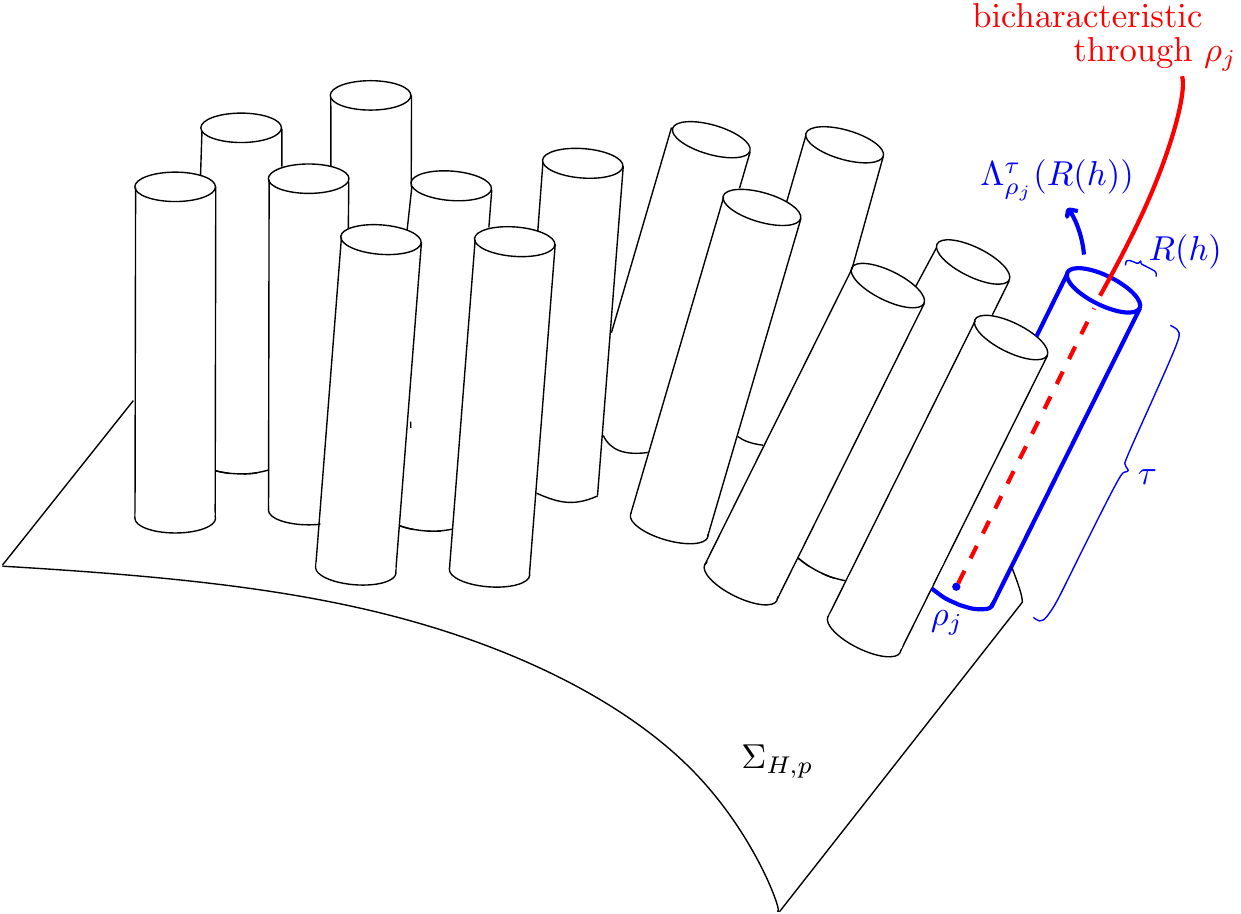}
\end{center}
\caption{\label{f:1} The tubes $\Tj$ through $\SigH$.}
\end{figure}
%%%%%%%%%%%%%%%%%%%%%%%%%%%%%%%%%%%%%%%%%%%%%%%%%%%%%%%%%%%%%%%%%%%%%%%%%%%%%%%% 

%%%%%%%%%%%%%%%%%%%%%%%%%%
\begin{definition} \label{d: cover} Let $A\subset \SigH$, $r>0$, and  $\{\rho_j(r)\}_{j=1}^{N_r} \subset A$. We say that the collection of tubes $\{\Lambda_{\rho_j}^\tau(r)\}_{j=1}^{N_h}$ is a \emph{$(\tau, r)$-cover} of a set $A\subset  \SigH$ provided
 $$\Lambda_A^\tau(\tfrac{1}{2}r) \subset\bigcup_{j=1}^{N_r}\Lambda_{\rho_j}^{\tau}(r).$$
In addition, {for $0\leq \delta\leq \frac{1}{2}$ and} $R(h)\geq {8}h^\delta$, we say that a collection $\{\chi_j\}_{j=1}^{N_h}\subset S_\delta(\TM;[0,1])$ is a \emph{$\delta$-partition for $A$   
associated to the $(\tau, R(h))$-cover}
if  $\{\chi_j\}_{j=1}^{N_h}$ is bounded in $S_\delta$ and
\begin{enumerate}
    \item $\supp \chi_j \subset {\Lambda_{\rho_j}^\tau(R(h))}$,\smallskip
    \item $\sum_{j=1}^{N_h}\chi_j\geq 1$ \;on\;  {$\Lambda_{A}^{{\tau/2}}(\tfrac{1}{2}R(h)).$}
\end{enumerate}
\end{definition}
%%%%%%%%%%%%%%%%%%%%%%%%%%

The main estimate is the following. 
%%%%%%%%%%%%%%%%%%%%%%%%%%%%%%%%%%%%%%%%%%%%%%%%%%%%%%%%%%%%%%%%%%%%%%%%%%%%%%%
\begin{theorem}
\label{t:porcupine} 
Let $P\in \Psi^m(M)$ have real, classically elliptic symbol $p\in S^m(\TM;{\R})$. 
{Let $\{H_h\}_h \subset M$ be a {regular} family of submanifolds of codimension $k$ that is uniformly conormally transverse for $p$.}
There exist
$$\tau_0=\tau_0(M,p,\Tinj, \FR ,{\{H_h\}_h}){>0},\qquad R_0=R_0(M,p,{k}, {\KR_{_{0}}}, {\Tinj}, \FR ){>0},$$
 $C_{n,k}>0$ depending only on $(n,k)$, {and $C_0>0$ depending only on $(M,p)$,} so that the following holds. 

Let $0<\tau\leq \tau_0$,  $0\leq \class<\frac{1}{2}$, and ${8}h^\delta\leq R(h)\leq R_0$.
{Let  $\{\chi_j\}_{j=1}^{N_h}$ be a $\delta$-partition for $\SigH$ associated to a $(\tau, R(h))$-cover.} {Let $N>0$ and $\{\tilde H_h\}_h\subset M$ be a family of submanifolds of codimension $k$  satisfying \eqref{e:conormalClose}.}

There exist $C>0$, so that for every family{ $\{w_h\}_h$ with $w_h\in S_\class \cap C_c^\infty(\tilde H_h)$} there are $C_{_{\!N}}>0$ and 
$$h_0=h_0{(M,P,\{\chi_j\},\delta, \FR, {\{H_h\}_h} )}>0$$  
with the property that for any $0<h<h_0$ and $u\in \mc{D}'(M)$,
\begin{align*}
h^{\frac{k-1}{2}}\Big|\int_{\tilde H_h}w_h u\, d\sigma_{_{\!\tilde H_h}}\Big|
&\leq \frac{C_{n,k} }{\tau^{\frac{1}{2}} {\FR^{\frac{1}{2}}}} \|w_h\|_{_{\!\infty}} \, R(h)^{\frac{n-1}{2}}\sum_{j\in \mathcal J_h(w_h)}{\|Op_h(\chi_j)u\|_{\LM}}\\
&+Ch^{-1}{\|w_h\|_\infty}\|Pu\|_{\Hm} +C_{_{\!N}}h^N\big(\|u\|_{\LM}+{\|Pu\|_{\Hm}}\big),
\end{align*}
where
\begin{equation}\label{e:i}
\mathcal J_h(w_h):=\{j:\; \Lambda_{\rho_j}^\tau (2R(h))\cap \pi^{-1}(\supp w_h)\neq \emptyset\},
\end{equation}
and  {$\pi: \Sigma_{_{\!\tilde H_h, p}} \to \tilde H_h$} is  the canonical projection. {Moreover, the constants $C,  C_{_{\!N}},{h_0}$ are uniform for $\chi_j$ in bounded subsets of $S_\delta$. The constants ${\tau_0},C,C_{_{\!N}},{h_0}$ depend on $\{H_h\}_h$ only through finitely many of the constants $\KR_{_{\alpha}}$ in \eqref{e:curvature}.  The constant $C_{_{\!N}}$ is uniform for $ \{w_h\}_h$ in bounded subsets of $S_\delta$.}
\end{theorem}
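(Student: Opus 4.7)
The plan is to prove Theorem~\ref{t:porcupine} in three stages: a microlocal reduction that replaces $u$ near the conormal $\Sigma_{\tilde H_h,p}$ by the sum $\sum_j Op_h(\chi_j)u$ of beams; a single-tube estimate bounding each beam's contribution against its $L^2$ mass; and the final assembly.

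First I would microlocally decompose the averaging functional. The map $u\mapsto\int_{\tilde H_h}w_h u\,d\sigma_{\tilde H_h}$ pairs $u$ against a conormal distribution supported on $N^*\tilde H_h$, and classical ellipticity of $P$ forces the essential action to occur on $N^*\tilde H_h\cap\{p=0\}=\Sigma_{\tilde H_h,p}$. I would choose a cutoff $\psi\in S^0(\TM)$ equal to $1$ on a small neighborhood of $\bigcup_{|t|\leq \tau_0}\varphi_t(\Sigma_{\tilde H_h,p})$ inside $\sigFlow$, so that
\[
\int_{\tilde H_h}w_h u\,d\sigma_{\tilde H_h} = \int_{\tilde H_h} w_h\, Op_h(\psi)u\,d\sigma_{\tilde H_h} + \int_{\tilde H_h}w_h\,(1-Op_h(\psi))u\,d\sigma_{\tilde H_h}.
\]
The second term is controlled by combining an elliptic parametrix for $P$ away from $\{p=0\}$ with a propagation/non-stationary-phase argument along the Hamilton flow away from $N^*\tilde H_h$, made quantitative by the lower bound~\eqref{e:FR}. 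This produces exactly the terms $h^{-1}\|w_h\|_\infty\|Pu\|_{\Hm}$ and $C_{N}h^N(\|u\|_{\LM}+\|Pu\|_{\Hm})$ in the theorem. Since $\{\chi_j\}$ is a $\delta$-partition for $\Sigma_{H_h,p}$ and $\tilde H_h$ is $O(h^\delta)$-close to $H_h$ by~\eqref{e:conormalClose}, one still has $\sum_j\chi_j\geq 1$ near $\Sigma_{\tilde H_h,p}$ at scale $h^\delta$; thus $Op_h(\psi)u=\sum_j Op_h(\chi_j)u+O_{L^2}(h^\infty)\|u\|_{\LM}$ modulo an operator whose symbol vanishes near $\Sigma_{\tilde H_h,p}$, which is absorbed into the error.

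Next, for each $j$ I would prove the single-tube estimate
\[
h^{\frac{k-1}{2}}\Big|\int_{\tilde H_h}w_h\, Op_h(\chi_j)u\,d\sigma_{\tilde H_h}\Big|\leq \frac{C_{n,k}}{\tau^{\frac{1}{2}}\FR^{\frac{1}{2}}}\|w_h\|_\infty R(h)^{\frac{n-1}{2}}\|Op_h(\chi_j)u\|_{\LM}
\]
for $j\in \mc J_h(w_h)$, with an $O(h^\infty)\|u\|_{\LM}$ bound for $j\notin\mc J_h(w_h)$. The latter follows from pseudolocality: if $\Lambda_{\rho_j}^\tau(2R(h))\cap\pi^{-1}(\supp w_h)=\emptyset$, then the kernel of $v\mapsto\int_{\tilde H_h}w_h\, Op_h(\chi_j)v\,d\sigma_{\tilde H_h}$ has operator wavefront disjoint from $\supp\chi_j$. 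The main inequality itself is obtained by a $TT^*$-style argument: bound the absolute value by $\|Op_h(\chi_j)u\|_{\LM}\cdot\|K_j\|_{\LM}$, where $K_j(y)=\int_{\tilde H_h}w_h(x)\,K_{Op_h(\chi_j)^*}(x,y)\,d\sigma_{\tilde H_h}(x)$, and then estimate $\|K_j\|_{\LM}$ via semiclassical stationary phase applied to the oscillatory integral representing $K_j$. The critical set is $\Sigma_{\tilde H_h,p}\cap\supp\chi_j$, a short piece of a single bicharacteristic through $\rho_j$; the transverse Hessian has Jacobian bounded below by $|H_pr_{\tilde H_h}|^2\geq\FR^2$. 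Dimensional counting then produces the $h^{(1-k)/2}$ from the codimension-$k$ restriction, $R(h)^{(n-1)/2}$ from the transverse width of the tube, $\tau^{-1/2}$ from $L^2$-normalization along the $2\tau$-long bicharacteristic segment, and $\FR^{-1/2}$ from the transversality Jacobian.

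Summing the single-tube estimate over $j\in\mc J_h(w_h)$ and absorbing the $O(h^\infty)$ contributions from $j\notin\mc J_h(w_h)$ (finite in number since $N_h=O(h^{-C})$) together with the errors of the first step into the $C_{_{\!N}}h^N$ term yields the announced bound. The main obstacle is the single-tube estimate: ensuring uniformity of constants over $\chi_j$ in bounded subsets of $S_\delta$ and over the family $\{\tilde H_h\}$ (with the curvature bounds~\eqref{e:curvature}--\eqref{e:conormalClose}) requires careful use of the $S_\delta$ calculus at the borderline regime $\delta<\tfrac{1}{2}$, where one must track the $O(h^{1-2\delta})$ scale of the tube in the cotangent bundle and verify that the stationary phase remainder is still an improvement. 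The near-commutativity $[P,Op_h(\chi_j)]=O_{L^2\to L^2}(h^{1-\delta})$ afforded by Proposition~\ref{l:nicePartition} enters here: it ensures that $Op_h(\chi_j)u$ behaves like a localized quasimode along its tube, which in turn identifies the stationary set of the phase as the arc of bicharacteristic through $\rho_j$.
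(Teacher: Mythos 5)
Your Step 1 and the final summation are reorganizations of what the paper does (the paper uses the second–microlocal tangential cutoff $\beta_\delta$ at frequency scale $h^\delta$ rather than an $O(1)$ cutoff near the flowout, which matters because the $\chi_j$ only sum to $1$ on an $R(h)$-neighborhood of $\SigH$, but that part is repairable). The genuine gap is in your single-tube estimate. The bound
\[
h^{\frac{k-1}{2}}\Big|\int_{\tilde H_h}w_h\,Op_h(\chi_j)u\,d\sigma\Big|\leq \frac{C_{n,k}}{\tau^{1/2}\FR^{1/2}}\|w_h\|_\infty R(h)^{\frac{n-1}{2}}\|Op_h(\chi_j)u\|_{L^2(M)}
\]
is \emph{false} for general $u\in\mc{D}'(M)$, and no $TT^*$/stationary-phase argument can prove it, because the duality bound $|\langle Op_h(\chi_j)u,K_j\rangle|\leq\|Op_h(\chi_j)u\|_{L^2}\|K_j\|_{L^2}$ never uses the equation $Pu\approx 0$. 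By pseudolocality, $K_j=Op_h(\tilde\chi_j)^*(\bar w_h\,\delta_{\tilde H_h})$ is concentrated in an $R(h)$-neighborhood of $\supp w_h$, so $\|K_j\|_{L^2}$ carries \emph{no} factor of $\tau^{-1/2}$; computing it (e.g.\ for $H=\{x\}$, $k=n$, $R(h)=h^{1/2}$) gives $\|K_j\|_{L^2}\sim h^{-n/4}$, whereas your claimed bound requires $\tau^{-1/2}h^{(1-n)/4}$. The duality bound is saturated by $u$ proportional to $K_j$, i.e.\ by a function whose tube-mass is piled into an $R(h)$-ball around $x$ rather than spread along the length-$\tau$ bicharacteristic; such $u$ violates your inequality by a factor $h^{-(1-\delta)(n+1-k)/2}\tau^{1/2}$. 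The remark that near-commutativity "identifies the stationary set of the phase" does not fix this: the phase and its critical set are determined by $\chi_j$ and $\tilde H_h$, not by $u$, and the quasimode property must enter \emph{quantitatively} to redistribute mass along the tube. Any correct single-tube estimate must carry an additional term of size $h^{-1}\|Op_h(\chi_j)Pu\|_{L^2(M)}$ on the right (which is then recombined over $j\in\mc J_h(w_h)$ by Cauchy--Schwarz, using $|\mc J_h|\lesssim R(h)^{1-n}$ and almost-orthogonality, into the global $Ch^{-1}\|Pu\|_{\Hm}$ term of the theorem).

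For contrast, the paper obtains the $\tau^{-1/2}$ by a transport, not a stationary-phase, mechanism: near each tube one factors $p=e(x,\xi)(\xi_1-a(x,\xi'))$ (the Koch--Tataru--Zworski idea), so that $w:=Op_h(\chi_j)u$ satisfies $(hD_{x_1}-Op_h(a))w=f$ with $f$ controlled by $POp_h(\chi_j)u$; solving this evolution equation and averaging the initial time over an interval of length $\sim\tau$ converts the trace at $x_1=0$ into $\tau^{-1/2}\|w\|_{L^2}+\tau^{1/2}h^{-1}\|f\|_{L^2}$ (Lemma~\ref{l:squirrel}). The factors $h^{(1-k)/2}R(h)^{(k-1)/2}$ then come from a semiclassical Sobolev embedding in the directions conormal to $\tilde H_h$, exploiting that $(hD_{x_i}-a_i)^k Op_h(\chi_j)$ has norm $O(R(h)^k)$ on the tube, and the remaining $R(h)^{(n-k)/2}$ from Cauchy--Schwarz on $R(h)$-balls in $\tilde H_h$. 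You would need to replace your $TT^*$ step by an argument of this type (or otherwise inject the equation per tube) for the proof to go through.
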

%{\cs explain how constants depend on $\KR_{_{\alpha}}$\cs}
%%%%%%%%%%%%%%%%%%%%%%%%%%%%%%%%%%%%%%%%%%%%%%%%%%%%%%%%%%%%%%%%%%%%%%%%%%%%%%%% 

\begin{remark}[{Proof of Theorem~\ref{t:porcupine efxs}}]\label{r:how to prove results for efxs}
We emphasize now that Theorem~\ref{t:porcupine} is the key analytical estimate of this article. In particular, Theorem \ref{t:porcupine efxs} is a direct consequence of it. Indeed, we work with $P=-h^2\Delta_g -I$,  $Pu=0$. {Let $H_h=\{x\}$ and $\tilde H_h=\{x_h\}$ with $x_h \in B(x, h^\delta)$. Let $w_h=1$ for all $h$.} In particular, ${\mc{J}_h}(w_h)=\{1, \dots, N_h\}$. Note that since  $H_h=\{x\}$, then $\SNH=S_x^*M$. Also, in this case  $\Tinj(\{x\})$ can be chosen uniform on $M$, and we have $H_pr_H=2$ and $\FR=2$. Moreover, ${\KR_{_{\alpha}}}$ can be taken arbitrarily small. {This yields $\tau_0=\tau_0(M,g)$, $R_0=R_0(M,g)$ and $h_0=h_0(M,g,\{\chi_j\}, \delta)$.}  Theorem~\ref{t:porcupine efxs} follows.
\end{remark}

We next present Theorem \ref{t:coverToEstimate} which combines Theorem~\ref{t:porcupine} with an application of Egorov's theorem  to control eigenfunction averages using dynamical information at $\SigH$.  In fact, all the applications to obtaining quantitative improvements {for $L^\infty$ bounds and averages} described in the introduction are reduced to a purely dynamical argument together with an application of Theorem~\ref{t:coverToEstimate}.

{As explained before Theorem \ref{t:coverToEstimate efxs}, it will be convenient for us to work with covers by tubes without too much redundancy. We therefore introduce the following definition.
\begin{definition}
\label{d:good cover}
Let $A\subset \SigH$, $r,\,\mathfrak{D}>0$, and  $\{\rho_j(r)\}_{j=1}^{N_r} \subset A$. We say that the collection of tubes $\{\Lambda_{\rho_j}^\tau(r)\}_{j=1}^{N_r}$ is a \emph{$(\mathfrak{D},\tau, r)$-good cover} of a set $A\subset  \SigH$ provided that it is a $(\tau,r)$-cover for $A$ and there exists a partition $\{\mathcal{J}_\ell\}_{\ell=1}^{\mathfrak{D}}$ of $\{1, \dots, N_r\}$ so that for every $\ell\in \{1, \dots, \mathfrak{D}\}$
    \[
    \Lambda_{\rho_j}^\tau (3r)\cap \Lambda_{\rho_i}^\tau(3r)=\emptyset\qquad i,j\in \mathcal{J}_\ell, \quad  i\neq j.
    \]
\end{definition}
\noindent In Proposition \ref{l:cover} we prove that {there exists a} $(\mathfrak{D}_n,\tau, r)$-good cover {for $\SigH$} where $\mathfrak{D_n}$ only depends on $n$. {Thus, one can always work with such a cover}.}

We define the \emph{maximal expansion rate } and the \emph{Ehrenfest time} at frequency $h^{-1}$ respectively:
\begin{equation}\label{e:Lmax}
\Lambda_{\max}:=\limsup_{|t|\to \infty}\frac{1}{|t|}{\log} \sup_{{\{|p|\leq \frac{1}{2}\}}}\|d\varphi_t(x,\xi)\|, \qquad 
T_e(h):=\frac{\log h^{-1}}{2\Lambda_{\max}}.
\end{equation}
Note that $\Lambda_{\max}\in[0,\infty)$ and if $\Lambda_{\max}=0$, we may replace it by an arbitrarily small positive constant.

The next theorem involves many parameters; their role is to provide flexibility when applying the theorem. {This theorem controls averages over uniformly conormally transverse families of submanifolds in terms of families $\{\G_\ell\}_\ell$ of  tubes that run conormally to the submanifolds and are $[t_\ell, T_\ell]$ non self-looping. For an explanation on the roles of these tubes and non-looping times, see the text after Theorem~\ref{t:coverToEstimate efxs}}.
%%%%%%%%%%%%%% %%%%%%%%%%%%%%%%%%%%%%%%%%%%%%%%%%%%%%%%%%%%%%%%%%%%%%%%%%%%%%%%%%

\begin{theorem}
\label{t:coverToEstimate}
Let $P\in \Psi^m(M)$ be a self-adjoint operator with classically elliptic symbol $p$.
Let $\{H_h\}_h \subset M$ be a regular family of submanifolds of codimension $k$ that is uniformly conormally transverse for $p$.
Let $\{\tilde H_h\}_h$ be a family of submanifolds of codimension $k$  satisfying \eqref{e:conormalClose}.
 Let $0<\delta<\frac{1}{2}$, $N>0$ and ${\{w_h\}_h}$ with {$w_h\in S_\class \cap C_c^\infty(\tilde H_h)$}. There exist positive constants $\tau_0=\tau_0(M,p,\Tinj,\FR,{\{H_h\}_h})$, {$R_0=R_0(M,p, {\KR_{_{0}}},{k,\Tinj,\FR })$,} and $C_{n,k}$ depending only on $n$ and $k$, $h_0=h_0(M,P,{\delta,\FR, {\{H_h\}_h}})$, and for each $0<\tau\leq \tau_0$  there are 
 $$C=C(M,p,\tau,\delta,{\FR }, {\{H_h\}_h}),\qquad C_{_{\!N}}=C_{_{\!N}}(M, {P}, N,\tau,\delta,{\{w_h\}_h},{\FR },{\{H_h\}_h}),$$   
 so that the following holds.

Let ${8}h^\delta\leq R(h){<R_0}$,{ $0\leq\alpha< 1-2{\limsup_{h\to 0}\frac{\log R(h)}{\log h}}$,} and suppose $\{\Lambda_{_{\rho_j}}^\tau(R(h))\}_{j=1}^{N_h}$  is a {$(\mathfrak{D},\tau, R(h))$-{good }cover of $\SigH$ for some $\mathfrak{D}>0$}. In addition, suppose there exist $\mc{B}\subset \{1,\dots, N_h\}$ and a finite collection  $\{\mc{G}_\ell\}_{\ell \in \mathcal L} \subset \{1,\dots, N_h\}$ with 
$$
\mathcal J_h(w_h)\;\subset\;  \mc{B} \cup \bigcup_{\ell \in \mathcal L}\mc{G}_\ell,
$$
where $\mathcal J_h(w_h)$ is defined in \eqref{e:i},
 and so that for every $\ell \in \mathcal L$ there exist  $t_\ell=t_\ell(h)>0$ and ${T_\ell=T_\ell(h)}$ {with $t_\ell(h)\leq T_\ell \leq {2} \alpha T_e(h)$}  so that
$$
\bigcup_{j\in \mc{G}_\ell}\Lambda_{_{\rho_j}}^\tau(R(h))\;\;\text{ is }\;\;[t_\ell,T_{\ell}]\text{ non-self looping}.
$$
Then, for $u\in \mc{D}'(M)$ and $0<h<h_0$,
\begin{align*}
h^{\frac{k-1}{2}}\Big|\int_{\tilde H_h} w_h u\, d\sigma_{\tilde H_h}\Big|
&\leq\frac{C_{n,k}{\mathfrak{D}}\|w_h\|_{_{\!\infty}}R(h)^{\frac{n-1}{2}}}{\tau^{\frac{1}{2}}\FR^{\frac{1}{2}}}
\Bigg(|\mc{B}|^{\frac{1}{2}}+\sum_{\ell \in \mathcal L }\frac{(|\mc{G}_\ell|t_\ell)^{\frac{1}{2}}}{T^{\frac{1}{2}}_\ell}\Bigg)\|u\|_{\LM} \\
&+\frac{C_{n,k}{\mathfrak{D}}\|w_h\|_{_{\!\infty}}R(h)^{\frac{n-1}{2}}}{\tau^{\frac{1}{2}}\FR^{\frac{1}{2}}} \sum_{\ell \in \mathcal L}\frac{(|\mc{G}_\ell|t_\ell T_\ell)^{\frac{1}{2}}}{h}\;\|Pu\|_{\LM}\! \\
&+Ch^{-1}{\|w_h\|_\infty}\|Pu\|_{\Hm}+C_{_{\!N}}h^N\big(\|u\|_{\LM}+{\|Pu\|_{\Hm}}\big).
\end{align*}
Here, the constant $C_{_{\!N}}$ depends on $\{w_h\}_h$  only through finitely many $S_\delta$ seminorms of $w_h$. The constants ${\tau_0},C,C_{_{\!N}}, {h_0}$ depend on $\{H_h\}_h$ only through finitely many of the constants $\KR_{_{\alpha}}$ in \eqref{e:curvature}.
\end{theorem}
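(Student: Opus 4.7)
My approach is to combine the $L^2$-decomposition estimate of Theorem~\ref{t:porcupine} with a propagation and averaging argument built on Egorov's theorem and the non-self-looping hypothesis. Applying Theorem~\ref{t:porcupine} first reduces the task to bounding
\[
S := \sum_{j \in \mc{J}_h(w_h)} \|Op_h(\chi_j) u\|_{L^2(M)},
\]
while the extraneous error terms $h^{-1}\|w_h\|_\infty \|Pu\|_{\Hm}$ and $C_N h^N(\|u\| + \|Pu\|_{\Hm})$ pass through unchanged. Using the partition $\mc{J}_h(w_h) \subset \mc{B}\cup\bigcup_\ell \mc{G}_\ell$ and Cauchy--Schwarz on each subfamily gives
\[
S \leq |\mc{B}|^{1/2}\Big(\sum_{j\in\mc{B}}\|Op_h(\chi_j)u\|^2\Big)^{1/2} + \sum_{\ell\in\mathcal L} |\mc{G}_\ell|^{1/2}\Big(\sum_{j\in\mc{G}_\ell}\|Op_h(\chi_j)u\|^2\Big)^{1/2},
\]
so the task reduces to bounding each $L^2$-sum separately.

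The $(\mathfrak{D}, \tau, R(h))$-good-cover assumption of Definition~\ref{d:good cover} decomposes any index subset into at most $\mathfrak{D}$ subfamilies of pairwise disjoint tubes. Within each subfamily, the operators $Op_h(\chi_j)$ are almost orthogonal (by standard pseudodifferential calculus, given $\chi_j \in S_\delta$ with disjoint supports), yielding $\sum_{j\in\mc{B}}\|Op_h(\chi_j)u\|^2 \lesssim \mathfrak{D}\|u\|^2 + O_N(h^N)\|u\|^2$. The same reduction gives $\sum_{j\in\mc{G}_\ell}\|Op_h(\chi_j)u\|^2 \lesssim \mathfrak{D}\,\|Op_h(\tilde\chi_\ell)u\|^2 + O_N(h^N)\|u\|^2$, where $\tilde\chi_\ell\in S_\delta$ is a nonnegative cutoff supported in the non-self-looping union $\bigcup_{j\in\mc{G}_\ell}\Lambda^\tau_{\rho_j}(R(h))$ and dominating $\sum_{j\in\mc{G}_\ell}\chi_j^2$.

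The heart of the proof is a propagation lemma of the form
\[
\|Op_h(\tilde\chi_\ell)u\|^2 \lesssim \frac{t_\ell}{T_\ell}\|u\|^2 + \frac{t_\ell T_\ell}{h^2}\|Pu\|^2_{L^2(M)} + (\text{negligible}).
\]
To prove it, set $s_k := k t_\ell$ for $k = 0, 1, \ldots, N := \lfloor T_\ell/t_\ell\rfloor$, so that the differences $s_k - s_j$ with $j < k$ lie in $[t_\ell, T_\ell]$. The $[t_\ell, T_\ell]$-non-self-looping hypothesis then forces the sets $\varphi_{s_k}(\operatorname{supp}\tilde\chi_\ell)$ to be pairwise disjoint, hence $\sum_{k=0}^N (\tilde\chi_\ell\circ\varphi_{-s_k})^2 \leq 1$, and a sharp G\aa rding argument yields
\[
\sum_{k=0}^N \|Op_h(\tilde\chi_\ell\circ\varphi_{-s_k}) u\|^2 \leq \|u\|^2 + O(h^{1-2\delta})\|u\|^2.
\]
Egorov's theorem---valid up to the Ehrenfest time, which is precisely why the hypothesis $T_\ell \leq 2\alpha T_e(h)$ with $\alpha < 1 - 2\limsup \log R(h)/\log h$ is imposed---then lets me compare $\|Op_h(\tilde\chi_\ell) u\|$ with $\|Op_h(\tilde\chi_\ell\circ\varphi_{-s_k}) U(s_k) u\|$ modulo errors of order $h^{1-2\delta}e^{C s_k}\|u\|$, where $U(t)=e^{-itP/h}$. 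The elementary quasimode bound $\|U(s_k) u - u\| \leq (s_k/h)\|Pu\|_{L^2}$ allows the passage from $U(s_k) u$ to $u$; averaging and optimizing the distribution of the $\|Pu\|$ error across the $N$ shifts delivers the stated inequality.

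Assembling via Cauchy--Schwarz,
\[
\sum_{j\in\mc{G}_\ell}\|Op_h(\chi_j) u\| \lesssim \mathfrak{D}\, |\mc{G}_\ell|^{1/2}\Bigl[\bigl(t_\ell/T_\ell\bigr)^{1/2}\|u\| + \bigl(t_\ell T_\ell\bigr)^{1/2} h^{-1}\|Pu\|_{L^2}\Bigr],
\]
and summing over $\ell$ and substituting back into the bound from Theorem~\ref{t:porcupine} produces the stated estimate. The two main obstacles are: (i) quantifying the iterated Egorov remainder so that it remains strictly below the main term over the long timescale $T_\ell$---the threshold on $\alpha$ is precisely the quantitative statement needed here, and it is essential to track how the $S_\delta$-seminorms of $\tilde\chi_\ell\circ\varphi_{-s_k}$ deteriorate under the flow; and (ii) obtaining the sharp $(t_\ell T_\ell)^{1/2}$ (rather than the naive $T_\ell$) coefficient on $\|Pu\|$, which requires care in distributing the quasimode perturbation across the $N$ shifts rather than concentrating it at the worst time step.
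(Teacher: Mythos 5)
Your overall architecture matches the paper's: reduce via Theorem~\ref{t:porcupine} to bounding $\sum_{j}\|Op_h(\chi_j)u\|_{\LM}$, split into $\mc{B}$ and the $\mathfrak{D}$ disjoint subfamilies of each $\mc{G}_\ell$, use Cauchy--Schwarz plus almost orthogonality of cutoffs with disjoint supports, and prove a propagation lemma by exploiting that the time translates $\varphi_{kt_\ell}(\supp\chi)$, $|k|\le T_\ell/(2t_\ell)$, are pairwise disjoint, so that Egorov (valid since $T_\ell\le 2\alpha T_e(h)$) plus sharp G\aa rding give $\sum_k\|e^{ikt_\ell P/h}Op_h(\chi)e^{-ikt_\ell P/h}u\|^2\lesssim\|u\|^2$. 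This is precisely the paper's Lemma~\ref{l:recur-a}. The $\|u\|_{\LM}$ part of your estimate is therefore sound.

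The gap is in the step you label obstacle (ii), and it is a genuine one: \emph{averaging the Duhamel error over the $N$ shifts does not by itself produce the coefficient $t_\ell T_\ell/h^2$ on $\|Pu\|^2_{\LM}$.} If you bound $\|Op_h(\tilde\chi_\ell\circ\varphi_{-s_k})(U(s_k)u-u)\|$ by $(s_k/h)\|Pu\|$ and then average over $k=0,\dots,N$ with $N\sim T_\ell/t_\ell$, you get $\frac{1}{N}\sum_k (s_k/h)^2\|Pu\|^2\sim (T_\ell/h)^2\|Pu\|^2$, which after Cauchy--Schwarz yields $|\mc{G}_\ell|^{1/2}T_\ell h^{-1}\|Pu\|$ rather than $(|\mc{G}_\ell|t_\ell T_\ell)^{1/2}h^{-1}\|Pu\|$ — a loss of $(T_\ell/t_\ell)^{1/2}$, which destroys the logarithmic gains in the applications. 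The missing idea is a \emph{second} application of the non-self-looping hypothesis, this time to the propagated $Pu$ terms. Writing $U(s_k)u-u=\tfrac{i}{h}\int_0^{s_k}U(s)Pu\,ds$ and conjugating, one needs the time-averaged almost orthogonality
\begin{equation*}
\sum_{k}\int_0^{kt_\ell}\big\|e^{\frac{isP}{h}}Op_h(\chi_\ell)e^{-\frac{isP}{h}}Pu\big\|^2\,ds\;\le\;C\,T_\ell\,\|Pu\|^2_{\LM},
\end{equation*}
which holds because for each fixed $\rho$ the set $\{s\in[0,T_\ell]:\varphi_s(\rho)\in\supp\chi_\ell\}$ has measure at most $t_\ell$ (if $\varphi_s(\rho),\varphi_t(\rho)\in\supp\chi_\ell$ with $0\le s-t\le T_\ell$, non-self-looping forces $s-t\le t_\ell$), so the symbol $\sum_k\int_0^{kt_\ell}|\chi_\ell|^2\circ\varphi_s\,ds$ is bounded by $T_\ell$ rather than the trivial $T_\ell^2/t_\ell$; this is the content of the paper's \eqref{e:orthog}--\eqref{e:Bud2}. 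Feeding this into the cross terms (the paper's $I_{k,\ell}$, $I\!\!I_{k,\ell}$) is what produces $\max_\ell T_\ell^2 h^{-2}\|Pu\|^2$ on the right of $\sum_\ell \tfrac{T_\ell}{t_\ell}\|Op_h(\chi_\ell)u\|^2\le 4\|u\|^2+\dots$, hence $t_\ell T_\ell h^{-2}$ per family. Without this dynamical input your propagation lemma, as sketched, proves a strictly weaker statement.
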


%%%%%%%%%%%%%%%%%%%%%%%%%%%%%%%%%%%%%%%%%%%%%%%%%%%%%%%%%%%%%%%%%%%%%%%%%%%%%%%%
\medskip
\begin{remark}[{Proof of Theorem~\ref{t:coverToEstimate efxs}}]\label{r:how to prove results for cover efxs}
{Note that making the same observations in Remark \ref{r:how to prove results for efxs} it is straightforward to see that Theorem \ref{t:coverToEstimate efxs} is a generalization of Theorem \ref{t:coverToEstimate}. The only consideration is that the tubes are built using the geodesic flow, which is generated by the symbol $p(x,\xi)=|\xi|_{g(x)}-1$ instead of $p_0(x,\xi)=|\xi|^2_{g(x)}-1$. We explain how to pass from one flow to the other in \S\ref{S:change of hamiltonian}.} 
\end{remark}

\begin{remark}
\label{r:extra}
Note that in this paper we study averages of relatively weak quasimodes for the Laplacian with no additional assumptions on the functions. This is in contrast with results which impose additional conditions on the functions such as: 
that they be Laplace eigenfunctions that simultaneously satisfy additional equations~\cite{I-S,GT18a,Ta18,ToZe03}; that they be eigenfunctions in the very rigid case of the flat torus~\cite{B93,Gros}; or that they form a density one subsequence of Laplace eigenfunctions~\cite{JZ}.
\end{remark}

\begin{remark}
We also note that the norm $C\|Pu\|_{\Hm}$ in Theorems~\ref{t:coverToEstimate} and~\ref{t:porcupine} may be replaced by $C_\e\|Pu\|_{\Hs{\!\!\frac{k-2m+\e}{2}}}$ for any $\e>0$. However, for notational convenience  we have chosen to use a sub-optimal Sobolev embedding to produce the $\|Pu\|_{\Hm}$ term.
\end{remark}

%%%%%%%%%%%%%%%%%%%%%%%%%%%%%%%%%%%%%%%%%%%%%%%%%%%%%%%%%%%%%%%%%%%%%%%%%%%%%%%%
\addcontentsline{toc}{section}{\quad\, \bf{Analysis of Quasimodes}}
%%%%%%%%%%%%%%%%%%%%%%%%%%%%%%%%%%%%%%%%%%%%%%%%%%%%%%%%%%%%%%%%%%%%%%%%%%%%%%%%
%%%%%%%%%%%%%%%%%%%%%%%%%%%%%%%%%%%%%%%%%%%%%%%%%%%%%%%%%%%%%%%%%%%%%%%%%%%%%%%%
%%%%%%%%%%%%%%%%%%%%%%%%%%%%%%%%%%%%%%%%%%%%%%%%%%%%%%%%%%%%%%%%%%%%%%%%%%%%%%%%
%%%%%%%%%%%%%%%%%%%%%%%%%%%%%%%%%%%%%%%%%%%%%%%%%%%%%%%%%%%%%%%%%%%%%%%%%%%%%%%%
%%%%%%%%%%%%%%%%%%%%%%%%%%%%%%%%%%%%%%%%%%%%%%%%%%%%%%%%%%%%%%%%%%%%%%%%%%%%%%%%
\section{Estimates near bicharacteristics: Proof of Theorem  \ref{t:porcupine}}
\label{s:loc}
%%%%%%%%%%%%%%%%%%%%%%%%%%%%%%%%%%%%%%%%%%%%%%%%%%%%%%%%%%%%%%%%%%%%%%%%%%%%%%%%
%%%%%%%%%%%%%%%%%%%%%%%%%%%%%%%%%%%%%%%%%%%%%%%%%%%%%%%%%%%%%%%%%%%%%%%%%%%%%%%%
%%%%%%%%%%%%%%%%%%%%%%%%%%%%%%%%%%%%%%%%%%%%%%%%%%%%%%%%%%%%%%%%%%%%%%%%%%%%%%%%
%%%%%%%%%%%%%%%%%%%%%%%%%%%%%%%%%%%%%%%%%%%%%%%%%%%%%%%%%%%%%%%%%%%%%%%%%%%%%%%%
%%%%%%%%%%%%%%%%%%%%%%%%%%%%%%%%%%%%%%%%%%%%%%%%%%%%%%%%%%%%%%%%%%%%%%%%%%%%%%%%

The proof of Theorem \ref{t:porcupine} relies on several estimates. In what follows we give an outline of the proof to motivate three propositions that together yield the proof of Theorem~\ref{t:porcupine}.  \medskip

{\noindent {\bf A note on notation.} Throughout this section to ease notation we write 
\[H, \,\tilde{H},\, w, \qquad \text{ instead of}\qquad H_h,\, \tilde{H}_h,\, w_h.\]}
%%%%%%%%%%%%%%%%%%%%%%%%%%%%%%%%%%%%%%%%%%%%%%%%%%%%%%%%%%%%%%%%%%%%%%%%%%%%%%%%

\noindent{\bf Proof Theorem \ref{t:porcupine}.} {Let $0<\delta<\tfrac{1}{2}$.}
{In what follows $\tau_0$, $R_0$, $\e_0$ {and $h_0$} are the constants given by Proposition \ref{l:manyTubes}. Let  $ {{8}}h^\delta\leq R(h){\leq} R_0$, {and $N>0$}.}

 Let $0<\tau\leq \tau_0$ and $\{\rho_j\}_{j=1}^{N_h} \subset \SigH$ be so that the tubes 
{$\{\Lambda_{_{\!\rho_{j}}}^\tau (R(h))\}_{j=1}^{N_h}$ form a $(\tau, R(h))$- covering of $\SigH$.}
%  This is possible by {setting $r_0=h^\delta$ and ${r_1}=R(h)$ in Proposition~\ref{l:cover} below, together with the assumption that $h^\delta\leq \frac{1}{2}R(h)$. } 
 We divide the proof into three steps, each of which relies on a  proposition.\\

  %%%%%%%%%%%%%%%%%%%%%%
  %%%%%%%%%%%%%%%%%%%%%%
\noindent{{\bf Step 1} {\emph{(Localization near conormal directions).}}}
Let $\chi_{_{0}} \in C_{{c}}^\infty(\R;[0,1])$ be a smooth cut-off function with $\chi_{_{0}}(t)=1$ for $t \leq \tfrac{1}{2}$ and $\chi_{_{0}}(t)=0$ for $t \geq {1}$. {Let $K>0$ be defined as in \eqref{e:Kdef} below} and define  
\begin{equation}\label{e:beta}
\beta_\class{(x',\xi')}:=\chi_{_{0}}\!\left(\frac{K|\xi'|_{_{{{\tilde{H}}}}}}{h^{\class}}\right),
\end{equation}
{where $|\xi'|_{_{{{\tilde{H}}}}}$ denotes the length of $\xi'$ as an element of $T^*_{x'}{\tilde{H}}$ with respect to the Riemannian metric induced on ${\tilde{H}}$.}  
% \begin{center}
%\includegraphics[height=3cm]{beta.pdf}
%\end{center}
In Proposition~\ref{l:intByParts} we prove that {for $w\in S_\delta\cap C_c^\infty({\tilde{H}})$} there exists $C_{_{\!N}}>0$, {depending on $P$}, {finitely many seminorms of } $w$, and finitely many of the constants $\KR_{_{\alpha}}$ in \eqref{e:curvature}, so that {for all $h>0$}
\begin{equation}\label{e:seaweed}
\Big|\int_{{\tilde{H}}} wud\sigma_{{\tilde{H}}}\Big|\leq  \|wOp_h(\beta_\class)u\|_{\LMo}+{C_{_{\!N}} h^N}\big(\|u\|_{\LM}{+\|Pu\|_{\Hm}}\big).
\end{equation}

%%%%%%%%%%%%%%%%%%%%%%
%%%%%%%%%%%%%%%%%%%%%%
\noindent{{\bf Step 2} {\emph{(Coverings by bicharacteristic beams).}}}
Let $\tilde{R}(h)=\tfrac{1}{2}R(h)$, $\tilde{\tau}=\tfrac{\tau}{4}$.

In Proposition \ref{l:cover} we prove that there exist a constant $\mathfrak{D}_n$, depending only on $n$,  points $\{\tilde \rho_j\}_{j=1}^{\tilde N_h}\subset \SigH$, 
and a partition $\{\mathcal J_{i}\}_{i=1}^{\mathfrak{D}_{n}}$ of $\{1,\dots, \tilde N_h\}$, so that 
\begin{itemize}
    \item $\Lambda_{_{\!\SigH}}^{{\tilde{\tau}}}(\tfrac{1}{2}\tilde{R}(h))\subset \bigcup_{j=1}^{\tilde N_{h}}\Lambda_{_{\tilde \rho_j}}^{\tilde \tau}(\tilde{R}(h)),$ \medskip
    \item $\Lambda_{_{\tilde \rho_j}}^{\tilde \tau}(3\tilde{R}(h))\cap \Lambda_{_{\tilde \rho_\ell}}^{\tilde \tau}(3\tilde{R}(h)))=\emptyset, \qquad j,\ell\in \mathcal J_i,\quad j\neq \ell.$
\end{itemize}
{That is, we work with a  $(\mathfrak{D}_{n}, \tilde \tau, \tilde R(h))$-good cover.}

In Proposition \ref{l:nicePartition} we prove that  {there exists $C_0>0$ so that for $0<\e<\e_0$ and $0<h \leq h_0$ there is a} partition of unity $\{\chi_j^\P\}_j$  for  $\Lambda_{_{\!\SigH}}^{{\tilde{\tau}}}(\tfrac{1}{2}\tilde{R}(h))$ with
\begin{itemize}
\item $\chi_j^\P\in S_\class\cap  {C^\infty_c(\TM ;[-C_0h^{1-2\delta},1+C_0h^{1-2\delta}])}$,\medskip
\item $\supp \chi_j^\P\subset {\Lambda^{\tilde{\tau}+\e}_{\tilde{\rho}_j}(\tilde R(h))},$\medskip
\item $\MSh([P,Op_h(\chi_j^\P)])\cap \Lambda_{_{\!\SigH}}^{\tilde{\tau}}({\e})=\emptyset.$
\end{itemize} 
{
Indeed, this follows from applying Proposition \ref{l:nicePartition} since $\tilde R(h)=\tfrac{1}{2}R(h)\geq \tfrac{1}{2} {8}h^\delta \geq 2h^\delta$.
From now on we fix $\e>0$ so that $\e<\e_0$ and $\e<\tfrac{\tau}{4}$.}
{See Appendix \ref{s:micro} for background on microsupports.} \smallskip

%%%%%%%%%%%%%%%%%%%%%%
%%%%%%%%%%%%%%%%%%%%%% \ 
\noindent{{\bf Step 3} {\emph{(Estimates near bicharacteristics).}}} In Proposition~\ref{l:manyTubes} we prove that there exist $C_{n,k}>0$, $C_{_{\!N}}>0$, $h_0>0$, and $C>0$ so that for all $w \in S_\delta\cap C^\infty_c({\tilde{H}})$ {and $0<h<h_0$}, if $\{\chi_j^\P\}$ is as before, then  
\begin{align}\label{e:goooool}
 h^{\frac{k-1}{2}}\|w Op_h(\beta_\class)u\|_{\LMo}
 &\leq C_{n,k}{\|w\|_{_{\!\infty}}}R(h)^{\frac{n-1}{2}}\sum_{j\in \tilde{\mathcal I}_h(w)} \frac{\|Op_h(\chi_j^\P)u\|_{\LM}}{\tau^{\frac{1}{2}}|H_pr_{_{\!H}}(\tilde \rho_j)|^{\frac{1}{2}}} \notag \\
&+{Ch^{-1}\|w\|_{\infty}\|Pu\|_{\Hm}}  +C_{_{\!N}}h^N{\|w\|_{\infty}}\|u\|_{\LM},
\end{align}
where $\tilde{\mathcal I}_h(w)=\{j:\;  \Lambda_{\tilde{\rho}_j}^{\tilde{\tau}}(\tilde{R}(h))\cap\pi^{-1}(\supp(w))\neq \emptyset\}$.

{\begin{remark} It is crucial that the cutoffs $\chi_j$ supported in disjoint tubes act almost orthogonally. This allows for efficient decomposition and recombination of estimates based on tubes and we use this fact throughout the text.
\end{remark}}

Next, let $\{\chi_\ell\}_{\ell=1}^{N_h}$ be a \emph{$\delta$-partition} associated to the  $(\tau, R(h))$-cover {$\{\Lambda_{\rho_\ell}^\tau(R(h))\}_{\ell=1}^{N_h}$ of $\SigH$}. We claim that {for each $j\in \tilde{\mc{I}}_h(w)$}
\begin{equation}\label{e:relating partitions}
    \chi_j^\P \leq 2\sum_{\ell \in \mc{A}_j} \chi_\ell,
\end{equation}
where 
$$\mc{A}_j=\{\ell :\;\Lambda_{\tilde{\rho}_j}^{{\tau/2}}(\tilde R(h)) \cap \Lambda_{\rho_\ell}^\tau (R(h)) \neq \emptyset\}.$$
Indeed, this follows from two observations. The first one is that $\supp \chi_j^\P \subset \Lambda_{\tilde\rho_j}^{{\tau}/{2}}(\tilde R(h))$ since $\e<\tfrac{\tau}{4}$. The second observation is that {on $ \Lambda_{\tilde\rho_j}^{\tau/2}(\tilde R(h))$} we have
$\sum_{\ell=1}^{N_h} \chi_\ell  =\sum_{\ell \in \mc{A}_j} \chi_\ell \geq 1$ since
$\sum_{\ell=1}^{N_h} \chi_\ell \geq 1$ on $\Lambda^{{\tau/2}}_{S^*_xM}(\tilde R(h))$ and {$\supp \chi_\ell \subset \Lambda_{\rho_\ell}^\tau(R(h))$}.
 Combining this with the fact that $\chi_j^\P\leq 1+C_0h^{1-2\delta}$ yields the claim in \eqref{e:relating partitions}.
 
{Next, note that if $j\in \tilde{\mc{I}}_h(w)$, then $\mathcal{A}_j\subset \mathcal{J}_h(w)$ where $\mathcal{J}_h(w)=\{\ell:\;  \Lambda_{\rho_\ell}^{{\tau}}(2R(h))\cap\pi^{-1}(\supp(w))\neq \emptyset\}.$ 
{This follows from the fact} that if  $\ell \in \mathcal{A}_j$, then $\Lambda_{\tilde{\rho}_j}^{{\tau/2}}(\tilde R(h)) \subset \Lambda_{\rho_\ell}^\tau (2R(h))$.}

%Indeed, note that for every $j \in \tilde{\mc{I}}_h(w)$ we have   $d(\tilde{\rho}_j,\pi^{-1}(\supp w))<\tfrac{1}{2}R(h)$. {\cs Not quite. note that $\pi^{-1}(\supp w) \subset \Sigma_{\tilde H,p}$, and by \eqref{e:conormalClose} we only have  $\sup_{\rho \in \Sigma_{_{H_h,p}}} d(\rho ,\Sigma_{_{\tilde{H}_h,p}})\leq  h^\delta$. I think we need to change the definition of $\mathcal{J}_h(w)$ to say  $\Lambda_{\rho_\ell}^{{\tau}}(3R(h))$. Then, everything would be fine since $d(\tilde{\rho}_j,\pi^{-1}(\supp w))<R(h)$  for $h$ small enough   \cs} Also, if $\ell \in \mathcal{A}_j$, then $d(\rho_\ell,\tilde{\rho}_j)<\tfrac{3}{2}R(h)$.  Then, $d(\rho_\ell, \pi^{-1}(\supp w))<2R(h)$ for all $\ell \in \mathcal{A}_j$, and this implies $\ell\in \mathcal{J}_h(w).$

To complete the proof we claim that there exists $C_n>0$ depending only on $n$ so that {for every $\ell \in \{1, \dots, N_h\}$,}
\begin{equation}
\label{e:countingMe}
\#\{j{\in \tilde{\mc{I}}_h(w)}:\; \ell\in \mathcal{A}_j\}\leq C_n.
\end{equation}

Assuming the claim for now, we conclude from \eqref{e:relating partitions} that 
\begin{align*}
\sum_{j\in \tilde{\mathcal I}_h(w)} \frac{\|Op_h(\chi_j^\P)u\|_{\LM}}{|H_pr_{_{\!H}}({\tilde \rho_j})|^{\frac{1}{2}}}
&\leq 4\FR^{-\frac{1}{2}} \sum_{j\in \tilde{\mathcal I}_h(w)} \sum_{\ell \in \mc{A}_j}\|Op_h(\chi_\ell)u\|_{\LM}\\
&\leq 4C_n\FR^{-\frac{1}{2}} \sum_{j\in \mathcal J_h(w)} \|Op_h(\chi_j)u\|_{\LM}.
\end{align*}

{Combining this with \eqref{e:goooool} and \eqref{e:seaweed} finishes the proof of Theorem \ref{t:porcupine}.} 

We now prove~\eqref{e:countingMe}. Suppose that $\ell\in \mathcal{A}_j$. Then, 
$$B({\rho}_\ell,R(h))\cap B(\tilde\rho_j,\tilde R(h))\cap\sigFlow \neq \emptyset.$$
In particular, 
$$
B(\tilde\rho_j, \tilde R(h))\cap \sigFlow \;\subset \;\, B({\rho_\ell},2R(h))\cap \sigFlow .
$$
{Therefore, $\Lambda_{\tilde{\rho_j}}^{\tilde{\tau}}(\tilde{R}(h))\subset \Lambda_{\rho_\ell}^{\tilde{\tau}}(2R(h))$.}
Thus, {since the tubes $\{\Lambda_{\tilde \rho_j}^{\tilde\tau}(3\tilde R(h))\}_{j \in \mc{J}_i}$ are disjoint for each $i=1, \dots, \mathfrak{D}_n$}, there exists $C_n>0$, depending only on $n$, such that for every $\ell \in \{1, \dots, N_h\}$
$$
\#\{j:\;  \ell\in \mathcal{A}_j\}\leq \mathfrak{D}_n\frac{\sup_{\ell}\vol(\Lambda_{{\rho}_{\ell}}^{{\tilde \tau}} (2R(h))}{\inf_{j}\vol(\Lambda_{\tilde \rho_j}^{{\tilde \tau}}(\tilde R(h)))}\leq C_n.
$$
\qed
%%%%%%%%%%%%%%%%%%%%%%%%%%%%%%%%%%%%%%%%%%%%%%%%%%%%%%%%%%%%%%%%%%%%%%%%%%%%

We proceed to state and prove all the propositions needed in the proof of Theorem~\ref{t:porcupine}.
%%%%%%%%%%%%%%%%%%%%%%%%%%%%%%%%%%%%%%%%%%%%%%%%%%%%%%%%%%%%%%%%%%%%%%%%%%%%%%%% 
\subsection{Step 1: Localization near conormal directions}

{Our first result is quite general, and it shows that in order to study integral averages over {$\tilde H$} of a function $v$ it suffices to restrict ourselves to studying the conormal behavior of $v$. That is, the {non-}oscillatory behavior of $v$ along  {$\tilde H$} is encoded in $Op_h(\beta_\delta)v$.}
\begin{lemma}
\label{l:intByPartsa}
Let $0\leq \class<\frac{1}{2}$, $N>0$, and $w\in S_\class \cap C_c^\infty({\tilde{H}})$. Then, there is {$C_{_{\!N}}>0$, depending on finitely many seminorms of $w\in S_\delta$ {and finitely many of the constants $\KR_{_{\alpha}}$ in~\eqref{e:curvature}}, so that for all} $v\in {\mc{D}'}({\tilde{H}})$
$$
 \Big|\int_{{\tilde{H}}} w(1-Op_h(\beta_\class)){( v)}d\sigma_{{\tilde{H}}}\Big|\leq C_{_{\!N}}h^N{\|v\|_{L^2({{\tilde{H}}})}}.
$$
\end{lemma}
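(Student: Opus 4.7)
The idea is to move the operator $1-Op_h(\beta_\delta)$ onto the test symbol $w$ by duality, and then show that $(1-Op_h(\beta_\delta))^* w = O_{L^2(\tilde H)}(h^\infty)$. The underlying principle is that $w\in C_c^\infty(\tilde H)\cap S_\delta$ has (semiclassical) frequency content concentrated at $\xi'=0$, whereas $1-\beta_\delta$ vanishes in the region $|\xi'|\leq h^\delta/(2K)$; since $\delta<\tfrac12$, this region is a genuine $h$-semiclassical neighborhood of $\{\xi'=0\}$.

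First, I would pair against $v$ using the $L^2(\tilde H)$ inner product:
\[
\int_{\tilde H} w\,(1-Op_h(\beta_\delta))(v)\,d\sigma_{\tilde H}
=\langle\, (1-Op_h(\beta_\delta))^* w,\;v\,\rangle_{L^2(\tilde H)},
\]
so by Cauchy--Schwarz it suffices to prove
\[
\|(1-Op_h(\beta_\delta))^* w\|_{L^2(\tilde H)}\le C_{_N}h^N,
\]
with $C_{_N}$ depending only on finitely many $S_\delta$-seminorms of $w$ and finitely many of the constants $\KR_\alpha$ in \eqref{e:curvature}. (The adjoint is taken in $L^2(\tilde H,d\sigma_{\tilde H})$; the bounds \eqref{e:conormalClose} give a uniform atlas for $\tilde H$ so that the pseudodifferential calculus on $\tilde H$ is uniform in $h$.)

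Next, working in local coordinates on $\tilde H$ and using a uniform partition of unity, I would represent $(1-Op_h(\beta_\delta))^* w$ as the oscillatory integral
\[
(1-Op_h(\beta_\delta))^* w(x')=\frac{1}{(2\pi h)^{k'}}\iint e^{i(x'-y')\cdot\xi'/h}\,\bigl(1-\beta_\delta(y',\xi')\bigr)\,w(y')\,dy'\,d\xi' \;+\; R(x'),
\]
where $k'=\dim\tilde H$ and $R$ collects the subprincipal corrections from $Op_h(\beta_\delta)^*\in\Psi_\delta$ (these have symbols in $h^{1-2\delta}S_\delta$ supported where $1-\beta_\delta$ is, and are handled identically). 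The amplitude is supported in $\{|\xi'|\ge h^\delta/(2K)\}$. On this region, the phase has $|\partial_{y'}\Phi|=|\xi'|/h\ge h^{\delta-1}/(2K)$, so I would integrate by parts in $y'$ using $e^{i(x'-y')\cdot\xi'/h}=-(h^2/|\xi'|^2)\,\Delta_{y'}e^{i(x'-y')\cdot\xi'/h}$. Each step produces a factor $h^2/|\xi'|^2$ (of size $\lesssim h^{2(1-\delta)}$ on the critical region $|\xi'|\asymp h^\delta$, and $\lesssim h^2$ for bounded $\xi'$), while differentiating $w$ costs at most $h^{-2\delta}$ per derivative because $w\in S_\delta$. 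The net gain per integration by parts is therefore $h^{2(1-2\delta)}$, a strictly positive power of $h$ since $\delta<\tfrac12$.

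Finally, I would carry out $M$ such integrations by parts with $M$ chosen large enough that (i) $|\xi'|^{-2M}$ is integrable at infinity (i.e.\ $2M>k'$), and (ii) $2M(1-2\delta)+\delta k'\geq N$; splitting the $\xi'$-integration into $\{h^\delta/(2K)\le|\xi'|\le 1\}$ and $\{|\xi'|\ge 1\}$ shows the pointwise bound $|(1-Op_h(\beta_\delta))^*w(x')|\le C_{_N}h^N$, uniformly for $x'$ in a bounded set. Since $w$ is compactly supported in a fixed set, this yields the desired $L^2(\tilde H)$ bound. The only ``obstacle'' here is bookkeeping: keeping track of the interplay between the two powers of $h$ in the amplitude region $|\xi'|\sim h^\delta$ versus bounded $|\xi'|$, and ensuring uniformity in $\tilde H$ through \eqref{e:conormalClose}. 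Everything else is a routine non-stationary phase argument.
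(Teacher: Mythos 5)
Your proposal is correct and is essentially the paper's argument: both reduce to a non-stationary-phase integration by parts in the spatial variable over $\supp(1-\beta_\delta)\subset\{|\xi'|\gtrsim h^{\delta}\}$, gaining $h^{1-2\delta}$ per derivative because $w,\beta_\delta\in S_\delta$ with $\delta<\tfrac{1}{2}$, and your duality step is just Cauchy--Schwarz in the $x'$ variable applied to the paper's triple oscillatory integral (the paper uses the first-order operator $L=\tfrac{1}{|\xi'|^2}\sum_j\xi_j'hD_{x_j}$ instead of $\Delta_{y'}$, which is cosmetic). The only slip is bookkeeping: your condition (ii) omits the $(2\pi h)^{-k'}$ prefactor and should read $2M(1-2\delta)+\delta k'-k'\ge N$, which is of course still achievable by taking $M$ larger.
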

%%%%%%%%%%%%%%%%%%%%%%%%%%%%%%%%%%%%%%%%%%%%%%%%%%%%%%%%%%%%%%%%%%%%%%%%%%%%%%%%

%%%%%%%%%%%%%%%%%%%%%%%%%%%%%%%%%%%%%%%%%%%%%%%%%%%%%%%%%%%%%%%%%%%%%%%%%%%%%%%%
\begin{proof}
Let $h>0$. Here, we work in coordinates $(\bar{x},x')\in \re^{k}\times \re^{n-k}$ where $\tilde{H}={\tilde{H_h}}=\{\bar{x}=0\}$. 
{Let $\tilde N$ be so that $N{<} k-n+\tilde N(1-2\delta)$}. Let {$g_{_{\!{\tilde H}}}$} denote the metric induced on {$\tilde H$}. Then, integrating by parts with 
$
L:= \frac{1}{|\xi'|^2} \left( \sum_{j=1}^{n-k} \xi_j'  hD_{x_j} \right),
$
 gives
\begin{align*}
&{\int_{{\tilde{H}}} w(x) \, (1-Op_h(\beta_\class)){v}(x)d\sigma_{_{\!\!{\tilde{H}}}}(x)}=\\
&\qquad=\frac{1}{(2\pi h)^{n-k}}\iiint e^{\frac{i}{h}\langle x-x',\xi'\rangle} w(x)(1-\beta_\class(x,\xi')){v}(x'){\sqrt{|g_{_{\!{\tilde{H}}}}(x')||g_{_{\!{\tilde{H}}}}(x)|}}dxdx'd\xi'\\
&\qquad= \frac{1}{(2\pi h)^{n-k}}\iiint e^{\frac{i}{h}\langle x-x',\xi'\rangle}(L^*)^{\tilde N}\Big[w(x)(1-\beta_\class(x,\xi')){v}(x'){\sqrt{|g_{_{\!{\tilde{H}}}}(x')||g_{_{\!{\tilde{H}}}}(x)|}}\,\Big]dxdx'd\xi'\\
&\qquad\leq C_{_{\!N}}h^{k-n+{\tilde N}(1-2\class)}{\|v\|_{L^2({{\tilde{H}}})}}.
\end{align*}
{Here, $C_{_{\!N}}$ depends on the $C^{\tilde{N}}$ norm of $w$ as well as finitely many of the constants $\KR_{_{\alpha}}$. The second fact follows since the transition maps for the coordinate change which flattens $\tilde{H}$ have $C^{\tilde{N}}$ norm bounded by finitely many of the constants $\KR_{\alpha}$.} 
%{\cs what did you want to say here?\red{That all derivatives of this `flattening' are uniformly bounded in $h$}{Yes, but what about the sentence ``and uniformly bounded over all choices of $\tilde{H}$"?}\red{Meaning that the constants in the bounds do not depend on which of the possible $\tilde{H}$'s we are talking about}}\cs }
\end{proof}
{We next apply Lemma \ref{l:intByPartsa} to the setup of Theorem \ref{t:porcupine}.}
{
\begin{proposition}
\label{l:intByParts}
{Let $P$ be as in Theorem \ref{t:porcupine}.} Let $0\leq \class<\frac{1}{2}$, $N>0$, and $w\in S_\delta\cap C_c^\infty({\tilde{H}})$. Then, there exists $C_{_{\!N}}>0$, depending on $P$, finitely many seminorms of $w\in S_\delta$, {and finitely many of the constants $\KR_{_{\alpha}}$ in~\eqref{e:curvature}}, so that for all $u\in \mc{D}'(M)$ {and all $h>0$}
$$
 \Big|\int_{{\tilde{H}}} w(1-Op_h(\beta_\class)){( u)}d\sigma_{{\tilde{H}}}\Big|\leq C_{_{\!N}}h^N(\|u\|_{\LM}+\|Pu\|_{\Hm}).
$$
\end{proposition}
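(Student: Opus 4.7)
The plan is to reduce the proposition to Lemma \ref{l:intByPartsa} applied to $v = u|_{\tilde H}$. The only new ingredient required is a bound of the form
$$\|u|_{\tilde H}\|_{L^2(\tilde H)} \leq Ch^{-\gamma}\bigl(\|u\|_{\LM} + \|Pu\|_{\Hm}\bigr)$$
for some fixed $\gamma = \gamma(n,k,m) > 0$; Lemma \ref{l:intByPartsa} can then be invoked with its exponent $N$ replaced by $N + \gamma$ to absorb the $h^{-\gamma}$ loss and produce the claimed estimate. An \emph{a priori} issue is that $u|_{\tilde H}$ need not even be defined for a general $u \in \mc{D}'(M)$; making sense of it is where the conormal transversality hypothesis plays its central role.

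The first step is a microlocal splitting $u = u_1 + u_2$. Choose $\chi \in C_c^\infty(\TM)$ supported in a small neighborhood of $\{p=0\}$ and identically $1$ on a smaller one, with $\supp(1-\chi)$ contained in the region of semiclassical ellipticity of $P$; set $u_1 := Op_h(\chi)u$ and $u_2 := u - u_1$. A standard semiclassical parametrix argument gives, for any $s$,
$$\|u_2\|_{\sob{s}} \leq C_s\bigl(\|Pu\|_{\sob{s-m}} + h^{\tilde N}\|u\|_{\LM}\bigr),$$
while $u_1$ has semiclassical wavefront in a compact subset of $\TM$, so $\|u_1\|_{\sob{s}} \leq C_s\|u\|_{\LM}$ for every $s$.

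Next I apply a semiclassical Sobolev trace estimate with $s = (k+1)/2$; observe that $s - m = (k-2m+1)/2$ matches exactly the Sobolev order of $\|Pu\|_{\Hm}$ in the statement. For $u_2$ the restriction is unproblematic by classical trace. For $u_1$, conormal transversality guarantees that at each $\rho \in N^*\tilde H \cap \{p=0\}$ the Hamilton vector field $H_p$ is not tangent to $\tilde H$; this is precisely the microlocal condition needed to define $u_1|_{\tilde H}$ despite the semiclassical wavefront of $u_1$ potentially meeting $N^*\tilde H$, and to produce the $h^{-\gamma}$ trace bound via a $TT^*$-type argument that flows $u_1$ off $\tilde H$ along $H_p$. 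Summing the two contributions yields the displayed bound on $\|u|_{\tilde H}\|_{L^2(\tilde H)}$.

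Applying Lemma \ref{l:intByPartsa} to $v = u|_{\tilde H}$ with $N$ replaced by $N+\gamma$ then completes the proof. The main obstacle in executing this plan rigorously is the semiclassical restriction estimate for $u_1$ under only conormal (rather than fully transverse) intersection of $N^*\tilde H$ and $\{p = 0\}$; one must track constants carefully through the parametrix, the trace inequality, and the flow argument to ensure the final $C_{_{\!N}}$ depends only on $P$, finitely many seminorms of $w$, and finitely many of the constants $\KR_{_{\alpha}}$ in \eqref{e:curvature}, as claimed.
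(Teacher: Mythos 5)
Your overall reduction --- bound $\|u\|_{L^2(\tilde H)}$ by a negative power of $h$ times $\|u\|_{\LM}+\|Pu\|_{\Hm}$, then invoke Lemma~\ref{l:intByPartsa} with $N$ enlarged --- is exactly the paper's strategy. The gap is in how you propose to get that restriction bound, and in particular in the claim that conormal transversality of $\tilde H$ is "where the central role is played." It is not used at all here, and the step you flag as "the main obstacle" (a restriction estimate for the compactly microlocalized piece $u_1$ via a $TT^*$/flow-out argument along $H_p$) is both unexecuted and unnecessary. The paper's argument is: classical ellipticity of $p$ (condition~\eqref{e:classellipt}, which concerns only $|\xi|\geq K_p$) plus a standard elliptic parametrix gives
$$\|u\|_{\Hs{\frac{k+1}{2}}}\leq C\big(\|u\|_{\LM}+\|Pu\|_{\Hm}\big),$$
where the Sobolev indices match because $\tfrac{k+1}{2}-m=\tfrac{k-2m+1}{2}$; then the semiclassical Sobolev trace estimate (\cite[Lemma 6.1]{Gdefect}) gives $\|u\|_{L^2(\tilde H)}\leq Ch^{-k/2}(\|u\|_{\LM}+\|Pu\|_{\Hm})$ for \emph{any} codimension-$k$ submanifold, with no dynamical or transversality hypothesis. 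This also disposes of your a priori worry about defining $u|_{\tilde H}$ for $u\in\mc{D}'(M)$: either the right-hand side of the proposition is $+\infty$ (the paper's convention when $Pu\notin\Hm$ or $u\notin L^2$) and there is nothing to prove, or $u\in\Hs{\frac{k+1}{2}}$ and the trace exists by the classical trace theorem.

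The reason no refined restriction estimate is needed is that Lemma~\ref{l:intByPartsa} yields an \emph{arbitrary} power $h^N$, so any fixed polynomial loss $h^{-k/2}$ in the trace step is harmless; one simply runs Lemma~\ref{l:intByPartsa} with $N$ replaced by $N+k/2$. The delicate "flow off $\tilde H$ along $H_p$" mechanism you sketch is genuinely needed in the paper, but only later and for a different purpose: it is the content of Lemma~\ref{l:squirrel} and Lemma~\ref{L:optimized}, where one must beat the crude $h^{-k/2}$ trace loss near the characteristic variety to extract the $\tau^{-1/2}$ and tube-volume gains. Importing it into Proposition~\ref{l:intByParts} misplaces the difficulty and leaves your proof incomplete as written; replacing that step by the elementary elliptic-plus-trace argument closes the gap.
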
}
\begin{proof}
In order to use Lemma~\ref{l:intByPartsa}, we first {bound $\|u\|_{L^2(\tilde H)}$.} For this, observe that since $p$ is classically elliptic, {by a standard elliptic parametrix construction (see e.g~\cite[Appenix E]{ZwScat}) }
$$
\|u\|_{\Hs{\frac{k+1}{2}}}\leq C(\|u\|_{\LM}+\|Pu\|_{\Hm})
$$
where $C$ depends only on $P$.
In particular, {the semiclassical Sobolev estimates (see e.g.~\cite[{Lemma 6.1}]{Gdefect})  imply that }
$$
\|u\|_{L^2({\tilde{H}})}\leq Ch^{-\frac{k}{2}}(\|u\|_{\LM}+\|Pu\|_{\Hm}).
$$
Using Lemma~\ref{l:intByPartsa} then gives 
$$
 \Big|\int_{{\tilde{H}}} w(1-Op_h(\beta_\class)){( u)}d\sigma_{{\tilde{H}}}\Big|\leq C_{_{\!N}}h^N(\|u\|_{\LM}+\|Pu\|_{\Hm}).
$$
\end{proof}
%%%%%%%%%%%%%%%%%%%%%%%%%%%%%%%%%%%%%%%%%%%%%%%%%%%%%%%%%%%%%%%%%%%%%%%%%%%%%%%% 

%%%%%%%%%%%%%%%%%%%%%%%%%%%%%%%%%%%%%%%%%%%%%%%%%%%%%%%%%%%%%%%%%%%%%%%%%%%%%%%%
\subsection{Step 2: Coverings by bicharacteristic beams}
%%%%%%%%%%%%%%%%%%%%%%%%%%%%%%%%%%%%%%%%%%%%%%%%%%%%%%%%%%%%%%%%%%%%%%%%%%%%%%%%
{We first prove that there is $\mathfrak{D}_n>0$, depending only on $n$, so that for $\tau,r$ small enough, there is a  $(\mathfrak{D}_n,\tau,r)$-good cover of $\SigH$}. We adapt the proof of~\cite[Lemma 2]{Col11} to our purposes.
%We first prove the existence of a covering of {$\Lambda_{_{\!\SigH}}^{{\frac{\tau}{4}}}(\red{2}h^\class)$} by bicharacteristic tubes. In particular, we construct at $(\tau,r)$ good cover of {$\LambdaH(h^\delta)$} for all $r$ small enough. We adapt the proof of~\cite[Lemma 2]{Col11} to our purposes.
%%%%%%%%%%%%%%%%%%%%%%%%%%%%%%%%%%%%%%%%%%%%%%%%%%%%%%%%%%%%%%%%%%%%%%%%%%%%%%%%

\begin{proposition}
\label{l:cover}
There exist $\mathfrak{D}_{n}>0$ depending only on $n$,  ${R_0=R_0(n,k,{\KR_{_{0}}})}>0$, and $0<\tau_{_{\!\SigH}}<\tfrac{\Tinj}{2}$ depending only on $\Tinj$, such that for $0<{r_1}<{R_0}$, $0<r_0\leq \frac{{r_1}}{2} $, and $0<\tau<\tau_{_{\!\SigH}}$ there exist $\{\rho_j\}_{j=1}^{N_{r_1}}\subset \SigH$ {and a partition $\{\mathcal J_{i}\}_{i=1}^{\mathfrak{D}_{n}}$ of $\{1,\dots, N_{r_1}\}$ so that 
\begin{itemize}
    \item $\LambdaH(r_0)\subset \bigcup_{j=1}^{N_{r_1}}\Lambda_{_{\rho_j}}^\tau({r_1}),$ \medskip
    \item $\Lambda_{_{\rho_j}}^\tau(3{r_1})\cap \Lambda_{_{\rho_\ell}}^\tau(3{r_1})=\emptyset, \qquad j,\ell\in \mathcal J_i,\quad j\neq \ell.$
\end{itemize}}
\end{proposition}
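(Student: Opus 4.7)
The plan is a Vitali-plus-coloring argument in $\sigFlow$. First I would choose $\{\rho_j\}_{j=1}^{N_{r_1}}\subset \SigH$ to be a maximal $r_1/2$-separated set in the Sasaki distance $d$ on $T^*M$ restricted to $\sigFlow$. Maximality forces the balls $B(\rho_j,r_1/2)\subset\sigFlow$ to cover $\SigH$, and since $r_0\leq r_1/2$, the triangle inequality upgrades this to a cover of the $r_0$-neighborhood $(\SigH)_{r_0}\subset\sigFlow$ by the balls $B(\rho_j,r_1)$. Flowing out for times $|t|\leq\tau+r_0$ on the left and $|t|\leq\tau+r_1$ on the right then yields $\LambdaH(r_0)\subset\bigcup_j\Lambda_{\rho_j}^\tau(r_1)$, giving the first bullet.

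Next I would control overlap. Suppose $\Lambda_{\rho_j}^\tau(3r_1)\cap\Lambda_{\rho_\ell}^\tau(3r_1)\neq\emptyset$; unfolding the definitions there exist $q_j,q_\ell\in\sigFlow$ and times $|t|,|s|\leq\tau+3r_1$ with $d(\rho_j,q_j),d(\rho_\ell,q_\ell)<3r_1$ and $\varphi_t(q_j)=\varphi_s(q_\ell)$, i.e.\ $\Psi(t-s,q_j)=\Psi(0,q_\ell)$. Choosing $\tau_{\SigH}<\Tinj/2$ and $R_0>0$ so that $2(\tau+3r_1)<\Tinj$ whenever $\tau<\tau_{\SigH}$ and $r_1<R_0$, the injectivity hypothesis~\eqref{e:Tinj} forces $t=s$ and $q_j=q_\ell$. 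The triangle inequality then gives $d(\rho_j,\rho_\ell)<6r_1$, so overlap of tubes implies proximity of base points in the ambient $\sigFlow$ metric.

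Since the balls $B(\rho_i,r_1/4)\subset\sigFlow$ are pairwise disjoint by separation, I would take $R_0$ small enough (depending on $n$, $k$, $\KR_{_{0}}$ and the small-scale geometry of $\sigFlow$) that Sasaki geodesic balls of radius $\leq 7r_1$ are quasi-Euclidean; a volume comparison in the $(2n-1)$-dimensional manifold $\sigFlow$ then gives
\[
\#\{\ell\,:\, d(\rho_j,\rho_\ell)<6r_1\}\leq C_n
\]
for some $C_n$ depending only on $n$. Combined with the previous step, the intersection graph defined by $j\sim\ell\iff\Lambda_{\rho_j}^\tau(3r_1)\cap\Lambda_{\rho_\ell}^\tau(3r_1)\neq\emptyset$ has maximum degree $\leq C_n$, and a greedy coloring produces the required partition into $\mathfrak{D}_n:=C_n+1$ independent sets $\{\mathcal J_i\}_{i=1}^{\mathfrak{D}_n}$.

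I expect the main obstacle to be the overlap-to-proximity step: two tubes of longitudinal length $\sim 2(\tau+3r_1)$ that meet somewhere in $T^*M$ must be forced to have their base points close \emph{in the ambient Sasaki metric}, not merely modulo some flow-induced equivalence. This is possible only because~\eqref{e:Tinj} makes the flow map $\Psi$ injective on a slab of time-width $\Tinj$; once $\tau_{\SigH}$ and $R_0$ are calibrated to this slab, everything collapses to a single common footprint $q_j=q_\ell$ on $\sigFlow$, and the rest reduces to a standard packing plus greedy coloring argument with dimension-only constants.
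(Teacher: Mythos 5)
Your proposal is correct and follows essentially the same route as the paper: a maximal $r_1/2$-separated set, the injectivity of $\Psi$ on $(-\Tinj,\Tinj)\times\sigFlow$ to reduce tube intersection to ball intersection, a volume-doubling packing bound, and a greedy coloring. The only (cosmetic) differences are that you phrase the covering step directly rather than by contradiction with maximality, and you run the volume comparison in $\sigFlow$ rather than in $\SigH$ as the paper does; both yield the same dimension-only constant $\mathfrak{D}_n$.
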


%%%%%%%%%%%%%%%%%%%%%%%%%%%%%%%%%%%%%%%%%%%%%%%%%%%%%%%%%%%%%%%%%%%%%%%%%%%%%%%%
\begin{proof}
Let $\{\rho_j\}_{j=1}^{N_{r_1}}$ be a maximal $\frac{{r_1}}{2}$ separated set in $\SigH$. Fix $i_0 \in \{1, \dots,N_{r_1}\} $ and suppose that $B(\rho_{i_0},3{r_1})\cap B(\rho_\ell,3{r_1})\neq \emptyset$ for all {$\ell\in \mathcal{L}_{{{i_0}}} \subset \{1,\dots, N_{r_1}\}$}. Then for all ${\ell\in \mathcal{L}_{{{i_0}}}}$, $B(\rho_\ell,\tfrac{r_1}{2})\subset B(\rho_{i_0},8{r_1}).$ In particular,
$$
\sum_{{\ell\in \mathcal{L}_{{{i_0}}}}}\vol(B(\rho_\ell,\tfrac{{r_1}}{2}))\leq \vol(B(\rho_{i_0},8{r_1})). 
$$
Now, there exist {$\mathfrak{D}_{n}>0$} and  ${R_0}>0$ depending on {$(n,k)$ and} a lower bound on the Ricci curvature of $\SigH $, and hence on only $(n,k,\KR_{_{0}})$, so that for ${r_1}<{R_0}$,
$$\vol(B(\rho_{i_0},8{r_1}))\leq \vol(B(\rho_\ell,14{r_1}))\leq \mathfrak{D}_{n}\vol(B(\rho_\ell,\tfrac{{r_1}}{2})).$$
Hence, 
$$
\sum_{{\ell\in \mathcal{L}_{{{i_0}}}}}\vol(B(\rho_\ell,\tfrac{{r_1}}{2}))\leq \vol(B(\rho_{i_0},8{r_1}))\leq \frac{\mathfrak{D}_n}{{| \mathcal{L}_{{{i_0}}}|}}\sum_{{\ell\in \mathcal{L}_{{{i_0}}}}}\vol(B(\rho_\ell,\tfrac{{r_1}}{2}))
$$
and in particular, $|\mathcal{L}_{{{i_0}}}|\leq \mathfrak{D}_{n}$.

Now, {suppose that}
$$
\Lambda_{_{\rho_k}}^\tau(3{r_1})\cap \Lambda_{_{\rho_{{i_0}}}}^\tau(3{r_1})\neq \emptyset.
$$
Then, there exists $q_k\in B(\rho_k,3{r_1})\cap\sigFlow $, $q_{{i_0}}\in B(\rho_{{i_0}},3{r_1})\cap\sigFlow $ and {$ t_k,t_{{i_0}}\in[-\tau, \tau]$}  so that 
$$
\varphi_{_{\!t_k-t_{{i_0}}}}(q_k)=q_{{i_0}}.
$$
{Here, $\sigFlow $ is the hypersurface defined in \eqref{e:Hsig}}.
In particular, choosing $\tau_{_{\!\SigH}}<\Tinj/2$, this implies that $q_k=q_{{i_0}}$, $t_k=t_{{i_0}}$ and hence $B(\rho_\ell,3{r_1})\cap B(\rho_{{i_0}},3{r_1})\neq \emptyset$. {This implies that $j\in \mc{L}_{i_0}$ and hence that there are at most $\mathfrak{D}_n$ such distinct $j$ (including $i_0$).}

At this point we have proved that each of the tubes $\Lambda_{_{\rho_j}}^\tau({r_1})$ intersects at most $\mathfrak{D}_{n}-1$ other tubes. We now construct the sets $\mathcal J_1,\dots, \mathcal J_{\mathfrak{D}_{n}}$ using a greedy algorithm. We will say that $i$ intersects $j$ if 
$$
\Lambda_{_{\rho_i}}^\tau({r_1})\cap \Lambda_{_{\rho_j}}^\tau({r_1})\neq \emptyset.
$$
First place $1\in \mathcal J_1$. Then suppose we have placed $j=1,\dots, \ell$ in $\mathcal J_1,\dots, \mathcal J_{\mathfrak{D}_{n}}$ so that each of the $\mathcal J_i$'s consists of disjoint indices. Then, since $\ell+1$ intersects at most $\mathfrak{D}_{n}-1$ indices, it is disjoint from $\mathcal J_i$ for some $i$. We add $\ell$ to $\mathcal J_i$. By induction we obtain the partition $\mathcal J_1,\dots ,\mathcal J_{\mathfrak{D}_{n}}$.

Now, suppose $r_0\leq {r_1}$ and that there exists $\rho \in \Lambda_{_{\SigH}}^\tau(r_0)$ so that {$\rho \notin \bigcup_i \Lambda_{_{\rho_i}}^\tau(r_1)$}. Then, there are $|t|<\tau+r_0$ and $q\in \sigFlow $ so that
$$
\rho=\varphi_t(q),\qquad d(q,\SigH)<r_0,\qquad \min_id(q,\rho_i)\geq {r_1}.
$$
In particular, by the triangle inequality, there exists $\tilde{\rho}\in \SigH$, 
$$
d(\tilde{\rho},\rho_i)\geq d(q,\rho_i)-d(q,\tilde{\rho})>{r_1}-r_0.
$$
This contradicts the maximality of $\{\rho_j\}_{j=1}^{N_{r_1}}$ if $r_0\leq {r_1}/2$.

\end{proof}
%%%%%%%%%%%%%%%%%%%%%%%%%%%%%%%%%%%%%%%%%%%%%%%%%%%%%%%%%%%%%%%%%%%%%%%%%%%%%%%%
%%%%%%%%%%%%%%%%%%%%%%%%%%%%%%%%%%%%%%%%%%%%%%%%%%%%%%%%%%%%%%%%%%%%%%%%%%%%%%%%
We proceed to build a $\delta$-partition of unity associated to the cover we constructed in Proposition~\ref{l:cover}. {The key feature in this partition will be that it is invariant under the bicharacteristic flow. Indeed, the partition is built so that its quantization commutes with the operator $P$ in a neighborhood of $\SigH$.}
\begin{proposition}
\label{l:nicePartition}There exist $\tau_1=\tau_1(\Tinj)>0$ and  $\e_1=\e_1(\tau_1)>0$, and given $0<\delta<\tfrac{1}{2}$,{ $0<\e\leq \e_1$} there exists $h_1>0$,
so that for any $0<\tau\leq \tau_1$, and $R(h)\geq 2h^\delta$, the following holds.

There exist $C_1>0$  so that  for all $0<h\leq h_1$ and all $(\tau,R(h))$-covers of $\SigH$
 there exists a partition of unity $\chi_j\in S_\class\cap  C^\infty_c(\TM ;[-C_1h^{1-2\delta},1+C_1h^{1-2\delta}])$ on $\LambdaH({\frac{1}{2}R(h)})$
for which
\begin{itemize}
\item$\supp \chi_j\subset \Lambda_{\rho_j}^{\tau+{\e}}(R(h))$, \medskip
\item $\MSh([P,Op_h(\chi_j)])\cap \LambdaH({\e})=\emptyset,$
\end{itemize}
{and the $\chi_j$ are uniformly bounded in $S_\delta$.}
\end{proposition}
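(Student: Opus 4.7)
The strategy is to build each $\chi_j$ by placing a transverse cutoff on the hypersurface $\sigFlow$ of~\eqref{e:Hsig}, transporting it along the Hamiltonian flow of $p$, and multiplying by a cutoff in the flow time. This will produce a $\chi_j$ that is genuinely constant along the flow on a neighborhood of $\LambdaH(\e)$; in Darboux coordinates adapted to $p$ this will force every term of the Moyal expansion of $p\sharp\chi_j-\chi_j\sharp p$ to vanish, which is the source of the microsupport condition on the commutator.

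\textbf{Construction.} Using Proposition~\ref{l:cover} (possibly after refinement), I would first construct $\tilde\chi_j\in S_\delta(\sigFlow;[0,1])$ subordinate to $\{B(\rho_j,R(h))\cap\sigFlow\}$ with $\sum_j\tilde\chi_j\equiv 1$ on $(\SigH)_{R(h)/2}$; these form a standard partition of unity at scale $R(h)\geq 2h^\delta$, with $S_\delta$-seminorms uniform in the cover. Choosing $\tau_1<\Tinj/4$ and $\e_1=\tau_1$ so that by~\eqref{e:Tinj} the map $\Psi(s,\rho)=\varphi_s(\rho)$ is a uniform diffeomorphism on $(-3\tau_1,3\tau_1)\times\sigFlow$, and picking $\psi\in C_c^\infty(\re;[0,1])$ with $\psi\equiv 1$ on $[-(\tau+\max(\e,\tfrac12 R(h))),\tau+\max(\e,\tfrac12 R(h))]$, $\mathrm{supp}\,\psi\subset[-(\tau+\e+R(h)),\tau+\e+R(h)]$, and $|\psi^{(k)}|\leq C_kR(h)^{-k}$, I would set
\[
\chi_j^{(0)}(\varphi_s(\rho)):=\tilde\chi_j(\rho)\,\psi(s),\qquad\rho\in\sigFlow,\ |s|<3\tau_1,
\]
extended by zero. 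Uniform bounds on $\Psi$ and $\Psi^{-1}$ then give $\chi_j^{(0)}\in S_\delta(\TM;[0,1])$ with seminorms uniform in the cover, $\mathrm{supp}(\chi_j^{(0)})\subset\Lambda_{\rho_j}^{\tau+\e}(R(h))$, and $\sum_j\chi_j^{(0)}\equiv 1$ on $\LambdaH(\tfrac12 R(h))$; this verifies the support and partition bullet points.

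\textbf{Commutator and correction.} Since $\psi\equiv 1$ on a neighborhood of $\{|s|\leq\tau+\e\}$, $\chi_j^{(0)}$ will be $s$-independent on a neighborhood of $\LambdaH(\e)$ and hence $H_p^k\chi_j^{(0)}\equiv 0$ there for every $k$. In local Darboux coordinates $(s,y;\sigma,\eta)$ near any point of $\LambdaH(\e)$ in which the principal part of $p$ equals $\sigma$ (available by classical ellipticity since $dp\neq 0$ on $\{p=0\}$, and with $\sigFlow$ taken as $\{s=0\}$), every term $\omega^k(p,\chi_j^{(0)})$ of the Moyal expansion of $p\sharp\chi_j^{(0)}-\chi_j^{(0)}\sharp p$ vanishes identically, as each requires either a derivative of $p=\sigma$ beyond first order (giving zero) or a $\partial_s$-derivative of $\chi_j^{(0)}$ (also zero). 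When the full symbol of $P$ coincides with $p$ this already yields the microsupport statement with $\chi_j=\chi_j^{(0)}\in[0,1]$. To handle lower-order contributions of $P$ that may not survive the Darboux normalization, I would inductively correct by setting $\chi_j=\chi_j^{(0)}+\sum_{k\geq 1}h^{k(1-2\delta)}r_k$, where each $r_k\in S_\delta$ is supported in $\Lambda_{\rho_j}^{\tau+\e}(R(h))$ and chosen to kill the next order of the commutator's symbol expansion via a transport equation along $H_p$ with initial data on $\sigFlow$. Borel-summing yields $\chi_j$ with the desired microsupport property, and the $h^{1-2\delta}$ corrections account for the bounds $\chi_j\in[-C_1h^{1-2\delta},1+C_1h^{1-2\delta}]$.

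\textbf{Main obstacle.} The hard part will be verifying that the inductive correction can be carried out with supports inside $\Lambda_{\rho_j}^{\tau+\e}(R(h))$ and with uniform $S_\delta$-seminorm bounds so that Borel summation converges; this relies on the transversality of $\sigFlow$ to $H_p$ making the transport equations non-characteristic, and on $\Tinj$ being large enough that the flow from $\sigFlow$ is injective throughout the support of $\chi_j^{(0)}$, which forces the choice of $\tau_1,\e_1$ and the upper bound on $R(h)$ implicit in the choice of $h_1$.
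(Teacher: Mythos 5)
Your construction is essentially the paper's: a cutoff on the transversal $\sigFlow$ transported along $H_p$ (so the leading symbol is flow-invariant on $\LambdaH(\e)$, killing the principal symbol of the commutator there), followed by an inductive correction $\sum_k h^{k(1-2\delta)}a_{j,k}$ solving transport equations $H_p a_{j,k}=-ie_{j,k}$ with data on $\sigFlow$, supported in $\Lambda_{\rho_j}^{\tau+\e}(R(h))$, and Borel-summed; this matches the paper's proof step for step, including the source of the $[-C_1h^{1-2\delta},1+C_1h^{1-2\delta}]$ range.

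The one point you do not address is that the corrections must preserve the partition-of-unity property $\sum_j\chi_j\equiv 1$ on $\LambdaH(\tfrac12 R(h))$, which is part of the statement: adding the $r_k$'s could a priori destroy it. The paper verifies this by noting that $\sum_j e_{j,k}=\sigma\bigl(h^{-1-k(1-2\delta)}[P,Op_h(\sum_j\chi_{j,k-1})]\bigr)=0$ on $\LambdaH(\tfrac12 R(h))$, so the corrections with zero initial data on $\sigFlow$ satisfy $\sum_j a_{j,k}=0$ there; a final $O(h^\infty)$ adjustment then makes the sum exactly $1$. This is an easy addition to your argument, but it is needed.
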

%%%%%%%%%%%%%%%%%%%%%%%%%%%%%%%%%%%%%%%%%%%%%%%%%%%%%%%%%%%%%%%%%%%%%%%%%%%%%%%%

%%%%%%%%%%%%%%%%%%%%%%%%%%%%%%%%%%%%%%%%%%%%%%%%%%%%%%%%%%%%%%%%%%%%%%%%%%%%%%%%
\begin{proof}
Let $\sigFlow $ be as in~\eqref{e:Hsig} $\tau_1<\frac{1}{2}\Tinj$ {and fix $0<\tau \leq \tau_1$}. Then let $\e_1>0$ be so small that $\Lambda_{\SigH}^{\tau_1}(\e_1)\subset\Lambda^{2\tau_1}_{\sigFlow }(0)$, {fix $0<\e<\e_1$} {and let $h_1$ be so small that $h^\delta \leq {\e}$ for all $0<h\leq h_1$}. {For each $j\in \{1, \dots, N_h\}$ let $\mc{H}_j=\sigFlow \cap \Tj$}. Let $\{\psi_j\}\subset C_c^\infty(\sigFlow ;[0,1]){\cap S_\delta}$ be a partition of unity on $\sigFlow \cap \LambdaH({\frac{1}{2}R(h)})$ subordinate to $\{\mc{H}_j\}_{j=1}^{N_h}$ {that is uniformly bounded in $S_\delta$}. Then, define  {  $a_{j,0} \in S_\delta$ on $\LambdaH({\e})$} by solving 
$$
a_{j,0}|_{\sigFlow }={\psi_j},\qquad H_pa_{j,0}=0\;\; \text{ on }\;\;\LambdaH({\e}).
$$
Clearly, $a_{j,0}$ defined in this way is a partition of unity {for $\LambdaH({\frac{1}{2}R(h)})$}. Furthermore, we can extend $a_{j,0}$ to $T^*\!M$ as an element of $S_\delta$ so that 
$$
\supp a_{j,0}\subset \bigcup_{|t|\leq{\tau+\e}+R(h)}\varphi_t(\mc{H}_j)\subset \Lambda_{\rho_j}^{{\tau+\e}}(R(h)),\qquad 0\leq a_{j,0}\leq 1
$$ 
Note also that since $P\in \Psi^m(M)$ {and $H_p a_{j,0}=0$}, for $b\in S_\delta$ with $\supp b\subset \LambdaH({\e})$,
$$
Op_h(b)[P,Op_h(a_{j,0})]\in h^{2-2\delta}\Psi_\delta(M).
$$

We define $a_{j,k}$ by induction. Suppose we have $a_{j,\ell}$, $\ell=0,\dots, k-1$, so that if we set  $\chi_{j,k-1}:=\sum_{\ell=0}^{k-1} h^{\ell (1-2\class)}a_{j,\ell}$, then
\begin{enumerate}
\item[A)]\; ${\displaystyle \sum_{j=1}^{N_h}\chi_{j,k-1}\equiv 1}$\;\; on  $\LambdaH({\frac{1}{2}R(h)})$,\\
 \item[B)] \; $e_{j,k}:=\sigma\Big(h^{-1-k(1-2\class)}[P,Op_h(\chi_{j,k-1})]\Big) \in S_\class$ \;\; on $\LambdaH({\e}).$
\end{enumerate}
Then, {for every $k\geq 1$} define $a_{j,k}\in  S_\class$ by 
\begin{equation}\label{E:defn of a}
a_{j,k}|_{\sigFlow }=0,\quad H_pa_{j,k}=-i{e_{j,k}}\;\; \text{ on }\;\;\LambdaH({\e}).
\end{equation}
Next extend $a_{j,k}$ to $\TM $ as an element of $S_\delta$ so that 
$$
\supp a_{j,k}\subset \bigcup_{|t|\leq{\tau+\e}+R(h)}\varphi_t(\mc{H}_j)\subset \Lambda_{\rho_j}^{{\tau+\e}}(R(h)).
$$
Now, since $\sum_{j=1}^{N_h} \chi_{j,k-1}\equiv 1$ on $\LambdaH({\frac{1}{2}R(h)})$, by  (B) we see that for $\rho\in \LambdaH({\frac{1}{2}R(h)})$,
\begin{align*}
\sum_{j=1}^{N_h} e_{j,k}(\rho)
%&=\sigma\Big(\sum_{j=1}^{N_h} h^{-1-k(1-2\delta)}[P,Op_h(\chi_{j,k-1})]\Big)(\rho)\\
%&
=\sigma\Big(h^{-1-k(1-2\delta)}\Big[P,Op_h\Big(\sum_{j=1}^{N_h}\chi_{j,k-1}\Big)\Big]\Big)(\rho)=0.
\end{align*}
 In particular, \eqref{E:defn of a} gives that  $\sum_{j=1}^{N_h} a_{j,k}=0$ on $\LambdaH({\frac{1}{2}R(h)})$. Therefore, since $\chi_{j,k}= \chi_{j,k-1}+h^{k(1-2\delta)}a_{j,k}$,   we conclude that 
\[\sum_{j=1}^{N_h} \chi_{j,k}=1 \qquad  \text{on} \;\; \LambdaH({\tfrac{1}{2}R(h)}),\] 
and hence (A) is satisfied for  $a_{j,\ell}$ with  $\ell=0,\dots, k$. To show that (B) is also satisfied, let $b\in S_\delta$ with $\supp b\subset \LambdaH({\e})$. By assumption, we have
$$
Op_h(b)[P,Op_h(\chi_{j,k-1})]\in h^{1+k(1-2\delta)}\Psi_\delta(M).
$$
Also, using once again that $P\in \Psi^m(M)$ {and that $H_pa_{j,k}=-ie_{j,k}$}
$$
Op_h(b)[P,Op_h(a_{j,k})]\in h\Psi_\delta(M)+ h^{2-2\delta}\Psi_\delta(M).
$$
Hence,
$$
Op_h(b)[P,Op_h(\chi_{j,k})]={Op_h(b)\big[P,Op_h( \chi_{j,k-1}+h^{k(1-2\delta)}a_{j,k})\big]}\in h^{1+k(1-2\delta)}\Psi_\delta(M),
$$
and so, on $\LambdaH({\e})$,
\begin{align*}&\sigma(h^{-1-k(1-2\delta)}Op_h(b)[P,Op_h(\chi_{j,k})])=\\
&\qquad\qquad =\sigma\! \Big(h^{-1-k(1-2\class)}\!Op_h(b)\Big([P,Op_h(\chi_{j,k-1})]+h^{k(1-2\class)}[P,Op_h(a_{j,k})]\Big)\Big)\\
&\qquad\qquad =b(e_{j,k}-e_{j,k})=0.
\end{align*}
In particular, 
\begin{equation}
\label{e:howdy}
    Op_h(b)[P,Op_h(\chi_{j,k})]\in h^{1+(k+1)(1-2\delta)}\Psi_\delta(M),
\end{equation}
and $ e_{j,k+1} \in S_\delta$ on $\LambdaH({\e})$ as claimed.

Finally, let 
$$\chi_j\sim \sum_{\ell=0}^\infty h^{\ell(1-2\class)}a_{j,\ell}.$$
Then, using~\eqref{e:howdy},
$$
\MSh([P,Op_h(\chi_j)])\cap \LambdaH({\e})=\emptyset.
$$
%{To see this note that the previous assertion is equivalent to showing that for all $\alpha, N$ there exists $C_{\alpha,N}$ so that 
%\[\sup_{(x,\xi)\in \LambdaH(\e_1)} |H_p \chi_j(x,\xi)| \leq C_{\alpha, N} h^{N-1}.\]
%Also, within $ \LambdaH(\e_1)$ relation \eqref{E:defn of a} gives  $H_p \chi_j=i\sum_{k=0}^\infty h^{k(1-2\class)}e_{j,k}$
%\cs i think we need define $a_{j,k}$ so that $H_pa_{j,k}=ih^{k(1-2\class)}e_{j,k}$ and so $H_p\chi_j=\sum e_{j,k} =0$\cs
%\ \\
%\cs Also, we need to define $a_{j,k}$ so that the support of $\chi_j$ does what we claim.\cs}

Now, note that by construction $\{\chi_{j}\}$ remains a partition of unity modulo $O(h^\infty)$ and by adding an $h^\infty$ correction to teach term, we construct $\{\chi_j\}$ so that it forms a partition of unity. {We also have by construction that $\chi_j\in C^\infty_c(\TM ;[-C_1h^{1-2\delta},1+C_1h^{1-2\delta}])$ {for some $C_1$ depending only on $(M,p$)} {and finitely many of the constants $\KR_\alpha$}.}
\end{proof}

%%%%%%%%%%%%%%%%%%%%%%%%%%%%%%%%%%%%%%%%%%%%%%%%%%%%%%%%%%%%%%%%%%%%%%%%%%%%%%%%

%%%%%%%%%%%%%%%%%%%%%%%%%%%%%%%%%%%%%%%%%%%%%%%%%%%%%%%%%%%%%%%%%%%%%%%%%%%%%%%%

\subsection{Step 3: Estimate near bicharacteristics}
{Let $h>0$.}
{Let $(x',\tilde{x})$ be Fermi coordinates near {$\tilde{H}=\tilde{H}_h$} with corresponding dual coordinates $(\xi',\tilde{\xi})$. Then, since $H$ is {uniformly} conormally transverse for $p$, {$\tilde{H}$} and on $\Sigma_{_{\tilde{H},p}}$, there exists $j$ so that $H_p\tilde{x}_j\neq 0$. In particular, 
$$
dp,\,\{d\tilde{x}_i\}_{i=1}^k,\,\{d\xi'_i\}_{i=1}^{n-k}\text{ are linearly independent near }\SigH.
$$
 Thus, there exist $y_1,\dots, y_{n-1}\in C^\infty(\TM ;\re)$ so that $(p,\tilde{x},\xi',y)$ are coordinates on $\TM $ near $\Sigma_{_{{\tilde{H},p}}}$ {for which $\Sigma_{_{{\tilde{H},p}}}=\{p=0, \tilde x=0, \xi'=0\}$}. In particular, {there exists $C>0$ depending only on $(M,p, \KR_{_{0}})$ so that}
$$
d((x_0,\xi_0),\Sigma_{_{{\tilde{H},p}}})^2\leq C(p(x_0,\xi_0)^2+|\tilde{x}_0|^2+|\xi'_0|^2).
$$
We define the constant $K>0$ {introduced} in the definition \eqref{e:beta} of $\beta_\delta$ to be large enough so that 
\begin{align}\label{e:Kdef}
&\text{If}\quad d((x_0,\xi_0),\Sigma_{_{{\tilde{H},p}}})\geq {\tfrac{1}{2}}h^\delta,\quad {(x_0',\xi_0')} \in \supp \beta_\delta,\quad \text{and} \quad d(x,{\tilde{H}})\leq \tfrac{1}{K}h^\delta, \notag\\
&\hspace{4cm} \text{then} \;\;
|p(x_0, \xi_0)| \geq \tfrac{1}{3}h^\delta.
\end{align}}
As introduced in Step 1 in the proof of Theorem \ref{t:porcupine}, let $\chi_{_{0}} \in C_{{c}}^\infty(\R;[0,1])$ be a smooth cut-off function with $\chi_{_{0}}(t)=1$ for $t \leq \tfrac{1}{2}$ and $\chi_{_{0}}(t)=0$ for $t \geq {1}$.  Let 
$\beta_\class{(x',\xi')}$ be defined as in \eqref{e:beta}. {In what follows $\tau_1, \e_1, h_1$ are the positive constants given by Proposition \ref{l:nicePartition}.} 

{Our next proposition estimates the main contribution to averages. In particular, we control the average near zero frequency by the $L^2$ mass along bicharacteristics co-normal to the submanifold $H$. One of the main estimates used in the proof of Proposition~\ref{l:manyTubes} is found in Lemma~\ref{l:squirrel}. In particular, $p$ is factored as $e(x,\xi)(\xi_1-a(x,\xi'))$ so that it can be treated using elementary estimates. This idea comes from~\cite{KTZ} where, to the best of the authors' knowledge, it was first used to control $L^\infty$ norms. }
%%%%%%%%%%%%%%%%%%%%%%%%%%%%%%%%%%%%%%%%%%%%%%%%%%%%%%%%%%%%%%%%%%%%%%%%%%%%%%%%
\begin{proposition}
\label{l:manyTubes}
There exist constants $0<\tau_0 \leq \tau_1$, $0<\e_0 \leq \e_1$,  with  $\tau_0=\tau_0(M,p,\Tinj,\FR )$ and $\e_0=\e_0(\tau_0)$, {$R_0=R_0(M,p,k,\KR_{_{0}},\Tinj,\FR )>0$} and a constant
$C_{n,k}$ depending only on $n,k$,  and for each $0<\delta < \tfrac{1}{2}$ there exists $0<h_0\leq h_1$ so that the following holds.

Let   $0<\tau\leq \tau_0$, $0<\e<\e_0$, $ {4}h^\delta\leq R(h)\leq R_0$. Let $\mathfrak{D}_n$ be the constant from Proposition~\ref{l:cover}, {$0<h<h_0$}, and 
$\{ \Lambda_{_{\!\rho_{j}}}^\tau (R(h))\}_{j=1}^{N_h}$ be a $(\mathfrak{D}_n,\tau,R(h))$-good cover for $\SigH$. In addition, let  $\{\chi_j\}_{j=1}^{N_h}$ be the partition of unity built in Proposition~\ref{l:nicePartition}.

 Then, {there exists $C>0$ so that} for all {$N>0$ there is $C_{_{\!N}}>0$} with the following properties. For all   $w=w(x';h)\in S_\class \cap C_c^\infty({\tilde{H}})$, $0<h \leq h_0 $, and $u\in \mathcal{D}'(M)$,
\begin{align*}
{h^{\frac{k-1}{2}}}\|w Op_h(\beta_\class)u\|_{L^1({\tilde{H}})}
&\leq C_{n,k}{\|w\|_{_{\!\infty}}} R(h)^{\frac{n-1}{2}}\sum_{j\in \mathcal I_h(w)}\frac{\|Op_h(\chi_j)u\|_{\LM}}{\tau^{\frac{1}{2}}|H_pr_H(\rho_j)|^{\frac{1}{2}}}\\
&+{{C{h^{-1}}}}{\|w\|_\infty}\|Pu\|_{\Hm}+{C_{_{\!N}}h^N}{\|w\|_\infty}\|u\|_{\LM},
\end{align*}
where $\mathcal I_h(w)=\{j: \Tj\cap \pi^{-1}(\supp w)\neq \emptyset\}$. {Moreover the constants $C, C_{_{\!N}}, {h_0}$ are uniform for $\chi_j$ in bounded subsets of $S_\delta$, {uniform in $\tau,\e_0,\FR $  when these are bounded away from $0$,} {and uniform for $\KR_{_{\alpha}}$ bounded.}}
\end{proposition}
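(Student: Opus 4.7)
The plan is to decompose $Op_h(\beta_\delta)u$ along the $\delta$-partition $\{\chi_j\}_{j=1}^{N_h}$ constructed in Proposition~\ref{l:nicePartition}, estimate each piece by a trace inequality localized near a single bicharacteristic segment, and sum. To begin, I will write
\[
Op_h(\beta_\delta)u \;=\; \sum_{j=1}^{N_h} Op_h(\beta_\delta)Op_h(\chi_j)u \;+\; Op_h(\beta_\delta)\Big(1-\sum_{j=1}^{N_h}Op_h(\chi_j)\Big)u.
\]
Since $\sum_j\chi_j\equiv 1$ on $\Lambda_{\SigH}^\tau(\tfrac{1}{2}R(h))$, the symbol of the last operator is supported off of this set. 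The choice~\eqref{e:Kdef} of $K$ in the definition of $\beta_\delta$, combined with~\eqref{e:conormalClose}, ensures that on $\supp\beta_\delta$ within $\tfrac{1}{K}h^\delta$ of $\tilde H$ any such point has $|p|\geq\tfrac{1}{3}h^\delta$. A semiclassical elliptic parametrix in $S_\delta$ (available since $2\delta<1$), together with Lemma~\ref{l:intByPartsa} applied to integrate against $w$ on $\tilde H$, will then dominate the resulting error by $Ch^{-1}\|w\|_\infty\|Pu\|_{\Hm}+C_Nh^N\|u\|_{\LM}$, matching the error structure required in the conclusion.

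The core step will be a single-tube trace inequality: for each $j\in\mathcal I_h(w)$,
\[
\Big|\!\int_{\tilde H}\!\! w\,Op_h(\beta_\delta)Op_h(\chi_j)u\, d\sigma_{\tilde H}\Big|\;\leq\; \frac{C_{n,k}\|w\|_\infty R(h)^{\frac{n-1}{2}}h^{\frac{1-k}{2}}}{\tau^{1/2}|H_pr_{_{\!H}}(\rho_j)|^{1/2}}\|Op_h(\chi_j)u\|_{\LM}
\]
modulo the same global error terms. I plan to establish this via the Koch--Tataru--Zworski factorization (Lemma~\ref{l:squirrel}): near $\rho_j$ pick Fermi coordinates $(\bar x,x')$ for $\tilde H$ together with a further linear change in the fibre so that $p=e(x,\xi)(\xi_1-a(x,\xi'))$ with $e$ elliptic, the existence of such a factorization being guaranteed by conormal transversality~\eqref{e:transverse}--\eqref{e:FR}; the geometric weight $|H_pr_{_{\!H}}(\rho_j)|$ enters as the Jacobian between the new $x_1$-direction and the intrinsic normal direction to $\tilde H$. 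Setting $v=Op_h(\chi_j)u$, Proposition~\ref{l:nicePartition} guarantees that $[P,Op_h(\chi_j)]$ is microlocally negligible throughout the region where the factorization is used, so $Pv$ reduces to $Op_h(\chi_j)Pu$ modulo $O(h^\infty)\|u\|_{\LM}$, and $(hD_{x_1}-a(x,hD_{x'}))v$ is controlled in $L^2$ by $Op_h(\chi_j)Pu$. Integrating the resulting energy identity in $x_1$ across the length-$\tau$ tube will yield the $L^2$ trace bound $\|v\|_{L^2(\tilde H)}^2\leq C\tau^{-1}|H_pr_{_{\!H}}(\rho_j)|^{-1}\|v\|_{\LM}^2+(\text{errors})$. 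Converting to the $L^1(\tilde H)$ pairing against $w\cdot Op_h(\beta_\delta)$ will use Cauchy--Schwarz on the spatial footprint together with a semiclassical Bernstein inequality that exploits the fact that $\beta_\delta$ confines $\xi'$-frequencies to a ball of radius $h^\delta$ while $v$ is spatially concentrated in a tube of transverse width $\sim R(h)$; this produces the weight $h^{\frac{1-k}{2}}R(h)^{\frac{n-1}{2}}$.

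Summing the single-tube estimates over $\mathcal I_h(w)$ and adding the elliptic treatment of the off-cover piece will then yield the stated bound; note that no $\mathfrak{D}_n$ factor needs to appear here because only norms (and not their squares) are being summed. The main obstacle will be the bookkeeping in the core step: extracting the precise factor $|H_pr_{_{\!H}}(\rho_j)|^{-1/2}$ uniformly through the coordinate change, factorization, and quantization stages, so that the implicit constants depend only on $(M,p,\FR)$ and on finitely many of the constants $\KR_\alpha$ from~\eqref{e:curvature}, rather than on the individual tube or on the particular perturbation $\tilde H$ of $H$. A related technical point is that $[Op_h(\beta_\delta),Op_h(\chi_j)]$ is only $O(h^{1-2\delta})$ a priori, so its contribution must be shown to be absorbed into the error scheme; this is the case because its microsupport lies in the region where the factorization above remains elliptic, reducing its treatment to the same elliptic-parametrix argument used for the off-cover piece in the first step.
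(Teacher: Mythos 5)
Your architecture coincides with the paper's: decompose along the $\chi_j$, treat the off-cover piece by a second-microlocal elliptic parametrix (Lemma~\ref{l:naive squirrel}), prove a single-tube trace estimate via the factorization $p=e\,(\xi_1-a)$ of Lemma~\ref{l:squirrel} plus averaging in $x_1$, and recombine by Cauchy--Schwarz on $R(h)$-balls in $\tilde{H}$. However, two points in your core step are wrong as stated. First, the factor $h^{\frac{1-k}{2}}R(h)^{\frac{k-1}{2}}$ does \emph{not} come from a Bernstein inequality exploiting the $h^\delta$-localization of the tangential frequencies $\xi'$ by $\beta_\delta$: the variables $x'$ are coordinates \emph{on} $\tilde{H}$ and require no trace, so that localization contributes nothing to the restriction constant. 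The gain comes from the $k-1$ conormal variables $\bar{x}$: on $\supp\chi_j$ one has $|\bar\xi-\bar a(x_1)|\leq 3R(h)$ (this is~\eqref{E:R}), so after conjugating by $e^{-\frac{i}{h}\langle\bar{x},\bar{a}(x_1)\rangle}$ the function is frequency-localized at scale $R(h)$ in $\bar\xi$, and the trace at $\bar{x}=0$ costs $(R(h)/h)^{\frac{k-1}{2}}$ — the optimized Sobolev embedding~\eqref{e:centipede} used in Lemma~\ref{L:optimized}. Running the computation as you describe would not land on the stated powers. Second, your claim that $[Op_h(\beta_\delta),Op_h(\chi_j)]$ is microsupported where the factorization is elliptic is false: its symbol lives in $\supp\beta_\delta\cap\supp\chi_j$, which meets $\{p=0\}$ inside the tube (the tube contains characteristic points with $|\xi'|\sim h^\delta/K$, where $d\beta_\delta\neq0$, since $R(h)\geq 4h^\delta$). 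The correct resolution is not to commute them at all: keep $Op_h(\beta_\delta)$, together with the spatial cutoffs, as the tangential multiplier $\kappa$ in the Sobolev-embedding step; the only commutator then needed is $[Op_h(\kappa),(hD_{x_i}-a_i)^k]$, which is $O(hR(h)^{k-1})$ precisely because $\beta_\delta$ is independent of $(x_1,\bar\xi)$ (cf.~\eqref{E:D4}).

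A further omission concerns the recombination of errors. Your single-tube estimate carries $R(h)^{\frac{n-1}{2}}h^{-1}\|Op_h(\chi_j)Pu\|_{\LM}$ per tube, and $|\mathcal I_h(w)|\sim R(h)^{1-n}$, so a naive sum gives $R(h)^{\frac{1-n}{2}}h^{-1}\|Pu\|_{\LM}$, far worse than the stated $Ch^{-1}\|Pu\|_{\Hm}$. Recovering the stated error requires Cauchy--Schwarz in $j$ together with the almost-orthogonality $\sum_j|\chi_j|^2\leq\beta_{n,k}$ (bounded overlap of the good cover) and the count $|\mathcal I_h(w)|\leq c_{n,k}R(h)^{1-n}\vol(\SigH)$, so that $R(h)^{\frac{n-1}{2}}\sum_{j\in\mathcal I_h(w)}\|Op_h(\chi_j)Pu\|_{\LM}\leq C\|Pu\|_{\LM}$. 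This is not ``the same global error terms'' and must be carried out explicitly.
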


%%%%%%%%%%%%%%%%%%%%%%%%%%%%%%%%%%%%%%%%%%%%%%%%%%%%%%%%%%%%%%%%%%%%%%%%%%%%%%%%
\begin{proof}

 We define $\tau_0>0$, ${\e_0>0}$ to be the constants given by  Lemma \ref{L:optimized} below. {Let  $\chi_{_{0}} \in C_{{c}}^\infty(\R;[0,1])$ be a smooth cut-off function with $\chi_{_{0}}(t)=1$ for $t \leq \tfrac{1}{2}$ and $\chi_{_{0}}(t)=0$ for $t \geq {1}$.}
 We first decompose $\|wOp_h(\beta_\class)u\|_{\LMo}$ with respect to $\{\chi_j\}_{j=1}^{N_h}$. 
 {We write 
\[
Op_h( \beta_\delta)=
\Big[1-\chi_{_0}\Big(\frac{Kd(x,{\tilde{H}})}{h^\delta}\Big)\Big]Op_h(\beta_\delta) 
+\chi_{_0}\Big(\frac{Kd(x,{\tilde{H}})}{h^\delta}\Big)Op_h(\beta_\delta)\sum_{j=1}^{N_h}Op_h(\chi_j)
+Op_h(\chi)
\]
 with
 $$
Op_h(\chi)=\chi_{_0}\Big(\frac{Kd(x,{\tilde{H}})}{h^\delta}\Big)Op_h(\beta_\delta)\Big(1-\sum_{j=1}^{N_h}Op_h(\chi_j)\Big).
$$
First, note that 
$
\big[1-\chi_{_0}\big(\frac{Kd(x,{\tilde{H}})}{h^\delta}\big)\big]Op_h(\beta_\delta) u\big|_{{\tilde{H}}}\equiv 0.
$
Therefore, 
\begin{equation}\label{e:bounda}
\|Op_h( \beta_\delta)u\|_{L^1({\tilde{H}})}
\leq
\Big\|Op_h(\beta_\delta)\sum_{j=1}^{N_h}Op_h(\chi_j) u \Big\|_{L^1({\tilde{H}})}
+\| Op_h(\chi)u \|_{L^1({\tilde{H}})}.
\end{equation}
We first study the $\| Op_h(\chi)u \|_{L^1({\tilde{H}})}$ term.
To do this let $\psi\in C_c^\infty(\TM )$ be so that $|p(x,\xi)|\geq c|\xi|^m$ on $\supp (1-\psi)$. Then, by a standard elliptic parametrix construction (see e.g~\cite[Appendix E]{ZwScat}) together with the semiclassical Sobolev estimates (see e.g.~\cite[{Lemma 6.1}]{Gdefect}) there exist {$C>0$ and $0<h_0\leq h_1$ so that the following holds. For all $N$ there exists $C_{_{\!N}}>0$}  such that for all $0<h\leq h_0$
\begin{align*}
\|Op_h(1-\psi)Op_h(\chi) u\|_{L^2({\tilde{H}})}&\leq Ch^{-\frac{k}{2}}\|Op_h(1-\psi)Op_h(\chi)u\|_{\Hs{\frac{k+1}{2}}}\\
&\leq Ch^{-\frac{k}{2}}\|Pu\|_{\Hm}+{C_{_{\!N}} h^N}\|u\|_{\LM}.
\end{align*}
Together with} Lemma~\ref{l:naive squirrel} (below) applied to $\psi\chi$ {and the fact that $\|Pu\|_{\LM}\leq \|Pu\|_{\Hm}$}
this implies
\begin{equation}\label{e:boundb}
{\|Op_h(\chi)u  \|_{_{\!L^2({\tilde{H}})}}}\leq C{h^{-\frac{k}{2}-\delta}}\|Pu\|_{\Hm}+{C_{_{\!N}} h^N}\|u\|_{\LM}.
\end{equation}
Indeed, to see that Lemma~\ref{l:naive squirrel} applies, let $(x_0,\xi_0)\in \supp \psi\chi$. Then observe that $\supp \chi \subset \Big(\Lambda_{\SigH}^\tau({2}h^\class)\Big)^c$ and hence
$$
 d((x_0,\xi_0),\Sigma_{_{{\tilde{H},p}}})\geq h^\delta.
$$
Next, note that  $d((x_0,\xi_0),N^*\!{\tilde{H}})\leq \tfrac{1}{K}h^\class$ since $(x_0,\xi_0)\in \supp \beta_\delta$. 
{Therefore,  since $d((x_0,\xi_0),\Sigma_{_{{\tilde{H},p}}})\geq h^\class$, $d(x,{\tilde{H}})\leq \tfrac{1}{K}h^\class$, and $(x_0, \xi_0)\in \supp \beta_\delta$, by the definition \eqref{e:Kdef} of $K$ we obtain that} $|p(x_0,\xi_0)|\geq \frac{h^\class}{3}$ {for all $0<h\leq h_0$.} {To see that $|dp|>\tfrac{\FR}{2}>0$ on $\supp \psi \chi$, we observe that $|H_p|>\FR>0$ on $\SigH$.} 
It follows {from \eqref{e:bounda} and \eqref{e:boundb}} that
\begin{multline}\label{E:allspikes0}
\|wOp_h(\beta_\class)u\|_{\LMo}
\leq \!\Big\|\sum_{j=1}^{N_h} wOp_h(\beta_\class)Op(\chi_j)u\Big\|_{\LMo}\!
\\+C {\|w\|_\infty}{h^{-\frac{k}{2}-\delta}}\|Pu\|_{\Hm}\!\!+\! {C_{_{\!N}}h^N\|w\|_\infty }\|u\|_{\LM}.
\end{multline}

By Proposition~\ref{l:cover}, or more precisely its proof, there exist a collection of balls $\{B_i\}_{i=1}^{M_h}$ in ${\tilde{H}}$ of radius $R(h)\leq R_0(n,k,{\KR_{_{0}}})$ and constants $\alpha_{n,k}$ depending only on $n,k$, so that 
$$
{\tilde{H}}\subset \bigcup_{i=1}^{M_h} B_i
$$
and each $x\in {\tilde{H}}$ lies in at most $\alpha_{n,k}$ balls $B_i$. Let $\{\psi_i\}_{i=1}^{M_h}$ be a partition of unity on ${\tilde{H}}$ subordinate to $\{B_i\}_{i=1}^{M_h}$.
Then,  by \eqref{E:allspikes0}, {for all $0<h\leq h_0$,} 
\begin{equation}
\begin{aligned}\label{E:allspikes}
\|wOp_h(\beta_\class)u\|_{L^1({\tilde{H}})}
&\leq \sum_{i=1}^{M_h}\sum_{j=1}^{N_h} \|\psi_i wOp_h(\beta_\class) Op(\chi_j)u\|_{L^1({\tilde{H}})}\\
&\qquad+C{h^{-\frac{k}{2}-\delta}}{\|w\|_{\infty}}\|Pu\|_{\Hm}\!\!+\!{C_{_{\!N}}h^N\|w\|_\infty}\|u\|_{\LM}.
\end{aligned}
\end{equation}
%Thus,  if $b\in S_\class$ is so that $\supp b\subset \Lambda_{\SigH}^T(2h^\class)$,  then
%$$
%\WFh\Big(Op_h(b)[P,Op_h(\chi_j)]\Big)=\emptyset.
%$$
We next note that on ${\tilde{H}}$, the volume of a ball of radius $r$ satisfies
$$
|\vol_{{\tilde{H}}}(B(x,r))-c_{n,k}r^{n-k}|\leq {C_{_{\!\KR_{_{0}}}}}r^{n-k+1}
$$
where {$C_{_{\!\KR_{_{0}}}}>0$} is a constant depending only on ${\KR_{_{0}}}$ {and $c_{n,k}$ is a constant that depends only on $(n,k)$}, (this can be seen by working in geodesic normal coordinates). Therefore, for some $c_{n,k}>0$ and {any} $R(h)\leq R_0=R_0({\KR_{_{0}}})$ 
\begin{equation}\label{E:l1bound}
 \|\psi_i wOp_h(\beta_\class) Op(\chi_j)u\|_{L^1({\tilde{H}})} \leq c_{n,k}R(h)^{\frac{n-k}{2}} \|\psi_i w Op_h(\beta_\class) Op(\chi_j)u\|_{L^2({\tilde{H}})}.
 \end{equation}
 
We next bound $\|\psi_i wOp_h(\beta_\class) Op(\chi_j)u\|_{L^2({\tilde{H}})}$. By Lemma~\ref{L:optimized} below there exist $C_{n,k}>0$ depending only on $(n,k)$, and $C>0$ {so that the following holds. For every $\tilde N>0$ there exists $C_{_{\! \tilde N}}>0$, independent of $(i,j)$,} so that for all $0<h\leq h_0$ 
\begin{multline}\label{E:spike1}
\|\psi_iwOp_h(\beta_\class)Op_h(\chi_j)u\|_{L^2({\tilde{H}})}\\
\leq C_{n,k}{\|w\|_{_{\!\infty}}}h^{\frac{1-k}{2}}\!R(h)^{\frac{k-1}{2}}\!\!\left(\frac{\|Op_h(\chi_j)u\|_{\LM}}{\tau^{\frac{1}{2}}|H_pr_H(\rho_j)|^{\frac{1}{2}}}\!+\!Ch^{-1}\|Op_h({\chi_j})Pu\|_{\LM}\!\right)\\+\!C_{_{\! \tilde N}}h^{\tilde N}\|w\|_\infty\|u\|_{\LM}.
\end{multline}

Also, note that  if $j\notin \mathcal I_h({\psi_i w})$ for some $i\in \{1, \dots, M_h\}$, then 
$$
\Tj \cap \pi^{-1}(\supp {\psi_i w})=\emptyset.
$$ Therefore, since $\supp \chi_j \subset \Tj$ {for all $j$}, {for all $N'$ there exists $C_{_{N'}}>0$ so that the following holds.} For all $i\in \{1, \dots, M_h\}$ and  $j\notin \mathcal I_h({\psi_i w})$
$$
\|\psi_i wOp_h(\beta_\class)Op_h(\chi_j)u\|_{L^2({\tilde{H}})}\leq C_{_{\! N'}}h^{N'}\|w\|_\infty\|u\|_{\LM}.
$$
In particular, since $N_h$ {and $M_h$ grow} like a polynomial power of $h$, {we can choose $N'$ so that}
\begin{equation}\label{E:spike2}
{\sum_{i=1}^{M_h}}\sum_{j\notin I_h({\psi_i w})}\|\psi_i wOp_h(\beta_\class)Op_h(\chi_j)u\|_{L^2({\tilde{H}})}\leq {C_{_{\!N}}h^N\|w\|_\infty}\|u\|_{\LM}.
\end{equation}

Putting \eqref{E:l1bound}, \eqref{E:spike1} and \eqref{E:spike2} into  \eqref{E:allspikes},  we find that for some adjusted $C_{n,k}$ {and $0<h\leq h_0$} 
\begin{align*}
&\|wOp_h(\beta_\class)u\|_{L^1({\tilde{H}})}\\
&\leq C_{n,k}{\|w\|_{_{\!\infty}}}h^{\frac{1-k}{2}}R(h)^{\frac{n-1}{2}}\sum_{i=1}^{M_h}\sum_{j\in I_h(\psi_iw)}\!\!\left(\frac{\|Op_h(\chi_j)u\|_{\LM}}{\tau^{\frac{1}{2}}|H_pr_{_{\!H}}(\rho_j)|^{\frac{1}{2}}}+Ch^{-1}\|Op_h({\chi_j})Pu\|_{\LM}\!\!\right)\\
&\qquad +C{h^{-\frac{k}{2}-\delta}}{\|w\|_\infty}\|Pu\|_{\Hm}+{C_{_{\!N}}h^N\|w\|_\infty}\|u\|_{\LM}.
\end{align*}
We have used that both $M_h$ and $N_h$ grow like a polynomial power of $h$ {to collect all the $C_{_{\! \tilde N}}h^{\tilde N}\|u\|_{\LM}$  error terms in \eqref{E:spike1}}. Furthermore, since the balls $\{B_i\}$ are built so that every point in ${\tilde{H}}$ lies in at most $\alpha_{n,k}$ balls, and each $\psi_i$ is supported on $B_i$, we have 
\begin{align}\label{e:superPorcupine}
&\|wOp_h(\beta_\class)u\|_{L^1({\tilde{H}})} \notag\\
&\leq C_{n,k}{\|w\|_{_{\!\infty}}}h^{\frac{1-k}{2}}R(h)^{\frac{n-1}{2}}\sum_{j\in \mathcal I_h(w)}\left(\frac{\|Op_h(\chi_j)u\|_{\LM}}{\tau^{\frac{1}{2}}|H_pr_H(\rho_j)|^{\frac{1}{2}}}+Ch^{-1}\|Op_h({\chi_j})Pu\|_{\LM}\right) \notag\\
&\qquad +C{{h^{-\frac{k}{2}-\delta}}\|w\|_\infty}\|Pu\|_{\Hm}+{C_{_{\!N}}h^N\|w\|_\infty}\|u\|_{\LM}.
\end{align}
Now, {since ${\chi_j}$ is supported in $\Tj$, and the tubes were built so that every point in $\LambdaH(h^\class)$ lies in at most $\beta_{n,k}$ tubes,  we have}  
$
\sum_{j=1}^{N_h} |{\chi_{j}}|^2\leq \beta_{n,k}.
$
This implies
$$
\sum_{j=1}^{N_h} \|Op_h({\chi_j})Pu\|_{\LM}^2\leq 2\beta_{n,k}\|Pu\|_{\LM}^2.
$$
Next, notice that since $\dim \SigH=n-1$, we have  $|\mathcal I_h(w)|\leq c_{n,k}R(h)^{1-n}\vol(\SigH)$ for some $c_{n,k}>0$ depending only on $n,k$. Therefore,  
\begin{align*}
\sum_{j\in \mathcal I_h(w)} \|Op_h({\chi_j})Pu\|_{\LM}&\leq |\mathcal I_h(w)|^{\frac{1}{2}}\Big(\sum_{j=1}^{N_h} \|Op_h({\chi}_j)Pu\|_{\LM}^2\Big)^{\frac{1}{2}}\\
&\leq c_{n,k}R(h)^{-\frac{n-1}{2}}\vol(\SigH)^{\frac{1}{2}}\|Pu\|_{\LM},
\end{align*} 
for some $c_{n,k}>0$ depending only on $n,k$.
Using this in~\eqref{e:superPorcupine} {together with $\delta<\frac{1}{2}$,} gives
\begin{align*}
\|wOp_h(\beta_\class)u\|_{L^1({\tilde{H}})}
&\leq C_{n,k}{\|w\|_{_{\!\infty}}}h^{\frac{1-k}{2}}R(h)^{\frac{n-1}{2}}\sum_{j\in \mathcal I_h(w)}\frac{\|Op_h(\chi_j)u\|_{\LM}}{\tau^{\frac{1}{2}}|H_pr_H(\rho_j)|^{\frac{1}{2}}}\\
&+C{h^{-\frac{1+k}{2}}}{\|w\|_\infty}\|Pu\|_{\Hm}+{C_{_{\!N}}h^N\|w\|_\infty}\|u\|_{\LM},
\end{align*}
as claimed. Note that the constants $C, C_{_{\!N}}, {h_0}$ are uniform for $\chi_j$ in bounded subsets of $S_\delta$, and are also  uniform in $\tau,\e_0,\FR $  when these are bounded away from $0$. {Furthermore, they depend only on finitely many of the constants $\KR_\alpha$.}

\end{proof}
%%%%%%%%%%%%%%%%%%%%%%%%%%%%%%%%%%%%%%%%%%%%%%%%%%%%%%%%%%%%%%%%%%%%%%%%%%%%%%%%

%%%%%%%%%%%%%%%%%%%%%%%%%%%%%%%%%%%%%%%%%%%%%%%%%%%%%%%%%%%%%%%%%%%%%%%%%%%%%%%%

We now state the following result {which gives elliptic estimates in regions that are $h^\delta$ away from the characteristic variety of $p$.} 

%%%%%%%%%%%%%%%%%%%%%%%%%%%%%%%%%%%%%%%%%%%%%%%%%%%%%%%%%%%%%%%%%%%%%%%%%%%%%%%%
{\begin{lemma}
\label{l:naive squirrel}
Let $0\leq \delta<\frac{1}{2}$, $0<k<n$. {Let ${\Theta:W\subset \re^n\to M}$ be coordinates on $M$.} Let $\chi\in S^{{\comp}}_\delta\cap C_c^\infty(\TM ;[-C_0h^{1-2\delta},1+C_0h^{1-2\delta}])$ be so that there exist $c, h_1>0$  with
\[ 
\supp \chi\subset \{|p|\geq c h^\delta\;,\;{|p|+|dp|>c}\}
\]
for  $0<h\leq h_1$.
{Then, {there exists $C>0$ such that} for all $\tilde \chi \in S_\class\cap C_c^\infty(\TM ;[0,1])$ with $\tilde \chi \equiv 1$ on $\supp \chi$, there exists $0<h_0<h_1$ so that the following holds. For all $N>0$ there exists $C_{_{\!N}}>0$  such that  for $0<h<h_0$}
\[
\|Op_h(\chi)u \|_{{L^\infty_{\bar{x}}L^2_{ x'}}}\leq Ch^{{-\frac{k}{2}}-\class}\|Op_h(\tilde{\chi})P{u}\|_{{L^2_x}} +{C_{_{\!N}}h^N}\|u\|_{L^2_x},
\] 
{where $x=(x', {\bar x})\in \R^{n-k}\times \R^k$ are the coordinates induced by $\Theta$.}
Moreover, $C,C_{_{\!N}}$ are uniform for $\tilde{\chi},\,\chi$ in bounded subsets of $S_\delta$, and {for} $\Theta$ in bounded subsets of $C^\infty$.
 \smallskip
\end{lemma}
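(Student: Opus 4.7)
The plan is to combine an elliptic parametrix estimate with a semiclassical Sobolev embedding in the $\bar{x}$ directions.

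First, I would construct a microlocal parametrix. Since $\chi$ is compactly supported and lives in $\{|p|\geq c h^\delta\}$, the function $q_0 := \chi/p$ lies in $h^{-\delta}S_\delta^{\comp}$: the quotient rule, combined with $|p| \geq c h^\delta$ on $\supp \chi$ and the $S_\delta$ bounds on $\chi$, shows that each derivative of $q_0$ costs at most a factor of $h^{-\delta}$ beyond the base bound $|q_0| \lesssim h^{-\delta}$. Because $\delta < \tfrac{1}{2}$, the semiclassical composition formula gives $Op_h(q_0)P = Op_h(\chi) + h R_1$ with $R_1 \in h^{-2\delta}\Psi_\delta^{\comp}$ microsupported in $\supp \chi$. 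Iterating this correction in the standard way produces $Q \sim \sum_{j\geq 0} h^{j(1-2\delta)}Op_h(q_j) \in h^{-\delta}\Psi_\delta^{\comp}$ satisfying $QP = Op_h(\chi) + \mathcal{O}(h^\infty)_{\Psi^{-\infty}}$. Since $\tilde\chi \equiv 1$ on $\supp \chi$, we have $Op_h(\chi) = Op_h(\chi)Op_h(\tilde\chi) + \mathcal{O}(h^\infty)_{\Psi^{-\infty}}$, so
\[
Op_h(\chi)u = Q\,Op_h(\tilde\chi)Pu + Eu
\]
with $E$ smoothing with $O(h^N)$ operator norm for every $N$.

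Next, Calderón--Vaillancourt in the class $S_\delta$ (valid since $\delta < \tfrac{1}{2}$) yields $\|Q\|_{L^2\to L^2}\leq C h^{-\delta}$, producing the $L^2$ elliptic bound
\[
\|Op_h(\chi)u\|_{L^2_x} \leq C h^{-\delta}\|Op_h(\tilde\chi)Pu\|_{L^2_x} + C_{_{\!N}} h^N \|u\|_{L^2_x}.
\]
I would then apply a semiclassical Sobolev embedding in $\bar{x}\in\R^k$: for any $s>k/2$ and $v\in C_c^\infty(\R^n)$,
\[
\|v\|_{L^\infty_{\bar{x}}L^2_{x'}} \leq C h^{-k/2}\|\langle h D_{\bar{x}}\rangle^s v\|_{L^2_x}.
\]
Taking $v = Op_h(\chi)u$ and using compact $\xi$-support of $\chi$ to conclude that $\langle hD_{\bar{x}}\rangle^s Op_h(\chi)$ is bounded on $L^2$ uniformly in $h$, combination with the $L^2$ estimate above yields the claimed inequality.

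The main obstacle is Step 1: verifying that the iterative parametrix construction stays in the class $h^{-\delta}S_\delta^{\comp}$ requires careful bookkeeping of seminorms, since each application of the quotient rule loses a power of $h^{-\delta}$ and each step of the iteration loses another factor of $h^{-2\delta}$, balanced by the gain $h$ from composition precisely when $\delta<\tfrac{1}{2}$. The uniformity statements then follow automatically: all constants depend only on finitely many $S_\delta$ seminorms of $\chi$ and $\tilde\chi$ (through the seminorms of the $q_j$ and the composition $Op_h(\chi)Op_h(\tilde\chi)$) and on finitely many $C^\infty$ derivatives of the coordinate map $\Theta$ (through the change of variables relating $L^2_x$ with the intrinsic $L^2(M)$ and through the Sobolev embedding in the transverse direction). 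The final exponent $-\tfrac{k}{2}-\delta$ arises as the sum of the $h^{-k/2}$ from Sobolev embedding in $\R^k$ and the $h^{-\delta}$ from the parametrix.
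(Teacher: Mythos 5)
Your overall strategy is the same as the paper's -- an elliptic parametrix on the region $\{|p|\geq ch^\delta\}$ followed by a semiclassical Sobolev embedding in the $\bar{x}$ variables -- but your parametrix step takes a genuinely different and more elementary route. The paper splits $\supp\chi$ into the truly elliptic region $\{|p|\gtrsim c\}$ (standard parametrix) and the region $\{ch^\delta\leq |p|\lesssim c\}$, where it uses $|dp|>c/2$ to conjugate $p$ to the normal form $\xi_1$ by an FIO and then divides by $\xi_1$ on $\{|\xi_1|\geq ch^\delta\}$; this "second microlocal" factorization buys an explicit leading constant ($\sup|b_2|\leq c^{-1}h^{-\delta}+O(h^{1-3\delta})$), which is harmless here but matters stylistically for the quantitative bookkeeping elsewhere in the paper. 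Your direct division $q_0=\chi/p$ avoids the FIO entirely and, as you note, lands in $h^{-\delta}S_\delta^{\comp}$ because each derivative of $1/p$ on $\{|p|\geq ch^\delta\}$ costs at most $h^{-\delta}$; the iteration gains $h^{1-2\delta}$ per step, so it closes for $\delta<\tfrac12$. This is correct, uses only the hypothesis $|p|\geq ch^\delta$ (you never need $|dp|>c$), and suffices for the lemma as stated since only "there exists $C$" is claimed.

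One step needs repair: the final combination. As written, you apply the embedding to $v=Op_h(\chi)u$ and invoke uniform $L^2$-boundedness of $\langle hD_{\bar{x}}\rangle^s Op_h(\chi)$; that chain only yields $\|Op_h(\chi)u\|_{L^\infty_{\bar x}L^2_{x'}}\leq Ch^{-k/2}\|u\|_{L^2}$, not the claimed bound, because $\|\langle hD_{\bar x}\rangle^s Op_h(\chi)u\|_{L^2}$ does not reduce to $\|Op_h(\chi)u\|_{L^2}$. The fix is to apply the embedding to the right-hand side of the parametrix identity, i.e.\ to $v=Q\,Op_h(\tilde\chi)Pu$, using that $\langle hD_{\bar x}\rangle^s Q$ is $O(h^{-\delta})$ on $L^2$ (this is what the paper does); alternatively, observe that $\langle hD_{\bar x}\rangle^s Op_h(\chi)=Op_h(a)+O(h^\infty)_{\Psi^{-\infty}}$ with $a$ satisfying the same support hypotheses as $\chi$, and rerun your elliptic estimate for $Op_h(a)u$. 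With that adjustment the argument is complete, and the uniformity claims follow as you describe.
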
}
%%%%%%%%%%%%%%%%%%%%%%%%%%%%%%%%%%%%%%%%%%%%%%%%%%%%%%%%%%%%%%%%%%%%%%%%%%%%%%%%

\begin{proof}
 {First, let $\psi\in C_c^\infty(\re)$ with $\psi\equiv 1$ on $[-1,1]$. Then, using the standard elliptic parametrix construction~\cite[Appendix E]{ZwScat} there exists $b_1\in S^{\comp}_\delta$ with $\sup |b_1|\leq 2c^{-1}+C_1h^{1-2\delta}$ such that 
  \begin{equation}
  \label{e:realElliptic}
  Op_h(\chi)Op_h(1-\psi\big(\tfrac{2}{c}p\big) )=Op_h(b_1)Op_h(\tilde{\chi})P+O(h^\infty)_{\Psi^{-\infty}}.
  \end{equation}}
  
  {Next, we show that there exists $b_2\in S^{\comp}_\delta$ with $\sup|b_2|\leq c^{-1}h^{-\delta}+C_1h^{1-3\delta}$ so that 
  \begin{equation}
  \label{e:elliptic}
  Op_h(\chi)Op_h(\psi\big(\tfrac{2}{c}p\big))=Op_h(b_2)Op_h(\tilde{\chi})P+O(h^\infty)_{\Psi^{-\infty}}.
  \end{equation}}
  Using that $ |p|\geq c h^\delta$ on $\supp \chi$ one can carry out an elliptic parametrix construction in the second microlocal calculus associated to $p=0$. Using a partition of unity, since $|dp|>\frac{c}{2}$ on $\supp\chi\cap \supp \psi\big(\tfrac{2}{c}p\big)$ we may assume that {there exist} an $h$-independent neighborhood $V_0$ of $\supp \chi$, $V_1\subset T^*\re^n$ a neighborhood of $0$, {and a symplectomorphism} $\kappa:V_1\to V_0$ so that $\kappa^*p=\xi_1$.  Let $U$ be a microlocally unitary FIO quantizing $\kappa$. Then
\[
\mathbf{P}:={U^*}P{U}=hD_{x_1}+h {Op^L_h}({\bf{r}}),
\]
with ${\bf{r}}\in S^{\comp}(\re^n)$ {and $Op_h^L$ denotes the \emph{left} quantization of $\bf{r}$.}
Moreover, there exist $\mathbf{a},\mathbf{\tilde{a}}\in S_\delta^{\comp}(T^*\re^n)$ so that 
$$
 {Op^L_h}(\mathbf{a})=U^*Op_h(\chi){Op_h(\psi\big(\tfrac{2}{c}p\big))} U
$$
and 
$$
 {Op^L_h}(\mathbf{\tilde{a}})=U^*Op_h(\tilde{\chi}) U\
$$ 
with $\supp\mathbf{a}\subset \{|\xi_1|\geq ch^\delta\}$ and $\mathbf{\tilde{a}}\equiv 1$ on $\supp \mathbf{a}$. 
Now, for ${\bf{b}}\in S^{\comp}_\delta(T^*\re^n)$ supported on $|\xi_1|\geq ch^\delta$, 
$$ |\partial_x^\alpha \partial_{\xi}^\beta (\xi_1^{-1}{\bf{b}})|\leq  C_{\alpha\beta}h^{-(|\beta|+|\alpha|)\delta}|\xi_1|^{-1}.$$

Let ${\bf{b}}_0=\mathbf{a}/\xi_1.$ Then ${\bf{b}}_0\in h^{-\delta}S_{\delta}^{\comp}$ and
$$
\sup |{\bf{b}}_0|\leq c^{-1}h^{-\delta}.
$$ 
Observe that
$$
 {Op^L_h}({\bf{b}}_0){Op_h^L}(\mathbf{\tilde{a}})\mathbf{P}= {Op^L_h}(\mathbf{a})+ {Op^L_h}({\bf{e}}_1)+O(h^\infty)_{{\Psi^{-\infty}}}
$$
with $\supp {\bf{e}}_1\subset \{|\xi_1|\geq ch^\delta\}$ and, since $\mathbf{\tilde{a}}\equiv 1$ on $\supp {\bf{b}}_0$, 
$$
{\bf{e}}_1\sim \sum_{{|\alpha|}\geq 1} \frac{h^{{|\alpha|}}i^{{|\alpha|}}}{\alpha!}D^{{\alpha}}_x({\bf{b}}_0)D_{\xi}^{{\alpha}}(\xi_1)+\sum_{{{|\alpha|}}\geq 0}\frac{h^{{{|\alpha|}}+1}i^{{|\alpha|}}}{k!}D^{{\alpha}}_x({\bf{b}}_0)D_{\xi}^{{\alpha}}({\bf{r}}).
$$
In particular, ${\bf{e}}_1\in h^{1-2\delta}S^{\comp}_\delta$. Then, setting ${\bf{b}}_{\ell}=-{\bf{e}}_{\ell}/\xi_1\in h^{\ell(1-2\delta)-\delta}S_{\delta}^{\comp}$, and
$$
 {Op^L_h}({\bf{e}}_{\ell+1}):= {Op^L_h}({\bf{b}}_{\ell}) {Op^L_h}(\mathbf{\tilde{a}})\mathbf{P} {+} {Op^L_h}({\bf{e}}_{\ell}) +O(h^\infty)_{{\Psi^{-\infty}}}
$$
we have ${\bf{e}}_{\ell+1}\in h^{(\ell+1)(1-2\delta)}S^{\comp}_\delta$ with $\supp {\bf{e}}_{\ell+1}\subset \{|\xi_1|\geq ch^\delta\}$.
In particular, putting ${\bf{b}}\sim \sum_\ell {\bf{b}}_{\ell}$, 
$$
 {Op^L_h}({\bf{b}}) {Op^L_h}(\mathbf{\tilde{a}})\mathbf{P}= {Op^L_h}(\mathbf{a})+O(h^\infty)_{{\Psi}^{-\infty}}.
$$

It follows that
\begin{align*}
U{Op_h^L({\bf{b}})}U^*Op_h(\tilde{\chi})P&=U {Op^L_h}(\mathbf{b})U^*U {Op^L_h}(\mathbf{\tilde{a}})U^*U\mathbf{P}U^*+O(h^\infty)_{{\Psi^{-\infty}}}\\
&=U {Op^L_h}({\bf{b}}) {Op^L_h}(\mathbf{\tilde{a}})\mathbf{P}U^*+O(h^\infty)_{{\Psi^{-\infty}}}\\
&=U  {Op^L_h}(\mathbf{a})U^*+O(h^\infty)_{{\Psi^{-\infty}}}\\
&=Op_h(\chi){Op_h(\psi\big(\tfrac{2}{c}p\big))}+O(h^\infty)_{{\Psi^{-\infty}}}.
\end{align*}
In particular, there exists ${b_2}\in h^{-\delta}S^{\comp}_{\delta}(\TM )$ with {$\sup| {b_2}|\leq c^{-1}h^{-\delta}+{C_1h^{1-3\delta}}$} so that
$$
Op_h({b_2})=U{Op_h^L({\bf{b}})}U^*+O(h^\infty)_{{\Psi^{-\infty}}}.
$$
Therefore, {as claimed in~\eqref{e:elliptic}} that
$$
 Op_h(\chi){Op_h(\psi\big(\tfrac{2}{c}p\big))}=Op_h({b_2})Op_h(\tilde{\chi})P+{O(h^\infty)_{{\Psi^{-\infty}}}},
$$
for all $\chi$ supported in $V_0$ and some suitable ${b_2}$ with $\|Op_h({b_2})\|\leq 2c ^{-1}h^{-\class}$. Next, using that $Op_h(\tilde{\chi})Pu$ is compactly microlocalized, we apply the Sobolev Embedding~\cite[{Lemma 6.1}]{Gdefect} (see also~\cite[Lemma 7.10]{EZB}) in the $\bar{x}$ coordinates. {Writing $b=b_1+b_2$,} we obtain {using~\eqref{e:realElliptic} and~\eqref{e:elliptic}} {that there exists $h_0>0$, and for all $N>0$ there exists $C_{_{\!N}}>0$, such that if $0<h<h_0$, then for every $\bar x$}
\begin{align*}
 \| Op_h(\chi){u}(\bar{x},\cdot)\|_{{L^2_{ x'}}}
&=  \| Op_h(b)Op_h(\tilde{\chi})Pu(\bar{x},\cdot)\|_{L^2_{ x'}}+   {{C_{_{\!N}}h^N}\|u\|_{{L^2_{x}}}}\\
&\leq 2c ^{-1}C_{k}h^{{-\frac{k}{2}}-\class} \|Op_h(\tilde{\chi})Pu\|_{L^2_x}+  {C_{_{\!N}}h^N}\|u\|_{L^2_x}.
\end{align*}
Since this is true for any $\bar{x}$, the claim follows.
\end{proof}

{The following lemma contains the key new ideas used to prove our main theorems. In particular, it converts quantitative localization along a bichacteristic into quantitative gains in averages. This idea is at the heart of the bicharacteristic beam techniques and originated in~\cite{Gdefect}. }
%%%%%%%%%%%%%%%%%%%%%%%%%%%%%%%%%%%%%%%%%%%%%%%%%%%%%%%%%%%%%%%%%%%%%%%%%%%%%%%%
\begin{lemma}\label{L:optimized}
There exist $C_{n,k}>0$, depending only on $n$ and $k$, and positive constants $\tau_0=\tau_0(M,p,\Tinj,\FR,{\{H_h\}_h} )$, $\e_0=\e_0(\tau_0)$, $R_0=R_0(M,p,k,\Tinj,\FR )$ so  that the following holds. Let  $0<\tau\leq \tau_0$, $0\leq \class<\frac{1}{2}$, and $2h^\delta\leq R(h)\leq R_0$.  Let $\gamma$ be a  bicharacteristic through $\SigH$, and $\chi \in S_\class\cap C^\infty_c(\TM ;[-C_1h^{1-2\delta},1+C_1h^{1-2\delta}])$ with $\rho_\gamma:=\gamma \cap \SigH \in \supp \chi$,
\begin{equation}\label{E:support}
\begin{gathered}
\supp (\chi)\subset {\Lambda_{\rho_\gamma}^{\tau+\e_0}}(R(h)),
\end{gathered}
\end{equation} 
and 
\begin{equation}
\label{e:commuting}
\MSh([P,Op_h(\chi)])\cap \LambdaH({\e_0})=\emptyset.
\end{equation}

Then, there are $C>0$ and $h_0>0$ with the following properties. For every $N>0$ there exists $C_{_{\!N}}>0$ such that, if $0<h\leq h_0$, then for  $u\in \mc{D}'(M)$,
\begin{equation}
\label{e:optimized}
\begin{aligned}
&h^{k-1}\|Op_h(\beta_\class) Op_h(\chi)u\|_{L^2({\tilde{H}})}^2\leq
{C_{n,k}}\,\frac{R(h)^{k-1}}{\tau|H_pr_H(\rho_\gamma)|}\|Op_h(\chi) u\|^2_{\LM}\\
&\hspace{6cm}+{C}R(h)^{k-1}h^{-2}\|Op_h(\chi)Pu\|^2_{\LM}\\
&\hspace{6cm}+{C_{_{\!N}}}h^N {\|u\|^2_{\LM}},
\end{aligned}
\end{equation}
The constants {${\tau_0},C,C_{_{\!N}},h_0$} are uniform for $\chi$ in  bounded subsets of $S_\class$, uniform for $\tau>0$ and $\FR$ uniformly bounded away from zero, {and only depend on $\{H_h\}_h$ through finitely many of the constants $\KR_{_{\alpha}}$ in \eqref{e:curvature}}. 
\end{lemma}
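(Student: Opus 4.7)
The plan is to establish the estimate by combining a microlocal factorization of $P$ adapted to $\gamma$ with a refined trace theorem that exploits the phase-space concentration of $v := Op_h(\chi)u$. First, I reduce to a single Fermi coordinate chart $(\bar{x}, x') \in \re^k\times \re^{n-k}$ with $\tilde{H} = \{\bar{x} = 0\}$ near $\pi(\rho_\gamma)$. Uniform conormal transversality gives $\partial_{\bar\xi}p(\rho_\gamma) \neq 0$, and after a rotation in $\bar x$ I may assume $\partial_{\xi_1} p(\rho_\gamma) = |H_p r_{_{\!H}}(\rho_\gamma)| =: c_0 > 0$, with $c_0 \geq \FR$. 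The implicit function theorem then yields a local factorization $p = e(x,\xi)(\xi_1 - a(x, \xi_2, \ldots, \xi_n))$ with $e(\rho_\gamma) = c_0$; quantizing produces $P = E(hD_{x_1} - A)$ modulo smoothing microlocally on the tube $\Lambda_{\rho_\gamma}^{\tau+\e_0}(R(h))$, with $E$ elliptic near $\rho_\gamma$.

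I then split the restriction to $\tilde H = \{\bar x = 0\}$ into a trace in the transverse directions $\bar x_2, \dots, \bar x_k$ and a trace in the flow direction $x_1$. For the former, since $\chi$ is supported in $\Lambda_{\rho_\gamma}^{\tau+\e_0}(R(h))$ and $\xi_j(\rho_\gamma)=0$ for $j = 2, \ldots, k$, the function $v$ has effective frequency $|\xi_j| \lesssim R(h)$ in each of these directions. Cauchy--Schwarz applied to the partial semiclassical Fourier transform then gives
\[
\|v|_{\bar x_2=\cdots=\bar x_k=0}\|^2_{L^2_{x_1,x'}}\leq C\Big(\frac{R(h)}{h}\Big)^{k-1}\|v\|_{\LM}^2 + C_{_{\!N}}h^N\|u\|_{\LM}^2,
\]
and the same estimate holds with $Op_h(\beta_\class)v$ in place of $v$ since $\beta_\class$ depends only on $\xi'$. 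For the $x_1$ trace, the commutator hypothesis $\MSh([P,Op_h(\chi)])\cap \Lambda^{\tau}_{\SigH}(\e_0)=\emptyset$ combined with the factorization yields $(hD_{x_1}-A)v = E^{-1}Op_h(\chi)Pu + r$ with $\|r\|_{\LM}=O(h^\infty)\|u\|_{\LM}$. Conjugating by a phase $e^{i\Phi/h}$, with $\partial_{x_1}\Phi$ equal to $a$ evaluated at the central bicharacteristic, removes the leading oscillation and produces $\tilde v := e^{-i\Phi/h}v$ whose derivative $hD_{x_1}\tilde v$ is $L^2$-bounded by $\|Op_h(\chi)Pu\|_{\LM} + O(R(h)+h^\class)\|v\|_{\LM}$ (the last term coming from the $O(R(h))$ deviation of $a$ in $\xi_2,\ldots,\xi_k$ and the $O(h^\class)$ deviation in $\xi'$ on $\supp(\beta_\class\chi)$). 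The elementary one-dimensional identity
\[
|\tilde v(0,x')|^2 \leq \frac{1}{\tau c_0}\|\tilde v(\cdot,x')\|^2_{L^2_{x_1}} + \tau c_0\,\|\partial_{x_1}\tilde v(\cdot,x')\|^2_{L^2_{x_1}},
\]
combined with $\partial_{x_1}=h^{-1}hD_{x_1}$ and the above control, produces (after integration in $x'$) a trace bound with main term $(\tau c_0)^{-1}\|v\|_{\LM}^2$ and remainder $C\tau c_0 h^{-2}\|Op_h(\chi)Pu\|_{\LM}^2 + O(h^\infty)\|u\|_{\LM}^2$. Multiplying the two trace bounds together, and using that $\tau c_0$ is bounded by a constant depending only on $\tau_0$ and $\sup|\partial p|$, produces the claimed estimate.

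The main obstacle I anticipate is the careful bookkeeping for the phase conjugation: one must verify that the $O(R(h)+h^\class)$ deviations of $a$ from its central value contribute only to subleading terms after integration over the tube, and that $Op_h(\beta_\class)$ can be commuted past $e^{-i\Phi/h}$ without spoiling the frequency restriction $|\xi'|\lesssim h^\class$. The definition \eqref{e:Kdef} of $K$ is chosen precisely so that on $\supp(\beta_\class\chi)$ the symbol $a$ stays close enough to $a(\rho_\gamma)$ for $\tilde v$ to be genuinely slowly varying in $x_1$; this is what makes the averaging identity yield a main term proportional to $(\tau c_0)^{-1}$ rather than being dominated by $\|\partial_{x_1}\tilde v\|^2$. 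Uniformity of the constants in $\chi$ in bounded subsets of $S_\class$ and in the geometric parameters $\KR_{_{\alpha}}$, $\FR$, $\tau$ follows from tracking all constants through the construction.
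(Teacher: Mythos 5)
Your overall architecture (factor $p=e\cdot(\xi_1-a)$ near $\gamma$, a trace in the $k-1$ transverse normal directions contributing $(R(h)/h)^{k-1}$, and a trace in the flow direction contributing $\tau^{-1}|H_pr_{_{\!H}}(\rho_\gamma)|^{-1}$) matches the paper's. The decisive difference, and the gap, is in the flow-direction trace. You conjugate by a scalar phase $e^{i\Phi/h}$ with $\partial_{x_1}\Phi$ equal to $a$ on the central bicharacteristic and then apply the elementary inequality $|\tilde v(0)|^2\leq (\tau c_0)^{-1}\|\tilde v\|^2_{L^2_{x_1}}+\tau c_0\|\partial_{x_1}\tilde v\|^2_{L^2_{x_1}}$. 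As you yourself flag, $hD_{x_1}\tilde v$ then carries the error $(Op_h(a)-a_\gamma(x_1))v$, which is genuinely of size $R(h)\|v\|_{\LM}$: the symbol $a-a_\gamma$ vanishes only on $\gamma$ and is of first order on the tube of radius $R(h)$, with no further cancellation. Feeding this into the second term of your one-dimensional inequality yields
$$\tau c_0\,h^{-2}R(h)^2\|v\|^2_{\LM}\;\geq\;\tau c_0\,h^{2\class-2}\|v\|^2_{\LM},$$
which, since $\class<\tfrac12$, exceeds the claimed main term $(\tau c_0)^{-1}\|v\|^2_{\LM}$ by the unbounded factor $\tau^2c_0^2h^{2\class-2}$. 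No choice of balance parameter rescues this: optimizing the 1D trace inequality gives at best $\|\tilde v\|\,\|\partial_{x_1}\tilde v\|\sim R(h)h^{-1}\|v\|^2$, still worse than the target $\tau^{-1}\|v\|^2$ by $\tau R(h)/h\to\infty$. This is precisely the obstruction the paper avoids by using, in place of a scalar phase, the operator-valued unitary propagator $U(x_1,t)$ of $hD_{x_1}-Op_h(a)$ in Lemma~\ref{l:squirrel}: writing $v(x_1)=U(x_1,t)v(t)-\tfrac{i}{h}\int_{x_1}^{t}U(x_1,s)f(s)\,ds$ with $f$ controlled by $Op_h(\chi)Pu$ plus only an $O(h)\|v\|$ remainder, unitarity of $U$ produces the averaged bound with main term $\tau_{\rho_0}^{-1}\|v\|^2$ and no $R(h)$ loss. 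The scalar phase conjugation appears in the paper only in the transverse variables, as $e^{-\frac{i}{h}\langle\bar x,\bar a(x_1)\rangle}$, where the resulting $(hD_{x_i}-a_i)^k$ terms are harmless because they enter the Sobolev embedding \eqref{e:centipede} with the compensating weight $\alpha^{-1-k}$, $\alpha\sim R(h)$.

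A second, more minor issue: your transverse trace asserts $|\xi_j|\lesssim R(h)$ for $j=2,\dots,k$ on $\supp\chi$ from ``$\xi_j(\rho_\gamma)=0$''. Even after rotating so that $\bar\xi(\rho_\gamma)$ is along $dx_1$, the components $a_j(x_1)$ of the bicharacteristic drift by $O(|x_1|)=O(\tau)$, so the correct localization on the tube is $|\xi_j-a_j(x_1)|\lesssim R(h)$, not $|\xi_j|\lesssim R(h)$. This part is fixable by performing the transverse trace at each fixed $x_1$ after conjugating by the $x_1$-dependent phase $e^{-\frac{i}{h}\langle\bar x,\bar a(x_1)\rangle}$, as the paper does, but as written the step is not justified.
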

%%%%%%%%%%%%%%%%%%%%%%%%%%%%%%%%%%%%%%%%%%%%%%%%%%%%%%%%%%%%%%%%%%%%%%%%%%%%%%%%

\begin{proof}

The proof of this result relies heavily on Lemma \ref{l:squirrel} below. {Let $\Theta:W \subset \R^n \to M$ be coordinates on $M$.} {Let $h>0$.} Note that we may adjust coordinates so that 
$\tilde{H}={\tilde{H}_h}\subset \{x_1=0\}$, 
$dx_1|_{x_1=0}\in N^*\!\tilde{H}$,
${\tfrac{1}{2}}H_pr_H{\leq}\partial_{\xi_1}p$,
{and so that the $C^k$ norm of the coordinate map $\Theta$ is bounded by finitely many of the constants $\KR_\alpha$}. Therefore, since {$ |\partial_{\xi_1}p(\rho_{\gamma})| \geq {\tfrac{1}{2}}\FR$} {by \eqref{e:FR}}, we may apply Lemma \ref{l:squirrel} with  $\mathfrak{I}:= {\tfrac{1}{2}}\FR $. Let $r_0,\tilde{\tau}_0, C_0$, depending only on $(M,{p},\FR,{\Theta})$, be the constants from Lemma~\ref{l:squirrel}. {Note that they are uniform for $\Theta$ in bounded sets of $C^k$. Therefore, they depend on ${\{H_h\}_h}$ through finitely many of the constants $\KR_\alpha$.}  Next, let $r_1=r_1(M,p,\FR,{\Theta})$ be small enough so that for all $\rho\in \SigH$, 
\begin{equation}
\label{E:control}
\frac{\inf_{B(\rho,r_1)}|H_pr_H|}{\sup_{B(\rho,r_1)}|H_pr_H|}\geq \frac{1}{2}.
\end{equation}
Let $r=\frac{1}{2}\min\{r_1,r_0\}$ and let $\{{\rho_i}\}_{i=1}^K\subset \SigH$ be a maximal $r$ separated set. Then for all $q\in \SigH$, there exists $i$ so that $d(q,{\rho_i})<r$ and in particular, $B(q,r)\subset B({\rho_i},2r)\subset V_{{\rho_i}}$ where $V_{{\rho_i}}$ is the subset from Lemma~\ref{l:squirrel} associated to ${\rho_i}$. 

{Fix $\rho_0 \in \{{\rho_i}\}_{i=1}^K$.}
Without loss of generality assume that $d(\rho_\gamma,{\rho_0})<r$. Next, let $0<\tilde{\tau}_1<\frac{\Tinj}{2}$, $R_0>0$, $\e_0>0$ small enough (depending only on $(M,P,\FR,\Tinj)$) so that 
$\Lambda_{\rho_\gamma}^{\tilde{\tau}_1+\e_0}(R_0) \subset {V_{\rho_0}}$. Next, by letting  
\begin{equation}\label{E:tau_0}
\tau_0=\min\{\tilde \tau_0, \tilde \tau_1\}
\end{equation}
we have
\[
\supp (\chi)\subset \Lambda_{\rho_\gamma}^{\tau+\e_0}(R(h)) \subset {V_{\rho_0}},
\]
for all $0<\tau<\tau_0$ and $h$ small enough. This will allow us to apply Lemma \ref{l:squirrel} to our $\chi$.
 
We work in coordinates so that $\partial_{\xi_1}p (\rho_\gamma) \neq 0$, {which we can assume since $\gamma$ is a bicharacteristic through $\SigH$ and $\rho_\gamma=\gamma \cap \SigH$}. In what follows we abuse notation slightly and redefine $\bar{x}$ as the normal coordinates to ${\tilde{H}}$ that are not $x_1$. With this notation $x=(x_1, \bar{x}, x')$. 

Given a function $v_h \in C^\infty(M)$ we may bound $\|v_h\|_{\LM}$  using  {the version of the Sobolev Embedding Theorem given in} \cite[Lemma 6.1]{Gdefect} which gives, after setting $k=\ell$, that for all $\alpha>0$ there exists $C_{k}>0$ depending only on $k$ so that 
\begin{equation}
\label{e:centipede}
\|v_h(x_1,\bar{x}, \cdot)\|^2_{L^2_{x'}}\leq C_{k}  h^{1-k} \left( \alpha^{k-1}\|v_h(x_1,\cdot)\|^2_{L_{\bar{x},x'}^2}+ \alpha^{-1-k} {\sum_{i=2}^{k}}\|(hD_{x_i})^k v_h(x_1,\cdot)\|^2_{L_{\bar{x},x'}^2}\right).
\end{equation}
We proceed to choose $v_h$ so that 
\begin{equation}\label{E:trick}
\|Op_h(\beta_\class) (Op_h(\chi )u)(x_1,\bar{x},\cdot)\|_{_{L^2_{x'}}} = \|v_h(x_1,\bar{x},\cdot)\|_{_{{L^2_{x'}}}},
\end{equation}
 and in such a way that the terms in \eqref{e:centipede} can be controlled efficiently.
{Let $0<\tau<\tau_0$, and set $\tau_{\rho_0}:=\tau |\partial_{\xi_1}p(\rho_0)|$.}

  Since $\gamma$ is a bicharacteristic through $\SigH$, we may define a function $a=a(x_1)$ so that $\xi-a(x_1)$ vanishes along $\gamma$. This is possible since we are working in coordinates so that $\partial_{\xi_1}p (\rho_\gamma) \neq 0$, and hence $\gamma$ may be locally written (near $\rho_\gamma$) as $\gamma(x_1)=(x(x_1),a(x_1))$ for $a$ and $x$ smooth.

  Define 
\[\kappa(x,\xi)={\chi_{_{0}}\Big(\frac{|(x_1,\bar{x})|}{{\e_0^2}}\Big)}{\chi_{_{0}}\Big(\frac{3|x_1|}{{\tau_{\rho_0}}}\Big)}\; {\beta_\class(x',\xi')},\]
{where $\e_0<1$ is so that the coordinates are well defined if $|(x_1,\bar{x})|< \e_0$.}  
Let
\[
v_h:=e^{-\frac{i}{h}\langle \bar{x}\,,\,\bar{a}(x_1)\rangle}Op_h(\kappa) Op_h(\chi) u ,
\]
where $\bar{a}(x_1)=(a_{2}(x_1),\dots,a_{k}(x_1))$ is so that $a(x_1)=(a_1(x_1), \bar a(x_1))$. The reason for working with this function $v_h$ is that not only \eqref{E:trick} is satisfied, but also
\[
(hD_{x_i})^k  v_h=  {e^{-\frac{i}{h}\langle \bar{x}\,,\,\bar{a}(x_1)\rangle}} (hD_{x_i}-a_{i})^k (Op_h(\kappa)Op_h( \chi ) u),
\]
for $i=2,\dots,k$, and this will allow us to obtain a gain in the $L^2$-norm bound once we use that,  by Lemma~\ref{l:flow}, for $(\tau_0,\e_0)$ small enough (depending only on $p$), 
\begin{equation}\label{E:R}
\sup_{{\Lambda_{\rho_\gamma}^{\tau_0+\e_0}}(R(h))} \max_i |\xi_i-a_{i}(x_1)| \leq 3R(h).
\end{equation}

We bound the terms in \eqref{e:centipede} by applying Lemma~\ref{l:squirrel} with $\kappa$ and {$\chi$}. 
% In order to apply the lemma we define  $b\in S_0\cap C_c^\infty(M;[0,1])$ with  
%$$
%b\equiv 1\;\; \text{on}\;\;\ |x_1|\leq  \qquad \text{and}\qquad \supp b\subset |x_1|<.
%$$
%Then by~\eqref{E:support},
%%\begin{equation}\label{E:support2}
%%\supp (\chi )\cap \supp b\;\; \subset   \Lambda_{\gamma}^T( R(h)).
%%\end{equation} 
We first bound the non-derivative term on the RHS of \eqref{e:centipede}.

{By~Lemma \ref{l:squirrel} we have that ${\inf_{V_{\rho_0}}}|\partial_{\xi_1}p|\geq \frac{3}{4}|\partial_{\xi_1}p(\rho_0)|$  on $\Lambda_{\rho_\gamma}^{\tau+\e_0}(R(h))$. This implies
\begin{equation}
\label{e:quince}
\Big(\Lambda_{\rho_\gamma}^{\tau+\e_0}(R(h))\cap (\LambdaH(\e_0))^c\Big)\subset \{|x_1| {\geq} \tfrac{3}{4}\tau_{\rho_0}\}.
\end{equation}}
Let $b\in C_c^\infty( \re;[0,1])$ with $b\equiv 1 $ on $\{x_1:\, |x_1|\leq \tau_{\rho_0}/2\}$, $\supp b\subset \{x_1:\, |x_1|<3\tau_{\rho_0}/4\}$. 
{By ~\eqref{E:support} and~\eqref{e:commuting} we have 
 $\MSh([P,Op_h(\chi)]) \subset (\Lambda_{\rho_\gamma}^{\tau+\e_0}(R(h))\cap (\LambdaH(\e_0))^c)$.} Therefore, by~\eqref{e:quince},
\begin{equation}
\label{e:blackberry}
\WFh(b)\cap \MSh([P,Op_h(\chi)])=\emptyset.
\end{equation}

Throughout the rest of the proof we will write $C,C_{_{\!N}}$ for constants that are uniform as claimed. We also note that when bounding $\|Op_h(a)u\|_{\LM}$ by $2\sup|a|\|u\|_{\LM}$, $h$ need only be taken small enough depending on finitely many seminorms of $a$ in $S_\delta$.
Let $C_0=C_0(M,P,\FR )$ as above and $\tau_0$ as in \eqref{E:tau_0}.
 Applying Lemma~\ref{l:squirrel} with $\kappa$, $\chi$, $b$, $q=1$, and using that  $b\equiv 1$ on $|x_1|\leq \tau_{\rho_0}/2$, $\| Op_h(\kappa)\|\leq 2$  and $0<\tau<\tau_0$, we have that {there exists $h_0>0$ such that for all $0<h<h_0$}
 \begin{equation}
\begin{aligned}\label{E:v}
{\|v_h(x_1,\cdot)\|}_{L_{\bar{x},x'}^2}&\leq  {8}\tau_{\rho_0}^{-\frac{1}{2}}\| b\,Op_h(\chi )u\|_{\LM} +{2C_0\tau_{\rho_0}^{\frac{1}{2}}}h^{-1}\|b\,POp_h(\chi ) u\|_{\LM} \\
&\qquad+{C_{_{\!N}}h^N} \|u\|_{\LM}.
\end{aligned}
\end{equation}

Next, note that  
$$
b\,POp_h(\chi)= b\,Op_h(\chi)P+b\,[P,Op_h(\chi)].
$$
 Therefore, since $|b|\leq 1$,
\begin{equation*}
\|b\,POp_h(\chi)u\|_\LM\leq 2\|Op_h(\chi)Pu\|_\LM+\|{b\,[P,Op_h(\chi)]}u\|_\LM.
\end{equation*}

Using the previous bound, equation \eqref{E:v} turns into
\begin{equation}
\begin{aligned}\label{E:v2}
{\|v_h(x_1,\cdot)\|}_{L_{\bar{x},x'}^2}
&\leq  {{16}\tau_{\rho_0}^{-\frac{1}{2}}}\|Op_h(\chi )u\|_{\LM}+{4}C_0\tau^{\frac{1}{2}}_{\rho_0}h^{-1}\|Op_h(\chi )P u\|_{\LM} \\
&\qquad+{2}C_0\tau^{\frac{1}{2}}_{\rho_0}h^{-1}\|{b\,[P,Op_h(\chi)]}u\|_{\LM}  +{{C_{_{\!N}}h^N} \|u\|_{\LM}}.
\end{aligned}
\end{equation}

We proceed to bound the derivative terms in \eqref{e:centipede}. For this, we first note that  $\|(hD_{x_i})^k v_h(x_1,\cdot)\|_{L_{\bar{x},x'}}=\|Q_i Op_h(\kappa)Op_h( \chi ) u(x_1,\cdot)\|_{L_{\bar{x},x'}} $ after setting
\begin{equation}\label{e:Q_i}
Q_i:=(hD_{x_i}-a_i)^k,
\end{equation}
{for $i=2, \dots, k$}.
Writing $Q_i=Op_h(q_i)$ we get  ${q_i=(\xi_i-a_i)^k}$ and $Q_i$ commutes with $Op_h(\kappa)$ modulo $O(h)$. {Note that there are no remainder terms since $a_i$ is a function of only $x_1$.} Then,  Lemma \ref{l:squirrel} gives that there exists $C_0>0$, independent of $\tau$,  and some $C,{C_{_{\!N}}}>0$ so that
\begin{equation}
\begin{aligned}\label{E:D} 
{\|(hD_{x_i})^k v_h(x_1,\cdot)\|}_{L_{\bar{x},x'}^2}
&\leq {{8}\tau_{\rho_0}^{-\frac{1}{2}}}\| b\,{{Q_i}Op_h(\chi )}u\|_{\LM}  +{{2}C_0\tau^{\frac{1}{2}}_{\rho_0}}h^{-1}\|b\,P{{Q_i}Op_h(\chi )}u\|_{\LM}  \\
& +\|[{Op_h}(\kappa),Q_i]Op_h(\chi){u(x_1,\cdot)}\|_{{L^2_{\bar{x},x'}}} +{C_{_{\!N}}h^N} \|u\|_{\LM},
\end{aligned}
\end{equation}
{for all $0<h<h_0$ where $h_0$ was possibly adjusted}.
We proceed to find efficient bounds for all the  terms in \eqref{E:D}. 
{Throughout the rest of the proof we use $C_0$ for a positive constant that depends only on {$P$ and finitely may $S_\delta$ seminorms of $(q,\chi)$}, possibly bigger than that above. We also write  $C_k$ for a positive constant that depends only on $k$. These constants may increase from line to line.}

First, let $\tilde{\chi}\in S_\delta\cap C_c^\infty(T^*M;[0,1])$ with $\tilde{\chi}\equiv 1$ on $\supp \chi$ and $\supp \tilde{\chi}\subset \Lambda_{\rho_\gamma}^{\tau+\e_0}(R(h)).$ Then note that by \eqref{E:R} {and \eqref{e:Q_i}} {there exists $C_{_{\!N}}>0$ such that}
\begin{equation}\label{E:D1}
\begin{aligned}\|b\,Q_iOp_h(\chi)u\|_{\LM}&\leq \|b\,Q_iOp_h(\tilde{\chi})Op_h(\chi)u\|_{\LM}+{C_{_{\!N}}h^N}\|u\|_{\LM}\\
&\leq {C_k}R(h)^k\|Op_h(\chi)u\|_{\LM}+{C_{_{\!N}}h^N}\|u\|_{\LM},
\end{aligned}
\end{equation}
{for all $0<h<h_0$ for $h_0$ small enough.}

Second, using that  
\begin{equation*}
b\,P{Q_i}Op_h(\chi )=b\,Q_i Op_h(\chi)P+b\,[P,Q_i]Op_h(\chi)+b\,Q_i[P,Op_h(\chi)],
\end{equation*}
we claim that {there exists $C_{_{\!N}}>0$ such that}
\begin{equation}
\begin{aligned}\label{E:D2}
\|b\,P{Q_i}Op_h(\chi )u\|_{\LM} &\leq {C_k}R(h)^k\|Op_h(\chi)Pu\|_{\LM}+C_0h{R(h)^{k}}\|Op_h(\chi)u\|_{\LM}\\
&\qquad+\|{b\,Q_i[P,Op_h(\chi)]}u\|_{\LM} +{C_{_{\!N}}h^N}\|u\|_{\LM}.
\end{aligned}
\end{equation}
Indeed,  the estimate on $b\,[P,Q_i]Op_h(\chi)$ was obtained as follows. We observe that 
$$
H_pq_i={k} (\xi_i-a_i)^{{k-1}}H_p(\xi_i-a_i).
$$
and since $H_p(\xi_i-a_i)$ vanishes on $\gamma$, $H_pq_i$ vanishes to order {$k$} on $\gamma$. Therefore, { using $\tilde \chi$ as in \eqref{E:D}, on $\supp \tilde{\chi}$} we have $|H_pq_i|\leq C_0R(h)^k$ and {there exists $C_{_{\!N}}>0$ such that}
\begin{align*}
&\|{b\,}[P,Q_i]Op_h(\chi)u\|_{\LM}\leq C_0hR(h)^k\|Op_h(\chi)u\|_{\LM}\\
&\qquad\qquad\qquad\qquad\;\;+\|([P,Q_i]-\tfrac{h}{i}Op_h(H_pq_i)){Op_h(\tilde \chi)Op_h(\chi)}u\|_{\LM} +{C_{_{\!N}}h^N}\|u\|_{\LM}.
\end{align*}
Finally, observe that $([P,Q_i]-\tfrac{h}{i}Op_h(H_pq_i)){Op_h(\tilde{\chi})}\in h^2R(h)^{k-2}S_{\class}$ and hence the bound follows since $R(h)\geq {2 h^\class}$ and $\class <\frac{1}{2}$.

%Next, we bound the third term  
%in \eqref{E:D} by 
%\begin{equation}\label{E:D3}
%\|{Q_i}Op_h(\tilde{\chi}_{{1}})Pu\|_{\LM}\leq { C_k}R(h)^k\|Op_h(\tilde{\chi}_{{1}})Pu\|_{\LM}+{C_{_{\!N}}h^N}\|u\|_{\LM}.
%\end{equation}

Finally, to bound the fourth term in~\eqref{E:D} note that {by \cite[Lemma 6.1]{Gdefect}}
$$
\|[Op_h(\kappa),Q_i]Op_h(\chi)u(x_1,\cdot)\|_{_{{L^2_{\bar{x},x'}}}} \leq  {C_{M,p,R_0}}h^{-\frac{1}{2}}\|[Op_h(\kappa),Q_i]Op_h(\chi)u\|_{\LM}.
$$
Then, observe that $[Op_h(\kappa),Q_i]Op_h(\tilde \chi) \in hR(h)^{k-1}S_{\delta} $ since for $i=2, \dots, k$  we have $\partial_{x_j}q_i=0$ for $j\neq 1$, $\partial_{\xi_1}\kappa=0$, $\partial_{\xi_j}q_i=0$ for all $j \neq i$, and $\partial_{x_i}\kappa\in S_\delta$ {because $\beta_\delta$ is a tangential symbol}. We then obtain {that there exists $C_{_{\!N}}>0$ such that}
\begin{equation}\label{E:D4}
\|[Op_h(\kappa),Q_i]Op_h(\chi)u(x_1,\cdot)\|_{_{{L^2_{\bar{x},x'}}}}\leq Ch^{\frac{1}{2}}R(h)^{k-1}\|Op_h(\chi)u\|_{\LM}+{C_{_{\!N}}h^N}\|u\|_{\LM}.
\end{equation}
Combining \eqref{E:D1}, \eqref{E:D2}, and \eqref{E:D4} into \eqref{E:D} it follows that 
\begin{equation}
\begin{aligned}\label{E:D5} 
R(h)^{-k}\|(hD_{x_i})^\ell v_h(x_1,\cdot)\|_{L_{\bar{x},x'}^2}
&\leq  \left({{C_k}\tau_{\rho_0}^{-\frac{1}{2}}} + C_0{\tau^{\frac{1}{2}}_{\rho_0}}+Ch^{\frac{1}{2}}R(h)^{-1}\right)  \|Op_h(\chi)u\|_{\LM}\\
&+{C_k C_0}\tau^{\frac{1}{2}}_{\rho_0}h^{-1}\|Op_h(\chi)Pu\|_{\LM}\\
& +{C_0\tau^{\frac{1}{2}}_{\rho_0}}h^{-1}\|{b\,Q_i[P,Op_h(\chi)]}u\|_{\LM}  +{C_{_{\!N}}h^N} \|u\|_{\LM},
\end{aligned}
\end{equation}
{for some $C>0$, $C_{_{\!N}}>0$, and for all $0<h<h_0$ with $h_0$ small enough.}

By~\eqref{e:blackberry} {we also know that there exists $C_{_{\!N}}>0$ and $h_0>0$ so that  for all $0<h<h_0$}
\begin{equation}
\label{E:D7}
\begin{aligned}
\|b\,[P,Op_h(\chi)]u\|_{\LM}+\|b\,Q_i[P,Op_h(\chi)]u\|_{\LM}&{\leq} {C_{_{\!N}}h^N}\|u\|_{\LM}.
\end{aligned}
\end{equation}

Feeding~\eqref{E:D7} into \eqref{E:v2} and \eqref{E:D5}, {and combining them in to \eqref{e:centipede}}, we have
\begin{align*}
R(h)^{1-k}h^{k-1}\|v_h(x_1,\bar{x},\cdot)\|^2_{L^2_{x'}}
&\leq C_k  \left( \|v_h(x_1,\cdot)\|^2_{L_{\bar{x},x'}^2}+ R(h)^{-2k} {\sum_{i=2}^{k}}\|(hD_{x_i})^k v_h(x_1,\cdot)\|^2_{L_{\bar{x},x'}^2}\right).\\
&\leq  C_k\left(\tau_{\rho_0}^{-1} + C_0\tau_{\rho_0}+ ChR(h)^{-2}\right)  \|Op_h(\chi)u\|^2_{\LM}\\
&+ Ch^{-2}\|Op_h(\chi)Pu\|_{\LM}^2+{C_{_{\!N}}h^N} \|u\|_{\LM}.  
\end{align*}
Taking $\tau_0\leq C_0^{-1}(\sup_{\SigH}|H_pr_H|)^{-1}$ and $h_0$ small enough so that $ChR(h)^{-2}\leq \tau_{\rho_0}^{-1}$ proves the desired result because of \eqref{E:trick}.
Also, note that,  since $\rho_\gamma \in {V_{\rho_0}}$, in view of \eqref{E:control}, we have 
\[\frac{1}{2}|\partial_{\xi_1}p(\rho_0)| \leq |\partial_{\xi_1}p(\rho_\gamma)| \leq 2|\partial_{\xi_1}p(\rho_0)|.\]
We may therefore rewrite the bound for $\|v_h\|^2_{L^2(H)}$ in terms of ${|H_pr_H(\rho_\gamma)|}$
%
%Finally, observe that since $\supp \tilde{\chi}_1\subset \{\tilde{\chi}\equiv 1\}$, {there exist $h_0>0$ and $C_{_{\!N}}>0$ so that for all $0<h<h_0$}
%\begin{gather*}
%%\|Op_h(\tilde{\chi}_1)u\|_{\LM}\leq {2}\|Op_h(\tilde{\chi})u\|_{\LM}+ {C_{_{\!N}}h^N}\|u\|_{\LM},\\
%\|Op_h(\tilde{\chi}_1)Pu\|_{\LM}\leq {2}\|Op_h(\tilde{\chi})Pu\|_{\LM}+ {C_{_{\!N}}h^N}\|u\|_{\LM},
%\end{gather*}
which completes the proof.

\end{proof}
%%%%%%%%%%%%%%%%%%%%%%%%%%%%%%%%%%%%%%%%%%%%%%%%%%%%%%%%%%%%%%%%%%%%%%%
In what follows we work with points $x\in \re^n$ and $(x,\xi)\in T^*\re^n$. We will isolate one position coordinate $x_1$ and write $(x,\xi)=(x_1,\tilde x, \xi_1, \tilde \xi)$. {This lemma is based on~\cite[Lemma 4.3]{Gdefect} which in turn draws on the factorization ideas from~\cite{KTZ}.}
%%%%%%%%%%%%%%%%%%%%%%%%%%%%%%%%%%%%%%%%%%%%%%%%%%%%%%%%%%%%%%%%%%%%%%%%%%%%%%%%
\begin{lemma}
\label{l:squirrel}
Let {$\Theta:W\subset\re^n\to M$ be coordinates on $M$,} $\rho_0\in T^*\R^n$ {and $\mathfrak{I}>0$} be so that 
 \[
|\partial_{\xi_1}p(\rho_0)|\geq {\mathfrak{I} }>0.
 \] 
 Then, there exist $\tau_0>0$, $C_0>0$, {$r_0>0$} depending only on $(M,p,\mathfrak{I}, {\Theta} )$  and $V_0 \subset {T^*\R^n}$ neighborhood of $\rho_0$, so that {$B(\rho_0,r_0)\subset V_0$}, 
 \begin{equation}
\label{e:nontrivialFlow}
{\frac{3}{4}}|\partial_{\xi_1}p(\rho_0)|\leq \inf_{V_0}|\partial_{\xi_1}p|\leq \sup_{V_0}|\partial_{\xi_1}p|\leq {\frac{4}{3}}|\partial_{\xi_1}p(\rho_0)|,
\end{equation}
and the following holds.\medskip
 
\noindent Let $0\leq \class<\frac{1}{2}$ and {$0<\tau<\tau_0$}. 
{Let $I_\tau=\{x_1: -\tfrac{\tau_{\rho_0}}{3}\leq x_1 \leq \tfrac{\tau_{\rho_0}}{3}\}$ with $\tau_{\rho_0}:=\tau |\partial_{\xi_1}p(\rho_0)|$,}
and  
$$
\kappa=\kappa(x_1, \tilde{x},\tilde{\xi})\in S_\class\cap C_c^\infty\Big({I_\tau \, \times\, }T^*\R^{n-1}\Big).
$$
 Let $\chi\in S_\delta\cap C_c^\infty(V_0;{[-2,2]})$ and   $q=q(x_1)\in C^\infty(\R; S^\infty(T^*\R^{n-1}))$.
Then, there is $C>0$ such that for all {$N>0$}, there is ${C_{_{\!N}}}>0$ and $h_0>0$ so that for all $0<h\leq h_0$,  {and all $x_1$,}
\begin{align*} 
\label{UPSHOT 1}
\|{Op_h(q)}Op_h(\kappa )Op_h(\chi){u({x_1},\cdot)}\|_{{L^2_{ \tilde{x}}}} 
&\leq { 4\tau_{\rho_0}^{-\frac{1}{2}}}\|Op_h(\kappa)\|\|Op_h( q) Op_h( \chi)u\|_{L^2_{x}(|x_1|<\tau_{\rho_0}/2)}\\
&+C_0\tau_{\rho_0}^{\frac{1}{2}}h^{-1} {\|Op_h(\kappa) \|}\|  POp_h(q)Op_h(\chi)u\|_{L^2_{x}(|x_1|<\tau_{\rho_0}/2)}\\
&+{\|[Op_h(\kappa),Op_h(q)]Op_h(\chi){u}({x_1},\cdot)\|}_{L^2_{{\tilde{x}}}}\\
&+{C_{_{\!N}}h^N}\|{u}\|_{L^2_x}.
\end{align*}
 Also, all constants are uniform when $\chi,\kappa,{q}$ are taken in bounded subsets of $S_\class$, {$\Theta$ is taken in bounded subset of $C^k$}, and when {$\mathfrak{I},\,\tau$ are taken uniformly bounded away from 0}. 
\end{lemma}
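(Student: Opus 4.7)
The plan is to isolate the ``elliptic-evolution'' structure of $P$ in the direction $x_1$ and then run a weighted energy estimate for a sliced $L^2_{\tilde x}$-norm. First, I commute $Op_h(q)$ past $Op_h(\kappa)$:
\[
Op_h(q)Op_h(\kappa)Op_h(\chi)u = Op_h(\kappa)Op_h(q)Op_h(\chi)u - [Op_h(\kappa),Op_h(q)]Op_h(\chi)u.
\]
The second term is exactly the commutator contribution in the statement. Setting $v_1 := Op_h(q)Op_h(\chi)u$ and $w(x_1) := Op_h(\kappa(x_1,\cdot,\cdot))v_1(x_1,\cdot)$, the task reduces to bounding $\|w(x_1)\|_{L^2_{\tilde x}}$. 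Note that $w(x_1)\equiv 0$ for $|x_1|>\tau_{\rho_0}/3$, since $\supp\kappa\subset I_\tau\times T^*\R^{n-1}$, so we may assume $|x_1|\leq \tau_{\rho_0}/3$.

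Second, using $|\partial_{\xi_1}p(\rho_0)|\geq \mathfrak{I}>0$, after shrinking $V_0$ to guarantee \eqref{e:nontrivialFlow}, the implicit function theorem factorizes the real symbol as $p=e(x,\xi)(\xi_1-a(x,\tilde\xi))$ on $V_0$ with $e$ non-vanishing and $e,a$ real. By an iterative parametrix construction this is upgraded microlocally on $V_0$ to an operator factorization $P=E(hD_{x_1}-A)+O(h^\infty)_{\Psi^{-\infty}}$ with $A=Op_h(a)$ tangential (no $\xi_1$) and $E$ elliptic; since $a$ is real, $A=A^*+O(h)_{\Psi^{0}}$. Because $\supp\chi\Subset V_0$, this yields the ``transport equation''
\[
hD_{x_1}v_1 = Av_1 + E^{-1}Pv_1 + O(h^\infty)_{L^2}
\]
microlocally. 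Now apply the fundamental theorem of calculus and average $y_1$ over the interval $|y_1|<\tau_{\rho_0}/2$:
\[
\tau_{\rho_0}\|w(x_1)\|_{L^2_{\tilde x}}^2 \leq \int_{|y_1|<\tau_{\rho_0}/2}\|w(y_1)\|_{L^2_{\tilde x}}^2\,dy_1 + \tau_{\rho_0}\int_{|s|<\tau_{\rho_0}/2}\bigl|\partial_s\|w(s)\|_{L^2_{\tilde x}}^2\bigr|\,ds.
\]
The first integral is dominated by $\|Op_h(\kappa)\|^2\|v_1\|_{L^2_x(|y_1|<\tau_{\rho_0}/2)}^2$. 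For the derivative, write $\partial_s\|w\|^2=-\frac{2}{h}\mathrm{Im}\langle hD_sw,w\rangle_{L^2_{\tilde x}}$ and expand
\[
hD_sw = \tfrac{h}{i}Op_h(\partial_s\kappa)v_1 + AOp_h(\kappa)v_1 + [Op_h(\kappa),A]v_1 + Op_h(\kappa)E^{-1}Pv_1 + O(h^\infty).
\]
The key cancellation is $\mathrm{Im}\langle Aw,w\rangle = O(h)\|w\|^2$ from self-adjointness of $A$ modulo $O(h)$. The commutator $[Op_h(\kappa),A]$ and the $\partial_s\kappa$ term are each $O(h^{1-\delta})$ as operators (because $\delta<1/2$ keeps us in $S_\delta$), contributing only small terms that are ultimately swept into the $C_N h^N\|u\|_{L^2_x}$ remainder. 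The dominant contribution is $\frac{2}{h}\|Op_h(\kappa)E^{-1}Pv_1(s)\|_{L^2_{\tilde x}}\|w(s)\|_{L^2_{\tilde x}}$.

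Finally, integrate in $s$, apply Cauchy--Schwarz in $s$, use $\|E^{-1}Pv_1\|_{L^2_x}\leq C\|Pv_1\|_{L^2_x}+O(h^\infty)\|u\|$ (ellipticity of $E$ near $\supp\chi$), and $\|w\|_{L^2_x}\leq \|Op_h(\kappa)\|\|v_1\|_{L^2_x}$ to obtain
\[
\tau_{\rho_0}\int_{|s|<\tau_{\rho_0}/2}|\partial_s\|w\|^2|\,ds \leq \tfrac{2C\tau_{\rho_0}}{h}\|Op_h(\kappa)\|^2\|v_1\|_{L^2_x}\|Pv_1\|_{L^2_x} + (\text{small}).
\]
Combining with the averaged-mass term and splitting the cross product via $2ab\leq a^2+b^2$ yields
\[
\tau_{\rho_0}\|w(x_1)\|^2 \leq 2\|Op_h(\kappa)\|^2\|v_1\|_{L^2_x(|y_1|<\tau_{\rho_0}/2)}^2 + \tfrac{C_0^2\tau_{\rho_0}^2}{h^2}\|Op_h(\kappa)\|^2\|Pv_1\|_{L^2_x(|y_1|<\tau_{\rho_0}/2)}^2 + C_Nh^N\|u\|_{L^2_x}^2,
\]
which after dividing by $\tau_{\rho_0}$ and taking square roots gives exactly the claimed bound (with $\sqrt{2}<4$). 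The main obstacle is the careful microlocal construction of the factorization to all orders and the verification that every remainder produced by the $S_\delta$-symbol calculus (notably those from $[Op_h(\kappa),A]$ and $h\partial_s\kappa$) is genuinely $o(\tau_{\rho_0}^{-1/2})$ plus $O(h^\infty)\|u\|$, so that it can be absorbed or placed into the stated error terms. All constants depend on $\{H_h\}_h$ only through finitely many of the $\KR_\alpha$, via the $C^k$-norm of the coordinate map $\Theta$ used to set up the factorization.
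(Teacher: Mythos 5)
Your global strategy --- factor $p=e\cdot(\xi_1-a)$ microlocally near $\rho_0$ using $|\partial_{\xi_1}p(\rho_0)|\geq\mathfrak{I}$, reduce to a one-dimensional evolution in $x_1$, and recover the slice norm $\|\cdot\|_{L^2_{\tilde x}}$ from an average over $|x_1|<\tau_{\rho_0}/2$ --- is the same as the paper's, and pulling out $[Op_h(\kappa),Op_h(q)]$ at the start matches how that term enters the statement. The execution differs at the key step: the paper solves $(hD_{x_1}-Op_h(a))v_1=f$ exactly with a unitary propagator $U(x_1,t)$ (after correcting $a$ by an $O(h)$ term so that $Op_h(a(x_1))$ is self-adjoint on $L^2_{\tilde x}$) and uses Duhamel's formula averaged against a bump $\Phi(t)$; crucially, $Op_h(\kappa)$ is applied to the propagated data and enters only through its operator norm, so it is never commuted with $Op_h(a)$. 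You instead differentiate $\|w(s)\|^2_{L^2_{\tilde x}}$ with $w=Op_h(\kappa)v_1$, which forces the terms $\tfrac{h}{i}Op_h(\partial_s\kappa)v_1$ and $[Op_h(\kappa),A]v_1$ into the energy identity.

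This is where the proposal has a genuine gap when $\delta>0$. Since $\kappa\in S_\delta$, both $Op_h(\partial_{x_1}\kappa)$ and $[Op_h(\kappa),A]$ have $L^2\to L^2$ norm of size $h^{1-\delta}$ --- not $O(h^\infty)$, and not even $O(h)$. After the factor $2/h$ in $\partial_s\|w\|^2=-\tfrac{2}{h}\mathrm{Im}\langle hD_sw,w\rangle$, they contribute $O(h^{-\delta})\|v_1(s)\|_{L^2_{\tilde x}}\|w(s)\|_{L^2_{\tilde x}}$ to $|\partial_s\|w(s)\|^2|$; integrating in $s$ and dividing by $\tau_{\rho_0}$ leaves a term of size $h^{-\delta}\|Op_h(\kappa)\|\,\|v_1\|^2_{L^2_x}$ in the bound for $\|w(x_1)\|^2$. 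Since $\tau_{\rho_0}$ is an $h$-independent constant, this dominates the target term $\tau_{\rho_0}^{-1}\|Op_h(\kappa)\|^2\|v_1\|^2_{L^2_x}$ as $h\to 0$ and cannot be absorbed into it, nor is it $O(h^N)\|u\|^2$ as you assert; a Gronwall absorption is also unavailable because $\|v_1(s)\|$ is not controlled by $\|w(s)\|$. (Separately, the genuinely harmless defects --- the $O(h)$ failure of self-adjointness of $A$ and the first-order factorization remainder $hOp_h(R_0)$ that the paper keeps rather than removing to all orders --- produce $O(1)\|v_1\|^2$ contributions, which are absorbed by taking $\tau_0$ small; this is exactly the role of the paper's smallness condition on $\tau_0$, and these terms are not $O(h^N)$ either, so the bookkeeping needs repair even at $\delta=0$.) To close the argument you should conjugate by the exact propagator before applying $Op_h(\kappa)$, as the paper does, or otherwise arrange that $\kappa$ is never commuted with the tangential operator $A$.
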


%%%%%%%%%%%%%%%%%%%%%%%%%%%%%%%%%%%%%%%%%%%%%%%%%%%%%%%%%%%%%%%%%%%%%%%%%%%%%%%%

\begin{proof}
%If $|p(\rho_0)|>{\frac{\mathfrak{I} }{2}}$, we may choose $V_0\subset \TM $  open neighborhood of $\rho_0$ so that $|p(q)|>{\frac{\mathfrak{I} }{4}}$ for $q\in V_0$. {Moreover, there is $r_0=r_0(\mathfrak{I} ,M,p)$ so that $B(\rho_0,r_0)\subset V_0$.}  If $p(\rho_0)<{\frac{\mathfrak{I} }{2}}$,  then by assumption $|\partial_{\xi_1}p(\rho_0)|>{\frac{\mathfrak{I} }{2}}$. Therefore,   
{T}here exists an open neighborhood $V_0$ of $\rho_0$ {so that $|\partial_{\xi_1}p|>\frac{\mathfrak{I}}{2}$ on $V_0$.} {Therefore, we may assume that there is} $e\in C^\infty(T^*\R^n)$ elliptic on $V_0$, and $a=a(x_1,\tilde{x},\tilde{\xi})\in C^\infty(\R\times S^0(T^*\R^{n-1}))$ so that for all $\psi \in C_c^\infty(V_0)$
$$
p(x,\xi)\psi(x,\xi)=e(x,\xi)(\xi_1-a(x_1,\tilde{x},\tilde{\xi}))\psi(x,\xi),
$$
with {$e$ satisfying that for every $\alpha, \beta$,}
\begin{equation}
\label{e:bd}
\begin{gathered}
{\|e^{-1}\|_\infty\leq C_{{1}}=C_{{1}}(M,P,\mathfrak{I}),}\\
{\|\partial_{x}^\alpha\partial_\xi^\beta e(x,\xi)\|_{\infty }\leq C=C(M,P,\mathfrak{I},\alpha,\beta,\Theta)}
\end{gathered}
\end{equation}
where $C(M,P,\mathfrak{I},\alpha,\beta,\Theta)$ depends on $\Theta$ through finitely many $C^k$ norms.
{Moreover, there exists $r_0=r_0(M,p,{\mathfrak{I}})$ so that $B(\rho_0,r_0)\subset V_0$.}

Using this factorization, we see that there exists $R\in S^0(T^*\R^n)$ so that for all $\psi\in S_\delta(V_0)$,
$$
POp_h(\psi)=Op_h(e){(hD_{x_1}-Op_h(a))}Op_h(\psi)+hOp_h(R)Op_h(\psi)+{R_\infty}.
$$
{where we write $R_\infty$ for an $O(h^\infty)_{\Psi^{-\infty}}$ operator that may change from line to line but whose seminorms are bounded by those of $P,\psi,e,e^{-1}$.}
Moreover, there exists an element  $a_1\in hC^\infty(\R\times S^0(T^*\R^{n-1}))$ so that for each fixed $x_1$ the operator $Op_h(a(x_1)+a_1(x_1)):L^2_{\tilde{x}}\to L^2_{\tilde{x}}$ is self-adjoint. Abusing notation slightly, we relabel $a+a_1$ as $a$ and {$Op_h(R)-Op_h(e)Op_h(a_1)$} as $Op_h(R)$. Then, for all $\psi\in S_\delta(V_0)$
$$
POp_h(\psi)=Op_h(e)(hD_{x_1}-Op_h(a))Op_h(\psi)+hOp_h(R)Op_h(\psi)+{R_\infty}.
$$
Therefore, letting $Op_h(e)^{-1}$ denote a microlocal parametrix for {$Op_h(e)$} on $V_0$, we have for all $\psi\in S_\delta(V_0)$,
\begin{equation}
\label{e:factor}
(hD_{x_1}-Op_h(a))Op_h(\psi)=Op_h(e)^{-1}POp_h(\psi)+hOp_h(R_0)Op_h(\psi)+{R_\infty}
\end{equation}
where  {$R_0$ is such that} {$Op_h(R_0)=-Op_h(e)^{-1}Op_h(R).$} {From the symbolic calculus together with~\eqref{e:bd} we see that {for every $\alpha, \beta$}
\begin{equation}
\label{e:Rbd}
{\|\partial_{x}^\alpha\partial_\xi^\beta R_0(x,\xi)\|_{\infty }\leq C=C(M,P,\mathfrak{I} ,\alpha,\beta,{\Theta}) },
\end{equation}}
{where $C$ depends on $\Theta$ through finitely many $C^k$ norms.}
Shrinking $V_0$ {(in a way depending only on $(M,p,\mathfrak{I})$ {and the $C^2$ norm of $\Theta$})}, if necessary, we may also assume that 
\begin{equation*}
{\frac{3}{4}}|\partial_{\xi_1}p(\rho_0)|\leq \inf_{V_0}|\partial_{\xi_1}p|\leq \sup_{V_0}|\partial_{\xi_1}p|\leq {\frac{4}{3}}|\partial_{\xi_1}p(\rho_0)|.
\end{equation*}

Define 
\begin{equation}\label{e:w}
w:=Op_h(q)Op_h(\chi )u,
\end{equation}
{with $Op_h(\psi)=Op_h(q)Op_h(\chi)$} we have by~\eqref{e:factor} that
$$
(hD_{x_1}-Op_h(a)) w=f,
$$
for
\begin{equation}\label{e:R_0}
f:=[Op_h(e)^{-1}POp_h(q)Op_h(\chi )+h Op_h(R_0)Op_h(q)Op_h(\chi )]u+{R_\infty} u.
\end{equation}

Defining {the operator} {$U(x_1,t)$ by
\[
(hD_{x_1}-{Op_h(a)})U(x_1,t)=0,\qquad\qquad
U(t,t)=Id,\]
%\[
%A(t,{x_1},\tilde{x},hD_{\tilde{x}}):=-\int_{{x_1}}^{t}a({s},\tilde{x},hD_{\tilde{x}})d{s},
%\] 
 we obtain that for all ${x_1},t \in \R$
$$
w({x_1},\tilde{x})=\U{x_1}{t}w(t,\tilde{x})-\frac{i}{h}\int_{{x_1}}^{t}\U{x_1}{s}f({s},\tilde{x})d{s}.
$$}
Let $\e=\e(\tau)$ be defined as
  \begin{equation}\label{e:ep}
  \e:=\frac{\tau_{\rho_0}}{3}=\frac{\tau |\partial_{\xi_1}p(\rho_0)|}{3} ,
  \end{equation}
and let $\Phi\in C_c^\infty(\re;[0,{3\e^{-1}}])$ with $\supp \Phi \subset [0,\e]$ and $\int_\R \Phi=1$. Then, integrating in $t$,
\begin{equation}
\label{e:est0}
w({x_1},\tilde{x})=\int_{\R}\!\Phi (t)\U{x_1}{t}w(t,\tilde{x})dt-\frac{i}{ h}\int_\R\! \Phi(t) \int_{{x_1}}^{t}\U{x_1}{s}f({s},\tilde{x})d{s}dt.
\end{equation}

Let $\tau_0$ satisfy
\begin{equation}\label{e:condition on tau_0}
\tau_0< {\sqrt{\frac{3}{2}}}{|\partial_{\xi_1}p(\rho_0)|^{-1}\|Op_h(R_0)\|^{-1}},
 \end{equation}
 where $Op_h(R_0)$ is as in \eqref{e:factor}. Note that by{~\eqref{e:Rbd}} $\tau_0$ only depends on {$(M,P,\mathfrak{I},{\Theta})$}.

%Given $0<\tau<\tau_0$, 
% $b\in  S_\class\cap C_c^\infty(T^*\R^n;[0,1])$ {as in the statement of the lemma}, consider $b_0\in  S_\class\cap C_c^\infty(T^*\R^n;[0,1])$ with $\supp b_0 \subset \{b \equiv1\}$ and
%\[\left\{ \rho\in T^*\R^n: \; d\left(\rho\,,\,\Lambda_{\supp \kappa}^\tau \cap \{p=0\} \cap V_0\right) < {\e_0} \right\} \subset \{b_0 \equiv 1\}.\] 
%Next, applying propagation of singularities (see below for the proof), we claim that {there exists $h_0>0$ so that for all $0<h<h_0$ and} {for all ${x_1} \in \R$}
%\begin{equation}
%\label{e:est0}
%\begin{aligned} 
%Op_h(\kappa) w({x_1},\tilde{x})&=\int_\R\!\Phi(t)Op_h(\kappa)  {\U{x_1}{t}} Op_h(b_0)w(t,\tilde{x})dt\\
%& -\frac{i}{h}  \int_\R \! \Phi(t)\int_{{x_1}}^{t}Op_h(\kappa){\U{x_1}{s}}Op_h(b_0)f({s},\tilde{x})d{s}dt\\
%&+{R_h({x_1}, \tilde x)},
%\end{aligned}
%\end{equation}
%with \[\|R_h({x_1}, \tilde x)\|_{L^\infty_{{x_1}}L^2_{\tilde{x}}}\leq Ch^{-1}\|Op_h(q)Op_h(\tilde{\chi})Pu\|_{L^2_x}+C_{_{\!N}}h^N\|u\|_{L^2}.\]
{From now on, we write 
$$C=C(M,P,\mathfrak{I} ,\e_0,\tau, \chi,q,\kappa,{\Theta}),\quad\text{and}\quad C_{_{\!N}}=C_{_{\!N}}(M,P,N,\tau,\mathfrak{I} ,\e_0,\chi,q,\kappa,{\Theta})$$
for constants depending on finitely many seminorms of the given parameters.}
To bound the first term in \eqref{e:est0} we apply Cauchy-Schwarz and use that ${\U{x_1}{t}}$ is a {unitary operator acting on $L^2_{\tilde x}$} to get
\[
\left \|\int_\R\!\Phi(t)Op_h(\kappa){\U{x_1}{t}}w(t,\tilde{x})dt \right\|_{{L^\infty_{x_1}}L^2_{\tilde x}}
 \!\!\leq \|\Phi\|_2\, \|Op_h(\kappa) \|\|w\|_{L^2_{t,\tilde x}(|t|\leq \e)}.
 \]
To bound the second term {in \eqref{e:est0}} we apply Minkowski's integral inequality, use that the support of $\Phi$ is contained in $[0,\e]$, {and that $\supp \kappa \subset \{|x_1|<\e\}$ to get}
\begin{align*}
&\left \|  \int_\R \! \Phi(t)\int_{{x_1}}^tOp_h(\kappa){\U{x_1}{s}}f({s},\tilde{x})d{s}dt\right\|_{{L^\infty_{x_1}}L^2_{\tilde x}}\\
&\leq \left\|\int_\R  \Phi(t) \left(\int_{\R^{n-1}}\left(\int_\R \mathbf 1_{[{-\e},\e]}({s})Op_h(\kappa){\U{x_1}{s}}f({s},\tilde{x})d{s}\right)^2 d\tilde x\right)^\frac{1}{2}dt\right\|_{{L^\infty_{x_1}}}\\ \ \medskip
%&\leq \int_\R  \Phi(t) \left(\int_{\R^{n-1}}\| \mathbf 1_{[0,\e]}\|_{L^2_{x_1}}^2 \left\|Op_h(\kappa)e^{-\frac{i}{h}A(x_1,t,\tilde{x},hD_{\tilde{x}})}Op_h(b)f(x_1,\tilde{x})\right\|^2_{L^2_{x_1}} d\tilde x\right)^\frac{1}{2}dt\\
&\leq \| \mathbf 1_{[{-\e},\e]}({s})\|_{L^2_{{s}}}  \left\|Op_h(\kappa)\|\|f\right\|_{L^2_{{s}, \tilde x}(|s|\leq \e)}. 
\end{align*}
Feeding  these two bounds into \eqref{e:est0}, and using that {$\Phi(t) \leq 3\e^{-1}$} and $\int_\R \Phi(t)dt =1$ give {$\|\Phi\|_{L^2(\R)}\leq  \sqrt{3}\e^{-\frac{1}{2}}$},  we obtain
\begin{align}\label{e:fly}
\|Op_h(\kappa) w(x_1,\cdot)\|_{L^2_{\tilde{x}}}
\!\leq {\sqrt{3}}\e^{-\frac{1}{2}}  \|Op_h(\kappa) \|\|w\|_{L^2_{x}(|x_1|\leq \e)}
+{\sqrt{2}}\e^{\frac{1}{2}}h^{-1} \! \left\|Op_h(\kappa)\|\|f\right\|_{L^2_{x}(|x_1|\leq \e)} .
\end{align}
Finally, note that according to \eqref{e:R_0}
\begin{align*}
\|f\|_{L^2_x(|x_1|\leq \e)}
&\leq \|Op_h(e)^{-1}POp_h(q)Op_h(\chi )u\|_{L^2_x(|x_1|\leq \e)}\\
&\quad + h \|Op_h(R_0)Op_h(q)Op_h(\chi )u\|_{L^2_x(|x_1|\leq \e)}+{C_{_{\!N}}h^N}\|u\|_{L^2_x}\\
&\leq C_0\|POp_h(q)Op_h(\chi )u\|_{L^2_x(|x_1|\leq 3\e/2)}\\
&\quad + h \|Op_h(R_0)\| \| {Op_h({b})Op_h(q)}Op_h(\chi )u\|_{L^2_x(|x_1|\leq 3\e/2)}+{C_{_{\!N}}h^N}\|u\|_{L^2_x}.
\end{align*}
{Using~\eqref{e:bd}, we see that $C_0>0$ depends only $(M,P,\mathfrak{I} )$.}
Therefore, since 
$$Op_h(q)Op_h(\kappa)Op_h(\chi)=Op_h(\kappa){Op_h(q)}Op_h(\chi)+[Op_h(q),Op_h(\kappa)]{Op_h( \chi)},$$ 
we may combine  definition \eqref{e:w} of $w$ with \eqref{e:fly} to obtain
\begin{align*}
\|Op_h(q)Op_h(\kappa)Op_h(\chi)u({x_1},\cdot)\|_{L^2_{\tilde{x}}}
&\leq  {\sqrt{3}}\e^{-\frac{1}{2}}{ \|Op_h(\kappa) \|}\|Op_h(q)Op_h(\chi)u\|_{L^2_{x}(|x_1|\leq \e)}\\
&+C_0h^{-1}\e^{\frac{1}{2}} { \|Op_h(\kappa) \|}\|POp_h(q)Op_h(\chi )u\|_{L^2_{x}(|x_1|\leq 3\e/2)}\\
&+ {\sqrt{2}}\e^{\frac{1}{2}} {\|Op_h(R_0)\|  \|Op_h(\kappa) \|}   \|{Op_h(q)}Op_h(\chi )u\|_{L^2_{x}(|x_1|\leq 3\e/2)}\\
&+C_{_{\!N}}h^N\|u\|_{L^2_x}+\|[Op_h(q),Op_h(\kappa)]{Op_h( \chi)}u(x_1,\cdot)\|_{L^2_{\tilde x}}.
\end{align*}
To finish the proof   we combine the first and third terms in the bound above using that {$\sqrt{3}\ep^{-\frac{1}{2}}=3\tau_{\rho_0}^{-\frac{1}{2}}$  and that  \eqref{e:condition on tau_0} gives  ${\sqrt{2}}\e^{\frac{1}{2}} \|Op_h(R_0)\| \leq  \tau_{\rho_0}^{-\frac{1}{2}}$}.\\

\end{proof}

%%%%%%%%%%%%%%%%%%%%%%%%%%%%%%%%%%%%%%%%%%%%%%%%%%%%%%%%%%%%%%%%%%%%%%%
%%%%%%%%%%%%%%%%%%%%%%%%%%%%%%%%%%%%%%%%%%%%%%%%%%%%%%%%%%%%%%%%%%%%%%%
%%%%%%%%%%%%%%%%%%%%%%%%%%%%%%%%%%%%%%%%%%%%%%%%%%%%%%%%%%%%%%%%%%%%%%%
%%%%%%%%%%%%%%%%%%%%%%%%%%%%%%%%%%%%%%%%%%%%%%%%%%%%%%%%%%%%%%%%%%%%%%%

\section{Non-looping Propagation Estimates: Proof of Theorem~\ref{t:coverToEstimate}}
\label{s:Egorov}

%%%%%%%%%%%%%%%%%%%%%%%%%%%%%%%%%%%%%%%%%%%%%%%%%%%%%%%%%%%%%%%%%%%%%%%
%%%%%%%%%%%%%%%%%%%%%%%%%%%%%%%%%%%%%%%%%%%%%%%%%%%%%%%%%%%%%%%%%%%%%%%
%%%%%%%%%%%%%%%%%%%%%%%%%%%%%%%%%%%%%%%%%%%%%%%%%%%%%%%%%%%%%%%%%%%%%%%
%%%%%%%%%%%%%%%%%%%%%%%%%%%%%%%%%%%%%%%%%%%%%%%%%%%%%%%%%%%%%%%%%%%%%%%
%%%%%%%%%%%%%%%%%%%%%%%%%%%%%%%%%%%%%%%%%%%%%%%%%%%%%%%%%%%%%%%%%%%%%%%%%%%%%%%%

The main result in this section is the proof of Theorem~\ref{t:coverToEstimate} which we present in what follows. {The proof is based on an application of Egorov's theorem (see Lemma~\ref{l:recur-a}) which in turn uses that cutoffs with disjoint support act almost orthogonally.}\\

\noindent{\bf Proof of Theorem~\ref{t:coverToEstimate}.}
By Theorem~\ref{t:porcupine} there exist $\tau_0$,  $R_0$, and $C_{n,k}>0$  so that if  $0<\tau\leq \tau_0$,  $0\leq \class<\frac{1}{2}$, $N>0$,  and ${8}h^\delta\leq R(h)<R_0$,
then for {$\{\Lambda^\tau_{_{\rho_j}}(R(h))\}_j$} a $({\mathfrak D},\tau,R(h))$-good cover of $\SigH$, and {$\{\chi_j\}_j$} a $\delta$-partition associated to the cover,  there exist $C>0$, {$h_0>0$}, 
so that for all $w=w(x';h)\in S_\class \cap C_c^\infty({\tilde{H}})$ there {is} $C_{_{\!N}}>0$  with the property that for any $0<h<h_0$ and $u\in \mc{D}'(M)$,
\begin{align}\label{e:mosquito}
h^{\frac{k-1}{2}}\Big|\int_{\tilde{H}}wud\sigma_{{\tilde{H}}}\Big|
&\leq \frac{C_{n,k} {\|w\|_{_{\!\infty}}}}{\tau^{\frac{1}{2}}\FR^{\frac{1}{2}}}R(h)^{\frac{n-1}{2}}\sum_{j\in \mathcal J_h(w)}{\|Op_h(\chi_j)u\|_{\LM}}\\
&+Ch^{-1}{\|w\|_\infty}\|Pu\|_{\Hm} +C_{_{\!N}}h^N\big(\|u\|_{\LM}+{\|Pu\|_{\Hm}}\big).\notag
\end{align}
Next, suppose there exist $\mc{B}\subset \{1,\dots, N_h\}$ and a finite collection  $\{\mc{G}_\ell\}_{\ell \in \mathcal L} \subset \{1,\dots, N_h\}$ {satisfying}  $\mathcal J_h(w)\;\subset\;  \mc{B} \cup \bigcup_{\ell \in \mathcal L}\mc{G}_\ell$, and {with $\{\mc{G}_\ell\}_{\ell \in \mathcal L}$ having }the non self looping properties described in the statement of the theorem. {Furthermore, since we are working with a $({\mathfrak D},\tau,R(h))$-good cover, we split each $\mc{G}_\ell$ into $\mathfrak D$ families $\{\mc{G}_{\ell, i}\}_{i=1}^{\mathfrak{D}}$ of disjoint tubes.}

Note that  
\begin{align*}
\sum_{j\in \mathcal J_h(w)}\|Op_h(\chi_j)u\|_{\LM}&\leq \sum_{\ell\in \mathcal L}{\sum_{i=1}^{\mathfrak{D}} \sum_{j\in \mc{G}_{\ell,i}}}\|Op_h(\chi_j)u\|_{\LM}+\sum_{j\in \mc{B}}\|Op_h(\chi_j)u\|_{\LM}.
\end{align*}
Since 
$$
{\bigcup_{j\in\mc{G}_{\ell}}\Lambda_{\rho_j}^\tau(R(h))\qquad\text{ is}\;\;[t_\ell(h),T_\ell(h)]\text{ non-self looping}},
$$
{and the tubes in $\mc{G}_{\ell,i}$ are disjoint,} we may apply  Lemma~\ref{l:recur-a} below {to $\mc{G}=\mc{G}_{\ell,i}$} and  $(t_j,T_j)=(t_\ell, T_\ell)$ for all {$j \in \mc{G}_{\ell,i}$} {together with Cauchy-Schwarz to get} 

\begin{align*}
{\sum_{j \in \mc{G}_{\ell,i}}}\|Op_h(\chi_j)u\|_{\LM}&\leq \Big(\frac{t_\ell |\mc{G}_\ell|}{T_\ell}\Big)^{\frac{1}{2}}\Big(\sum_{j\in { \mc{G}_{\ell,i}}}\frac{\|Op_h(\chi_j)u\|^2_{\LM}T_\ell}{ t_\ell}\Big)^{\frac{1}{2}}\\
&\leq 2\Big(\frac{t_\ell |\mc{G}_\ell|}{T_\ell}\Big)^{\frac{1}{2}}\Big(\|u\|_\LM^2+ \frac{T_{\ell}^2}{h^2}\, \|Pu\|^2_\LM\Big)^{\frac{1}{2}}.
\end{align*}

On the other hand, {using Cauchy-Schwarz and the fact that there are $\mathfrak{D}$ families of disjoint tubes},
$$
\sum_{j\in \mc{B}}\|Op_h(\chi_j)u\|_{\LM}\leq {2 \mathfrak{D}}|\mc{B}|^{\frac{1}{2}}\|u\|_{\LM}.
$$
Therefore, after adjusting $C_{n,k}$ {in \eqref{e:mosquito}},
\begin{align*}
&h^{\frac{k-1}{2}}\Big|\int_Hwu\, d\sigma_H\Big|\\
&\leq \frac{C_{n,k}{\mathfrak{D}}\|w\|_{_{\!\infty}}{R(h)^{\frac{n-1}{2}}}}{\tau^{\frac{1}{2}}\FR^{\frac{1}{2}}}\Big[\sum_{\ell \in \mathcal L} \Big(\frac{t_\ell |\mc{G}_\ell|}{T_\ell}\Big)^{\frac{1}{2}}\Big(\|u\|_\LM^2+ \frac{T_{\ell}^2}{h^2}\, \|Pu\|^2_\LM\Big)^{\frac{1}{2}}+|\mc{B}|^{\frac{1}{2}}\|u\|_{\LM}\Big]\\
&\qquad+Ch^{-1}{\|w\|_\infty}\|Pu\|_{\Hm} +C_{_{\!N}}\big(\|u\|_{\LM}+{\|Pu\|_{\Hm}}\big)\\
&\leq  \frac{C_{n,k}{\mathfrak{D}}\|w\|_{_{\!\infty}}{R(h)^{\frac{n-1}{2}}}}{\tau^{\frac{1}{2}}\FR^{\frac{1}{2}}}\Big[\sum_{\ell\in \mathcal L} \Big(\frac{t_\ell |\mc{G}_\ell |}{T_\ell}\Big)^{\frac{1}{2}}\|u\|_\LM+ \sum_{\ell\in L} \Big(\frac{|\mc{G}_\ell |t_\ell T_{\ell}}{h^2}\Big)^{\frac{1}{2}}\, \|Pu\|_\LM+|\mc{B}|^{\frac{1}{2}}\|u\|_{\LM}\Big]\\
&\qquad+Ch^{-1}{\|w\|_\infty}\|Pu\|_{\Hm} +C_{_{\!N}}\big(\|u\|_{\LM}+{\|Pu\|_{\Hm}}\big).
\end{align*}
\qed

%%%%%%%%%%%%%%%%%%%%%%%%%%%%%%%%%%%%%%%%%%%%%%%%%%%%%%%%%%%%%%%%%%%%%%%%%%%%%%%%

The next lemma relies on Egorov's theorem to the Ehrenfest time (see for example~\cite[Proposition 3.8]{DyGu14}, \cite{EZB}).

%%%%%%%%%%%%%%%%%%%%%%%%%%%%%%%%%%%%%%%%%%%%%%%%%%%%%%%%%%%%%%%%%%%%%%%%%%%%%%%%
\begin{lemma}
\label{l:recur-a}
Assume that $P$ is self adjoint. Let  $0\leq \class_0<\frac{1}{2}$, $0<2\e_0<1-2\class_0$, {and let $\mc{G}$ be a set of indices with  $|\mc{G}|\leq h^{-N}$ for some $N>0$}. For each $\ell \in {\mc{G}}$ let  $0\leq\class_\ell\leq \class_0$,   $0<\alpha_\ell<{1}-2\class_\ell-2\e_0$, and $\chi_\ell \in S_{\class_\ell}(\TM ) \cap C^\infty_c(\TM ;[-C_1h^{1-2\delta_0},1+C_1h^{1-2\delta_0}])$. 
 In addition,  for each $\ell \in {\mc{G}}$ let  {$t_\ell(h)>0$ and  $0<T_{\ell}(h)\leq 2\alpha_\ell\, T_e(h)$} be so that
\begin{equation}
\label{e:supp2}
\bigcup_{k\in {\mc{G}}}\supp \chi_k \cap \varphi_{-t}(\supp \chi_\ell)=\emptyset
\end{equation}
for all $t \in   [{t_\ell(h)}, T_{\ell}(h)]$ or $t\in {[-T_\ell(h),-t_\ell(h)]}$,
and suppose that  
\begin{equation}
\label{e:supp1}
\bigcup_{k\neq \ell}\supp \chi_k \cap \supp \chi_\ell=\emptyset.
\end{equation} 
Then, %there exists a universal constant $C_0>0$ (independent of $T_\pm(h), {t_0},h, \chi,P,\rho,\e$) 
there exists a  constant  $h_0>0$  so that for $0<h<h_0$ 
$$
\sum_{\ell\in {\mc{G}}}\frac{\|Op_h(\chi_\ell)u\|_\LM^2T_{\ell}(h)}{t_\ell(h)}\leq 4\|u\|_\LM^2+ 4\max_{\ell \in {\mc{G}}}\frac{T_{\ell}(h)^2}{h^2}\, \|Pu\|^2_\LM .
$$
Moreover, the constant $h_0$ can be chosen to be uniform for $\chi_\ell$ in bounded subsets of $S_\class(\TM )$ {and $N<N_0$}.
\end{lemma}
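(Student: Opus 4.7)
The plan is to prove the inequality by discretizing each non-self-looping interval $[t_\ell, T_\ell]$ into $N_\ell\approx T_\ell/t_\ell$ time steps, applying Egorov's theorem up to the Ehrenfest time to push each $Op_h(\chi_\ell)$ along the flow, and combining with a Duhamel argument to absorb the contribution of $Pu$. The crucial dynamical input is that the non-self-looping assumption, together with the disjoint-supports condition \eqref{e:supp1}, forces the flowed symbols to be almost orthogonal.

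First I would assume without loss of generality that the forward version of \eqref{e:supp2} holds and set, for each $\ell\in\mc{G}$, the discrete times $s_\ell^{(j)}:=j\,t_\ell$ for $j=0,1,\dots,N_\ell:=\lfloor T_\ell/t_\ell\rfloor$, so that every $s_\ell^{(j)}\in[t_\ell,T_\ell]$ for $j\geq 1$ and every difference $|s_\ell^{(j)}-s_\ell^{(j')}|$ with $j\neq j'$ lies in $[t_\ell,T_\ell]$. I would then write the Duhamel decomposition
\begin{equation*}
Op_h(\chi_\ell)u \;=\; Op_h(\chi_\ell)U(s_\ell^{(j)})u \;+\; Op_h(\chi_\ell)\bigl(I-U(s_\ell^{(j)})\bigr)u,\qquad U(s):=e^{-isP/h},
\end{equation*}
and use the elementary bound $\|Op_h(\chi_\ell)(I-U(s))u\|\leq 2(s/h)\|Pu\|$, together with $(a+b)^2\leq 2a^2+2b^2$, to obtain, after summing over $j=0,\dots,N_\ell$ and over $\ell$, a master estimate controlling $(N_\ell+1)\sum_\ell\|Op_h(\chi_\ell)u\|^2$ in terms of $\sum_{\ell,j}\|Op_h(\chi_\ell)U(s_\ell^{(j)})u\|^2$ plus a $\|Pu\|^2$ remainder.

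Next, for the propagated terms I would invoke Egorov's theorem to the Ehrenfest time, valid because each $s_\ell^{(j)}\leq T_\ell\leq 2\alpha_\ell T_e(h)$ with $\alpha_\ell<1-2\delta_\ell-2\e_0$, to write $U(-s_\ell^{(j)})Op_h(\chi_\ell)U(s_\ell^{(j)})=Op_h(\chi_\ell\circ\varphi_{s_\ell^{(j)}})+E_{\ell,j}$ with $\|E_{\ell,j}\|=O(h^{\eta})$ for some $\eta>0$ depending on the parameters. Using unitarity of $U$, this replaces each $\|Op_h(\chi_\ell)U(s_\ell^{(j)})u\|^2$ by $\|Op_h(\chi_\ell\circ\varphi_{s_\ell^{(j)}})u\|^2$ up to a total error controlled by $|\mc{G}|(\max N_\ell) h^{2\eta}\|u\|^2$, which is $o(1)$ for $h$ sufficiently small (this is what forces the uniformity hypothesis $|\mc{G}|\leq h^{-N_0}$).

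The heart of the argument is the following pointwise bound: $\sum_{\ell,j}|\chi_\ell\circ\varphi_{s_\ell^{(j)}}|^2\leq 4$. I would prove this by contradiction: if both $\varphi_{s_{\ell_1}^{(j_1)}}\rho\in\supp\chi_{\ell_1}$ and $\varphi_{s_{\ell_2}^{(j_2)}}\rho\in\supp\chi_{\ell_2}$ with $(\ell_1,j_1)\neq(\ell_2,j_2)$, then either \eqref{e:supp1} is violated (when $s_{\ell_1}^{(j_1)}=s_{\ell_2}^{(j_2)}$) or, WLOG setting $\rho_0=\varphi_{s_{\ell_2}^{(j_2)}}\rho\in\supp\chi_{\ell_2}$ and $s=s_{\ell_1}^{(j_1)}-s_{\ell_2}^{(j_2)}\in[t_{\ell_1},T_{\ell_1}]$, we would obtain $\varphi_s(\supp\chi_{\ell_2})\cap\supp\chi_{\ell_1}\neq\emptyset$, contradicting \eqref{e:supp2}. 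With this pointwise bound in hand, the Calder\'on--Vaillancourt / sharp G\r{a}rding estimate applied to $\sum_{\ell,j}Op_h(\chi_\ell\circ\varphi_{s_\ell^{(j)}})^*Op_h(\chi_\ell\circ\varphi_{s_\ell^{(j)}})\approx Op_h(\sum|\chi_\ell\circ\varphi_{s_\ell^{(j)}}|^2)$ yields the quadratic-form bound $\sum_{\ell,j}\|Op_h(\chi_\ell\circ\varphi_{s_\ell^{(j)}})u\|^2\leq (4+o(1))\|u\|^2$.

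For the Duhamel remainder I would sum over $\ell$ first using the analogous pointwise bound $\sum_\ell|\chi_\ell|^2\leq 4$ coming from \eqref{e:supp1}, obtaining $\sum_\ell\|Op_h(\chi_\ell)(I-U(s_\ell^{(j)}))u\|^2\leq 4(s_\ell^{(j)}/h)^2\|Pu\|^2$. Summing in $j$ and using the identity $\sum_{j=0}^{N_\ell}(j t_\ell)^2\leq \tfrac{1}{3}(N_\ell+1)^3 t_\ell^2\leq \tfrac{4}{3}(N_\ell+1)T_\ell^2$, then dividing by $N_\ell+1$ (which absorbs the $T_\ell/t_\ell$ weight on the left), produces the $\|Pu\|^2$ coefficient of the desired form $\max_\ell T_\ell^2/h^2$. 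Choosing $h_0$ small enough that all $o(1)$ errors and polynomial-in-$h^{-1}$ prefactors from Egorov are absorbed into the constants $4$ finishes the proof.

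The main obstacle I anticipate is ensuring that the accumulated errors from Egorov across the roughly $|\mc{G}|\sum_\ell(N_\ell+1)$ pairs $(\ell,j)$ remain $o(1)$; this is handled by the assumption $\alpha_\ell<1-2\delta_\ell-2\e_0$ together with the polynomial size bound on $|\mc{G}|$, which together guarantee an Egorov error of $O(h^{\eta'})$ for some $\eta'>0$ uniformly in $(\ell,j)$, allowing $h_0$ to be chosen uniformly in the claimed data.
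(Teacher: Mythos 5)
Your strategy coincides with the paper's: discretize each non-looping window into $\approx T_\ell/t_\ell$ steps of length $t_\ell$, conjugate the cutoffs by the propagator, use Egorov to the Ehrenfest time together with \eqref{e:supp1}--\eqref{e:supp2} to make the propagated symbols (essentially) disjointly supported, and close with sharp G\aa rding plus a Duhamel argument. The main $\|u\|^2$ term is handled essentially as in the paper's proof.

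Two of your summation steps do not close as written. The first is repairable: adding operator-norm Egorov errors of size $O(h^{\eta})$ over all $|\mc{G}|\cdot\max_\ell N_\ell$ pairs $(\ell,j)$ gives $h^{-N}$ times a polynomial-in-$h^{-1}$ factor times $h^{\eta}$, which is not $o(1)$; the correct accounting uses that the remainder symbols are supported in the disjoint sets $\supp(\chi_\ell\circ\varphi_{s_\ell^{(j)}})$, so their \emph{sum} is a single symbol of size $O(h^{2\e_0})$ (the polynomial bound on $|\mc{G}|$ is only needed for the $O(h^\infty)$ tails). The second is the genuine gap: the inequality $\sum_\ell\|Op_h(\chi_\ell)(I-U(s_\ell^{(j)}))u\|^2\leq 4(s_\ell^{(j)}/h)^2\|Pu\|^2$ does not follow from $\sum_\ell|\chi_\ell|^2\leq 4$. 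Almost orthogonality of the $Op_h(\chi_\ell)$ controls $\sum_\ell\|Op_h(\chi_\ell)v\|^2$ for a \emph{single} vector $v$, whereas here each cutoff acts on a different vector $v_\ell=(I-U(jt_\ell))u$, because $s_\ell^{(j)}=jt_\ell$ depends on $\ell$; estimating term by term instead loses a factor $|\mc{G}|\leq h^{-N}$, which destroys the claimed bound. To repair it one must keep $v_\ell=\tfrac{i}{h}\int_0^{jt_\ell}U(r)Pu\,dr$, apply Cauchy--Schwarz in $r$, and bound $\sum_{\ell,j}\int_0^{jt_\ell}\|Op_h(\chi_\ell)U(r)Pu\|^2\,dr=\langle L\,Pu,Pu\rangle$ with $L=\sum_{\ell,j}\int_0^{jt_\ell}U(-r)Op_h(\chi_\ell)^*Op_h(\chi_\ell)U(r)\,dr$. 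Egorov identifies $\sigma(L)$ with $\sum_{\ell,j}\int_0^{jt_\ell}|\chi_\ell|^2\circ\varphi_r\,dr$ up to acceptable errors, and \eqref{e:supp2} in \emph{integrated} form (any two times in a window of length $T_\ell$ at which a given trajectory meets $\supp\chi_\ell$ differ by at most $t_\ell$) bounds each $r$-integral by $t_\ell(1+o(1))$; summing over the $\approx T_\ell/t_\ell$ values of $j$ and over $\ell$ gives $\sigma(L)\lesssim\max_\ell T_\ell$, hence $\langle L\,Pu,Pu\rangle\leq 2\max_\ell T_\ell\|Pu\|^2$ and the desired coefficient $\max_\ell T_\ell^2/h^2$. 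This $L$-operator step is exactly what the paper's proof supplies and what your proposal is missing.
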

%%%%%%%%%%%%%%%%%%%%%%%%%%%%%%%%%%%%%%%%%%%%%%%%%%%%%%%%%%%%%%%%%%%%%%%%%%%%%%%%

\begin{proof}
Throughout this proof it will be convenient to write $\|\cdot\|$ for $\|\cdot\|_\LM$. Define $\tilde \chi$ by 
$$
Op_h(\tilde{\chi})=\sum_{\ell\in {\mc{G}}} \sum_{k=\frac{-T_{\ell}}{2t_\ell}}^{\frac{T_{\ell}}{2t_\ell}} e^{\frac{ik t_\ell P}{h}}Op_h(\chi_\ell )e^{-\frac{ik t_\ell P}{h}}.
$$
First, we claim that there exists $h_0>0$ so that for all $0<h<h_0$
\begin{equation}\label{e:three halfs}
  \|Op_h(\tilde{\chi})u\|^2\leq \frac{3}{2}{\|u\|^2}.
  \end{equation}
Indeed, Egorov's Theorem \cite[Proposition 3.9]{DyGu14} gives that there exists $C_\chi>0$ and $h_0>0$  
so that for every $k$ 
\begin{equation}\label{E:eg-a}
\begin{gathered} {e^{\frac{ik t_\ell P}{h}}Op_h(\chi_\ell )e^{-\frac{ik t_\ell P}{h}}=Op_h(\chi_{k,\ell})+O(h^\infty)_{\Psi^{-\infty}},\qquad \chi_{k,\ell}= \chi_\ell \circ \varphi_{_{\!k t_\ell}}  + r_{k,\ell}(h),}
 \end{gathered}
\end{equation}
where $r_{k,\ell}\in h^{1-d_{k,\ell}(h)-2\class_\ell}S_{d_{k,\ell}(h)/2+\class_\ell}$, \; $\supp r_{k,\ell}\subset \supp \chi_\ell\circ{\varphi_{kt_\ell}},$
\[
|r_{k,\ell}(h)| \leq C_\chi h^{1-d_{k,\ell}(h)-2\class_\ell} \qquad \text{ and } \qquad  {d_{k,\ell}(h)} \leq |k| \frac{t_\ell}{ T_e(h)},
\]
 for all $0<h<h_0$. 
  Note that since $\{\chi_\ell\}_{\ell\in {\mc{G}}} \mapsto \tilde \chi$ is a continuous map from 
  $${\prod_{\ell\in {\mc{G}}} S_{\delta_\ell}(\TM)\to S_{\frac{1}{2}-\e_0}(\TM),}$$ 
  the constant $C_\chi$ can be chosen to be uniform for $\{\chi_\ell\}_{\ell\in {\mc{G}}}$ in bounded subsets of {$ \Pi_\ell S_{\class_\ell}(\TM )$}, and that then the same is true for $h_0$. 
 
Now, let {$\ell,m \in \mc{G}$ with $\ell\neq m$} and assume without loss that $T_{\ell}\leq T_{m}$. Then, {using~\eqref{e:supp2} and~\eqref{e:supp1}}, we have for $\frac{-T_{\ell}(h)}{2t_\ell}\leq {k}\leq \frac{T_{\ell}(h)}{2t_\ell}$, $\frac{-T_{m}(h)}{2t_m}\leq {j}\leq \frac{T_{m}(h)}{2t_m}$, 
$$
\varphi_{_{-kt_\ell}}(\supp \chi_\ell)\cap \varphi_{_{-j t_m}}(\supp \chi_m)=\supp \chi_\ell\cap \varphi_{_{kt_\ell-j t_m}}(\supp \chi_m)=\emptyset.
$$ 
In addition, {using~\eqref{e:supp2}}, we have if $\ell=m$, then for $\frac{-T_{\ell}(h)}{2t_\ell}\leq {k<j}\leq \frac{T_{\ell}(h)}{2t_\ell}$,
$$
\varphi_{_{-kt_\ell}}(\supp \chi_\ell)\cap \varphi_{_{-j t_\ell}}(\supp \chi_m)=\supp \chi_\ell\cap \varphi_{_{(k-j)t_\ell}}(\supp \chi_m)=\emptyset.
$$
Thus, it follows from \eqref{E:eg-a} that 
 \[
 \tilde \chi = \sum_{\ell\in {\mc{G}}} \sum_{k=-\frac{T_\ell}{2t_\ell}}^{\frac{T_\ell}{2t_\ell}}  \chi_\ell \circ \varphi_{\!_{k t_\ell }} + r(h). 
 \]
with $|r(h)|\leq C_\chi h^{2\e_0}$  for all $0<h<h_0$,  and $C_{\chi}, h_0$ can be chosen uniform for $\{\chi_\ell\}_{\ell=1}^J$ in bounded subsets of $S_{\class_0}$.  {We have used  that the support of the $r_{k,\ell}$'s are disjoint, {together with the fact that $2\e_0<1-\alpha_\ell-2\delta_\ell$ implies $2\e_0<1-d_{k,\ell}(h)-2\delta_\ell$}, to get the bound on $r(h)$.}
This implies that
\begin{equation}\label{E:tilde chi-a}
\tilde \chi \in S_{\frac{1}{2}-\e_0}
\qquad \text{and }\qquad 
-C_\chi h^{2\e_0}\leq \tilde{\chi}\leq 1+C_\chi h^{2\e_0},
\end{equation}  
for all $0<h<h_0$.% To conclude this we have used that {$d_{k,\ell}<\alpha_\ell$ and $\alpha_\ell+2\delta_\ell<1-2\e_0.$}

Note that by the sharp G\aa rding inequality \eqref{E:tilde chi-a} yields
\[
\big\langle \left( 1+C_\chi  h^{2\e_0}-Op_h(\tilde{\chi})^*Op_h(\tilde{\chi})\right)u,u\big\rangle \geq -C_\chi h^{2\e_0}\|u\|_{L^2}^2,
\]
 which in turn gives
 \begin{equation}\label{E:u bound-a}
  \|Op_h(\tilde{\chi})u\|^2\leq (1+2C_\chi h^{2\e_0}){\|u\|^2}
  \end{equation}
  for all $0<h<h_0$.
  %\marginpar{{[Y]Note that this breaks in strong localization. We can't have both $\frac{\alpha}{2}<1-\r$ and $\frac{\alpha}{2}<\frac{1}{2}$}}
  Also, note that since $\e_0>0$, we may shrink $h_0$  so that \eqref{E:u bound-a} gives
 \begin{equation}\label{E:u bound2-a}
  \|Op_h(\tilde{\chi})u\|^2\leq {\frac{{3}}{2}}{\|u\|^2},
  \end{equation}
for $0<h<h_0$ as claimed in \eqref{e:three halfs}.
 
Next, note that  since the supports of the $\chi_m \circ \varphi_{_{\!j t_m}}$ and $\chi_\ell\circ\varphi_{_{\!kt_\ell}}$ are disjoint for $(j,m)\neq (k,\ell)$, Egorov's Theorem also gives
\begin{equation}\label{E:egorov-a}
 \left \langle e^{\frac{ij t_mP}{h}}Op_h(\chi_m )e^{-\frac{ij t_mP}{h}}u \; ,\;  e^{\frac{ikt_\ell P}{h}}Op_h(\chi_\ell)e^{-\frac{ikt_\ell P}{h}}u \right\rangle =O_\chi(h^\infty)\|u\|^2,
 \end{equation}
where  the constant in $O_\chi(h^N)$ depends only on the $|\alpha|\leq C_{\!N}\,n$ seminorms of $\chi$,  where $ C_{\!N}$ is a universal constant.
It then follows from \eqref{E:u bound2-a} and \eqref{E:egorov-a} that
\begin{equation}
\label{E:decomposition-a}
\begin{aligned}
{\frac{3}{2}\|u\|^2}&\geq \sum_{\ell\in {\mc{G}}}\sum_{k=-\frac{T_\ell}{2t_\ell}}^{\frac{T_\ell}{2t_\ell}}\left \|e^{\frac{ikt_\ell P}{h}}Op_h(\chi_\ell)e^{-\frac{ikt_\ell P}{h}}u\right\|^2
+O_\chi(h^\infty \max_\ell|T_\ell|))\|u\|^2,
\end{aligned}
\end{equation}
as long as we work with $0\leq h\leq h_0$ and $h_0$ small enough so that   $r(h)$ can be absorbed by ${\frac{3}{2}\|u\|^2}$.

On the other hand, since the propagators $e^{\frac{ikt_\ell P}{h}}$ are unitary operators, 
\begin{equation}
\label{E:bound-a}
\begin{aligned}
\left \|e^{\frac{ikt_\ell P}{h}}Op_h(\chi_\ell )e^{-\frac{ikt_\ell P}{h}}u\right\|^2&=  \left \|Op_h(\chi_\ell )e^{-\frac{ikt_\ell P}{h}}u\right\|^2\\\\
&= \|Op_h(\chi_\ell )u\|^2-I_{k,\ell } - I\!\!I_{k,\ell }
\end{aligned}
\end{equation}
where
\begin{equation*}
\begin{aligned}
I_{k,\ell }&=
\left\langle Op_h(\chi_\ell )[u-e^{-\frac{ikt_\ell P}{h}}u],Op_h(\chi_\ell )u\right\rangle,\\
I\!\!I_{k,\ell }&=\left\langle Op_h(\chi_\ell )e^{-\frac{ikt_\ell P}{h}}u,Op_h(\chi_\ell )[u-e^{-\frac{ikt_\ell P}{h}}u]\right\rangle.
\end{aligned}
\end{equation*}
 
It follows from \eqref{E:bound-a} that 
\begin{equation}
\label{E:decomposition2-a}
\sum_\ell \!\!\!\sum_{k=-\frac{T_\ell }{2t_\ell }}^{\frac{T_\ell }{2t_\ell }}\!\! \left\|e^{ikt_\ell P/h}Op_h(\chi_\ell )e^{-ikt_\ell P/h}u \right\|^2
\!\!\!\!\!=\!\sum_\ell \frac{T_\ell }{t_\ell }\|Op_h(\chi_\ell )u\|^2-\sum_\ell \!\!\sum_{k=-\frac{T_\ell }{2t_\ell }}^{\frac{T_\ell }{2t_\ell }}\!\! I_{k,\ell } +I\!\!I_{k,\ell }.
\end{equation}

%Define $A_{k,\ell }:= \frac{i}{h}\int_0^{kt_\ell }\left\langle e^{\frac{isP}{h}}Op_h(\chi_\ell )e^{-\frac{isP}{h}}Pu,e^{\frac{isP}{h}}Op_h(\chi_\ell )e^{-\frac{isP}{h}}u\right\rangle ds$  and  
Observe that
\begin{equation*}
I_{k,\ell }
= \frac{i}{h}\int_0^{kt_\ell }\left\langle Op_h(\chi_\ell )e^{-\frac{isP}{h}}Pu,Op_h(\chi_\ell )u\right\rangle ds
=A_{k,\ell } +B_{k,\ell }, 
\end{equation*}
where
\begin{align*}
&A_{k,\ell }:= \frac{i}{h}\int_0^{kt_\ell }\left\langle e^{\frac{isP}{h}}Op_h(\chi_\ell )e^{-\frac{isP}{h}}Pu,e^{\frac{isP}{h}}Op_h(\chi_\ell )e^{-\frac{isP}{h}}u\right\rangle ds\\
&B_{k,\ell }:=\frac{i}{h}\int_0^{kt_\ell }\left\langle e^{\frac{isP}{h}}Op_h(\chi_\ell )e^{-\frac{isP}{h}}Pu,e^{\frac{isP}{h}}Op_h(\chi_\ell )(u-e^{-\frac{isP}{h}}u)\right\rangle ds
\end{align*}

To deal with the $A_{k,\ell }$ terms note that 
\begin{align*}
&\sum_{k,\ell }A_{k,\ell }\leq \frac{1}{h}\sum_{k,\ell }\int_0^{kt_\ell }\|e^{\frac{isP}{h}}Op_h(\chi_\ell )e^{-\frac{isP}{h}}Pu\| \|e^{\frac{isP}{h}}Op_h(\chi_\ell )e^{-\frac{isP}{h}}u\| ds\\
&\leq \frac{1}{h}\left(\sum_{\ell ,k}\int_0^{kt_\ell }\|e^{\frac{isP}{h}}Op_h(\chi_\ell )e^{-\frac{isP}{h}}Pu\|^2ds\right)^{\frac{1}{2}} \left(\sum_{\ell ,k}\int_0^{kt_\ell } \|e^{\frac{isP}{h}}Op_h(\chi_\ell )e^{-\frac{isP}{h}}u\|^2 ds\right)^{\frac{1}{2}}.
\end{align*}
In addition, observe that for $v\in L^2$,
\begin{equation}
\label{e:orthog}
\begin{aligned}
\sum_{\ell ,k}\int_0^{kt_\ell }\|e^{\frac{isP}{h}}Op_h(\chi_\ell )e^{-\frac{isP}{h}}v\|^2ds&\leq \left\langle Lv, v\right\rangle,
\end{aligned}
\end{equation}
with $L:=\sum_{\ell ,k}\int_0^{kt_\ell } e^{\frac{isP}{h}}Op_h(\chi_\ell )^*Op_h(\chi_\ell )e^{-\frac{isP}{h}}ds$.
Also, another application of Egorov's theorem gives
\begin{equation*}
L=Op_h\left(\sum_{\ell ,k}\int_0^{kt_\ell } |\chi_\ell |^2\circ\varphi_s +\tilde r_{k,\ell }(s,h)ds\right)+O(h^\infty)_{\Psi^{-\infty}}
 \end{equation*}
where $\tilde r_{k,\ell }(s,h)\in h^{1-d_{k,\ell}(h)-2\delta_\ell }S_{d_{k,\ell}/2+\delta_\ell }$ with $ \supp \tilde r_{k,\ell }(s,h)\subset \supp \chi_\ell \circ\varphi_{s}$ and 
$$
|\tilde r_{k,\ell }(s,h)|\leq C_\chi h^{1-d_{k,\ell}(h)-2\delta_\ell }.
$$
Next, we claim that \eqref{e:supp2} gives
\begin{equation}
\label{e:bud}
\Big|\int_0^{kt_\ell } |\chi_\ell |^2\circ\varphi_s +\tilde r_{k,\ell }(s,h)ds\Big|\leq t_\ell (1+ C_\chi h^{1-d_{k,\ell}(h)-2\delta_\ell }).
\end{equation}
{To see this, let $\rho \in T^*\!M$, $s,t\in [-\frac{T_\ell }{2},\frac{T_\ell }{2}]$, be so that $\varphi_s(\rho)\in \supp \chi_\ell $ and $\varphi_t(\rho)\in \supp \chi_\ell $.
Suppose $s \geq t$ and note that
$$\varphi_s(\rho) \in \varphi_{s-t}(\supp \chi_\ell )\cap \supp\chi_\ell .$$
Therefore,  since $0\leq s-t\leq T_\ell ,$ we obtain $0\leq s-t\leq t_\ell $ from~\eqref{e:supp2}. This proves the claim.}

In addition, we claim that combining \eqref{e:bud} with \eqref{e:supp1} gives
\begin{equation}
\label{e:Bud2}
\Big|\sum_{\ell ,k}\int_0^{kt_\ell } |\chi_\ell |^2\circ\varphi_s +\tilde r_{k,\ell }(s,h)ds\Big|\leq \max_\ell  T_\ell (h)(1+C_\chi h^{1-\e_0}).
\end{equation}
To see this, first observe that $\#\big\{ k \in [-\tfrac{T_\ell }{2t_\ell }, \tfrac{T_\ell }{2t_\ell }]\big\} \leq T_\ell /t_\ell $. Together with~\eqref{e:bud} {this} implies
\begin{equation}\label{e:first}
\Big| \sum_k \int_0^{kt_\ell }|\chi_\ell |^2\circ\varphi_s +\tilde r_{k,\ell }(s,h)ds\Big|\leq T_\ell (1+C_\chi h^{1-\e_0}).
\end{equation}
Second, note that
$$
\supp\Big(\sum_k \int_0^{kt_\ell }|\chi_\ell |^2\circ\varphi_s +\tilde r_{k,\ell }(s,h)ds\Big)\subset \bigcup_{s=-T_\ell /2}^{T_\ell /2}\varphi_{-s}(\supp\chi_\ell ).$$
Therefore, by~\eqref{e:supp1} for $\ell \neq j$
\begin{equation}\label{e:second}
\supp\Big(\sum_k \int_0^{kt_\ell }|\chi_\ell |^2\circ\varphi_s +\tilde r_{k,\ell }(s,h)ds\Big)\cap \supp\Big(\sum_k \int_0^{kt_j}|\chi_j|^2\circ\varphi_s +\tilde r_{k,\ell }(s,h)ds\Big)=\emptyset.
\end{equation}
Combining \eqref{e:first} with \eqref{e:second} we {obtain} \eqref{e:Bud2} as claimed.

Using \eqref{e:orthog} and \eqref{e:Bud2} together with the same argument we used for $\tilde{\chi}$,  for $h_0$ small enough (uniform for $\chi_\ell $ in bounded subsets of $S_{\delta_\ell }$)
$$
\sum_{\ell ,k}\int_0^{kt_\ell }\|e^{\frac{isP}{h}}Op_h(\chi_\ell )e^{-\frac{isP}{h}}v\|^2ds\leq 2{\max_\ell  T_\ell (h)}\|v\|^2.
$$
In particular,
$$\Big|\sum_{\ell ,k} A_{k,\ell }\Big|\leq 2\frac{\max_\ell  T_\ell (h)}{h}\|Pu\|\|u\|.$$

We next turn to dealing with $B_{k,\ell }$. Note that
\begin{equation*}
B_{k,\ell }={\frac{1}{h^2}}\int_0^{kt_\ell }\int_0^s\left\langle  e^{\frac{i(t-s)P}{h}}e^{\frac{isP}{h}}Op_h(\chi_\ell )e^{-\frac{isP}{h}}Pu,e^{\frac{itP}{h}}Op_h(\chi_\ell )e^{-\frac{itP}{h}}Pu\right\rangle \;dt ds.
\end{equation*}
Therefore, by a similar argument this time using
$$
{\Big|\int_0^{kt_\ell }\int_0^{kt_\ell } |\chi_\ell |^2\circ\varphi_s +\tilde r_{k,\ell }(s,h)dtds\Big|}\leq kt_\ell ^2(1+ C_\chi h^{1-d_{k,\ell}(h)-2\delta_\ell }),
$$
we obtain
\begin{equation}
\begin{aligned}
\Big|\sum_{\ell ,k} B_{k,\ell }\Big|&\leq \frac{1}{h^2}\sum_{\ell ,k}\int_0^{kt_\ell }\int_0^s\| e^{\frac{isP}{h}}Op_h(\chi_\ell )e^{-\frac{isP}{h}}Pu\|\|e^{\frac{itP}{h}}Op_h(\chi_\ell )e^{-\frac{itP}{h}}Pu\|dt ds\\
&\leq \frac{1}{h^2}\sum_{\ell ,k}\int_0^{kt_\ell }\int_0^{kt_\ell }\| e^{\frac{isP}{h}}Op_h(\chi_\ell )e^{-\frac{isP}{h}}Pu\|^2ds dt\\
&\leq  2\frac{\max_\ell  T_\ell ^2(h)}{h^2}\|Pu\|^2.
\end{aligned}
\end{equation}

We have therefore shown that 
\begin{equation}\label{e:I}
\Big|\sum_{\ell ,k} I_{k,\ell }\Big|\leq 2\frac{\max_\ell  T_\ell (h)}{h}\|Pu\|\|u\|+ 2\frac{\max_\ell  T_\ell ^2(h)}{h^2}\|Pu\|^2.
\end{equation}

Next, note that
\begin{equation*}
\begin{aligned}
I\!\!I_{k,\ell }&=\left\langle Op_h(\chi_\ell )e^{\frac{-ikt_\ell P}{h}}u,Op_h(\chi_\ell )[u-e^{-\frac{ikt_\ell P}{h}}u]\right\rangle\\
%&= \left\langle e^{\frac{ikt_\ell P}{h}}Op_h(\chi_\ell )e^{\frac{-ikt_\ell P}{h}}u,e^{\frac{ikt_\ell P}{h}}Op_h(\chi_\ell )[u-e^{-\frac{ikt_\ell P}{h}}u]\right\rangle\\
&= \frac{i}{h}\int_0^{kt_\ell }\left\langle e^{\frac{ikt_\ell P}{h}}Op_h(\chi_\ell )e^{\frac{-ikt_\ell P}{h}}u,e^{\frac{ikt_\ell P}{h}}Op_h(\chi_\ell )e^{-\frac{isP}{h}}Pu\right\rangle ds\\
%&= \frac{i}{h}\int_0^{kt_\ell }\left\langle e^{\frac{ikt_\ell P}{h}}Op_h(\chi_\ell )e^{\frac{-ikt_\ell P}{h}}u,e^{\frac{i(k-s)P}{h}}e^{\frac{ist_\ell P}{h}}Op_h(\chi_\ell )e^{-\frac{isP}{h}}Pu\right\rangle ds\\
&\leq \frac{1}{h}\int_0^{kt_\ell }\|e^{\frac{ikt_\ell P}{h}}Op_h(\chi_\ell )e^{\frac{-ikt_\ell P}{h}}u\|\|{e^{\frac{ik{(t_\ell-s)} P}{h}}}{e^{\frac{iksP}{h}}}Op_h(\chi_\ell )e^{-\frac{isP}{h}}Pu\|ds.
\end{aligned}
\end{equation*}
Then, by {unitarity of $e^{-\frac{i{t_\ell-s}P}{h}}$ and}~\eqref{e:orthog},
\begin{equation}\label{e:II}
\Big|\sum_{\ell ,k} I\!\!I_{k,\ell }\Big|\leq 2\frac{\max_\ell T_\ell }{h}\|Pu\|\| u\|.
\end{equation}
In particular, from \eqref{e:I} and \eqref{e:II} we have
\begin{equation}
\label{e:R_k-a}\Big|\sum_{\ell ,k}I_{k,\ell } +I\!\!I_{k,\ell } \Big|\leq  4\frac{\max_\ell T_\ell }{h}\|Pu\|\| u\| + 2\frac{\max_\ell T_\ell ^2}{h^2}\|Pu\|^2 \leq  2\|u\|^2+4\frac{\max_\ell T_\ell ^2}{h^2}\|Pu\|^2.
\end{equation}
By possibly shrinking $h_0$ we may assume  that the error term in $\eqref{E:decomposition-a}$ is smaller than $\frac{1}{2}\|u\|^2$ for $0<h<h_0$. We conclude from \eqref{E:decomposition-a} together with \eqref{E:bound-a},{~\eqref{E:decomposition2-a}} and \eqref{e:R_k-a} that 
\begin{align}
{2}\|u\|^2
&\geq \sum_\ell \frac{T_\ell (h)}{t_\ell }\|Op_h(\chi_\ell )u\|^2 -2\|u\|^2-4\frac{\max_\ell T_\ell ^2}{h^2}\|Pu\|^2 \label{E:ant-a}.
\end{align}
Therefore, \eqref{E:ant-a} gives
$$
\sum_{\ell\in {\mc{G}}}  \frac{\|Op_h(\chi_\ell )u\|^2T_\ell (h)}{t_\ell }\leq \Big(4\|u\|^2+ 4\frac{\max_\ell T_\ell ^2}{h^2}\|Pu\|^2 \Big)$$
for $0<h<h_0$.
As noted right after \eqref{E:eg-a} the constant $h_0$ can be chosen to be uniform for $\chi_\ell $ in compact subsets of $ S_{\class_0}(\TM )$.
\end{proof}
%%%%%%%%%%%%%%%%%%%%%%%%%%%%%%%%%%%%%%%%%%%%%%%%%%%%%%%%%%%%%%%%%%%%%

\addcontentsline{toc}{section}{\quad\;\,\bf{Dynamical analysis}}

%%%%%%%%%%%%%%%%%%%%%%%%%%%%%%%%%%%%%%%%%%%%%%%%%%%%%%%%%%%%%%
%%%%%%%%%%%%%%%%%%%%%%%%%%%%%%%%%%%%%%%%%%%%%%%%%%%%%%%%%%%%%%
%%%%%%%%%%%%%%%%%%%%%%%%%%%%%%%%%%%%%%%%%%%%%%%%%%%%%%%%%%%%%%
%%%%%%%%%%%%%%%%%%%%%%%%%%%%%%%%%%%%%%%%%%%%%%%%%%%%%%%%%%%%%%

\section{Quantitative improvements in integrable geometries}
\label{s:spheres}
%%%%%%%%%%%%%%%%%%%%%%%%%%%%%%%%%%%%%%%%%%%%%%%%%%%%%%%%%%%%%
%%%%%%%%%%%%%%%%%%%%%%%%%%%%%%%%%%%%%%%%%%%%%%%%%%%%%%%%%%%%%
%%%%%%%%%%%%%%%%%%%%%%%%%%%%%%%%%%%%%%%%%%%%%%%%%%%%%%%%%%%%%
%%%%%%%%%%%%%%%%%%%%%%%%%%%%%%%%%%%%%%%%%%%%%%%%%%%%%%%%%%%%%

{In this section, we focus on the special case of spheres of revolution $M=[0,2\pi]_\theta\times [0,\pi]_r$ with Hamiltonian
$$
p(\theta,r,\xi_{\theta},\xi_r)=\xi_r^2+\tfrac{1}{\alpha(r)^2}\, {\xi_\theta^2}+V(r),
$$ 
and operate under the assumptions of Theorem~\ref{t:sphere}.}

{In this setting,} one can explicitly describe the Liouville tori intersected with $\{p=0\}$ as
$$
\mathbb{T}_{\xi_\theta}=\Big\{(\theta,r,\xi_r):\; \xi_r^2=V(r)-\tfrac{1}{\alpha(r)^2}{\xi_\theta^2}\Big\}.
$$
In particular, 
$$
\mathbb{T}_{\xi_\theta}\cap S^*_{(\theta_0,r_0)}M=\Big\{ \xi_r=\pm\sqrt{V(r_0)-\tfrac{1}{\alpha(r_0)^2}{\xi_\theta^2}}\Big\},
$$
and for any $\delta>0$ there is $c>0$ so that if $r_0\in [\delta,2\pi-\delta]$ the two intersections are separated by at least 
\begin{equation}
    \label{e:canyon}
    c\sqrt{\alpha(r_0)\sqrt{{V}(r_0)}-\xi_\theta}.
\end{equation} 
 Let $R_1>0$ and define \[A_{\pm,R_1}:=\{(\theta, r,\xi_\theta, \xi_r) \in T^*M: \; \pm \xi_r\geq R_1\}.\] 

{Theorem~\ref{t:sphere} is a consequence of the following Lemma which constructs non-looping covers together with Theorem~\ref{t:coverToEstimate}.}
\begin{lemma}
\label{l:sphere}
Let the above assumptions hold. Fix $\delta>0$ and let $\big\{\Lambda_{_{\rho_j}}^\tau(R)\big\}_{j=1}^{N_R}$ be as in Proposition~\ref{l:cover}. Then there exists $\beta>0$ so that if $r_0\in [\delta,2\pi-\delta]$, $H=\{x\}=\{(r_0,\theta_0)\}$ the following holds. For all $0<\tau<\tau_0$, $ \alpha_1>0$, $0<R\ll 1$, and $0<T<cR^{\alpha_1-1}$, there exists ${\mc{B}}\subset \{1,\dots, N_R\}$ so that  for $R_1=R^{\alpha_1}$
$$
|{\mc{B}}| \leq \beta T^3 R^{1-\alpha_1}+R^{-\alpha_1}
$$
and for $j\notin {\mc{B}}$ with $\Lambda_{_{\rho_j}}^\tau(R)\cap \Lambda_{_{A_{\pm,R_1}\cap \SigH}}^\tau(R)\neq \emptyset,$ 
$$
d\Big (\Lambda_{_{A_{\pm,R_1}\cap \SigH}}^\tau(R),\bigcup_{t\in [1,T]}\varphi_t(\Lambda_{_{\rho_j}}^\tau(R))\Big)\geq 2R
$$
In particular, 
$$
\bigcup_{j\notin{\mc{B}}}\Lambda_{_{\rho_j}}^\tau(R)\text{ is }[1,T]\text{ non-self looping}.
$$
\end{lemma}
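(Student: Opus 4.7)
The approach is to exploit the action-angle structure of the integrable system on the regular part of $\{p=0\}$. Away from the singular torus $\gamma_s=\{r=r_s\}$ we have coordinates $(\Theta,I)\in\mathbb{T}^2\times\mathbb{R}^2$ in which $p=p(I)$ and $\varphi_t(\Theta,I)=(\Theta+t\omega(I),I)$ with frequency $\omega(I)=\partial_I p(I)$. The iso-energetic non-degeneracy hypothesis asserts that $\{p=0\}\ni I\mapsto\omega(I)\in\mathbb{RP}^1$ is a diffeomorphism, so the rotation ratio varies monotonically across the Liouville tori foliating the regular part of the energy shell. Parametrize $\SigH=S^*_xM$ by the elliptic angle $\phi$ with $\xi_r=\sqrt{V(r_0)}\cos\phi$, $\xi_\theta=\alpha(r_0)\sqrt{V(r_0)}\sin\phi$, so that Sasaki arc-length on $\SigH$ is $\sqrt{V(r_0)}\,d\phi$.

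First I would split the cover into two groups. Let $\mc{B}_0$ consist of indices $j$ for which $\Lambda_{\rho_j}^\tau(R)$ lies within Sasaki distance $O(R)$ of the collapsing set $\{|\xi_r|<R_1\}$ (i.e., the neighborhood of $\gamma_s$). This set is an arc near each of $\phi=\pm\pi/2$ whose length is controlled by $R_1=R^{\alpha_1}$; counting tube centers in such an arc with the disjoint-sub-family structure of the $(\mathfrak D_n,\tau,R)$-good cover from Proposition~\ref{l:cover} gives $|\mc{B}_0|\lesssim R^{-\alpha_1}$. Any index in $\mc{B}_0$ is placed into $\mc{B}$ outright; in particular every tube meeting $\SigH\setminus(A_{+,R_1}\cup A_{-,R_1})$ is removed, which is exactly what is needed for the ``in particular'' assertion.

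For a tube with $j\notin\mc{B}_0$, the center $\rho_j$ lies in $A_{+,R_1}$ or $A_{-,R_1}$ at Sasaki distance $\gtrsim R_1$ from $\gamma_s$, so the action-angle coordinates are well behaved on a neighborhood of the whole Liouville torus through $\rho_j$. The key object is the first-return map $\Phi:\SigH\to\SigH$ of the flow on this torus: it has smooth return time $T_1(I)$ and displacement $\Delta(I)$ on $\SigH$, and by iso-energetic non-degeneracy $I\mapsto\Delta(I)$ has non-vanishing derivative on the bulk, with $|\Delta'(I)|$ comparable (up to constants) to the distance from $I$ to $\gamma_s$. The tube is $[1,T]$ self-looping exactly when some iterate $\Phi^k$ with $kT_1(I_j)\in[1,T]$ sends a point of the tube within $\lesssim R$ of the tube itself. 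Here the standing assumption $T\leq cR^{\alpha_1-1}=c/R_1$ is essential: it forces the shearing spread $\lesssim tR\|\partial_I\omega\|$ of the tube under the flow to stay well below the scale $R_1$ on which action-angle coordinates degenerate, so the rigid-rotation picture on the Liouville torus remains accurate. A Farey-type counting, combined with a dyadic decomposition of $A_{\pm,R_1}$ in the distance-to-$\gamma_s$ variable to handle the degeneration of $\Delta'$, then bounds the number of such resonant $\rho_j$ by $\lesssim T^3R^{1-\alpha_1}$, and these indices form the second family $\mc{B}_1$.

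Setting $\mc{B}=\mc{B}_0\cup\mc{B}_1$ gives the required count. For $j\notin\mc{B}$ with $\Lambda_{\rho_j}^\tau(R)\cap\Lambda_{A_{\pm,R_1}\cap\SigH}^\tau(R)\neq\emptyset$, the removal of resonances forces each iterate $\varphi_t(\Lambda_{\rho_j}^\tau(R))$ for $t\in[1,T]$ to stay at distance $\geq 2R$ from $A_{\pm,R_1}\cap\SigH$, which is the pointwise non-looping statement; for the ``in particular'' clause, observe that every good tube must lie essentially inside $A_{+,R_1}\cup A_{-,R_1}$ since the complement was swept into $\mc{B}_0$. I expect the main technical obstacle to be the Farey/dyadic counting near $\gamma_s$: the displacement derivative $\Delta'$ degenerates there, so one must split $A_{\pm,R_1}$ into dyadic shells $\{2^{-k-1}\leq d(\cdot,\gamma_s)<2^{-k}\}$, estimate the resonant set in each shell against a scale-dependent version of the twist inequality, and sum the resulting shell contributions; the precise exponent $R^{1-\alpha_1}$ in the final count is produced by balancing these shell contributions against the cutoff at distance $R_1=R^{\alpha_1}$ from $\gamma_s$.
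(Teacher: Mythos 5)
Your overall strategy is the one the paper uses: excise $\sim R^{-\alpha_1}$ tubes near the singular torus, then use iso-energetic non-degeneracy to confine the remaining $[1,T]$ self-looping tubes to neighborhoods of resonant tori and count those neighborhoods. The gap is that the decisive quantitative step --- the bound $|\mc{B}_1|\lesssim T^3R^{1-\alpha_1}$ --- is never derived: you announce that a ``Farey-type counting combined with a dyadic decomposition'' will produce it and then list this very counting as the main technical obstacle, so the heart of the lemma is left unproved. Two of your supporting claims are also off. First, what degenerates near $\gamma_s$ is not the twist $\Delta'$ (in the Birkhoff normal form $p=\tau+f(x^2+\xi^2,\tau)$ the frequency map extends smoothly to the singular torus); it is the angle coordinate $\Theta_2=\arctan(\xi/x)$, whose differential blows up like the inverse of the distance to $\gamma_s$ --- this is the content of~\eqref{e:distances}. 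Second, the hypothesis $T<cR^{\alpha_1-1}$ is not primarily about keeping the rigid-rotation picture accurate; it is used, together with the sheet separation~\eqref{e:canyon}, to guarantee that when $\varphi_t(\Lambda_{\rho_j}^\tau(R))$ returns within $(CT+1)R$ of $\SigH\cap A_{\pm,R_1}$ it must return to the \emph{same} intersection point of $\mathbb{T}_{I_0}$ with $S^*_xM$ rather than to the opposite sheet, so that a loop forces $d(\varphi_t(\tilde\rho),\tilde\rho)<(CT+1)R$.

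The paper closes the count with no dyadic decomposition, working at a single worst-case scale: after the excision every surviving center is at distance $\geq R^{1-\alpha_1}$ from $\gamma_s$, so~\eqref{e:distances} applies with $\alpha=R^{1-\alpha_1}$, and a loop at time $t\in[1,T]$ forces $d(t\partial_Ip(I_0),2\pi\mathbb{Z}^2)<CTR^{\alpha_1}$, i.e.\ the center lies $CTR^{\alpha_1}$-close in action to a rational torus of period at most $T$. Iso-energetic non-degeneracy separates such tori by $cT^{-2}$, so there are at most $CT^2$ of them, and each resonant neighborhood meets $\SigH$ in a set coverable by $CTR^{1-\alpha_1}$ tubes; multiplying gives $CT^3R^{1-\alpha_1}$. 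If you prefer your first-return-map formulation you must still supply these three inputs (localization of looping centers to resonances at scale $CTR^{\alpha_1}$, the $T^2$ count of resonances, and the $TR^{1-\alpha_1}$ tube count per resonance); the unspecified ``balancing of shell contributions'' does not substitute for them.
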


\begin{proof}
We start by removing tubes covering the intersection of an $R^{1-{\alpha_1}}$ neighborhood of $\xi_\theta=\sqrt{V(r_0)}\alpha(r_0)$ with $\SigH$. This requires $R^{-{\alpha_1}}$ tubes of radius $R$. In particular, this covers an $R^{1-{\alpha_1}}$ neighborhood of the singular torus and we may restrict our attention to $A_{\pm,R_1}$.

We claim that  there is $C>0$ so that if $\rho_1,\rho_2$ are at least $\alpha$ away from the singular torus, then
\begin{equation}\label{e:distances}
|\Theta(\rho_1)-\Theta(\rho_2)|+|I(\rho_1)-I(\rho_2)|\leq C\alpha^{-1}d(\rho_1,\rho_2). 
\end{equation}
Indeed, by (e.g. \cite[eqn.~(3.37)]{To09},~\cite[Theorem 3.12]{Vu06},~\cite[Theorem. Page 9]{El90}) there are Birkhoff normal form symplectic coordinates in a neighborhood of the stable bicharacteristic $\gamma_s$ so that $\rho=(t,x,\tau,\xi)\in S^1\times \re\times \re^2$ with  $\gamma_s$ given by ${ \{(t,0,0,0):\, t\in S^1\}}$  so that 
$$p(t,x,\tau,\xi)=\tau+f(x^2+\xi^2,\tau),$$
$f \in C^\infty((-\delta, \delta)^2 ; \re)$ for some $\delta>0$ and  $f(u,v)=\alpha(v)u+O(v^2)+O_v(u^2)$ for some $\alpha \in C^\infty((-\delta, \delta) ; \re)$.

In particular, we may work with action-angle coordinates $(\Theta, I)$  given by 
$$
 I_1=\tau,\qquad I_2=\frac{1}{2}(x^2+\xi^2)\qquad
x=\sqrt{2I_2}\cos(\Theta_2),\qquad \xi=\sqrt{2I_2}\sin(\Theta_2).
$$
In these coordinates $p(\Theta, I)=I_1 + f(2 I_2, I_1)$, the action coordinate function $I_2(x,\xi)$ measures the squared distance from $(x,\xi)$ to the singular torus, 
and we have
$$|\partial_{_{I,\Theta}} \rho|\leq C/\sqrt{2I_2}=C\alpha^{-1}.$$
This yields \eqref{e:distances} as claimed.

Next, suppose 
$$
d(\rho,\SigH\cap A_{\pm, R_1})<2R,\qquad d(\varphi_t(\rho),\SigH\cap A_{\pm,R_1})<2R.
$$
There exists $\tilde{\rho}\in \SigH\cap A_{\pm,R_1}$ with $d(\rho,\tilde{\rho})<2R$. Therefore,  for some $C>0$,
$$
d(\varphi_t(\tilde{\rho}),\varphi_t(\rho))<CRt
$$
and hence, for  $t\leq T$, 
$$d(\varphi_t(\tilde{\rho}),\SigH\cap A_{\pm,R_1})< (CT+1)R.$$
Now, for $RT\ll R^{{\alpha_1}} $, by~\eqref{e:canyon} since $\rho$ is at least $R^{1-\alpha_1}$ away from the singular torus, the only intersection of $\mathbb{T}_{I_0(\tilde{\rho})}$ with 
$$
\{q:\; d(q,\SigH\cap A_{\pm,R_1})<(CT+1)R\}
$$
happens at $q$ with $d(q,\tilde{\rho})<(CT+1)R$. In particular, 
$$
d(\varphi_t(\tilde{\rho}),\tilde{\rho})<(CT+1)R,
$$
and hence by~\eqref{e:distances}
$$
d(t\partial_I p(I_0),2\pi \mathbb{Z}^2)<CTRR^{-1+{\alpha_1}}.
$$
That is, $\tilde{\rho}$ is $CTR^{\alpha_1}$ close to a rational torus of period $t$. Thus, the same is true for the original $\rho$ with possibly a different constant.

Now, the points that are $CTR^{\alpha_1}$ close to the intersection of $\SigH\cap A_{\pm}$ with $\mathbb{T}_{I_0}$ can be covered by $CTR^{1-{\alpha_1}}$ tubes. Moreover, since $p$ is isoenergetically non-degenerate, there is $c>0$ so that the rational tori of period $\leq T$, are separated by $cT^{-2}$. Hence, there are at most $CT^2$ such tori and we require $CT^3R^{1-{\alpha_1}}$ tubes.
\end{proof}
%%%%%%%%%%%%%%%%%%%%%%%%%%%%%%%%%%%%%%%%%%%%%%%%%%%%%%%%%%%%%%%%%%%%%%%%%%%%%%%

%%%%%%%%%%%%%%%%%%%%%%%%%%%%%%%%%%%%%%%%%%%%%%%%%%%%%%%%%%%%%%%%%%%%%%%%%%%%%%%
%\begin{theorem}
%\label{t:sphere}
%Let $\alpha$ and $V$ satisfy the assumptions above, %$M=[0,2\pi]_\theta\times[0,\pi]_r$ and 
%$$
%g=dr^2+\alpha(r)^2d\theta^2.
%$$
%Then, for
%\begin{equation}
%    \label{e:sphereHamilton}
%P=-h^2\Delta_g+V(r)+hQ
%\end{equation}
%with $Q\in \Psi^2(M)$ self-adjoint, and $K\subset [0,2\pi]\times (0,\pi)$ compact, there exists $C>0$ with the following properties. For all $L>0$ there exists $h_0>0$ so that for $0<h<h_0$, and $u \in \mathcal D'(M)$
%$$
%\|u\|_{L^\infty(K)}\leq Ch^{-\frac{1}{2}}\Bigg(\frac{\|u\|_{\LM}}{L\sqrt{\log h^{-1}}}+\frac{L\sqrt{\log h^{-1}}\|Pu\|_{\Hs{-\frac{1}{2}}}}{h}\Bigg).
%$$
%In particular, if $\|Pu\|_{\Hs{-\frac{1}{2}}}=o\Big(\frac{h\|u\|_{\LM}}{\log h^{-1}}\Big)$, then
%$$
%\|u\|_{L^\infty(K)}=o\Bigg(\frac{h^{-\frac{1}{2}}}{\sqrt{\log h^{-1}}}\|u\|_{\LM}\Bigg).
%$$
%\end{theorem}
%%%%%%%%%%%%%%%%%%%%%%%%%%%%%%%%%%%%%%%%%%%%%%%%%%%%%%%%%%%%%%%%%%%%%%%%%%%%%%%

\begin{proof}[{Proof of Theorem~\ref{t:sphere}}]
Fix $L>0$, $r_0\in[\delta,2\pi-\delta]$, $\theta_0\in [0,\pi]$ and $\alpha_1=\frac{1}{2}$. Then for $0<R\ll1$ and $0<T<R^{-\frac{1}{2}}$, we may apply Lemma~\ref{l:sphere}. Let $\{\Lambda_{\rho_j}^\tau(R)\}_{j=1}^{N_R}$ be the cover of $\SigH$ given by Proposition~\ref{l:cover}. Then, there are $\mc{G},\mc{B}\subset \{1,\dots, N_R\}$ so that 
\begin{gather*}
|\mc{B}|\leq (\beta T^3+1)R^{-\frac{1}{2}},\qquad \{1,\dots, N_R\}\subset \mc{G}\cup \mc{B}\\
\bigcup_{j\in \mc{G}}\Lambda_{\rho_j}^\tau(R)\text{ is }[1,T]\text{ non-self looping}.
\end{gather*}
Fix $0<\e<\delta<\frac{1}{2}$, let $R=h^\e$ and $T=L^2\log h^{-1}$.  We next apply Theorem~\ref{t:coverToEstimate} {with $P$ as in~\eqref{e:sphereHamilton}}, $\mathcal G_\ell= \mc{G}$,  $T_\ell=T$ and $t_\ell= 1 $  for all $\ell$. Then,  there exist $C>0$ independent of $L$, for any $N>0$, $C_{_{\!N}}>0$, and $h_0>0$, so that for all $0<h<h_0$

\begin{align*}
&h^{\frac{1}{2}}\|u\|_{L^\infty(B((r_0,\theta_0),h^\delta))}\\
&\leq C h^{\frac{\e}{2}}\Bigg(\Big[(\log h^{-1})^{\frac{3}{2}}h^{-\frac{\e}{4}}+ \frac{h^{-\frac{\e}{2}}}{L\sqrt{\log h^{-1}}}\Big]\|u\|_{\LM}
+\frac{h^{-\frac{\e}{2}}L\sqrt{\log h^{-1}}}{h}\|Pu\|_{\LM}\Bigg)\\ &\hspace{5.5cm}+Ch^{-1}\|Pu\|_{\Hs{-\frac{1}{2}}} +C_{_{\!N}}h^N\big(\|u\|_{\LM}+\|Pu\|_{\Hs{-\frac{1}{2}}}\big)\\
&\leq C \Bigg(\beta\Big[(\log h^{-1})^{\frac{3}{2}}{h^{\frac{\e}{4}}}+ \frac{1}{L\sqrt{\log h^{-1}}}\Big]\|u\|_{\LM}+\frac{L\sqrt{\log h^{-1}}}{h}\|Pu\|_{\Hs{-\frac{1}{2}}}\Bigg).
\end{align*}
\end{proof}

\section{Change of the Hamiltonian}\label{S:change of hamiltonian}

When studying quasimodes for the Laplacian, it will be convenient to replace the operator $P_0:=-h^2\Delta_g-1$ by an operator whose dynamics agree with those of $p=|\xi|_g-1$.
\begin{lemma}
\label{l:gecko}
There exists $P\in\Psi^0(M)$ with real, classically eliptic symbol $p$ such that {$\{p=0\}=\SM$,} $p=|\xi|_g-1$ in a neighborhood of $\SM$ and there exist $Q\in \Psi^{-2}(M)$, $E\in h^\infty\Psi^{-\infty}(M)$ satisfying
$$
P=QP_0+E.
$$
In particular, for all $s\in \re$ there exists a constant $C_s>0$ depending only on $s$ so that for all $N>0$, there exist $C_{N,s}=C(N,s, M,g)>0$ and $h_0=h_0(N,s, M,g)>0$ so that for $0<h<h_0$ and $u\in \mc{D}'(M)$,
$$
\|Pu\|_{_{\sob{s}}}\leq C_s\|P_0u\|_{_{\sob{s-2}}}+C_{N,s}h^N\|u\|_{_{\sob{-N}}}.
$$
\end{lemma}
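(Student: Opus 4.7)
The plan is to choose the symbol $p$ so that it is a function of $|\xi|_g$ alone; this makes $q := p/p_0$ (with $p_0 := |\xi|_g^2-1$ the full principal symbol of $P_0$) itself a smooth function of $p_0$, and then both $P$ and $Q$ may be defined as functions of $P_0$ via semiclassical functional calculus, yielding the exact operator identity $P = QP_0$ with vanishing remainder.

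Concretely, I would fix $\psi\in C_c^\infty(\re;[0,1])$ with $\psi\equiv 1$ near $1$ and $\supp\psi\subset(\tfrac12,\tfrac32)$, and set
\[
q(x,\xi):=\frac{\psi(|\xi|_g)}{|\xi|_g+1}+\frac{1-\psi(|\xi|_g)}{|\xi|_g^2+1},\qquad p(x,\xi):=q(x,\xi)\,(|\xi|_g^2-1).
\]
Then $q\in S^{-2}(\TM;\re)$ and $p\in S^0(\TM;\re)$ are real; $p$ agrees with $|\xi|_g-1$ in a neighborhood of $\SM$, vanishes exactly on $\SM$, and is classically elliptic of order $0$ (since $p\to 1$ as $|\xi|_g\to\infty$). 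Because $q$ depends only on $|\xi|_g=\sqrt{p_0+1}$ and the cutoff $\psi$ is supported well away from $0$, there is a real classical symbol $F\in S^{-1}_{\rm cl}(\re)$, smooth on a neighborhood of the range $[-1,\infty)$ of $p_0$, with $q=F(p_0)$; setting $G(s):=sF(s)\in S^{0}_{\rm cl}(\re)$ then gives $p=G(p_0)$.

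Next I would invoke semiclassical functional calculus for the self-adjoint, classically elliptic operator $P_0\in\Psi^2(M)$. Via the Helffer--Sj\"ostrand formula (or Beals's characterization of $\Psi$DOs), $Q:=F(P_0)\in\Psi^{-2}(M)$ is self-adjoint with principal symbol $q$, and $P:=G(P_0)\in\Psi^0(M)$ is self-adjoint and classically elliptic with principal symbol $p$. Since $G(s)=sF(s)$, the spectral theorem yields the exact identity
\[
P \;=\; G(P_0) \;=\; F(P_0)\,P_0 \;=\; Q\,P_0,
\]
so one may take $E:=0\in h^\infty\Psi^{-\infty}(M)$. The Sobolev bound then drops out of standard $\Psi$DO mapping properties: $\|Pu\|_{\sob{s}} = \|QP_0u\|_{\sob{s}} \le C_s\|P_0u\|_{\sob{s-2}}$ uniformly in $h$, and the $C_{N,s}h^N\|u\|_{\sob{-N}}$ remainder is trivially absorbed (in fact zero here).

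The one point requiring care is justifying the two functional-calculus identifications above; this is standard for classical real symbols of negative and zero order paired with a classically elliptic self-adjoint $\Psi$DO of positive order, but one must verify that $F$ is smooth up to the left endpoint $s=-1$ of the spectrum of $p_0$. This holds because $\psi$ vanishes on a neighborhood of $0$, so near $s=-1$ one has $F(s)=1/(s+2)$, which is smooth. If one wishes to avoid functional calculus entirely, an alternative is to define $P$ and $Q$ directly by quantizing $p$ and $q$, and then correct $P$ (rather than $Q$) by $h^2$-subprincipal terms iteratively; the identity $\{q,p_0\}\equiv 0$ (from $q=F(p_0)$) kills the first Moyal error, and the higher corrections, which again depend only on derivatives of $F(p_0)$ and $p_0$, can be absorbed into the subprincipal parts of $P$ to reach an $h^\infty\Psi^{-\infty}$ remainder.
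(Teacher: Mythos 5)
Your proposal is correct, and it reaches the factorization by a genuinely different route than the paper. The paper builds $\tilde P$ as a glued version of $\sqrt{-h^2\Delta_g}$ --- equal to $\tfrac12$ near zero frequency, to $\sqrt{-h^2\Delta_g}$ near $\SM$, and to $2$ at high frequency --- sets $P=\tilde P-I$, and extracts the factorization from the algebraic identity $(\tilde P+I)(\tilde P-I)=P_0+(\tilde P^2+h^2\Delta_g)$: the discrepancy $\tilde P^2+h^2\Delta_g$ is microsupported where $P_0:=-h^2\Delta_g-1$ is elliptic, so an elliptic parametrix rewrites it as $Q_1P_0+O(h^\infty)_{\Psi^{-\infty}}$, and one then inverts the elliptic factor $\tilde P+I$, which is why an $O(h^\infty)$ remainder $E$ appears. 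You instead arrange $p=q\,p_0$ with $q>0$ an explicit function of $|\xi|_g$ alone, so that $q=F(p_0)$ and $p=G(p_0)$ with $G(s)=sF(s)$, and the spectral theorem gives the \emph{exact} identity $P=G(P_0)=F(P_0)P_0=QP_0$ with $E=0$; self-adjointness of $P$ and $Q$ (needed when the lemma is fed into Theorem~\ref{t:coverToEstimate}) comes for free, and the positivity of $q$ immediately gives $\{p=0\}=\SM$. The trade-off is in the functional-calculus input: the paper only ever applies the calculus to compactly supported functions of $-h^2\Delta_g$ (note $\psi_2(t)\sqrt{t}$ is compactly supported and smooth because $\psi_2$ vanishes near $0$) plus multiples of the identity, and can cite \cite[Theorem 14.9]{EZB} directly, whereas your $F\in S^{-1}(\re)$ and $G\in S^{0}(\re)$ are not compactly supported, so you need the Helffer--Robert calculus for symbol-class functions of an elliptic self-adjoint operator to conclude $F(P_0)\in\Psi^{-2}(M)$ and $G(P_0)\in\Psi^{0}(M)$ with principal symbols $F(p_0)$ and $G(p_0)$. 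That is standard, and you correctly isolate and dispose of the one genuinely delicate point --- smoothness of $F$ at the bottom $s=-1$ of the range of $p_0$ --- via the support condition on $\psi$. The concluding Sobolev bound then follows, as you say, from the $h$-uniform boundedness of $Q:\sob{s-2}\to\sob{s}$, and is in fact stronger than stated since the $h^N$ term is absent.
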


\begin{proof}
Let $\psi_1\in C_c^\infty(\re;[0,1])$ with $\supp \psi_1 \subset (-\frac{1}{2},\frac{1}{2})$ and  $\psi_1\equiv 1$ on $[-\frac{1}{4},\frac{1}{4}]$. Next, let $\psi_2\in C_c^\infty(\re; {[0,1]})$ with $\psi_2\equiv 1$ on $[-4,-\frac{1}{2}]\cup [\frac{1}{2},4]$ so that $\psi:=\psi_1+\psi_2$ has $\psi\equiv 1$ on $[-4,4]$. Define
\[\tilde P=\tilde{P}_1+\tilde{P}_2+\tilde{P}_3\]
with
\begin{equation}
\label{e:Trex}
{\tilde{P}}_1:=\tfrac{1}{2}\psi_1(-h^2\Delta_g),\qquad
{\tilde{P}}_2:=\psi_2(-h^2\Delta_g)\sqrt{-h^2\Delta_g}, \qquad
{\tilde{P}}_3:=2(1-\psi(-h^2\Delta_g)).
\end{equation}

Note that by the functional calculus~\cite[Theorem 14.9]{EZB} $\tilde{P}\in \Psi(M)$ with symbol
$$
\tilde{p}:=\tfrac{1}{2}\psi_1(|\xi|^2_g)+\psi_2(|\xi|_g^2)|\xi|_g+2(1-\psi(|\xi|_g^2))
$$
In particular, $\tilde{p}=|\xi|_g$ in a neighborhood of $\SM$.

Next, observe that 
\begin{align*}
(\tilde{P}+I)(\tilde{P}-I)
&=P_0+h^2\Delta_g+ \tilde{P}^2\\
&=P_0-(I-\psi_2^2(-h^2\Delta_g))(-h^2\Delta_g)+\tilde{P}_1^2+\tilde{P}_3^2+2\tilde{P}_1\tilde{P}_2+2\tilde{P}_2\tilde{P}_3{+2\tilde{P}_1\tilde{P}_3}
\end{align*}
Now, there exists $c>0$ so that 
$$\WF_h(\tilde{P}_1)\cup \WF_h(\tilde{P}_3)\cup \WF_h(I-\psi_2^2(-h^2\Delta_g))\subset \{|\sigma(P_0)|>c\langle \xi \rangle^2\}.$$
In particular, by the elliptic parametrix construction (see e.g.~\cite[Appendix E.2]{ZwScat}) there is $Q_1\in \Psi^{-2}(M)$ so that 
$$
(\tilde{P}+I)(\tilde{P}-I)=Q_1P_0 +O(h^\infty)_{\Psi^{-\infty}}.
$$
Now, $\sigma(\tilde{P}+I)>1$ therefore, $(\tilde{P}+I)^{-1}\in \Psi(M)$ and we have that 
$$
\tilde{P}-I=(\tilde{P}+I)^{-1}Q_1P_0+O(h^\infty)_{\Psi^{-\infty}}
$$
which completes the proof of the lemma after letting $Q=(\tilde{P}+I)^{-1}Q_1$ and $P=\tilde{P}-I$.

\end{proof}
Applying Theorem~\ref{t:coverToEstimate} to $P$ from Lemma~\ref{l:gecko}, where $P_0:=-h^2\Delta_g-1$, and then estimating $Pu$ by Lemma~\ref{l:gecko}, we obtain the following theorem.
\begin{theorem}
{Let $\{H_h\}_h \subset M$ be a {regular} family of submanifolds of codimension $k$ that is uniformly conormally transverse for $p$.}
{Let $\{\tilde H_h\}_h$ be a family of submanifolds of codimension $k$ satisfying \eqref{e:conormalClose}.}
 Let $0<\delta<\frac{1}{2}$, $N>0$ and ${\{w_h\}_h}$ with {$w_h\in S_\class \cap C_c^\infty(\tilde H_h)$}. There exist positive constants $\tau_0=\tau_0(M,g,\Tinj ,{\{H_h\}_h})$, {$R_0=R_0(M,g, {\KR_{_{0}}},{k,\Tinj })$,} $C_{n,k}$ depending only on $n$ and $k$, and $h_0=h_0(M,g,{\delta, {\{H_h\}_h}})$ and for each $0<\tau\leq \tau_0$  there exist, $C=C(M,g,\tau,\delta,, {\{H_h\}_h})>0$ and $C_{_{\!N}}=C_{_{\!N}}(M,g, N,\tau,\delta,{\{w_h\}_h},{\{H_h\}_h})>0$,   so that the following holds.

Let ${8}h^\delta\leq R(h){<R_0}$,{ $0\leq\alpha< 1-2{\limsup_{h\to 0}\frac{\log R(h)}{\log h}}$,} and suppose $\{\Lambda_{_{\rho_j}}^\tau(R(h))\}_{j=1}^{N_h}$  is a {$(\mathfrak{D},\tau, R(h))$ cover of $\SNH$ for some $\mathfrak{D}>0$}.

In addition, suppose there exist $\mc{B}\subset \{1,\dots, N_h\}$ and a finite collection  $\{\mc{G}_\ell\}_{\ell \in \mathcal L} \subset \{1,\dots, N_h\}$ with 
$$
\mathcal J_h(w_h)\;\subset\;  \mc{B} \cup \bigcup_{\ell \in \mathcal L}\mc{G}_\ell,
$$
where $\mathcal J_h(w_h)$ is defined in \eqref{e:i},
 and so that for every $\ell \in \mathcal L$ there exist  $t_\ell=t_\ell(h)>0$ and ${T_\ell=T_\ell(h)}\leq {2} \alpha T_e(h)$  so that
$$
\bigcup_{j\in \mc{G}_\ell}\Lambda_{_{\rho_j}}^\tau(R(h))\;\;\text{ is }\;\;[t_\ell,T_{\ell}]\text{ non-self looping for }\varphi_t:=\exp(tH_{|\xi|_g}).
$$
Then, for $u\in \mc{D}'(M)$ and $0<h<h_0$,
\begin{align*}
h^{\frac{k-1}{2}}\Big|\int_{\tilde H_h} w_h u\, d\sigma_{\tilde H_h}\Big|
&\leq\frac{C_{n,k}{\mathfrak{D}}\|w_h\|_{_{\!\infty}}R(h)^{\frac{n-1}{2}}}{\tau^{\frac{1}{2}}}
\Bigg(|\mc{B}|^{\frac{1}{2}}+\sum_{\ell \in \mathcal L }\frac{(|\mc{G}_\ell|t_\ell)^{\frac{1}{2}}}{T^{\frac{1}{2}}_\ell}\Bigg)\|u\|_{\LM} \\
&+\frac{C_{n,k}{\mathfrak{D}}\|w_h\|_{_{\!\infty}}R(h)^{\frac{n-1}{2}}}{\tau^{\frac{1}{2}}} \sum_{\ell \in \mathcal L}\frac{(|\mc{G}_\ell|t_\ell T_\ell)^{\frac{1}{2}}}{h}\;\|P_0u\|_{\LM}\! \\
&+Ch^{-1}{\|w_h\|_\infty}\|P_0u\|_{\Hl}+C_{_{\!N}}h^N\big(\|u\|_{\LM}+{\|P_0u\|_{\Hl}}\big).
\end{align*}
Here, the constant $C_{_{\!N}}$ depends on $\{w_h\}_h$  only through finitely many $S_\delta$ seminorms of $w_h$. {The constants ${\tau_0},C,C_{_{\!N}},h_0$ depend on $\{H_h\}_h$ only through finitely many of the constants $\KR_{_{\alpha}}$ in \eqref{e:curvature}.}
\end{theorem}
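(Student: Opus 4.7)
The plan is to reduce directly to Theorem~\ref{t:coverToEstimate} by replacing the non-elliptic operator $P_0 = -h^2\Delta_g - 1$ with a genuinely classically elliptic order-zero operator $P$ whose bicharacteristic flow on its characteristic variety agrees with the homogeneous geodesic flow $\varphi_t = \exp(tH_{|\xi|_g})$. Such a $P$ is furnished by Lemma~\ref{l:gecko}: it produces $P \in \Psi^0(M)$ with real, classically elliptic symbol $p$ satisfying $\{p=0\} = S^*M$ and $p = |\xi|_g - 1$ near $S^*M$, together with the factorization $P = QP_0 + E$ with $Q \in \Psi^{-2}(M)$ and $E \in h^\infty \Psi^{-\infty}(M)$. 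Consequently, the semiclassical Sobolev bound
\[
\|Pu\|_{H^s_{\scl}} \leq C_s \|P_0u\|_{H^{s-2}_{\scl}} + C_{_{\!N,s}}\, h^N \|u\|_{H^{-N}_{\scl}}
\]
holds for every $s \in \mathbb{R}$ and every $N > 0$.

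First I would verify that all hypotheses of Theorem~\ref{t:coverToEstimate} apply to this $P$. Since $\{p = 0\} = \{p_0 = 0\} = S^*M$, one has $\Sigma_{H_h,p} = SN^*H_h$, and on this common characteristic set $H_p = \tfrac{1}{2|\xi|_g}H_{p_0} = \tfrac{1}{2}H_{p_0}$, so $\exp(tH_p)|_{\{p=0\}}$ coincides with the geodesic flow $\varphi_t$. In particular, the $(\mathfrak{D},\tau,R(h))$-good cover $\{\Lambda^\tau_{\rho_j}(R(h))\}_{j=1}^{N_h}$ and the non-self-looping splitting $\mathcal{J}_h(w_h) \subset \mc{B} \cup \bigcup_\ell \mc{G}_\ell$ assumed for $\varphi_t$ transport without modification to the flow generated by $p$ (the looping times $t_\ell, T_\ell$ remain identical). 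Moreover, uniform conormal transversality of $\{H_h\}_h$ for the geodesic flow is equivalent to that for $p$, and on $SN^*H_h$ one has $|H_p r_{H_h}| \equiv 1$, so we may take $\FR = 1$, which absorbs into $C_{n,k}$. Regularity of $\{H_h\}_h$ is unchanged.

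Next I would invoke Theorem~\ref{t:coverToEstimate} for $(P, \{H_h\}, \{\tilde{H}_h\}, \{w_h\})$. This yields an estimate of the form
\begin{align*}
h^{\frac{k-1}{2}}\Big|\int_{\tilde H_h} w_h u\, d\sigma_{\tilde H_h}\Big|
&\leq \frac{C_{n,k}\mathfrak{D}\|w_h\|_\infty R(h)^{\frac{n-1}{2}}}{\tau^{\frac{1}{2}}}\!\left(|\mc{B}|^{\frac{1}{2}} + \sum_{\ell}\frac{(|\mc{G}_\ell| t_\ell)^{\frac{1}{2}}}{T_\ell^{\frac{1}{2}}}\right)\!\|u\|_{L^2} \\
&\quad + \frac{C_{n,k}\mathfrak{D}\|w_h\|_\infty R(h)^{\frac{n-1}{2}}}{\tau^{\frac{1}{2}}} \sum_\ell \frac{(|\mc{G}_\ell|t_\ell T_\ell)^{\frac{1}{2}}}{h}\|Pu\|_{L^2} \\
&\quad + Ch^{-1}\|w_h\|_\infty\|Pu\|_{H^{(k+1)/2}_{\scl}} + C_{_{\!N}} h^N(\|u\|_{L^2} + \|Pu\|_{H^{(k+1)/2}_{\scl}}),
\end{align*}
where the Sobolev index $(k-2m+1)/2 = (k+1)/2$ arises because $P$ has order $m=0$.

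Finally I would convert every occurrence of $\|Pu\|$ on the right-hand side to the corresponding $\|P_0u\|$ norm using Lemma~\ref{l:gecko}. Specifically, $\|Pu\|_{L^2} \leq C\|P_0u\|_{H^{-2}_{\scl}} + O(h^\infty)\|u\|_{L^2} \leq C\|P_0u\|_{L^2} + O(h^\infty)\|u\|_{L^2}$ since $H^{-2}_{\scl}$ is weaker than $L^2$, and $\|Pu\|_{H^{(k+1)/2}_{\scl}} \leq C\|P_0u\|_{H^{(k-3)/2}_{\scl}} + O(h^\infty)\|u\|_{L^2}$, which is the desired Sobolev index $\Hl$. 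The resulting $O(h^\infty)\|u\|_{L^2}$ contributions are absorbed into the final $C_{_{\!N}}h^N$ term. The main, though minor, obstacle is bookkeeping: ensuring that the perturbation of $P_0$ by $Q$ does not affect the flow structure (this is precisely the content of Lemma~\ref{l:gecko}), and that the conversion of Sobolev indices $(k+1)/2 \mapsto (k-3)/2$ is tracked correctly. No new dynamical input is needed.
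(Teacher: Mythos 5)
Your proposal is correct and is exactly the paper's argument: the paper deduces this theorem in one line by applying Theorem~\ref{t:coverToEstimate} to the operator $P$ furnished by Lemma~\ref{l:gecko} and then converting $\|Pu\|$ to $\|P_0u\|$ via the factorization $P=QP_0+E$. Your verification of the hypotheses (agreement of the flows on $S^*M$, $\FR=1$, the Sobolev index shift $(k+1)/2\mapsto(k-3)/2$) fills in the bookkeeping the paper leaves implicit, and it all checks out.
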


%%%%%%%%%%%%%%%%%%%%%%%%%%%%%%%%%%%%%%%%%%%%%%%%%%%%%%%%%%%%%%
%%%%%%%%%%%%%%%%%%%%%%%%%%%%%%%%%%%%%%%%%%%%%%%%%%%%%%%%%%%%%%
%%%%%%%%%%%%%%%%%%%%%%%%%%%%%%%%%%%%%%%%%%%%%%%%%%%%%%%%%%%%%%
%%%%%%%%%%%%%%%%%%%%%%%%%%%%%%%%%%%%%%%%%%%%%%%%%%%%%%%%%%%%%%
\appendix
\section{}
%%%%%%%%%%%%%%%%%%%%%%%%%%%%%%%%%%%%%%%%%%%%%%%%%%%%%%%%%%%%%%
%%%%%%%%%%%%%%%%%%%%%%%%%%%%%%%%%%%%%%%%%%%%%%%%%%%%%%%%%%%%%%
%%%%%%%%%%%%%%%%%%%%%%%%%%%%%%%%%%%%%%%%%%%%%%%%%%%%%%%%%%%%%%
\label{s:prelim}

%%%%%%%%%%%%%%%%%%%%%%%%%%%%%%%%%%%%%%%%%%%%%%%%%%%%%%%%%%%%%%%%%%%%%%%%%%%%%%%%
{\subsection{Index of Notation}\label{s:index}\ \smallskip

In general we denote points in $\TM$ by $\rho$. When position and momentum need to be distinguished we write $\rho=(x,\xi)$ for $x\in M$ and $\xi \in T_x^*M$.  Sets of indices are denoted in calligraphic font (e.g $\mathcal I$).  Next, we list symbols that are used repeatedly in the text along with the location where they  are first defined.
\pagebreak
\begin{multicols}{3}
\noindent $\mc{C}^{r,t}_x$          \tabto{1.6cm}    \eqref{e:conjugate set}\\ 
$\Sigma_{H,p}$          \tabto{1.6cm}    \eqref{e:SigH}\\
$\varphi_t$            \tabto{1.6cm}    \eqref{e:varphi}\\
$\mc{K}_{_{\!\alpha}}$            \tabto{1.6cm}  \eqref{e:curvature}\\
$r_{H}$               \tabto{1.6cm}    \eqref{e:HprH}\\
$K_p$                \tabto{1.6cm}   \eqref{e:classellipt}\\\
$\FR$               \tabto{1.6cm}
\eqref{e:FR}
\vfill\null
\columnbreak 
\noindent $\mc{H}_{\Sigma}$       \tabto{1.6cm}    \eqref{e:Hsig}\\
$\Tinj$                 \tabto{1.6cm}    \eqref{e:Tinj}\\
$\Lambda_{_{\!A}}^\tau(r)$ \tabto{1.6cm}  \eqref{e:tube}\\
$\Lambda_{\rho}^\tau(r)$ \tabto{1.6cm}  \eqref{e:tube2}\\
$\mc{J}_h(w)$          \tabto{1.6cm}   \eqref{e:i}  \\
$T_e(h)$                    \tabto{1.6cm}    \eqref{e:Lmax}\\
$\Lambda_{\max}$                   \tabto{1.6cm}    \eqref{e:Lmax}

\vfill\null
\columnbreak 
\noindent $\beta_\delta$     \tabto{1.6cm}   \eqref{e:beta}\\
$\mathfrak{D}_n$    \tabto{1.6cm}   Prop. \ref{l:cover}\\
$\Psi_\delta^k$            \tabto{1.6cm}     \eqref{e:Sdelta}\\
$S_{\delta}^{k}$           \tabto{1.6cm}     \eqref{e:Sdelta}\\
$H_{\scl}^k$           \tabto{1.6cm}      \eqref{e:sobolev}\\
$\MSh$              \tabto{1.6cm}      Def. \ref{d:MSh}
\end{multicols}
}
\noindent {For the definition of $[t,T]$\text{ non-self looping}, see~\eqref{e:nonsl}. For that of $(\mathfrak{D},\tau,r)$ good covers, see Definition~\ref{d:good cover}.}

\subsection{Notation from semiclassical analysis}
\label{s:SemiNote}

We refer the reader to~\cite{EZB} or~\cite[Appendix E]{ZwScat} for a complete treatment of semiclassical analysis, but recall some of the relevant notation here. We say $a\in C^\infty(\TM )$ is a symbol of order $m$ and class $0\leq \delta<\frac{1}{2}$, writing $a\in S^m_\delta(\TM )$ if there exists $C_{\alpha\beta}>0$ so that
\begin{equation}\label{e:Sdelta}
|\partial_x^\alpha \partial_\xi^\beta a(x,\xi)|\leq C_{\alpha\beta}h^{-\delta(|\alpha|+|\beta|)}\langle \xi\rangle^{m-|\beta|},\qquad \langle \xi\rangle:=(1+|\xi|_g^2)^{1/2}.
\end{equation}
{Note that we implicitly allow $a$ to also depend on $h$, but omit it from the notation.}
We then define $S^\infty_\delta(\TM ):=\bigcup_m S^m_\delta(\TM )$. We sometimes write $S^m(\TM )$ for $S_0^m(\TM )$. We also sometimes write $S_\delta$ for $S_\delta^m$. Next, we say that $a\in S^{\comp}_\delta(\TM)$ if $a$ is supported in an $h$-independent compact {sub}set {of $\TM$}. 

Next, there is a quantization procedure $Op_h:S^m_\delta \to \mc{L}(C^\infty(M),\mc{D}'(M))$ and we say $A\in \Psi^m_\delta(M)$ if there exists $a\in S^m_\delta(\TM )$ so that $Op_h(a)-A=O(h^\infty)_{\Psi^{-\infty}}$ where we say an operator is $O(h^k)_{\Psi^{-\infty}}$ if for all $N>0$ there exists $C_{_{\!N}}>0$ so that
$$
\|Au\|_{H^N(M)}\leq C_{_{\!N}}h^k\|u\|_{H^{-N}(M)}{,}
$$
and say {an} operator, $A$, is $O(h^\infty)_{\Psi^{-\infty}}$ if for all $N>0$ there exists $C_{_{\!N}}>0$ so that
$$
\|Au\|_{H^N(M)}\leq C_{_{\!N}}h^N\|u\|_{H^{-N}(M)}.
$$

For $a\in S_\delta^{m_1}(\TM )$ and $b\in S_\delta^{m_2}(\TM )$, we have that 
\begin{equation}\label{e:exp}
Op_h(a)Op_h(b)=Op_h(c),\qquad c(x,\xi)\sim \sum_j h^jL_{2j}(a(x,\xi)b(y,\eta))\Big|_{\substack{x=y\\\xi=\eta}} 
\end{equation}
where $L_{2j}$ is a differential operator of order $j$ in $(x,\xi)$ and order $j$ in $(y,\eta)$.

There is a symbol map $\sigma:\Psi^m_\delta(M)\to S^m_\delta(\TM )/h^{1-2\delta}S^{m-1}_\delta(\TM )$ so that 
\begin{gather*}
\sigma(Op_h(a))=a,\qquad \sigma(Op_h(a)^*)=\bar{a},\\
\sigma(Op_h(a)Op_h(b))=ab,\qquad \sigma([Op_h(a),Op_h(b)])=-ih\{a,b\},
\end{gather*}
and 
$$
0\longrightarrow h^{1-2\delta}\Psi_\delta^{m-1}(M)\longrightarrow \Psi_\delta^m(M)\overset{\sigma}{\longrightarrow} S^m_\delta(M)/h^{1-2\delta}S^{m-1}_\delta(M)\longrightarrow 0
$$
is exact.

The main consequence of~\eqref{e:exp} that we will use is that if $p\in S^m(M)$ and $a\in S^k_\delta(\TM )$, then
$$
[Op_h(p),Op_h(a)]=\frac{h}{i}Op_h(H_pa)+h^{2-2\delta}Op_h(r)
$$
with $r\in S^{m+k-2}_\delta(\TM )$. 

We define the semiclassical Sobolev spaces $\sob{s}$ by
\begin{equation}\label{e:sobolev}
\sob{s}:=\{u\in \mc{D}'(M)\mid \|u\|_{_{\sob{s}}}<\infty\},\quad \|u\|_{_{\sob{s}}}:=\|Op_h(\langle \xi\rangle^s)u\|_{L^2(M)}.
\end{equation}

\subsection{Background on Microsupports and Egorov's Theorem}\label{s:micro}

\begin{definition}
\label{d:MSh}
For a pseudodifferential operator $A\in \Psi^{\comp}_{\delta}(M)$, we say that $A$ is microsupported in a family of sets  $\{V(h)\}_h$ and write $ \MSh(A)\subset V(h)$ if 
$$
A=Op_h(a)+O(h^\infty)_{\Psi^{-\infty}}
$$
{and for all $\alpha,N$, there exists $C_{\alpha, N}>0$} so that 
$$
\sup_{(x,\xi)\in \TM \setminus V(h)}|\partial_{x,\xi}^\alpha a(x,\xi)|\leq C_{\alpha, N}h^N.
$$
For $B(h)\subset \TM $, will also write $\MSh(A)\cap B(h)=\emptyset$ for $\MSh(A)\subset (B(h))^c$.
\end{definition}
{Note that the notation $ \MSh(A)\subset V(h)$ is a shortening for $ \MSh(A)\subset \{V(h)\}_h$.}

%%%%%%%%%%%%%%%%%%%%%%%%%%%%%%%%%%%%%%%%%%%%%%%%%%%%%%%%%%%%%%%%%%%%%%%%%%%%%%%%
\begin{lemma}
Let $0\leq \delta <\frac{1}{2}$ and $\delta'>\delta$, $c>0$. Suppose that $A\in \Psi^{\comp}_{\delta}(M)$ and that $\MSh(A)\subset V(h)$. Then, 
$$
\MSh(A)\subset \Big\{ (x,\xi)\, \big| \, d\big( (x,\xi), V(h)^c\big)\leq ch^{\delta'}\Big\}.
$$
\end{lemma}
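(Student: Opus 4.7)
The plan is to use a Taylor expansion argument. Write $A = Op_h(a) + O(h^\infty)_{\Psi^{-\infty}}$ with $a \in S^{\comp}_\delta$; by hypothesis $|\partial^\alpha a|$ is $O(h^\infty)$ uniformly on $V(h)^c$ for every multi-index $\alpha$, while the $S_\delta$ class gives the global bound $|\partial^\alpha a(\rho)| \leq C_\alpha h^{-\delta|\alpha|}$. The idea is to propagate the $O(h^\infty)$ smallness from $V(h)^c$ onto its $ch^{\delta'}$-thickening, using that the $S_\delta$ derivative bounds and the scale $h^{\delta'}$ with $\delta' > \delta$ are exactly calibrated to one another.

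First I would fix a point $\rho$ with $d(\rho, V(h)^c) \leq ch^{\delta'}$, and choose $\rho_0 \in V(h)^c$ with $|\rho - \rho_0| \leq ch^{\delta'}$ (such a $\rho_0$ exists by the definition of distance to a set, possibly passing to a minimizing sequence). For each multi-index $\beta$ and each integer $N \geq 1$, I Taylor expand $\partial^\beta a$ around $\rho_0$ to get
\[
\partial^\beta a(\rho) = \sum_{|\gamma| < N} \frac{(\rho - \rho_0)^\gamma}{\gamma!}\, \partial^{\beta+\gamma} a(\rho_0) + R_{N,\beta}(\rho).
\]
Each explicit coefficient is $O(h^\infty)$ because $\rho_0 \in V(h)^c$, while the remainder satisfies
\[
|R_{N,\beta}(\rho)| \leq C_{\beta, N}\, \sup|\partial^{\beta+\gamma} a|\cdot |\rho - \rho_0|^N \leq C'_{\beta, N}\, h^{N(\delta' - \delta) - \delta |\beta|}.
\]
Taking $N$ arbitrarily large and using $\delta' > \delta$ gives $|\partial^\beta a(\rho)| = O(h^\infty)$ uniformly for each $\beta$, with constants depending only on finitely many $S_\delta$ seminorms of $a$.

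Finally, translating this symbol-smallness statement back into the language of microsupport via the definition of $\MSh$ yields the claimed inclusion, the set $\{(x, \xi) : d((x, \xi), V(h)^c) \leq ch^{\delta'}\}$ being exactly the $ch^{\delta'}$-neighborhood of $V(h)^c$ on which the Taylor argument produces $O(h^\infty)$-smallness in all derivatives. The main obstacle is making the bookkeeping uniform — both in the multi-index $\beta$ and in the choice of $\rho_0$ as $\rho$ varies — but this reduces to standard triangle-inequality estimates combined with the uniform $S_\delta$ bounds, and the key analytic input is simply the strict inequality $\delta' > \delta$, without which the remainder estimate fails to improve with $N$.
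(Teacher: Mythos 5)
Your proof is correct and follows essentially the same route as the paper's: Taylor-expand $\partial^\beta a$ about a nearby point $\rho_0\in V(h)^c$, use the $O(h^\infty)$ bounds there for the explicit terms and the uniform $S_\delta$ bounds for the remainder, then let $N(\delta'-\delta)\to\infty$. The only caveat — one shared with the paper's own statement — is that what this argument literally establishes is $O(h^\infty)$-smallness of all derivatives of $a$ on the $ch^{\delta'}$-neighborhood of $V(h)^c$, so under Definition~\ref{d:MSh} the inclusion one actually reads off is that $\MSh(A)$ lies in the complement of that neighborhood (i.e.\ in $\{\rho : d(\rho,V(h)^c)\geq ch^{\delta'}\}$, a slight shrinking of $V(h)$), not in the neighborhood itself.
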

%%%%%%%%%%%%%%%%%%%%%%%%%%%%%%%%%%%%%%%%%%%%%%%%%%%%%%%%%%%%%%%%%%%%%%%%%%%%%%%%

\begin{proof}
Let $A=Op_h({a})+O(h^\infty)_{\Psi^{-\infty}}$. Suppose that 
$$
2r(h):=d\big(\rho_1, V(h)^c\big)\leq ch^{\delta'}
$$
and let $\rho_0 \in V(h)^c$ with $d(\rho_1,\rho_0)\leq r(h)$. 
Then, for any $N>0$, 
\begin{align*}
|\partial^\alpha a(\rho_1)|&\leq \sum_{|\beta|\leq N-1}|\partial^{\alpha+\beta}a(\rho_0)|r(h)^{|\beta|}+C_{|{\alpha}|+N}\sup_{|k|\leq |\alpha|+N,\TM }|\partial^k a|r(h)^N\\
&\leq \sum_{|\beta|\leq N-1}\sup_{V^c}|\partial^{\alpha+\beta}a(\rho)|r(h)^{|\beta|}+C_{\alpha N}h^{-N\delta}r(h)^N\\
&\leq C_{\alpha N M}h^M+C_{\alpha N} h^{-N\delta}r(h)^N
\end{align*}
So, letting $N\geq M(\delta'-\delta)^{-1}$, 
$$
|\partial^\alpha a(\rho_1)|\leq C_{\alpha M}h^{M}. 
$$
\end{proof}
%%%%%%%%%%%%%%%%%%%%%%%%%%%%%%%%%%%%%%%%%%%%%%%%%%%%%%%%%%%%%%%%%%%%%%%%%%%%%%%%

%%%%%%%%%%%%%%%%%%%%%%%%%%%%%%%%%%%%%%%%%%%%%%%%%%%%%%%%%%%%%%%%%%%%%%%%%%%%%%%%
\begin{lemma}
Let $0\leq \delta<\frac{1}{2}$ and $A,B\in \Psi_\delta^{\comp}(M)$. Suppose that $\MSh(A)\subset V(h)$ and $\MSh(B)\subset W(h)$.
\begin{enumerate}
\item The statement $\MSh(A)\subset V(h)$ is well defined. In particular, it does not depend on the choice of quantization procedure.
\item $\MSh(AB)\subset V(h)\cap W(h)$
\item $\MSh(A^*)\subset V(h)$
\item If $V(h)=\emptyset$, then $\WFh(A)=\emptyset$. 
\item If $A=Op_h(a)+O(h^\infty)_{\Psi^{-\infty}}$, then $\MSh(a)\subset \supp a$. 
\end{enumerate}
\end{lemma}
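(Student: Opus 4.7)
The plan is to verify the five properties in order, treating (1) as the main obstacle and handling (2)–(5) as straightforward consequences of the semiclassical symbol calculus.

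For (1), the quantization map $Op_h$ depends on a choice of coordinate patches, a partition of unity subordinate to them, and (implicitly) on left vs. Weyl quantization. I would show that any two such schemes, producing $a$ and $\tilde a$ with $Op_h(a)=\widetilde{Op}_h(\tilde a)+O(h^\infty)_{\Psi^{-\infty}}$, yield symbols whose difference $a-\tilde a$ has an asymptotic expansion of the form $\sum_j h^j L_j(a)$ where each $L_j$ is a differential operator acting at a single point of $T^*M$. In particular, at any $(x,\xi)\notin V(h)$, the derivatives of $\tilde a$ are bounded by a finite sum of derivatives of $a$ evaluated at the same point, all of which are $O_{\alpha,N}(h^N)$. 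Hence the rapid-decay condition defining $\MSh(A)\subset V(h)$ is intrinsic to $A$. The same argument handles the ambiguity up to $O(h^\infty)_{\Psi^{-\infty}}$ remainders, since such operators have symbols all of whose derivatives are globally $O(h^N)$.

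For (2), write $A=Op_h(a)+O(h^\infty)$ and $B=Op_h(b)+O(h^\infty)$ with $a$, $b$ controlled off $V(h)$, $W(h)$ respectively. By the composition expansion \eqref{e:exp}, $AB=Op_h(c)+O(h^\infty)_{\Psi^{-\infty}}$ with $c\sim \sum_j h^j L_{2j}(a(x,\xi)b(y,\eta))|_{y=x,\eta=\xi}$. Each term in the expansion is a finite sum of products $\partial^{\alpha}a(x,\xi)\partial^{\beta}b(x,\xi)$; at a point outside $V(h)\cap W(h)$, one factor in each product is $O_{\alpha,\beta,N}(h^N)$ while the other is controlled by a fixed $S_\delta$ seminorm, so $c$ and all its derivatives are $O(h^N)$ there. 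Property (3) is the same argument applied to the adjoint expansion $\sigma(A^*)\sim\sum h^j\tilde L_j\bar a$, whose terms again involve only pointwise derivatives of $a$.

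For (4), if $V(h)=\emptyset$ then by hypothesis $\sup_{(x,\xi)\in T^*M}|\partial^\alpha a(x,\xi)|\leq C_{\alpha,N}h^N$ for all $\alpha,N$, so every semi-norm of $a$ as an element of $S^{-\infty}$ is $O(h^\infty)$, giving $Op_h(a)=O(h^\infty)_{\Psi^{-\infty}}$ directly from the standard mapping properties. For (5), interpreting $\MSh(a)$ as $\MSh(Op_h(a))$, we note that at any $(x_0,\xi_0)\notin\supp a$ there is a neighborhood on which $a$ vanishes identically, so all derivatives vanish there and the defining inequality holds trivially with $C_{\alpha,N}=0$. The main obstacle is really just (1); once one is careful about the comparison of two quantization schemes via the pointwise nature of the expansion kernels $L_j$, (2)–(5) reduce to bookkeeping of the symbolic calculus already set up in \S\ref{s:SemiNote}.
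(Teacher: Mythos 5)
Your proposal follows essentially the same route as the paper: (1)--(3) via the relevant asymptotic expansions (change of quantization, composition, adjoint), whose terms are pointwise differential expressions in the symbols, and (4)--(5) directly from the definition. The one step you elide is that $c\sim\sum_j h^jL_{2j}(\cdots)$ is only an \emph{asymptotic} sum: knowing that each individual term is $O(h^N)$ off $V(h)\cap W(h)$ does not by itself control $c$ and its derivatives there, because the tail after truncating at order $M$ lies a priori only in $h^{M(1-2\delta)}S_\delta$ globally, with derivative losses of $h^{-\delta|\alpha|}$. The paper closes this by truncating at $M>(N+\delta|\alpha|)(1-2\delta)^{-1}$ so that the remainder is $O(h^N)$ everywhere --- this is precisely where the hypothesis $\delta<\tfrac{1}{2}$ enters --- and the same remark applies to your sketches of (1) and (3). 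With that truncation made explicit, your argument is complete and matches the paper's.
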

%%%%%%%%%%%%%%%%%%%%%%%%%%%%%%%%%%%%%%%%%%%%%%%%%%%%%%%%%%%%%%%%%%%%%%%%%%%%%%%%

\begin{proof}
The proofs of 1-3 are nearly identical, relying on the asymptotic expansion for, respectively, the change of quantization, composition, and adjoint so we write the proof for only (2).
Write 
$$
A=Op_h(a)+O(h^\infty)_{\Psi^{-\infty}}, \qquad B=Op_h(b)+O(h^\infty)_{\Psi^{-\infty}}.
$$
Then, 
$$
Op_h(a)Op_h(b)=Op_h(a\#b) +O(h^\infty)_{\Psi^{-\infty}}
$$
where 
$$
a\#b(x,\xi)\sim \sum_{j}h^jL_{2j} a(x,\xi)b(y,\eta)\Big|_{\substack{x=y\\\xi=\eta}}
$$
and $L_{2j}$ are differential operators of order $2j$. Suppose that $\MSh(A)\subset V$. Then, for any $N>0$.
$$
\sup_{V^c}|\partial^\alpha a|\leq C_{\alpha N}h^N.
$$
So, choosing $M>(N+\delta|\alpha|)(1-2\delta)^{-1}$, 
\begin{align*}
|\partial^\alpha a\#b|&\leq \Bigg|\partial^\alpha \sum_{j{<M}}h^jL_{2j} a(x,\xi)b(y,\eta)\Big|_{\substack{x=y\\\xi=\eta}}\Bigg|+ C_{\alpha M}h^{M(1-2\delta)-|\alpha|\delta}\leq C_{\alpha N}h^N
\end{align*}
In particular, 
$$
\sup_{V^c}|\partial^\alpha a\# b|\leq C_{\alpha N}h^N.
$$
An identical argument shows 
$$
\sup_{W^c}|\partial^\alpha a\# b|\leq C_{\alpha N}h^N.
$$

(4) follows from the definition since if $V(h)=\emptyset,$ $a\in h^\infty S_\delta$. 

(5) follows easily from the definition.
\end{proof}
\begin{lemma}
\label{l:flow}
{Let $\varphi_t:=\exp(tH_p)$ and $\Sigma \subset \TM$ compact.} There exists $\delta>0$ small enough and $C_1>0$ so that uniformly for $t\in [0,\delta ]$, {and $(x_i,\xi_i)\in \Sigma$.}
\begin{multline}
\label{e:flow-1} 
\frac{1}{2}d\big((x_1,\xi_1),(x_2,\xi_2)\big)-C_1d\big((x_1,\xi_1),(x_2,\xi_2)\big)^2\leq d\big(\varphi_t(x_1,\xi_2),\varphi_t(x_2,\xi_1)\big)\\
\leq 2d\big((x_1,\xi_1),(x_2,\xi_2)\big)+C_1d\big((x_1,\xi_1),(x_2,\xi_2)\big)^2
\end{multline}
where $d$ is the distance induced by the Sasaki metric.
Furthermore if $\varphi_t(x_i,\xi_i)=(x_i(t),\xi_i(t))$,
\begin{equation}
\label{e:flow-2} 
d_M(x_1(t),x_2(t))\leq d_M(x_1,x_2)+C_1d\big((x_1,\xi_1),(x_2,\xi_2)\big)\delta
\end{equation}
where $d_M$ is the distance induced by the metric on $M$.
\end{lemma}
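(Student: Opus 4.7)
The plan is to reduce both estimates to two elementary facts: (i) the Hamiltonian flow $\varphi_t$ is a bilipschitz perturbation of the identity for small times, and (ii) swapping the two fiber components does not change Sasaki distance to leading order. Throughout we fix a compact $\Sigma \subset \TM$ and work with points whose Sasaki distance to $\Sigma$ is small; in particular, we may identify fibers at nearby base points via a fixed coordinate chart (or equivalently via parallel transport), so that the ``crossed'' points $(x_1,\xi_2)$ and $(x_2,\xi_1)$ are unambiguously defined and the result is independent of this choice up to quadratic corrections.

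First, since $H_p$ is smooth and $\Sigma$ is compact, there is $C_0>0$ and $\delta_0>0$ so that for $0\leq t\leq \delta_0$ the differential $d\varphi_t$ satisfies $\|d\varphi_t-\mathrm{Id}\|\leq C_0 t$ on a Sasaki neighborhood of $\Sigma$. Integrating along a minimizing Sasaki geodesic between any two points $\rho,\tilde\rho$ near $\Sigma$ then yields
\[
(1-C_0t)\,d(\rho,\tilde\rho)\leq d(\varphi_t(\rho),\varphi_t(\tilde\rho))\leq (1+C_0t)\,d(\rho,\tilde\rho),
\]
so choosing $\delta\leq \min(\delta_0, 1/(2C_0))$ gives the bilipschitz bound
\begin{equation}\label{e:bilip-plan}
\tfrac{1}{2} d(\rho,\tilde\rho) \leq d(\varphi_t(\rho),\varphi_t(\tilde\rho))\leq 2 d(\rho,\tilde\rho),\qquad t\in[0,\delta].
\end{equation}

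Second, I will show that the crossed distance agrees with the uncrossed one up to quadratic error:
\begin{equation}\label{e:swap-plan}
\bigl|\,d((x_1,\xi_2),(x_2,\xi_1))-d((x_1,\xi_1),(x_2,\xi_2))\,\bigr|\;\leq\; C_1'\,d((x_1,\xi_1),(x_2,\xi_2))^2.
\end{equation}
Working in local cotangent coordinates $(x,\xi)$ centered at a point of $\Sigma$, the Sasaki metric expands as $|dx|_g^2+|d\xi|_{g^{-1}}^2+O(|\xi|^2)$-type corrections involving Christoffel symbols, all of which are smooth. A second-order Taylor expansion of the Sasaki distance squared between two nearby points shows that both $d((x_1,\xi_1),(x_2,\xi_2))^2$ and $d((x_1,\xi_2),(x_2,\xi_1))^2$ equal the same leading-order quadratic form $|x_1-x_2|_g^2+|\xi_1-\xi_2|_{g^{-1}}^2$ (since swapping $\xi_1\leftrightarrow\xi_2$ only changes the sign of $\xi_1-\xi_2$), with remainder that is cubic in $(|x_1-x_2|,|\xi_1-\xi_2|)$. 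Taking square roots and comparing yields~\eqref{e:swap-plan}.

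Combining \eqref{e:bilip-plan} applied to $\rho=(x_1,\xi_2)$, $\tilde\rho=(x_2,\xi_1)$ with \eqref{e:swap-plan} immediately gives \eqref{e:flow-1}. For \eqref{e:flow-2}, I use Hamilton's equations: $\dot x_i(t)=\partial_\xi p(x_i(t),\xi_i(t))$, so by smoothness of $\partial_\xi p$ on a neighborhood of $\Sigma$ and \eqref{e:bilip-plan} (uncrossed version, which holds by the same argument),
\[
|\dot x_1(t)-\dot x_2(t)|\leq C\,d\bigl(\varphi_t(x_1,\xi_1),\varphi_t(x_2,\xi_2)\bigr)\leq 2C\,d((x_1,\xi_1),(x_2,\xi_2)).
\]
Integrating from $0$ to $t\leq\delta$ and using the triangle inequality for $d_M$ yields \eqref{e:flow-2} after renaming constants. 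The main technical step is~\eqref{e:swap-plan}, where one has to justify that the swap genuinely produces only a quadratic error in the Sasaki distance; this is where the non-triviality of the Sasaki metric (via its connection terms) must be controlled by Taylor expansion, but compactness of $\Sigma$ makes the resulting constants uniform.
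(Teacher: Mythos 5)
Your argument is correct, but it is organized quite differently from the paper's. The paper proves the lemma by a single Taylor expansion in local coordinates: it expands $\varphi_t(x,\xi)=(x,\xi)+t(\partial_\xi p,-\partial_x p)+O(t^2)$ and its differential, writes $\varphi_t(x_1,\xi_1)-\varphi_t(x_2,\xi_2)$ as $(\mathrm{Id}+O(t))$ applied to the coordinate difference plus a quadratic remainder, and reads off both \eqref{e:flow-1} and \eqref{e:flow-2}; the crossing of the $\xi$-arguments never appears as a separate issue because in coordinates it only flips the sign of $\xi_1-\xi_2$, which does not change the Euclidean size of the difference. You instead factor the proof into an intrinsic bilipschitz estimate for $\varphi_t$ (via $\|d\varphi_t-\mathrm{Id}\|\leq C_0t$ integrated along minimizing Sasaki geodesics) plus a separate ``swap'' estimate \eqref{e:swap-plan}. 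This is cleaner on the flow side and makes the role of the small time $\delta$ transparent, at the price of making \eqref{e:swap-plan} carry all the geometric weight. That step is true, but your justification is slightly imprecise: the off-diagonal $dx\,d\xi$ blocks of the Sasaki metric are of the form $g^{-1}\Gamma\,\xi$, hence $O(1)$ in a generic chart and not ``$O(|\xi|^2)$-type''; if they do not vanish at the base point, swapping $\xi_1\leftrightarrow\xi_2$ changes the leading quadratic form by a term comparable to $d^2$, and you would only get a linear rather than quadratic error. The fix is exactly what you hint at: take geodesic normal coordinates centered at $x_1$, so $\Gamma(x_1)=0$, the Sasaki metric is block diagonal at the base point, and the cross terms contribute $O(d)\cdot|\Delta|^2=O(d^3)$ to the squared distances, whence $|d_{\mathrm{cr}}-d|\leq C d^2$ after dividing by $d_{\mathrm{cr}}+d$. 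With that made explicit (and compactness of $\Sigma$ giving uniform constants), your proof is complete; your derivation of \eqref{e:flow-2} from Hamilton's equations matches what the paper's expansion gives implicitly.
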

%%%%%%%%%%%%%%%%%%%%%%%%%%%%%%%%%%%%%%%%%%%%%%%%%%%%%%%%%%%%%%%%%%%%%%%%%%%%%%%%

%%%%%%%%%%%%%%%%%%%%%%%%%%%%%%%%%%%%%%%%%%%%%%%%%%%%%%%%%%%%%%%%%%%%%%%%%%%%%%%%
\begin{proof}
By Taylor's theorem
\begin{multline*}
\varphi_t(x_1,\xi_1)-\varphi_t(x_2,\xi_2)=d_x\varphi_t(x_2,\xi_2)(x_1-x_2)+d_\xi \varphi_t(x_2,\xi_2)(\xi_1-\xi_2)\\
+O_{C^\infty}(\sup_{q\in \Sigma} |d^2\varphi_t(q)|(|\xi_1-\xi_2|^2+|x_1-x_2|^2)
\end{multline*}
Now,
$$
\varphi_t(x,\xi)=(x,\xi)+(\partial_\xi p(x,\xi) t,-\partial_xp(x,\xi) t)+O(t^2)
$$
so
\begin{gather*}
d_\xi \varphi_t(x,\xi)=(0,I)+t(\partial^2_\xi p ,-\partial^2_{\xi x}p )+O(t^2)\\
d_x \varphi_t(x,\xi)=(I,0)+t(\partial^2_{x\xi} p ,-\partial^2_{x}p )+O(t^2).
\end{gather*}
In particular,
\begin{align*}
\varphi_t(x_1,\xi_1)-\varphi_t(x_2,\xi_2)&= ((0,I)+O(t))(\xi_1-\xi_2)+((I,0)+O(t))(x_1-x_2)\\
&\qquad+O((\xi_1-\xi_2)^2+(x_1-x_2)^2)
\end{align*}
and choosing $\delta>0$ small enough gives the result.
\end{proof}
%%%%%%%%%%%%%%%%%%%%%%%%%%%%%%%%%%%%%%%%%%%%%%%%%%%%%%%%%%%%%%%%%%%%%%%%%%%%%%%%
%%%%%%%%%%%%%%%%%%%%%%%%%%%%%%%%%%%%%%%%%%%%%%%%%%%%%%%%%%%%%%%%%%%%%%%%%%%%%%%%
%%%%%%%%%%%%%%%%%%%%%%%%%%%%%%%%%%%%%%%%%%%%%%%%%%%%%%%%%%%%%%%%%%%%%%%%%%%%%%%%
\section{Proofs of Lemmas~\ref{l:loopsAreFun} and~\ref{l:dejaVu}}
%%%%%%%%%%%%%%%%%%%%%%%%%%%%%%%%%%%%%%%%%%%%%%%%%%%%%%%%%%%%%%%%%%%%%%%%%%%%%%%%
%%%%%%%%%%%%%%%%%%%%%%%%%%%%%%%%%%%%%%%%%%%%%%%%%%%%%%%%%%%%%%%%%%%%%%%%%%%%%%%%
%%%%%%%%%%%%%%%%%%%%%%%%%%%%%%%%%%%%%%%%%%%%%%%%%%%%%%%%%%%%%%%%%%%%%%%%%%%%%%%%
{
\label{a:oldrelations}
\begin{lemma}
\label{l:padMe}
Let $t,T>0$ and suppose that $G\subset S^*_xM$ is a closed set that is $[t,T]$ non-self looping. Then there is $R>0$ such that $B_{T^*M}(G,R)$ is $[t,T]$ non-self looping.
\end{lemma}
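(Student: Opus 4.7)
The plan is a standard compactness argument: convert the set-theoretic non-looping hypothesis into a quantitative separation statement, then use uniform continuity of the flow to show that small thickenings of $G$ preserve that separation.

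First I would handle the setup. Without loss of generality, assume the forward non-looping clause in \eqref{e:nonsl} holds, i.e.\ $\bigcup_{s\in[t,T]}\varphi_s(G)\cap G=\emptyset$ (the backward case is symmetric). Since $S^*_xM$ is a compact sphere and $G\subset S^*_xM$ is closed, $G$ is compact. Because the Hamiltonian flow $\Phi:[t,T]\times T^*M\to T^*M$, $\Phi(s,\rho)=\varphi_s(\rho)$, is continuous (indeed smooth), the image $K:=\Phi([t,T]\times G)=\bigcup_{s\in[t,T]}\varphi_s(G)$ is compact. The two disjoint compact sets $K$ and $G$ therefore satisfy
$$
2\delta\;:=\;d_{T^*M}(K,G)\;>\;0,
$$
where $d_{T^*M}$ denotes the Sasaki distance.

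The main step is then to show that for some $R\in(0,\delta)$, the forward flowout of $B_{T^*M}(G,R)$ over $[t,T]$ remains inside $B_{T^*M}(K,\delta)$. Because $M$ is compact, $T^*M$ is complete, so the closed bounded set $\overline{B_{T^*M}(G,1)}$ is compact by Hopf--Rinow. Consequently $\Phi$ is uniformly continuous on the compact cylinder $[t,T]\times\overline{B_{T^*M}(G,1)}$. Uniform continuity, applied to pairs $((s,\rho),(s,\rho'))$ with $\rho'\in G$ and $\rho\in B_{T^*M}(G,R)$, yields an $R>0$ such that
$$
d_{T^*M}\bigl(\varphi_s(\rho),\varphi_s(\rho')\bigr)<\delta\qquad\text{for all }s\in[t,T],
$$
and since $\varphi_s(\rho')\in K$, we obtain $\varphi_s(B_{T^*M}(G,R))\subset B_{T^*M}(K,\delta)$ for every $s\in[t,T]$.

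Finally, shrinking $R$ further if necessary so that $R<\delta$, the two sets
$$
B_{T^*M}(G,R)\;\subset\;B_{T^*M}(G,\delta)\qquad\text{and}\qquad \bigcup_{s\in[t,T]}\varphi_s\bigl(B_{T^*M}(G,R)\bigr)\;\subset\;B_{T^*M}(K,\delta)
$$
are disjoint, because $d_{T^*M}(G,K)=2\delta$. This establishes the forward non-self looping condition for $B_{T^*M}(G,R)$. The only mild subtlety worth watching is verifying that balls of bounded radius around the compact set $G$ have compact closure in $T^*M$ so that uniform continuity of the flow is available; otherwise the argument is a routine compactness-plus-continuity reduction and I do not foresee an obstacle.
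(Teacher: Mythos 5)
Your proof is correct and rests on the same ingredients as the paper's: compactness of $G$ and of its flowout over $[t,T]$, plus continuity of the flow. The paper organizes the compactness argument slightly differently (a pointwise contradiction producing a radius $R_q$ at each $q\in G$, followed by a finite subcover), whereas you extract a uniform separation $2\delta=d(G,K)>0$ and invoke uniform continuity of $\Phi$ on $[t,T]\times\overline{B_{T^*M}(G,1)}$; both are valid and essentially equivalent.
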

\begin{proof}
We will assume that $\varphi_s(G)\cap G=\emptyset$ for $s\in[t,T]$, the case of $s\in[-T,-t]$ being similar.
Let $q\in G$. We claim there is $R_q>0$ such that 
$$
\bigcup_{s\in[t,T]}\varphi_t(B_{T^*M}(q,R_q))\cap B_{T^*M}(G,R_q)=\emptyset.
$$ 
Suppose not. Then there are $q_n \to q$ and  $s_n\in[t,T]$ such that $d(\varphi_{s_n}(q_n),G)\to 0$. Extracting subsequences, we may assume $s_n\to s\in[t,T]$ and $\varphi_{s_n}(q_n)\to \rho \in G$. But then $\varphi_{s}(q)=\rho$ and, in particular, $G$ is not $[t,T]$ non-self looping. 

Now, 
$
G\subset \bigcup_{q\in G} B(q,R_q)
$ 
and hence, by compactness, there are $q_i$, $i=1,\dots N$, such that 
$
G\subset \bigcup_{i=1}^N B(q_i,R_{q_i}).
$
In particular, there is $0<R<\min_{i}R_{q_i}$ such that 
$
B(G,R)\subset \bigcup_{i=1}^N B(q_i,R_{q_i}).
$
This implies that $B(G,R)$ is $[t,T]$ non-self looping.
\end{proof}

\begin{lemma}
\label{l:coverMe}
Let $\tau,\mathfrak{D}, t,T>0$, $R(h)\geq 8h^\delta$, and $\{\Lambda_{\rho_j}^\tau(R(h))\}_{j\in \mc{G}}$ be a $(\mathfrak{D},\tau,R(h))$ good cover of $S^*_xM$. Suppose that $G\subset S^*_xM$ is closed and $[t,T]$ non-self looping. Then, for all $\e>0$, there is $R>0$ small enough such that for $R(h)<R$, 
$$
\mc{G}:=\{j\in \mc{J}\mid \Lambda_{\rho_j}^\tau(R(h))\cap B_{S^*_xM}(G,R)\neq \emptyset\}
$$
satisfies
\begin{equation}
\label{e:nslMe}
\bigcup_{j\in \mc{G}}\Lambda_{\rho_j}^\tau(R(h))\text{ is }[\max(t,3\tau),\max(t,3\tau,T)]\text{ non-self looping}
\end{equation}
and
\begin{equation}
\label{e:volMe}
|\mc{G}|\leq \mathfrak{D}R(h)^{1-n}(\vol_{S^*_xM}(G)+\e).
\end{equation}
\end{lemma}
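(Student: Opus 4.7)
The plan is to separately establish the two conclusions, using Lemma \ref{l:padMe} for the non-self looping statement and the good cover structure for the cardinality bound.

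For the non-self looping property, I would first apply Lemma \ref{l:padMe} to obtain $R_0 > 0$ such that $B_{T^*M}(G, R_0)$ is $[t, T]$ non-self looping, and then show that for $R$ and $R(h)$ small enough, every tube in $\mc{G}$ lies in a short flow-out of $B_{T^*M}(G, R_0)$. Concretely, for $j \in \mc{G}$ there exists $\zeta \in \Lambda_{\rho_j}^\tau(R(h)) \cap B_{S^*_xM}(G, R)$, writable as $\zeta = \varphi_{s'}(\eta')$ with $\eta' \in \mc{L}_h \cap B(\rho_j, R(h))$ and $|s'| \leq \tau + R(h)$. Any $\rho \in \Lambda_{\rho_j}^\tau(R(h))$ is $\varphi_{s_\rho}(\eta_\rho)$ with $\eta_\rho \in B(\rho_j, R(h))$, so $d(\eta_\rho, \eta') \leq 2R(h)$; by Lipschitz continuity of $\varphi_{s'}$ (over time bounded by $\tau + R(h)$, Lemma \ref{l:flow}), $\varphi_{s'}(\eta_\rho) \in B_{T^*M}(G, R + CR(h))$ for some $C = C(\tau)$. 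Hence $\rho \in \varphi_{s_\rho - s'}\bigl(B_{T^*M}(G, R + CR(h))\bigr)$ with $|s_\rho - s'| \leq 2\tau + 2R(h)$.

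Next, suppose $\rho_1, \rho_2 \in \bigcup_{j \in \mc{G}} \Lambda_{\rho_j}^\tau(R(h))$ with $\varphi_s(\rho_1) = \rho_2$. Writing $\rho_i = \varphi_{a_i}(\mu_i)$ as above gives $\varphi_{s + a_1 - a_2}(\mu_1) = \mu_2$ with $\mu_i \in B_{T^*M}(G, R + CR(h))$. Choosing $R$ and $R(h)$ small enough that $R + CR(h) < R_0$, non-self looping of $B_{T^*M}(G, R_0)$ forces $s + a_1 - a_2 \notin [t, T]$ (or $\notin [-T, -t]$, according to which alternative holds for $G$). The bound $|a_1 - a_2|$ can be made arbitrarily close to $2\tau$ by shrinking $R(h)$ further and noting that in each of the two representations the shift $s_\rho - s'$ attains the extreme values only when the tube sits at the far end of its flow-extent, a situation that can be avoided by enlarging the padding from $\tau + R(h)$ to $\tau + \e$ at the cost of a slight decrease in $R(h)$; the net effect is that for $R, R(h)$ small enough, $s \in [\max(t, 3\tau), \max(t, 3\tau, T)]$ is incompatible with $\varphi_s(\rho_1) = \rho_2$, yielding \eqref{e:nslMe}.

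For the volume bound, the plan is to invoke the good cover structure. By Definition~\ref{d:good cover}, partition $\{1,\dots, N_h\} = \bigsqcup_{\ell=1}^{\mathfrak{D}} \mc{J}_\ell$ so that $\{\Lambda_{\rho_j}^\tau(3R(h))\}_{j \in \mc{J}_\ell}$ is pairwise disjoint; in particular so are the smaller slices $\{\Lambda_{\rho_j}^\tau(R(h)) \cap S^*_xM\}_{j \in \mc{J}_\ell}$. For $\tau$ less than the injectivity threshold from Proposition~\ref{l:cover}, each such slice is essentially the $(n-1)$-dimensional ball $B_{S^*_xM}(\rho_j, R(h))$, of volume $c_n R(h)^{n-1} + O(R(h)^n)$. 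Moreover, for $j \in \mc{G}$ the representation of the tube's center via $\zeta$ above yields $\rho_j$ within distance $C(\tau)(R + R(h))$ of a flow-iterate of a point in $G$; hence the slice is contained in $B_{S^*_xM}(G, R + C'R(h))$ for a suitable $C' = C'(\tau)$. Summing over the disjoint slices within each $\mc{J}_\ell$ and over the $\mathfrak{D}$ partition classes gives
\[
|\mc{G}| \cdot c_n R(h)^{n-1} \leq \mathfrak{D}\, \vol_{S^*_xM}\bigl(B_{S^*_xM}(G, R + C'R(h))\bigr).
\]
Since $G$ is closed, $\vol_{S^*_xM}(B_{S^*_xM}(G, r)) \to \vol_{S^*_xM}(G)$ as $r \to 0^+$; so by first choosing $R$ small and then taking $R(h) < R$ small enough, the right-hand side is at most $\mathfrak{D} c_n R(h)^{n-1}(\vol_{S^*_xM}(G) + \e)$, which rearranges to \eqref{e:volMe} (absorbing the constant $c_n$ into a careful volume normalization, or equivalently adjusting $\e$).

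The main technical obstacle is the bookkeeping of the flow-time slack $|a_1 - a_2|$ in the non-looping step: naively one obtains $4\tau$ of slack and has to tighten this to $3\tau$, which requires carefully exploiting the ``or'' alternative in the definition of $[t, T]$ non-self looping and taking $R(h) \ll \tau$. The volume count is otherwise a routine disjointness-plus-continuity-of-volume argument.
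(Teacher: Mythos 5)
Your overall strategy coincides with the paper's: Lemma~\ref{l:padMe} supplies $R_0$ with $B(G,R_0)$ still $[t,T]$ non-self looping, closedness of $G$ supplies $R_1$ with $\vol_{S^*_xM}(B(G,R_1))<\vol_{S^*_xM}(G)+\e$, and the good-cover disjointness gives the count. The paper's own proof consists of exactly these two choices plus the containment $\bigcup_{j\in\mc{G}}\Lambda_{\rho_j}^\tau(R(h))\cap S^*_xM\subset B(G,\min(R_0,R_1))$, from which both conclusions are simply asserted; you are filling in the bookkeeping it omits, and your volume count (disjointness within each $\mathcal{J}_\ell$, then continuity of volume of neighborhoods of the closed set $G$) is the intended one.

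Two points on the non-looping bookkeeping, which is where your write-up is weakest. First, the auxiliary time $s'$ is unnecessary: since $\zeta\in S^*_xM\subset \mc{L}_h$ and $\eta'\in\mc{L}_h$ with $|s'|\leq \tau+R(h)<\Tinj$, the injectivity of $\Psi$ on $(-\Tinj,\Tinj)\times\mc{L}_h$ in \eqref{e:Tinj} forces $s'=0$ and $\eta'=\zeta$. Every point of a tube in $\mc{G}$ is then $\varphi_b(\eta)$ with $\eta\in\mc{L}_h$, $d(\eta,G)<R+CR(h)$, and $|b|\leq \tau+R(h)$, so the slack in $\varphi_{s+b_1-b_2}(\eta_1)=\eta_2$ is $|b_1-b_2|\leq 2(\tau+R(h))$, not $4\tau$; the ``enlarging the padding'' manoeuvre, which is the vaguest step in your argument and does not actually reduce the slack, is not needed. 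Second, even with the corrected slack, what follows from $B(G,R_0)$ being $[t,T]$ non-self looping is that the union of tubes is $[t+2(\tau+R(h)),\,T-2(\tau+R(h))]$ non-self looping; to reach the stated window $[\max(t,3\tau),\max(t,3\tau,T)]$ for $s$ near the endpoints (e.g.\ $s\approx t$ when $t>3\tau$, or $s\approx T$) one needs $B(G,R_0)$ to be non-self looping on the slightly enlarged interval $[t-2\tau-2R(h),T+2\tau+2R(h)]$, which Lemma~\ref{l:padMe} does not provide. Your appeal to the ``or'' alternative in the definition does not close this; to be fair, the paper's two-line proof does not address it either (and it is harmless in the two applications, where $G$ has ample slack in $t$ and $T$), but as written your tightening from $4\tau$ to $3\tau$ is asserted rather than proved.
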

\begin{proof}
By Lemma~\ref{l:padMe}, there is $R_0>0$ such that $B(G,R_0)$ is $[t,T]$ non-self looping. Furthermore, since $G$ is closed, there is $R_1>0$ such that 
$$
\vol_{S^*_xM}(B(G,R_1))<\vol_{S^*_xM}(G)+\e.
$$
Therefore, putting $R=\min(R_0/4,R_1/4)$, for $R(h)\leq R$, and $j\in \mc{G}$,
$$
\bigcup_{j\in\mc{G}}\Lambda_{\rho_j}^\tau(R(h))\cap S^*_xM\subset B_{T^*M}(G,\min(R_0,R_1)).
$$
In particular,~\eqref{e:nslMe} and~\eqref{e:volMe} hold.
\end{proof}

\begin{proof}[Proof of Lemma~\ref{l:loopsAreFun}]
Suppose that $x$ non-self focal. Let $\mc{L}_x^T:=T_+^{-1}([0,T])$ and note that for all $T>0$, $\mc{L}_x^T$ is closed. Thus, by Lemma~\ref{l:coverMe} for all $T>0$ there is $R_0=R_0(T)>0$ such that for $R(h)\leq R_0$, with
$\tilde{\mc{B}}:=\{ j\mid \Lambda_{\rho_j}^\tau(R(h))\cap B_{S^*_xM}(\mc{L}_x^T,R_0)\},$
one has $$
|\tilde{\mc{B}}|\leq \frac{R(h)^{1-n}}{T}.
$$

Next, since $G:=S^*_xM\setminus B(\mc{L}_{x}^T, R_0)$ is closed and $[\inj M/2,T]$ non-self looping, there is $R_1=R_1(T)>0$ such for $R(h)\leq R_1$ and 
$$
\mc{G}=\{ j\mid \Lambda_{\rho_j}^\tau(R(h))\cap B(G,R_1)\},
$$
equation~\eqref{e:nslMe} holds with $t=\inj M/2$ and $T=T$. Putting $R(T):=\min(R_1(T),R_2(T))$, 
$
\mc{B}:=\tilde{\mc{B}}\setminus \mc{G},
$
and defining
$$
h_0(T)=\inf\{h>0\,\mid\,  R(h)>R(T)\}, \qquad
T(h)=\sup\{T>0\mid h_0(T)>h\},
$$
we have shown that $x$ is $(\inj M/2, T(h))$ non-looping. 
\end{proof}

\begin{proof}[Proof of Lemma~\ref{l:dejaVu}]
Let $\mc{R}^{\pm,\delta,S}_x$ be the set of points $\rho \in S^*_xM$ for which there exists $0<\pm t\leq S$ such that $ \varphi_t(\rho)\in S^*_xM$ and   $d(\varphi_t(\rho),\rho)\leq \delta$. Then, 
$$
\mc{R}_x=\bigcap_{\delta>0}\bigcup_{S>0}\mc{R}_x^{\delta,S}, \qquad \mc{R}_x^{\delta, S}
:=\bigcap_{\pm}\mc{R}^{\pm,\delta,S}_x.
$$
Note that  $\mc{R}_x^{\delta, S}$  is closed for all $\delta, S$, and that  for all $\e>0$ there is $\delta>0$ such that for all $S>0$
$$
\vol_{S^*_xM}(\mc{R}_x^{S,\delta})\leq \vol_{S^*_xM}(\mc{R}_x)+\e.
$$

Now, assume that $x$ is non-recurrent. Then for all $\e>0$, there is $\delta=\delta(\e)>0$ such that for all $S>0$
$$
\vol_{S^*_xM}(\mc{R}_x^{S,\delta})\leq \e.
$$
Let $\{\rho_i\}_{i=1}^{N(\delta)}\subset S^*_xM$ be such that $S^*_xM\subset \cup_i B(\rho_i, \delta/4)$ and $N(\delta)\leq C\delta^{1-n}$.  

Letting $G_0:=\mc{R}_x^{S,\delta}$, by Lemma~\ref{l:coverMe} there is $R_0=R_0(\e,S)>0$ such that for $R(h)\leq R_0$, defining $\tilde{\mc{G}}_0:=\{ j\mid \Lambda_{\rho_j}^\tau(R(h))\cap B_{S^*_xM}(G_i,R_0)\}$, we have
\begin{gather*}
%\tilde{\mc{G}}_0:=\{ j\mid \Lambda_{\rho_j}^\tau(R(h))\cap B_{S^*_xM}(G_i,R_0)\},\\
|\tilde{\mc{G}}_0|\leq \mathfrak{D}R(h)^{1-n}\e,\qquad %|\tilde{\mc{G}}_i|\leq R(h)^{1-n}\mathfrak{D}\delta^{n-1},\,i\geq 1
\end{gather*}
Next, let $G_i:= \overline{B_{S^*_xM}(\rho_i,\delta/4)}\setminus B_{S^*_xM}(\mc{R}_x^{T,\delta},R_0)$ so that $G_i$ is closed and $[\inj M/2,S]$ non-self looping.  By Lemma~\ref{l:coverMe}, there are $R_i=R_i(\e,S)>0$ such that for $R(h)\leq \min_iR_i$, if we set $\tilde{\mc{G}}_i:=\{ j\mid \Lambda_{\rho_j}^\tau(R(h))\cap B_{S^*_xM}(G_i,R_i)\}$, then
\begin{gather*}
%\tilde{\mc{G}}_i:=\{ j\mid \Lambda_{\rho_j}^\tau(R(h))\cap B_{S^*_xM}(G_i,R_i)\},\\
%|\tilde{\mc{G}}_0|\leq \mathfrak{D}R(h)^{1-n}\e,
|\tilde{\mc{G}}_i|\leq R(h)^{1-n}\mathfrak{D}\delta^{n-1},\,i\geq 1
\end{gather*}
and for $i\geq 1$,
$$
\bigcup_{j\in \tilde{\mc{G}}_i}\Lambda_{\rho_j}^\tau(R(h))\text{ is }[\inj M/2,S]\text{ non-self looping}.
$$
Then, we have 
$$
\sum_{i=0}^{N}\sqrt{\frac{|\tilde{\mc{G}}_i|R(h)^{n-1}\inj M}{2S}}\leq N(\delta)\delta^{\frac{n-1}{2}} \sqrt{\frac{\mathfrak{D}\inj M}{2S}}+\sqrt{\mathfrak{D}\e}.
$$
Now, for $\e:=\frac{1}{4\mathfrak{D}T}$ let $\delta:=\delta(\e)$ and set $S:=2N^2(\delta)\delta^{n-1}\mathfrak{D}\inj M $. Working with $R_i=R_i(\ep, S)=R_i(T)$ as defined before, we have
$$
\sum_{i=0}^{N}\sqrt{\frac{|\tilde{\mc{G}}_i|R(h)^{n-1}\inj M}{2S}}\leq \sqrt{\frac{1}{T}}.
$$
Defining
$$
h_0(T)=\inf\{h>0\,\mid\,  R(h)>\min_iR_i(T)\}, \qquad
T(h)=\sup\{T>0\mid h_0(T)>h\},
$$
we have shown that $x$ is $(\inj M/2, T(h))$ non-recurrent. 
\end{proof}
}

\bibliography{biblio}

\begin{thebibliography}{Wym19b}

\bibitem[Ano67]{Anosov}
Dmitri~V. Anosov.
\newblock Geodesic flows on closed {R}iemannian manifolds of negative
  curvature.
\newblock {\em Trudy Mat. Inst. Steklov.}, 90:209, 1967.

\bibitem[Arn73]{Arn73}
Jacques~A Arnaud.
\newblock Vi hamiltonian theory of beam mode propagation.
\newblock In {\em Progress in Optics}, volume~11, pages 247--304. Elsevier,
  1973.

\bibitem[Ava56]{Ava}
Vojislav~G. Avakumovi\'c.
\newblock \uppercase{\"u}ber die {E}igenfunktionen auf geschlossenen
  {R}iemannschen {M}annigfaltigkeiten.
\newblock {\em Math. Z.}, 65:327--344, 1956.

\bibitem[BB91]{babich1991short}
Vasilii~M Babich and Vladimir~Sergeevich Buldyrev.
\newblock {\em Short-wavelength diffraction theory: asymptotic methods}.
\newblock Springer-Verlag, 1991.

\bibitem[B{\'e}r77]{Berard77}
Pierre~H. B{\'e}rard.
\newblock On the wave equation on a compact {R}iemannian manifold without
  conjugate points.
\newblock {\em Math. Z.}, 155(3):249--276, 1977.

\bibitem[BL67]{Laz67}
V.~M. Babi\v{c} and V.~F. Lazutkin.
\newblock The eigenfunctions which are concentrated near a closed geodesic.
\newblock In {\em Problems of {M}athematical {P}hysics, {N}o. 2, {S}pectral
  {T}heory, {D}iffraction {P}roblems ({R}ussian)}, pages 15--25. Izdat.
  Leningrad. Univ., Leningrad, 1967.

\bibitem[Bla10]{BlairSasaki}
David~E. Blair.
\newblock {\em Riemannian geometry of contact and symplectic manifolds}, volume
  203 of {\em Progress in Mathematics}.
\newblock Birkh\"auser Boston, Inc., Boston, MA, second edition, 2010.

\bibitem[Bon17]{Bo16}
Yannick Bonthonneau.
\newblock The {$\Theta$} function and the {W}eyl law on manifolds without
  conjugate points.
\newblock {\em Doc. Math.}, 22:1275--1283, 2017.

\bibitem[Bou93]{B93}
Jean Bourgain.
\newblock Eigenfunction bounds for the {L}aplacian on the {$n$}-torus.
\newblock {\em Internat. Math. Res. Notices}, (3):61--66, 1993.

\bibitem[BS17]{BS17}
Matthew~D. Blair and Christopher~D. Sogge.
\newblock Refined and microlocal {K}akeya--{N}ikodym bounds of eigenfunctions
  in higher dimensions.
\newblock {\em Communications in Mathematical Physics}, 356(2):501--533, 2017.

\bibitem[BS18]{BS18}
Matthew~D. Blair and Christopher~D. Sogge.
\newblock Concerning toponogov’s theorem and logarithmic improvement of
  estimates of eigenfunctions.
\newblock {\em Journal of Differential Geometry}, 109(2):189--221, 2018.

\bibitem[CG19a]{CG19dyn}
Yaiza Canzani and Jeffrey Galkowski.
\newblock Improvements for eigenfunction averages: an application of geodesic
  beams.
\newblock {\em \arXiv{1809.06296}}, 2019.

\bibitem[CG19b]{CG17}
Yaiza Canzani and Jeffrey Galkowski.
\newblock On the growth of eigenfunction averages: microlocalization and
  geometry.
\newblock {\em Duke Math. J.}, 168(16):2991--3055, 2019.

\bibitem[CG20a]{CG19Lp}
Yaiza Canzani and Jeffrey Galkowski.
\newblock Growth of high ${L}^p$ norms for eigenfunctions: an application of
  geodesic beams.
\newblock {\em \arXiv{2003.04597}}, 2020.

\bibitem[CG20b]{CG19Weyl}
Yaiza Canzani and Jeffrey Galkowski.
\newblock Weyl remainders: an application of geodesic beams.
\newblock {\em Preprint available upon request}, 2020.

\bibitem[CGT18]{CGT}
Yaiza Canzani, Jeffrey Galkowski, and John~A. Toth.
\newblock Averages of eigenfunctions over hypersurfaces.
\newblock {\em Comm. Math. Phys.}, 360(2):619--637, 2018.

\bibitem[CM11]{Col11}
Tobias~H. Colding and William~P. Minicozzi, II.
\newblock Lower bounds for nodal sets of eigenfunctions.
\newblock {\em Comm. Math. Phys.}, 306(3):777--784, 2011.

\bibitem[CS15]{CS}
Xuehua Chen and Christopher~D. Sogge.
\newblock On integrals of eigenfunctions over geodesics.
\newblock {\em Proc. Amer. Math. Soc.}, 143(1):151--161, 2015.

\bibitem[DG75]{DG75}
Johannes~J Duistermaat and Victor~W Guillemin.
\newblock The spectrum of positive elliptic operators and periodic
  bicharacteristics.
\newblock {\em Inventiones mathematicae}, 29(1):39--79, 1975.

\bibitem[DG14]{DyGu14}
Semyon Dyatlov and Colin Guillarmou.
\newblock Microlocal limits of plane waves and {E}isenstein functions.
\newblock {\em Ann. Sci. \'Ec. Norm. Sup\'er. (4)}, 47(2):371--448, 2014.

\bibitem[DGR06]{Dim06}
Mouez Dimassi, J-C Guillot, and James Ralston.
\newblock Gaussian beam construction for adiabatic perturbations.
\newblock {\em Mathematical Physics, Analysis and Geometry}, 9(3):187--201,
  2006.

\bibitem[DZ19]{ZwScat}
Semyon Dyatlov and Maciej Zworski.
\newblock Mathematical theory of scattering resonances.
\newblock 200:xi+634, 2019.

\bibitem[Eli90]{El90}
L.~Hak\aa~n Eliasson.
\newblock Normal forms for {H}amiltonian systems with {P}oisson commuting
  integrals---elliptic case.
\newblock {\em Comment. Math. Helv.}, 65(1):4--35, 1990.

\bibitem[Gal18]{GJEDP}
Jeffrey Galkowski.
\newblock A microlocal approach to eigenfunction concentration.
\newblock {\em Journ\'ees \'equations aux d\'eriv\'ees partielles}, 2018.
\newblock talk:3.

\bibitem[Gal19]{Gdefect}
Jeffrey Galkowski.
\newblock Defect measures of eigenfunctions with maximal {$L^\infty$} growth.
\newblock {\em Ann. Inst. Fourier (Grenoble)}, 69(4):1757--1798, 2019.

\bibitem[Goo83]{Good}
Anton Good.
\newblock {\em Local analysis of {S}elberg's trace formula}, volume 1040 of
  {\em Lecture Notes in Mathematics}.
\newblock Springer-Verlag, Berlin, 1983.

\bibitem[Gro85]{Gros}
Emil Grosswald.
\newblock {\em Representations of integers as sums of squares}.
\newblock Springer-Verlag, New York, 1985.

\bibitem[GT17]{GT}
Jeffrey Galkowski and John~A Toth.
\newblock Eigenfunction scarring and improvements in ${L}^\infty$ bounds.
\newblock {\em Analysis \& PDE}, 11(3):801--812, 2017.

\bibitem[GT20]{GT18a}
Jeffrey Galkowski and John~A. Toth.
\newblock Pointwise bounds for joint eigenfunctions of quantum completely
  integrable systems.
\newblock {\em Comm. Math. Phys.}, 375(2):915--947, 2020.

\bibitem[Hej82]{Hej}
Dennis~A. Hejhal.
\newblock Sur certaines s\'eries de {D}irichlet associ\'ees aux g\'eod\'esiques
  ferm\'ees d'une surface de {R}iemann compacte.
\newblock {\em C. R. Acad. Sci. Paris S\'er. I Math.}, 294(8):273--276, 1982.

\bibitem[H{\"o}r68]{Ho68}
Lars H{\"o}rmander.
\newblock The spectral function of an elliptic operator.
\newblock {\em Acta Math.}, 121:193--218, 1968.

\bibitem[Hor93]{Horozov}
Emil Horozov.
\newblock On the isoenergetical nondegeneracy of the spherical pendulum.
\newblock {\em Phys. Lett. A}, 173(3):279--283, 1993.

\bibitem[HT15]{HT15}
Andrew Hassell and Melissa Tacy.
\newblock Improvement of eigenfunction estimates on manifolds of nonpositive
  curvature.
\newblock In {\em Forum Mathematicum}, volume~27, pages 1435--1451. De Gruyter,
  2015.

\bibitem[IS95]{I-S}
Henryk Iwaniec and Peter Sarnak.
\newblock {$L^\infty$} norms of eigenfunctions of arithmetic surfaces.
\newblock {\em Ann. of Math. (2)}, 141(2):301--320, 1995.

\bibitem[JZ16]{JZ}
Junehyuk Jung and Steve Zelditch.
\newblock Number of nodal domains and singular points of eigenfunctions of
  negatively curved surfaces with an isometric involution.
\newblock {\em J. Differential Geom.}, 102(1):37--66, 2016.

\bibitem[Kli74]{Kling}
Wilhelm Klingenberg.
\newblock Riemannian manifolds with geodesic flow of {A}nosov type.
\newblock {\em Ann. of Math. (2)}, 99:1--13, 1974.

\bibitem[Kli95]{Kling95}
Wilhelm P.~A. Klingenberg.
\newblock {\em Riemannian geometry}, volume~1 of {\em De Gruyter Studies in
  Mathematics}.
\newblock Walter de Gruyter \& Co., Berlin, second edition, 1995.

\bibitem[KS71]{Keller71}
Joseph~B Keller and William Streifer.
\newblock Complex rays with an application to gaussian beams.
\newblock {\em JOSA}, 61(1):40--43, 1971.

\bibitem[KTZ07]{KTZ}
Herbert Koch, Daniel Tataru, and Maciej Zworski.
\newblock Semiclassical {$L^p$} estimates.
\newblock {\em Ann. Henri Poincar{\'e}}, 8(5):885--916, 2007.

\bibitem[Lev52]{Lev}
Boris~M. Levitan.
\newblock On the asymptotic behavior of the spectral function of a self-adjoint
  differential equation of the second order.
\newblock {\em Izvestiya Akad. Nauk SSSR. Ser. Mat.}, 16:325--352, 1952.

\bibitem[Ral76]{Ra76}
James Ralston.
\newblock On the construction of quasimodes associated with stable periodic
  orbits.
\newblock {\em Comm. Math. Phys.}, 51(3):219--242, 1976.

\bibitem[Ral77]{Ra77}
James Ralston.
\newblock Approximate eigenfunctions of the {L}aplacian.
\newblock {\em J. Differential Geometry}, 12(1):87--100, 1977.

\bibitem[Ral82]{Ra82}
James Ralston.
\newblock Gaussian beams and the propagation of singularities.
\newblock In {\em Studies in partial differential equations}, volume~23 of {\em
  MAA Stud. Math.}, pages 206--248. Math. Assoc. America, Washington, DC, 1982.

\bibitem[Ran78]{Randol}
Burton Randol.
\newblock The {R}iemann hypothesis for {S}elberg's zeta-function and the
  asymptotic behavior of eigenvalues of the {L}aplace operator.
\newblock {\em Trans. Amer. Math. Soc.}, 236:209--223, 1978.

\bibitem[Saf88]{Saf88}
Yu.~G. Safarov.
\newblock Asymptotic of the spectral function of a positive elliptic operator
  without the nontrap condition.
\newblock {\em Functional Analysis and Its Applications}, 22(3):213--223, 1988.

\bibitem[Sog88]{So88}
Christopher~D. Sogge.
\newblock Concerning the {$L^p$} norm of spectral clusters for second-order
  elliptic operators on compact manifolds.
\newblock {\em J. Funct. Anal.}, 77(1):123--138, 1988.

\bibitem[STZ11]{SoggeTothZelditch}
Christopher~D. Sogge, John~A. Toth, and Steve Zelditch.
\newblock About the blowup of quasimodes on {R}iemannian manifolds.
\newblock {\em J. Geom. Anal.}, 21(1):150--173, 2011.

\bibitem[SXZ17]{SXZ}
Christopher~D. Sogge, Yakun Xi, and Cheng Zhang.
\newblock Geodesic period integrals of eigenfunctions on riemann surfaces and
  the \uppercase{G}auss-\uppercase{B}onnet \uppercase{T}heorem.
\newblock {\em Camb. J. Math.}, 5(1):123--151, 2017.

\bibitem[SZ02]{SZ02}
Christopher~D. Sogge and Steve Zelditch.
\newblock Riemannian manifolds with maximal eigenfunction growth.
\newblock {\em Duke Math. J.}, 114(3):387--437, 2002.

\bibitem[SZ16a]{SZ16I}
Christopher~D. Sogge and Steve Zelditch.
\newblock Focal points and sup-norms of eigenfunctions.
\newblock {\em Rev. Mat. Iberoam.}, 32(3):971--994, 2016.

\bibitem[SZ16b]{SZ16II}
Christopher~D. Sogge and Steve Zelditch.
\newblock Focal points and sup-norms of eigenfunctions {II}: the
  two-dimensional case.
\newblock {\em Rev. Mat. Iberoam.}, 32(3):995--999, 2016.

\bibitem[Tac18]{Ta18a}
Melissa Tacy.
\newblock A note on constructing families of sharp examples for {$L^p$} growth
  of eigenfunctions and quasimodes.
\newblock {\em Proc. Amer. Math. Soc.}, 146(7):2909--2924, 2018.

\bibitem[Tac19]{Ta18}
Melissa Tacy.
\newblock {$L^p$} estimates for joint quasimodes of semiclassical
  pseudodifferential operators.
\newblock {\em Israel J. Math.}, 232(1):401--425, 2019.

\bibitem[Tot09]{To09}
John~A Toth.
\newblock ${L}^2$-restriction bounds for eigenfunctions along curves in the
  quantum completely integrable case.
\newblock {\em Communications in Mathematical Physics}, 288(1):379--401, 2009.

\bibitem[TZ03]{ToZe03}
John~A. Toth and Steve Zelditch.
\newblock {$L^p$} norms of eigenfunctions in the completely integrable case.
\newblock {\em Ann. Henri Poincar\'{e}}, 4(2):343--368, 2003.

\bibitem[VS92]{VaSa:92}
D.~G. Vasiliev and Yu.~G. Safarov.
\newblock The asymptotic distribution of eigenvalues of differential operators.
\newblock In {\em Spectral theory of operators ({N}ovgorod, 1989)}, volume 150
  of {\em Amer. Math. Soc. Transl. Ser. 2}, pages 55--110. Amer. Math. Soc.,
  Providence, RI, 1992.

\bibitem[VuN06]{Vu06}
San V\~{u}~Ng\d{o}c.
\newblock Symplectic techniques for semiclassical completely integrable
  systems.
\newblock In {\em Topological methods in the theory of integrable systems},
  pages 241--270. Camb. Sci. Publ., Cambridge, 2006.

\bibitem[Wei75]{We75}
Alan Weinstein.
\newblock Fourier integral operators, quantization, and the spectra of
  {R}iemannian manifolds.
\newblock pages 289--298, 1975.
\newblock With questions by W. Klingenberg and K. Bleuler and replies by the
  author.

\bibitem[Wym17]{Wym}
Emmett~L. Wyman.
\newblock Integrals of eigenfunctions over curves in compact 2-dimensional
  manifolds of nonpositive sectional curvature.
\newblock {\em \arXiv{1702.03552}}, 2017.

\bibitem[Wym18]{Wym18}
Emmett~L. Wyman.
\newblock Period integrals in non-positively curved manifolds.
\newblock {\em \arXiv{1806.01424}}, 2018.

\bibitem[Wym19a]{Wym2}
Emmett~L. Wyman.
\newblock Explicit bounds on integrals of eigenfunctions over curves in
  surfaces of nonpositive curvature.
\newblock {\em The Journal of Geometric Analysis}, pages 1--29, 2019.

\bibitem[Wym19b]{Wym3}
Emmett~L. Wyman.
\newblock Looping directions and integrals of eigenfunctions over submanifolds.
\newblock {\em J. Geom. Anal.}, 29(2):1302--1319, 2019.

\bibitem[Zel92]{Zel}
Steven Zelditch.
\newblock Kuznecov sum formulae and {S}zeg{\H{o}} limit formulae on manifolds.
\newblock {\em Comm. Partial Differential Equations}, 17(1-2):221--260, 1992.

\bibitem[Zel15]{ZeGaussian15}
Steve Zelditch.
\newblock Gaussian beams on {Z}oll manifolds and maximally degenerate
  {L}aplacians.
\newblock In {\em Spectral theory and partial differential equations}, volume
  640 of {\em Contemp. Math.}, pages 169--197. Amer. Math. Soc., Providence,
  RI, 2015.

\bibitem[Zwo12]{EZB}
Maciej Zworski.
\newblock {\em Semiclassical analysis}, volume 138 of {\em Graduate Studies in
  Mathematics}.
\newblock American Mathematical Society, Providence, RI, 2012.

\end{thebibliography}
\bibliographystyle{alpha}

\end{document}